\newtheorem{thm}{Theorem}[section]
\newtheorem{cor}[thm]{Corollary}
\newtheorem{lem}[thm]{Lemma}
\newtheorem{prop}[thm]{Proposition}
\theoremstyle{definition}
\newtheorem{defn}[thm]{Definition}
\theoremstyle{remark}
\newtheorem{rem}[thm]{Remark}
\newtheorem{exam}[thm]{Example}
\long\def\forget#1\forgotten{}
\newcommand{\eps}{\varepsilon}
\newcommand{\mQ}{\mathbb{Q}}
\newcommand{\mC}{\mathbb{C}}
\newcommand{\mP}{\mathbb{P}}
\newcommand{\K}{\mathbb{K}}
\newcommand{\abs}[1]{\lvert #1 \rvert}
\newcommand{\ra}{\rightarrow}
\newcommand\suchthat{{\,:\ }}
\DeclareMathOperator\Red{Red}
\def\({\left(}
\def\){\right)}
\def\divides{{\,|\,}}
\newcommand\oline[1] {{\overline{#1}}}
\newcommand\Hom{{\operatorname{Hom}}}
\newcommand\Aut{{\operatorname{Aut}}}
\newcommand\End{{\operatorname{End}}}
\newcommand\lcm{{\operatorname{lcm}}}
\newcommand\Orb{{\operatorname{Orb}}}
\newcommand\ceil[1]{{\lceil {#1} \rceil}}
\DeclareMathOperator\Mon{Mon}
\DeclareMathOperator\fpr{fpr}
\DeclareMathOperator\PSL{PSL}
\DeclareMathOperator\PGL{PGL}
\DeclareMathOperator\Sym{Sym}
\newcommand{\mybar}[1]{#1\llap{$\overline{\phantom{\rm#1}}$}}
\numberwithin{equation}{section}
\numberwithin{table}{section}
\begin{document}

\title
{Monodromy groups of indecomposable coverings of bounded genus}%

\def\ann{Department of Mathematics, University of Michigan, Ann Arbor, MI 48109--1043, USA}
\def\technion{Department of Mathematics, Technion - IIT, Haifa 3200, Israel}
\author{ Danny Neftin}
\address{\technion}
\email{dneftin@tx.technion.ac.il}%
\author{ Michael E. Zieve}
\address{\ann}
\email{zieve@umich.edu}


\begin{abstract}
For each nonnegative integer $g$, we classify the ramification types and monodromy groups of indecomposable coverings of complex curves $f\colon X\to Y$ where $X$ has genus $g$, under the hypothesis that $n:=\deg(f)$ is sufficiently large and the monodromy group is not $A_n$ or $S_n$.  This proves a conjecture of Guralnick and several conjectures of Guralnick and Shareshian.
\end{abstract}

\maketitle

\section{Introduction}\label{sec:intro}
\subsubsection*{The classification of monodromy groups} 
Since the work of Riemann and Hurwitz, it has been known that many properties of a branched covering $f\colon X\to Y$ of complex curves are determined by the values of certain fundamental invariants of the covering.  This has led to much effort in studying the possibilities for the values of these invariants.  The most important invariant of $f$ is its \emph{monodromy group} $\Mon(f)$, namely the Galois group of the Galois closure of the function field extension $\mC(X)/\mC(Y)$, viewed as a group of permutations of $\Hom_{\mC(Y)}(\mC(X),\mybar{\mC(Y)})$.  Equivalently, writing $B$ for the branch locus of $f$ and choosing a base point $P_0\in Y(\mathbb{C})\setminus B$, the group $\Mon(f)$ is isomorphic as a permutation group to the image of the fundamental group of $Y\setminus B$ under the monodromy representation $\pi_1(Y\setminus B,P_0)\to\Sym(f^{-1}(P_0))$.  In many respects, all degree-$n$ coverings with monodromy group $A_n$ or $S_n$ behave similarly to random degree-$n$ coverings, so that it is of particular interest to describe the coverings with different monodromy groups.  
Our main result achieves this goal by describing all coverings $f\colon X\to Y$ whose monodromy group is neither $A_{\deg(f)}$ nor $S_{\deg(f)}$ nor a member of a finite list of groups which depends only on the genus of $X$.  In particular, for any prescribed value of the genus of $X$, we describe all coverings $f:X\to Y$ of sufficiently large degree whose monodromy group is not $A_{\deg(f)}$ or $S_{\deg(f)}$. The main source of such ``exceptional" coverings is \emph{decomposable} coverings, namely the coverings $f\colon X\to Y$ which can be written as the composition $f=f_1\circ f_2$ of coverings $f_2\colon X\to Z$ and $f_1\colon Z\to Y$ which both have degree at least $2$.  We determine the exceptional \emph{indecomposable} coverings, in the sense that we determine all occurring possibilities for their key invariants, namely the monodromy group and the \emph{ramification type} of the covering.  Here the ramification type of a covering $f\colon X\to Y$ with branch locus $B$ is the multiset $\{E_f(P):P\in B\}$, where $E_f(P)$ is the multiset of ramification indices (i.e., local multiplicities) of $f$ at the points in $f^{-1}(P)$.  As we illustrate below, knowledge of
the monodromy group and ramification type of a covering provides sufficient information to answer a wide assortment of questions about the covering.


The first steps towards classifying monodromy groups and ramification of indecomposable coverings of bounded genus were
taken over 100 years ago by Chisini \cite{Chisini} and Ritt \cite{Ritt2}, who did this for prime-degree rational functions 
$f:\mP^1\ra\mP^1$ under the assumption that the monodromy group is solvable.
These results were generalized soon afterwards by Zariski \cite{Zar1,Zar2} to rational functions of arbitrary degree whose monodromy group has a trivial two-point stabilizer.
Nearly 65 years later, Guralnick and Thompson \cite{GT} reignited the study of monodromy groups 
by connecting them with modern results in group theory, including the classification of finite simple groups.
In particular, the combination of the papers
\cite{GT,Asch, Shih, Gur1, Neu1, Neu2, GN, LWay, GM, LSa, LSh, FM}
yields a classification of
the simple groups which occur as composition factors of the monodromy group of an indecomposable covering $f:X\ra \mP^1$ whose degree is sufficiently large compared to the genus $g_X$.
However, this remarkable achievement has not had many applications, since it seems that in applications one needs to know the full monodromy group rather than just the composition factors.

 
In this paper, building on the above papers and also on both \cite{GS} and our previous paper \cite{NZ2}, we give a complete list of the possible monodromy groups 
of indecomposable coverings $f:X\to\mP^1$ whose degree $n$ is sufficiently large compared to the genus $g_X$.  Moreover, in case the monodromy group is not $A_n$ or $S_n$, we also determine all possibilities for the ramification type of $f$.
\begin{thm}\label{thm:general} 
Fix a nonnegative integer $g$. There exists a constant $N_g$ such that every indecomposable covering $f:X \ra\mP^1$ of genus $g_X = g$ and degree   $n \geq N_g$ satisfies one of the following:
\begin{enumerate}
\item $\Mon(f) = A_n$ or $S_n$;
\item $\Mon(f) = A_\ell$ or $S_\ell$ with $n=\ell(\ell-1)/2$, and $g_X=0$. Moreover, the ramification type of $f$ is given in Table~\emph{\ref{table:f}}. 
\item   $A_\ell^2< \Mon(f) \leq S_\ell^2\rtimes C_2$, where $C_2$ permutes the two copies of $S_\ell$,  $n=\ell^2$ and $g_X\leq 1$. Moreover, the ramification type of $f$ is given in \cite[Table 3.1]{NZ2}\footnote{The ramification types in \cite[Table 3.1]{NZ2} are represented concisely by conjugacy classes of elements in $N_{S_n}(\Mon(f))=S_\ell\wr S_2$. The ramification of $f$ is then the list of cycle lengths of these elements, see  \cite[\S3]{NZ2}.}. 
\item  $\Mon(f)\leq C_p^i\rtimes C_k$, $n=p^i$, and $g_X=0$
where $p$ is prime, $i\leq 2$,  and $k\leq 6$. Moreover, the ramification type of $f$ is given in Table~\emph{\ref{table:solvable}}.
\end{enumerate}
\end{thm}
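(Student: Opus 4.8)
The plan is to reduce to the known classification of composition factors and then bootstrap from composition factors to the full monodromy group using the structure theory of primitive permutation groups. First I would recall that since $f$ is indecomposable, $G:=\Mon(f)$ is a primitive permutation group of degree $n$, and that a point stabilizer $G_1$ has an associated \emph{genus inequality}: by Riemann--Hurwitz applied to the Galois closure $\widehat X\to\mP^1$ together with the intermediate cover $\widehat X\to X$, the index contributions of the ramified generators force $\sum_{i}\mathrm{ind}(g_i)$ to be bounded linearly in terms of $g_X$ and $n$. Following \cite{GT} and the chain of papers culminating in the composition-factor classification, for $n\gg g_X$ the only composition factors of $G$ that can occur (other than $A_n$) are alternating groups $A_\ell$ acting on $k$-subsets or on partitions, cyclic groups of small order, and a bounded list of exceptions. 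This already pins $G$ down to one of the following Aschbacher--O'Nan--Scott families: affine type ($G\le\AGL_i(p)$), product action on $A_\ell^k$ or $S_\ell^k$ with the natural or $k$-subset action, and finitely many sporadic/small cases absorbed into $N_g$.

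The heart of the argument is then a case analysis of these surviving families, in each case using the genus-$g$ constraint to bound the remaining parameters and to extract the ramification type. For the affine case $G\le \AGL_i(p)$ with $n=p^i$: the genus inequality restricts $i$ and the order of the stabilizer's image in $\GL_i(p)$; one shows $i\le 2$ and that the complement has order at most $6$ (the possible cyclic orders being forced by the requirement that a product of few generators, each of bounded index, be trivial), and $g_X=0$ is forced because the fixed-point structure of affine transformations makes any positive-genus solution violate Riemann--Hurwitz for large $p$ — this yields case (4) and Table \ref{table:solvable}. For the product action on $\ell$-element base sets: this is exactly the content of \cite{NZ2}, giving case (3), with the ramification type read off from \cite[Table 3.1]{NZ2}; here one cites \cite{NZ2} directly. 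For the action of $A_\ell$ or $S_\ell$ on $2$-subsets (so $n=\binom{\ell}{2}$): the genus-$g$ Riemann--Hurwitz computation, combined with \cite{GS}, forces $g_X=0$ and a short list of ramification types, namely Table \ref{table:f} — this is case (2). Actions on $k$-subsets with $k\ge 3$, and on partitions into more than two blocks, are ruled out for $n\gg g$ by showing the minimal index of a nontrivial element grows too fast, so no bounded-genus ramification data can exist.

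Throughout, the mechanism converting ``composition factors known'' into ``group known'' is: a primitive group with a prescribed socle has only boundedly many overgroups inside the normalizer of the socle, and the genus inequality eliminates all but the explicitly listed ones. The constant $N_g$ is defined to be large enough that (i) the composition-factor classification applies, (ii) all sporadic primitive groups of degree $\le$ some function of $g$ are excluded, and (iii) in each infinite family the parameter inequalities derived from Riemann--Hurwitz have only the solutions recorded in the tables. I would organize the paper so that the affine case and the $2$-subset case each get their own section with the detailed Riemann--Hurwitz bookkeeping and the derivation of the tables, while the product-action case is handled by quotation from \cite{NZ2}.

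The main obstacle I expect is the affine case: unlike the almost-simple cases, affine groups of degree $p^2$ admit a genuinely rich family of ramification configurations (the stabilizer can be any subgroup of $\GL_2(p)$ of order up to $6$ times a $p$-power, and there are many ways to choose generating tuples of small total index), so the work is in showing the list of genus-$0$ possibilities is finite and exactly Table \ref{table:solvable} — this requires a careful, essentially complete, analysis of tuples in $C_p^i\rtimes C_k$ whose product is trivial and whose indices sum to the genus-$0$ value $2(n-1)$, together with ruling out $i\ge 3$ and $k\ge 7$ by a counting/divisibility argument. A secondary difficulty is ensuring the various exceptional small-degree groups are uniformly swept into $N_g$ without gaps, which requires cross-checking the effective bounds in the cited composition-factor papers.
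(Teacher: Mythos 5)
Your overall architecture (indecomposable $\Rightarrow$ primitive, Aschbacher--O'Nan--Scott case division, citation of the composition-factor literature for most branches, quotation of \cite{NZ2} for the product action) matches the paper's Section \ref{sec:reduction}. But there is a genuine gap at the point where you dispose of the action of $A_\ell$ or $S_\ell$ on $2$-subsets by saying that ``the genus-$g$ Riemann--Hurwitz computation, combined with \cite{GS}, forces $g_X=0$ and a short list of ramification types.'' The results of \cite{GS} (and the follow-up \cite{Br}) only establish this when the Galois closure $\tilde f$ has at least five (resp.\ four) branch points; the three-branch-point case is precisely what was left open there, is stated as \cite[Conjecture 1.0.4]{GS}, and is the main theorem of this paper (Theorem \ref{thm:Sn}). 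Its proof is not a Riemann--Hurwitz bookkeeping exercise: it requires Castelnuovo's inequality to convert a small value of $g_{X_2}-g_{X_1}$ into a linear bound on $g_{Y_2}$ (Section \ref{sec:castel}), an ``almost Galois'' structure theorem for the ramification of $h$ derived from that bound (Section \ref{sec:almost-Galois}), a delicate estimate of the correspondence contribution $R_{\pi_t}$ relating $t$-set and $t$-tuple stabilizers (Section \ref{sec:set-point}), and an explicit translation argument plus ad hoc nonexistence proofs to prune the resulting list of candidate ramification data (Sections \ref{sec:ram} and \ref{sec:non-existence}). None of this is supplied or even gestured at by your proposal, so case (2) of the theorem is unproven as you have set things up.

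Two secondary points. First, your claim that actions on $k$-subsets with $k\ge 3$ are ruled out ``by showing the minimal index of a nontrivial element grows too fast'' is not how the reduction actually works and would not obviously succeed as stated; the paper instead bounds the successive differences $g_{X_t}-g_{X_{t-1}}$ for all $t$ up to $M_\ell=\ceil{c_1(\log\ell)^2/\log\log\ell}$, combines this with the Livingstone--Wagner orbit inequalities and \cite[Lemma 2.0.13]{GS}, and invokes Jordan's theorem (via Babai--Seress) to force a highly transitive point stabilizer to contain $A_\ell$ --- this is also how the transitive primitive stabilizers are eliminated without the classification of $3$-homogeneous groups. Second, you identify the affine case as the main obstacle, but the paper dispatches it quickly: Neubauer's work reduces to Galois closure of genus at most $1$, and Proposition \ref{lem:normal-closure} classifies such covers via Klein's list of finite subgroups of $\PGL_2$ and the automorphism theory of elliptic curves, rather than by a direct enumeration of product-one tuples in $C_p^i\rtimes C_k$. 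Your proposed direct tuple analysis could in principle be made to work there, but the genuine difficulty of the theorem lies elsewhere.
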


Theorem \ref{thm:general} 
proves a conjecture of Guralnick \cite[Conjecture 1.0.1]{GS}. 
In this result, case (1) is the general case, which occurs for any randomly chosen degree-$n$ covering.
It is expected that there exist covers in case (1) whose ramification type is nearly any collection of partitions of $n$ which is consistent with the Riemann--Hurwitz formula for $f$.
By contrast, the possible ramification types
in cases (2)--(4) are severely restricted.
For instance, in case (2) there are approximately $2\varphi(\ell)$ ramification types for any prescribed $\ell$, and in case (3) there are approximately $6\varphi(\ell)$, 
where $\varphi(\ell)$ denotes Euler's totient function. 
%
The ramification types occurring in case (4) appear in Zariski's work, and are completely determined in Proposition \ref{lem:normal-closure}.
Up to composition with linear fractionals, the rational functions in case (4) are $X^n$, Chebyshev polynomials $T_n(X)$, and the composition factors of certain Latt\`es maps. The Latt\`es maps in question are the maps $L_n:E/\langle\sigma\rangle\ra E/\langle\sigma\rangle$ induced by the multiplication by $n$ map on an elliptic curve $E$ upon  taking the quotient by a nontrivial $\sigma\in \Aut(E)$. 
\begin{rem}
The lists of ramification types in items (2)--(4) of Theorem \ref{thm:general} cannot be shortened.  Namely, we show in 
Section \ref{sec:ramification} and Proposition \ref{prop:An-Sn-ram-types} (resp.\ \cite[Proposition 10.1]{NZ2} and Proposition  \ref{lem:isogeny-mon}) that each of the types in (2) (resp., (3) and (4)) occurs as the ramification type of an indecomposable covering $f:X\ra\mP^1$ with monodromy group as in (2) (resp., (3) and (4)). In particular, Proposition~\ref{prop:An-Sn-ram-types} proves eight of the ten assertions in \cite[Conjecture 3.29]{GS} of Guralnick--Shareshian, and Remark~\ref{rem:GS-correct} corrects the remaining two assertions.
\end{rem}

\begin{rem}
The analogue of Theorem~\ref{thm:general} holds (much more simply) for degree-$n$ coverings $X\to Y$ with $g_Y>0$.  For, Riemann--Hurwitz implies that $g_X-1\ge n(g_Y-1)$, so that if $g_Y>1$ then there are no coverings at all with $n\ge g_X$.
If $g_Y=1$ then \cite[Proposition~2.3]{GN} asserts
that, if $f:X\ra Y$ is an indecomposable covering of degree $n\ge (2g_X-1)^2$ for which $\Mon(f)$ is not $A_n$ or $S_n$, then $n$ is prime, $\Mon(f)=C_n$, and $f$ is unramified.
\end{rem}

In the special case of {\it polynomials},  
the above classification was carried out in  \cite{Mul}.
This case is significantly easier, since it is known that there are very few possibilities for the monodromy group of an indecomposable covering $f:X\to Y$ in which some point of $Y(\mC)$ has a unique preimage \cite{Feit,Jones}.
The resulting monodromy groups are used in a wide range of topics including:   the near-injectivity of polynomial maps $f:\mQ\ra\mQ$  \cite{AZ,CHZ} and  local-global injectivity principles  \cite{Ros}; the finiteness of common values $f(\mQ)\cap g(\mQ)$ of polynomials $f,g\in\mQ[x]$ and properties of the associated curve $f(x)=g(y)$ \cite{Fried, CN, BT}; Ritt functional decompositions \cite{MZ,  Pak2} and invariant subvarieties under polynomial maps \cite{MS}; common value sets $f(\mathbb 
F_p)=g(\mathbb 
F_p)$, arithmetic equivalence \cite{Mul4}, and  fake values  ($a\in \mQ\cap f(\mQ_p)$ for almost all $p$)  \cite{Cor};   the finiteness of complements $\Red_f:=\{t_0\in\mQ\,:\, f(t_0,x)\in\mQ[x]\text{ is reducible}\}$ of Hilbert sets \cite{DF,DW, KN,Mul3}; the minimal ramification problem \cite{BSEF}; and complex and arithmetic dynamics \cite{BDGHT, KN2, Pink1, MZ, GTZ}.

Similarly to the case of polynomials, Theorem \ref{thm:general}  plays a key role in many problems concerning rational functions and coverings of bounded genus. These include the conjecture of the second author that every rational function $f:\mQ\ra \mQ$ is at most  $16$-to-$1$ over all but finitely many values \cite{Segol}; the study of rational points on the curves $f(x)=g(y)$, $f,g\in \mQ(x)$ and determining when are these reducible \cite[\S 5.2 of V1]{KN}; finding  rational functions $f_1\ldots,f_r$ whose fake values cover $\mQ$ \cite{KK};  determining when $\Red_f$ is finite for ``general" polynomials $f\in\mQ(t)[x]$ \cite{MN} and other families \cite{KN}; determining which groups appear as the Galois groups of infinitely many inequivalent trinomials \cite[\S 1.4]{GS}; rational  curves of gonality $n$, and the finiteness of primitive degree $n$-points other than $A_n$ and $S_n$-points on such curves \cite[Theorem 2]{KS}; and as pointed out by  Chinburg, in view of the analogy between genera of curves over finite fields and root discriminants of number fields \cite[\S 3.5]{Stich},  in determining the possibilities for the Galois groups of primitive number fields of bounded root discriminant.

\subsubsection*{Main result}
One of the most difficult cases in the classification of monodromy groups is when $G$ is $A_\ell$ or $S_\ell$, but where the permutation representation of $G$ is not the standard action on $\ell$ points. 
This is the final class of permutation groups that has to be analyzed in order to complete the proof of
Theorem~\ref{thm:general}, in light of previous results.
Note that for a Galois covering $\tilde f:\tilde X\ra \mP^1$ with monodromy group $G\in \{A_\ell,S_\ell\}$, and a maximal subgroup $H< G$, the covering $\tilde f$ induces an indecomposable covering $\tilde X/H\ra \mP^1$. Since every indecomposable covering $f:X\ra\mP^1$ with alternating or symmetric monodromy is isomorphic to the covering $\tilde X/H\ra \mP^1$ induced by some $\tilde f$ and $H$ as above, Theorem \ref{thm:general} reduces, in the case of alternating or symmetric monodromy groups, to finding all such pairs $\tilde f,H$ for which $g_{\tilde X/H}$ is less than some function of $\ell$.  We achieve this in the following result.
\begin{thm}\label{thm:Sn}  There exist constants $a,N>0$ such that for 
every 
Galois covering $\tilde f: \tilde X\ra \mathbb{P}^1$ with monodromy group $G=A_\ell$ or $S_\ell$, with $\ell\geq N$, and every maximal subgroup $H$ of $G$ such that $H\neq A_\ell$, one of the following conditions holds: 
\begin{enumerate}
\item $H$ is the group of elements of $G$ fixing some prescribed point in $\{1,2,\dots,\ell\}$.
\item $H$ is the group of elements of $G$ preserving some prescribed two-element subset of $\{1,2,\dots,\ell\}$,
 and the ramification type of the natural projection $\tilde X/H\ra \mP^1$ is given in Table~\emph{\ref{table:f}}. In all of these cases $g_{\tilde X/H}=0$.
\item $g_{\tilde X/H} > a\ell$.
\end{enumerate}
\end{thm}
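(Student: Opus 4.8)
\emph{Plan.} Write $(g_1,\dots,g_r)$ for a branch-cycle description of $\tilde f$: thus $g_i\in G\setminus\{1\}$, $g_1\cdots g_r=1$, $\langle g_1,\dots,g_r\rangle=G$, and $r\ge3$ since $G$ is nonabelian; and for any subgroup $K\le G$ the curve $\tilde X/K$ maps to $\mP^1$ of degree $m:=[G:K]$ with $2g_{\tilde X/K}-2=-2m+\sum_{i=1}^r\ind_m(g_i)$, where $\ind_m(g)$ is $m$ minus the number of orbits of $\langle g\rangle$ on $G/K$. Two constraints drive the argument. First, $\ind_m(g)\le m(1-1/\ord(g))$, with equality exactly when $\langle g\rangle$ acts semiregularly on $G/K$; writing $\Delta:=\sum_i(1-1/\ord(g_i))-2$ we get $g_{\tilde X/K}=1+\tfrac m2\bigl(\Delta-\tfrac1m\sum_i e_i(K)\bigr)$, where $e_i(K)\ge0$ measures the failure of semiregularity of $g_i$ on $G/K$ and is controlled by the fixed-point ratios of the powers of $g_i$. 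Since for $\ell$ large $\tilde X$ has genus $\ge2$ ($A_\ell,S_\ell$ act faithfully on no curve of genus $\le1$), we have $\Delta>0$, and the elementary minimum of $\sum(1-1/\ord)-2$ over admissible tuples gives $\Delta\ge\tfrac1{42}$. Second, Riemann--Hurwitz in the natural degree-$\ell$ action (a quotient of the coset action) gives $\sum_i\ind_\ell(g_i)\ge2\ell-2$, hence
$$ \sum_{i=1}^r\card{\operatorname{supp}(g_i)}\ \ge\ 2\ell-2 ,$$
where $\operatorname{supp}(g)\subseteq\{1,\dots,\ell\}$ is the set of non-fixed points. The proof converts these into $g_{\tilde X/H}>a\ell$ in every case except $H$ a point or pair stabilizer, and, for pair stabilizers, extracts exactly when that fails.

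By the O'Nan--Scott theorem together with the Liebeck--Praeger--Saxl classification of maximal subgroups of $A_\ell$ and $S_\ell$, such an $H$ is either $(S_k\times S_{\ell-k})\cap G$ with $1\le k<\ell/2$ (intransitive), or $(S_a\wr S_b)\cap G$ with $ab=\ell$ and $a,b\ge2$ (transitive imprimitive), or primitive (of product-action, affine, diagonal, or almost simple type). The case $k=1$ is conclusion (1). For primitive $H$, $\card H\le\ell^{O(\log\ell)}$ by Mar\'oti's theorem (the product actions being handled separately but similarly), while the minimal degree of $H$ on $\{1,\dots,\ell\}$ is at least $c\ell/\log\ell$ by Babai's theorem; hence every $g_i$ of support below that bound acts semiregularly on $G/H$, so $e_i(H)=0$, while for $g_i$ of larger support the class-size bound $\card{g_i^{S_\ell}}\ge\binom\ell{\card{\operatorname{supp}(g_i)}}$ together with Landau's bound on element orders forces $e_i(H)=o(n)$. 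Combining with $\Delta\ge\tfrac1{42}$, and a short case split on how many $g_i$ have large support, yields $g_{\tilde X/H}\ge\tfrac n{84}(1-o(1))\gg a\ell$, since $n=[G:H]$ exceeds every fixed power of $\ell$.

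The imprimitive case and the intransitive case with $3\le k<\ell/2$ are subtler, because now $H$ contains elements of small support (e.g.\ a transposition inside a block), so some $g_i$ may act far from semiregularly and $\sum_i e_i(H)$ need not be negligible. Here the mechanism is that small support of $g_i$ costs in $\sum_i e_i(H)$ but pays in $\Delta$: if $g_i$ has small support then, by the movement constraint, the other $g_j$ must collectively move almost all of $\{1,\dots,\ell\}$, and transitive generation forces those $g_j$ to have very few cycles in the natural action --- essentially to be (close to) single long cycles --- which both makes their $e_j(H)$ small and makes $\ord(g_j)$, hence $\Delta$, large. Balancing these, and using for $k\ge3$ the explicit formula for the cycle structure of a permutation on $k$-subsets, gives $g_{\tilde X/H}\ge cn$ in the imprimitive case and $g_{\tilde X/H}\ge c_k\ell^{\,k-1}\ge c\,\ell^2$ in the $k$-subset case, both $\gg a\ell$ for $\ell$ large.

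This leaves $H=(S_2\times S_{\ell-2})\cap G$, the stabilizer of a $2$-subset, and here no such domination is available: $\Delta$ and $\tfrac1n\sum_i e_i(H)$ are genuinely comparable and $g_{\tilde X/H}$ can be $0$. For a permutation with cycle lengths $c_1,c_2,\dots$ one has the exact formula $\ind_n(g)=\binom\ell2-\sum_i\lfloor c_i/2\rfloor-\sum_{i<j}\gcd(c_i,c_j)$, so $g_{\tilde X/H}\le a\ell$ is equivalent to a sharp inequality on the multiset of cycle types of $g_1,\dots,g_r$; the crude estimates above only give $g_{\tilde X/H}\ge1-o(n)$, so one must track the error terms exactly. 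The plan is to show, in order, that $r$ is bounded in terms of $a$, that the natural-action genus is bounded, that all but at most two of the $g_i$ have support $O(1)$ while the remaining one or two have support within $O(1)$ of $\ell$, that the cycle types of all the $g_i$ are then severely restricted, and finally, using the product-$1$ relation, that the short cycles occurring among all the $g_i$ fit together in only finitely many ways --- reducing the problem to a finite computation. Matching the surviving configurations against the list conjectured by Guralnick--Shareshian in \cite{GS}, corrected as in Remark~\ref{rem:GS-correct} and supplemented by \cite{NZ2}, yields exactly the ramification types of Table~\ref{table:f}, and one checks directly that each of these has $g_{\tilde X/H}=0$. This last reduction --- disentangling the sharp genus-$\le a\ell$ constraint on $2$-subsets into a manageable finite list --- is the technical heart of the argument and the step I expect to be the main obstacle, since it is where the arithmetic of cycle lengths, the transitive-generation and product-$1$ conditions, and the external input from \cite{GS} and \cite{NZ2} must all be reconciled.
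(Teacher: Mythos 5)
There is a genuine gap at the step you yourself identify as the technical heart. For the $2$-set stabilizer you propose to show that ``all but at most two of the $g_i$ have support $O(1)$ while the remaining one or two have support within $O(1)$ of $\ell$,'' and then to finish by a finite computation. That intermediate claim is false, and it is contradicted by the exceptional families themselves: in type F1.1 of Table~\ref{table:two-set-stabilizer} the branch cycles have cycle types $[1^{\ell-2},2]$, $[2^{\ell/2}]$, and $[1^2,2^{(\ell-2)/2}]$ three times, so four of the five $g_i$ have support at least $\ell-2$, yet $g_{Y_1}=g_{X_2}=0$; the same happens throughout families F1--F4. The correct structural statement is not about supports but about ramification being ``almost Galois'': over each branch point all but $O_\alpha(1)$ of the ramification indices equal a common value $m_h(P)\in\{1,2,3,4,6,\infty\}$, and the multiset of values $m_h(P)>1$ is one of six Euclidean-type lists (Proposition~\ref{lem:DZ} and Corollary~\ref{cor:DZ}). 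Deriving this requires first bounding $g_{Y_2}$ linearly in $\ell$ (which the paper gets from Castelnuovo's inequality, Proposition~\ref{prop:Castel}) and then the Do--Zieve style argument of Section~\ref{sec:almost-Galois}; it does not follow from the crude trade-off between $\Delta$ and $\sum_i e_i(H)$ that you describe. A ``finite computation'' predicated on bounded supports would both miss the actual infinite families and fail to terminate, since the surviving configurations are parametrized by $\ell$ (and by the parameter $a$ in types I1--I2), not drawn from a finite set.

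Separately, your treatment of the remaining subgroups takes a genuinely different route from the paper's, and one that reintroduces the classification of finite simple groups: you invoke the Liebeck--Praeger--Saxl description of maximal subgroups, Mar\'oti's order bound, and fixed-point-ratio estimates for primitive $H$. The paper deliberately avoids all of this by bounding $g_{\tilde X/H}$ from below via the orbit-counting inequality \eqref{equ:XkXk-1} of Guralnick--Shareshian together with Livingstone--Wagner and the Cameron--Neumann--Saxl interchange property, so that only set stabilizers ever need to be analyzed, and then closing the transitive case with Jordan's theorem in the Babai--Seress form (every $\lceil c_1(\log\ell)^2/\log\log\ell\rceil$-transitive group contains $A_\ell$). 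Your claimed bound $g_{\tilde X/H}\ge c_k\ell^{k-1}$ for $k$-set stabilizers with $k\ge 3$ also understates the difficulty: when $g_{Y_1}\le 1$ the contribution $R_{\pi_t}$ from passing between $t$-tuples and $t$-sets nearly cancels the main term, and controlling it is the content of Propositions~\ref{lem:pi-form}, \ref{lem:t-1-bound}, and \ref{prop:t-set}, which is precisely the part of Theorem~\ref{thm:main} that cannot be waved through.
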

Note that part (1)  corresponds to the generic case (1) in Theorem \ref{thm:general}, and part (2) corresponds to part (2) of Theorem \ref{thm:general}. 
Here as well, it is necessary to include the exceptional ramification types in (2) since these are shown to occur in Section \ref{sec:ramification} and  Proposition \ref{prop:An-Sn-ram-types}. Moreover in part (3), the linear growth rate in $\ell$ is optimal since there are many other families of coverings with monodromy group $A_\ell$ or $S_\ell$ for which $g_{\tilde X/H}$ grows linearly with $\ell$ when $H$ is the stabilizer of a two-element set, 
see Example \ref{exam:almost-Gal}.

Theorem~\ref{thm:Sn}  proves \cite[Conjecture 1.0.4]{GS}
of Guralnick--Shareshian for sufficiently large $\ell$. 
In \cite{GS} (resp., \cite{Br}), Theorem \ref{thm:Sn} is proven for all coverings $\tilde f$ with at least five (resp., four) branch points\footnote{See Remark \ref{rem:GS-correct} for the required minor adjustments to the lists of ramification types in \cite{GS} and \cite{Br}.}. However, it does not seem feasible to extend the method of \cite{GS} and \cite{Br} to treat the most difficult case where $\tilde f$ has three branch points. Also, the fixed point ratio method, which was used for (almost) simple groups other than $A_\ell$ or $S_\ell$, does not seem useful in this case. 

\subsubsection*{About the proof} 
The proof of Theorem \ref{thm:Sn} does not rely on the classification of finite simple groups.
In contrast, the papers \cite{GS} and \cite{Br}
implicitly appeal to this classification by applying its consequences about $3$-homogeneous groups. Instead of the classification, we use a classical result of Jordan \cite{Jordan}, whose proof was simplified by Babai--Seress \cite{BS}. Namely, there exists a constant $c_1>0$ such that every group $G\leq S_\ell$ which is $t$-transitive for all $t\leq c_1(\log\ell)^2/\log\log\ell$ must be $G=A_\ell$ or $S_\ell$. 
Let $X_t$ be the quotient $\tilde X/\oline H_t$ by a stabilizer $\oline H_t$ of a set of cardinality $t$.
We combine Jordan's theorem with character theoretic results of Guralnick--Shareshian \cite{GS} and Livingstone--Wagner \cite{LW}, to reduce the problem to proving that the difference $g_{X_t}-g_{X_{t-1}}$ is large for every $2\leq t\leq k$, where $k=\ceil{c_1(\log\ell)^2/\log\log\ell}$. This is in contrast to~\cite{GS}, where $g_{X_t}-g_{X_{t-1}}$ is shown to be large for $t=2$ and $3$, and then the classification of $3$-homogenous groups (and hence the classification of finite simple groups) is applied.

Note that there is no natural projection from $X_t$ to $X_{t-1}$, but there is a correspondence 
\[
\xymatrix{
& Y_t \ar[dl] \ar[dr]^{\pi_t} &  \\
X_{t-1} & & X_t,
}
\]
where $\pi_t:Y_t\ra X_t$ is the natural projection from the quotient $Y_t:=\tilde X/H_t$ by a $t$-point stabilizer  $H_t$. 
The difficulty in showing that $g_{X_t}-g_{X_{t-1}}$ is large lies in bounding the difference $g_{Y_t}-t!g_{X_t}$ (or equivalently the Riemann--Hurwitz contribution $R_{\pi_t}$ of $\pi_t$), which becomes an extremely complicated expression when described group theoretically. In Sections \ref{sec:set-point} and \ref{sec:large-genus}, we bound the main term of this expression in order to prove the claim when $g_{Y_1}$ is sufficiently large. This method allows us to assume $g_{Y_1}$ is bounded, but breaks down completely when $g_{Y_1}$ is small. 

To overcome the difficulty of estimating $R_{\pi_t}$, we incorporate two main new ingredients. First, we use Castelnuovo's inequality for the genus of a curve on a split surface, which, unlike previous arguments in this topic, is of geometric origin. For simplicity, let us consider here the case $t=2$.  In this case, Castelnuovo's  inequality allows us to bound $g_{Y_2}$ in terms of $g_{X_2}$  and $g_{Y_1}$, see Section~\ref{sec:castel}, 
yielding a linear bound $g_{Y_2}<c\ell -d$ for some constants $c,d>0$. 

Our second key new ingredient is a proof that a linear bound on $g_{Y_2}$ forces the ramification of the natural projection $h:Y_1\ra  \mP^1$ to be similar to that of a Galois extension, that is,  all but a bounded number of preimages in $h^{-1}(P)$ have the same ramification index under $h$ for each point $P$ of $\mP^1$. The latter argument appears in Section \ref{sec:almost-Galois} and is based on an idea from Do--Zieve~\cite{DZ}.

The final steps of the proof involve determining which ramification data for $h$ that are similar to those of Galois extensions  make the difference $g_{X_2}-g_{X_1}$ small. 
The list of such ramification data is quite large, and we use an explicit version of the so-called ``translation" method (Lemma \ref{lem:hurwitz1}) in order to show that many of these ramification data do not correspond to an indecomposable covering $h$. This shows that the ramification of $h$ appears in Table \ref{table:two-set-stabilizer}. 

Theorem  \ref{thm:Sn} and consequently  Theorem \ref{thm:general} are deduced from Theorem \ref{thm:main} in Section \ref{sec:reduction}.  Theorem \ref{thm:main} summarizes the above genus bounds obtained in Sections \ref{sec:set-point}-\ref{sec:ram}, and is proved in Section \ref{sec:proof}. In Sections \ref{sec:ramification}, \ref{sec:ram}, and Appendix \ref{sec:app}, we list the exceptional ramification types from parts (2) and (4) of Theorem \ref{thm:general} and show they correspond to rational maps.

\subsubsection*{Acknowledgements} We thank Ted Chinburg, Thao Do,  Robert Guralnick,  Ariel Leitner, and Tali Monderer for helpful comments.
The first author is grateful for the support of  ISF grants No.\ 577/15, 353/21, and the NSF for support under a Mathematical Sciences Postdoctoral Fellowship. The second author thanks the NSF for support via grant DMS-1162181. We also thank the United States-Israel Binational Science Foundation (BSF) for its  support under Grant No.\ 2014173, and the SQuaREs program of the American Institute of Math.


\section{Preliminaries and notation}
\subsection{Monodromy and ramification}\label{sec:prelim} Fix an algebraically closed field $\K$ of characteristic $0$.  
A nonconstant morphism $h:{Y_1}\ra Y$ of (smooth projective) curves over $\K$ is called {\it a covering}. 
A covering $\tilde h:\tilde {Y_1}\ra Y$ is called {\it the Galois closure of $h$} if it is a minimal Galois covering which factors as $\tilde h = h\circ h_0$, for some covering $h_0:\tilde Y_1\ra Y_1$.   
The {\it monodromy group}  $G=\Mon(h)$ of $h$ is then defined to be the automorphism group  $\Aut(\tilde h)$. 
Note that $G$ is a permutation group of degree $\deg h$ via its action on the set $S:= H\backslash G$, 
where $H:=\Mon(h_0)$. 
All group actions 
are right actions. Permutation multiplication is left to right, e.g.~$(1,2)(1,3)=(1,2,3)$. All facts stated in this section are derived from a basic reference on function fields, e.g.~\cite{Stich}, via the correspondence between function fields and curves  \cite[Section~7.1]{Fulton}.   

For every subgroup $H\leq G$, the covering $\tilde h$ induces a covering $\tilde Y_1/H\ra Y$ which we call the natural projection. If $\tilde h = h \circ h_0$ is as above, 
then $h_0$ is Galois, and $h$ is equivalent to the natural projection $\tilde Y_1/H\ra Y$ for $H=\Aut(h_0)$, that is, there is an isomorphism $\mu:\tilde Y_1/H\ra Y_1$ such that $h\circ \mu$ is the natural projection $\tilde Y_1/H\ra Y$. 

The subgroup $I$ consisting of all elements $\sigma\in G$ such that $\tilde Q^\sigma = \tilde Q$ is called {\it the decomposition subgroup} of $\tilde Q$. It coincides with the {\it inertia subgroup} of $\tilde Q$ since $\K$ is algebraically closed. 
Since $G$ acts transitively on $\tilde h^{-1}(P)$ and the stabilizer of $\tilde Q$ is $I$, the action of $G$ on $\tilde h^{-1}(P)$ is equivalent to its action on $I\backslash G$, and in particular all decomposition subgroups of points in $\tilde h^{-1}(P)$ are conjugate (as point stabilizers). 
Since $\K$ is of characteristic $0$, $\tilde h$ is tamely ramified, and hence all inertia subgroups   are cyclic.  
A generator of an inertia subgroup of a place in $\tilde h^{-1}(P)$ is called a {\it branch cycle} over $P$. 

The following is a well known description of the points in $h^{-1}(P)$ using $\Mon(h)$. Let $I\backslash G/H$ denote the double coset space $IxH,x\in G$, and let $e_h(Q)$ denote the {\it ramification index} of a point $Q$ of $Y_1$ under $h$. 
\begin{lem}\label{lem:correspondence} Let $h:Y_1\ra Y$ be a covering and $\tilde h:\tilde Y_1\ra Y$ a Galois covering that factors as $\tilde h =h\circ h_0$. Let $G:=\Mon(h)$  and $H:=\Mon(h_0)$. Fix points $P$ of $Y$ and  $\tilde Q\in \tilde h^{-1}(P)$, and let $I$ be the decomposition subgroup of $\tilde Q$. Then there is a natural bijection $\psi_{H,\tilde Q}:h^{-1}(P)\ra I\backslash G/H$ such that $e_h(Q) = \abs{\psi_{H, \tilde Q}(Q)}/\abs H$ for all $Q\in h^{-1}(P)$. 
\end{lem}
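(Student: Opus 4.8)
The statement is the standard dictionary between the geometry of a covering and the double-coset combinatorics of its monodromy group, so the plan is to pass to function fields and use the correspondence of \cite[Section~7.1]{Fulton}. Write $L/\K(Y)$ for the Galois closure, so $\Gal(L/\K(Y))=G$ acts on the right on the set of $\K(Y)$-embeddings $\K(Y_1)\hookrightarrow L$; by the primitive element / coset description this set is $\K$-isomorphic as a $G$-set to $H\backslash G$, where $H=\Mon(h_0)=\Gal(L/\K(Y_1))$. Likewise the points of $\tilde h^{-1}(P)$ form a transitive $G$-set, and fixing $\tilde Q$ with decomposition (= inertia, since $\charak\K=0$) group $I$ identifies this $G$-set with $I\backslash G$. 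The points of $h^{-1}(P)$ are exactly the orbits of $I$ acting (on the left, through the $G$-action) on $\Hom_{\K(Y)}(\K(Y_1),L)\cong H\backslash G$, because two embeddings lie below the same point of $Y_1$ over $P$ iff they are conjugate by an element of the decomposition group of a chosen place of $L$ over $P$. Thus $h^{-1}(P)$ is in natural bijection with $I\backslash(H\backslash G)=I\backslash G/H$; call this bijection $\psi_{H,\tilde Q}$, noting its only dependence on $\tilde Q$ is through the choice of $I$ inside its conjugacy class, which is the data recorded in the subscript.

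First I would make precise the ``points below $Q$'' description: for a place $\fq$ of $\K(Y_1)$ over $P$, choose a place of $L$ over $\fq$; its decomposition group in $G$ is a conjugate of $I$, and $\fq$ corresponds to the orbit of the corresponding coset $H\sigma$ under left multiplication by that conjugate, which is precisely a double coset $I\tau H$. That this is a well-defined bijection $h^{-1}(P)\to I\backslash G/H$ is the transitivity of $G$ on places of $L$ over $P$ together with the fact that the stabilizer of the place used to pin down $\tilde Q$ is exactly $I$. Then I would compute ramification indices. Fix the double coset $I\tau H$ corresponding to $Q\in h^{-1}(P)$. The place of $L$ under $\tilde Q$ sits over $Q$, and the ramification index $e_{\tilde h}(\tilde Q\mid P)=|I|$ (tame, $\K$ algebraically closed), while $e_{h_0}(\tilde Q\mid Q)$ equals the order of the stabilizer in $H$ of the coset $H\tau$ acted on appropriately — equivalently, by counting, the places of $L$ over $Q$ are the $H$-orbits inside the single $I$-orbit $I\tau H$, and there are $[I\tau H:H]$ — wait, rather: the number of places of $L$ over $Q$ is $[G:H]$-type counting restricted to this double coset. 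Cleanly: $e_{h_0}(\tilde Q\mid Q)\cdot e_h(Q\mid P)=e_{\tilde h}(\tilde Q\mid P)=|I|$, and $e_{h_0}(\tilde Q\mid Q)=|H\cap I^{\tau}|$ for the appropriate conjugate, i.e.\ $|I^{\tau}\cap H|=|I\tau H|^{-1}\,|I|\,|H|$ by the double-coset size formula $|I\tau H|=|I|\,|H|/|I^{\tau}\cap H|$. Combining, $e_h(Q\mid P)=|I|/|I^{\tau}\cap H|=|I\tau H|/|H|=|\psi_{H,\tilde Q}(Q)|/|H|$, which is the asserted formula.

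The routine but slightly delicate point — and the step I'd expect to be the only real obstacle — is bookkeeping the conjugation: a place of $L$ over $Q$ has decomposition group $I^{\sigma}$ for some $\sigma$, and one must check that the relevant intersection governing $e_{h_0}(\tilde Q\mid Q)$ is indeed $I^{\tau}\cap H$ for a $\tau$ representing the double coset attached to $Q$, rather than some other conjugate; this is exactly the statement that translating so that the fixed place $\tilde Q$ has decomposition group $I$ turns the abstract orbit-counting into the double-coset count, and it is where the choice of $\tilde Q$ (hence of $I$ in its class) enters the bijection $\psi_{H,\tilde Q}$. Everything else is the orbit-stabilizer theorem and the multiplicativity of ramification indices in the tower $L/\K(Y_1)/\K(Y)$; no Riemann--Hurwitz or genus input is needed. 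The naturality claim is immediate from the construction, since every choice made (the place of $L$ over $P$ pinning $\tilde Q$, a set of double-coset representatives) is tracked explicitly.
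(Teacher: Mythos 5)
Your proposal is correct and follows essentially the same route as the paper: identify $h^{-1}(P)$ with $I\backslash G/H$ via the $G$-set structure on the fiber of the Galois closure, then combine $e_{\tilde h}(\tilde Q^\tau)=\abs{I}$ with multiplicativity of ramification indices in the tower. The only cosmetic difference is that you extract $e_{h_0}=\abs{I^\tau\cap H}$ and invoke the double-coset size formula, whereas the paper computes $e_{h_0}=\abs{H}/\abs{h_0^{-1}(Q)}$ by counting the $H$-orbit $h_0^{-1}(Q)$ inside $I\sigma H$; these are the same calculation.
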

\begin{proof} 
As noted earlier there is an isomorphism  $\phi_{\tilde Q}:\tilde h^{-1}(P)\ra I\backslash G$ of $G$-sets such that $\phi_{\tilde Q}(\tilde Q^\sigma)= I\sigma$ for all $\sigma\in G$. Let $Q\in h^{-1}(P)$ and choose $\sigma \in G$ such that $\tilde Q^\sigma\in h_0^{-1}(Q)$. 
Since  $h_0^{-1}(Q)$ coincides with the orbit of $H$ on $\tilde Q^\sigma$, one has  
  $\tilde Q^\tau\in h_0^{-1}(Q)$ if and only if $I\sigma H=I\tau H$. Thus $\phi(h_0^{-1}(Q))$ is 
the orbit of $H$ on $I\sigma$, that is, the double coset $I\sigma H$. Thus, $\psi_{H,\tilde Q}(Q):=\phi_{\tilde Q}(\tilde Q^\sigma) = I\sigma H$ is well defined, as it is independent of the choice of $\sigma$, and clearly gives a one to one correspondence. 
It also follows that the cardinality of the orbit $h_0^{-1}(Q)$ equals the cardinality of the orbit of $H$ on $I\backslash G$ which is $\abs{I\sigma H}/\abs{I}$. Since $e_{h}(Q) = e_{\tilde h}(\tilde Q^\sigma)/e_{h_0}(\tilde Q^\sigma)$, since $e_{\tilde h}(\tilde Q^\sigma)=\abs I$ as $\tilde h$ is Galois, and since $e_{h_0}(\tilde Q^\sigma) = \abs H/\abs{h_0^{-1}(Q)}$ as $h_0$ is Galois, we have
\[
 e_h(Q) = \frac{e_{\tilde h}(\tilde Q^\sigma)}{e_{h_0}(\tilde Q^\sigma)} = \frac{\abs I}{\abs H/\abs{h_0^{-1}(Q)}}= \frac{\abs{I\sigma H}}{\abs H}. \qedhere
 \]
\end{proof}
Moreover,  note that if $h=h_2\circ h_1$ so that $\tilde h = h_2\circ h_1\circ h_0$, and if $H\leq H_2\leq G$ is a point stabilizer of $h_2$ so that $H_2=\Mon(h_1\circ h_0)$, then the bijection $\psi_{H,\tilde Q}$ induces the bijection $\psi_{H_2,\tilde Q}$. Indeed by definition $\psi_{H_2,\tilde Q}(h_1(Q)) = I\sigma H_2$ whenever $\psi_{H,\tilde Q}(Q)=I\sigma H$, for all $Q\in h^{-1}(P)$.

The {\it ramification type} of $P$ under $h$ is the multiset $E_h(P):=[e_h(Q_1),\ldots, e_h(Q_r)]$, where $Q_1,\ldots, Q_r$ are the preimages in  $h^{-1}(P)$.  
Denote by $\Orb_S(U)$  the set of orbits on $S$ of the group generated by a subset $U\subseteq G$. Then Lemma \ref{lem:correspondence} implies that  $E_h(P)$ equals the multiset of lengths of orbits in  $\Orb_{S}(x)$, where $x$ is a branch cycle over $P$ and $S=H\backslash G$. The {\it ramification type} of $h$ is then defined to be the multiset $\{E_h(P)\neq [1^\ell]\suchthat P\in Y(\K)\}$. 

The Riemann--Hurwitz formula $2(g_{Y_1}-\ell g_Y+\ell-1)=\sum_{P\in Y(\K)}R_h(P)$ describes the {\it genus} $g_{Y_1}$ in terms of $g_Y$ and the contributions $R_h(P)$, where
 \[
 R_h(P) := \sum_{r\in E_h(P)}(r-1) = \ell-\abs{\Orb_S(x)}.
 \]
For a set $T$ of points in $Y$ we denote by $R_h(T)$ the sum $\sum_{P\in T}R_h(P)$. 
We note that if $\hat h = h \circ \pi$ for a covering $\pi$ of degree $m$, then $R_{\hat h}(P)$ satisfies the chain rule:
\begin{equation}\label{equ:chain} 
\begin{split} R_{\hat h}(P) & =   \sum_{\hat Q\in \hat h^{-1}(P)}(e_{\hat h}(\hat Q)-1) = \sum_{\hat Q\in \hat h^{-1}(P)}\biggl(e_{\pi}(\hat Q) -1 + e_\pi(\hat Q)\left(e_{h}(\pi(\hat Q))-1\right) \biggr) \\ 
&  =  \sum_{Q\in h^{-1}(P)}\biggl( R_{\pi}(Q) \,+ \,(e_{h}(Q) -1)\,\cdot\!\!\sum_{\hat Q\in \pi^{-1}(Q)}e_\pi(\hat Q) \biggr)
\\
& = R_\pi(h^{-1}(P)) + mR_{h}(P).
\end{split}
\end{equation}

A \emph{ramification data} of degree $\ell$ is a finite multiset
$\{A_1,\ldots,A_r\}$ of partitions of $\ell$, so that $\ell=\sum_{\alpha\in A_i}\alpha$ for each $i$. A ramification data $A_1,\ldots,A_r$ is said to be of {\it genus $g$} if $2(g+\ell-1)=\sum_{i=1}^r(\ell-\abs{A_i})$. It is not known which ramification data arise as ramification types of coverings. We write $E_h(P) = [e_1,\ldots,e_r,e^*]$ to denote that $E_h(P)$ is the union of the multiset $[e_1,\dots,e_r]$ with a (possibly empty) multiset consisting of copies of $e$. 

\subsection{Setup}\label{sec:setup}
We use the following basic setup throughout the paper. Let $h:Y_1\ra Y$ be a covering of degree $\ell$ with monodromy group $G=\Mon(h)$ acting on a set $S=H\backslash G$ such that $G\cong A_\ell$ or $S_\ell$ as permutation groups. Let $\tilde h_1:\tilde Y_1\ra Y$ be the Galois closure of $h$. 

Enumerating the elements in $S$ by $\{1,\ldots,\ell\}$, we let $H_t$ be the pointwise stabilizer of $1,\ldots,t$, and $\oline H_t$ the setwise stabilizer of $\{1,\ldots,t\}$, so that the action of $G$ on $\oline H_t\backslash G$ (resp., $H_t\backslash G$) is equivalent to its action on the set $\oline S^{(t)}$ (resp., $S^{(t)}$) of $t$-element subsets (resp., ordered $t$-tuples of pairwise distinct elements) in $S$ for $t\leq \ell$.  Henceforth we write $t$-set to mean $t$-element subset.
Let $\pi_t:Y_t\ra X_t$, $h_t:Y_t\ra Y$, $f_t:X_t\ra X_0$ be the natural projections from $Y_t:=\tilde Y_1/H_t$, and $X_t:=\tilde Y_1/\oline H_t$, so that $\Mon(f_t)$ (resp., $\Mon(h_t)$) is $G$ with its action on $\oline S^{(t)}$ (resp., $S^{(t)}$) for $t\leq \ell$. The following diagram is commutative: 
 \[
\xymatrix{
& \tilde Y_1\ar[dr]^{\oline H_t} \ar[dl]_{H_t} & \\
 Y_t \ar[dr]_{h_t} \ar[rr]^{\pi_t} & & X_t \ar[dl]^{f_t} \\
  & Y & 
}
\]
Note that the original covering $h$ is equivalent to the covering $h_1$. 


\section{Reduction to genera of set stabilizers}\label{sec:reduction}

In this section we derive Theorem \ref{thm:Sn} from the following theorem, and then prove Theorem \ref{thm:general}. This section follows \cite{GS}.
We use Setup \ref{sec:setup}:
for a degree $\ell$ covering $h:Y_1\ra Y$ with monodromy group $G$ acting on a set $S$, let $X_t$ be the quotient by the stabilizer $\oline H_t\leq G$ of a subset of $S$ of cardinality $t$, for every integer $0\leq t\leq \ell/2$.
\begin{thm}\label{thm:main}
 There exist constants $c,d>0$ 
 such that for every covering  $h:Y_1\ra Y$ of degree $\ell$ and  monodromy group $G \in\{A_\ell, S_\ell\}$, one has 
\[
g_{X_t}-g_{X_{t-1}}>(c\ell-dt^{15})\frac{\binom{\ell}{t}}{\binom{\ell}{2}}
\]
 for all integers $2\leq t\leq \ell/2$ if the ramification type of $h$ does not appear in Table~\ref{table:two-set-stabilizer}, and for all integers $3\leq t\leq \ell/2$ otherwise. 
\end{thm}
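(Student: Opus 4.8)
The plan is to reduce the estimate on $g_{X_t}-g_{X_{t-1}}$ to a careful analysis of the Riemann--Hurwitz contribution $R_{\pi_t}$ of the natural projection $\pi_t\colon Y_t\to X_t$, using the correspondence $X_{t-1}\leftarrow Y_t\to X_t$ from the introduction. Since $h_t=h_{t-1}\circ(\text{the map }Y_t\to Y_{t-1})$ factors through $X_{t-1}$ as well, and since $\deg\pi_t=t!$, $\deg(Y_t\to X_{t-1})=t$, the chain rule \eqref{equ:chain} applied along both legs of the correspondence expresses $g_{Y_t}$ two ways in terms of $g_{X_t}$ and $g_{X_{t-1}}$ respectively. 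Comparing these, after applying Riemann--Hurwitz to each of $\pi_t$, $f_t$, $f_{t-1}$ and the two unlabeled projections, yields an identity of the shape
\[
g_{X_t}-g_{X_{t-1}} \;=\; \frac{1}{2}\sum_{P\in Y(\K)}\Bigl(\tfrac{1}{t!}R_{\pi_t}(P) - (\text{corresponding term for }X_{t-1})\Bigr) + (\text{Euler characteristic terms}),
\]
and the whole problem becomes: show the right-hand side is at least $(c\ell-dt^{15})\binom{\ell}{t}/\binom{\ell}{2}$. The normalizing factor $\binom{\ell}{t}/\binom{\ell}{2}=|\oline S^{(t)}|/|\oline S^{(2)}|$ already signals that the natural strategy is to compare the $t$-set picture with the $2$-set picture branch point by branch point.

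First I would fix a branch point $P$ of $Y$ and a branch cycle $x\in G$ over $P$, which is a single permutation in $A_\ell$ or $S_\ell$ of some cycle type; by Lemma~\ref{lem:correspondence} the local ramification of $f_t$, of $\pi_t$, and of the map $Y_t\to Y_{t-1}$ at $P$ is entirely determined by the orbit structure of $\langle x\rangle$ on $\oline S^{(t)}$, $S^{(t)}$, and related coset spaces. Then I would express $\frac1{t!}R_{\pi_t}(P)$ as a sum over orbits of $\langle x\rangle$ on $\oline S^{(t)}$ of a quantity depending only on the lengths of the $\langle x\rangle$-orbits on the underlying $t$ points together with their ``labeling'' stabilizer, i.e.\ a combinatorial function of the partition of $t$ recording how the $t$ chosen elements of $S$ distribute among and within the cycles of $x$. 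The key combinatorial input is a lower bound showing that, outside a set of $t$-sets of size $O(t^{\text{const}})\cdot\binom{\ell}{t}/\ell^{?}$, the contribution of a generic $t$-set grows like $\ell$; here one uses that $x$ has at least two cycles of length $\ge 2$ (the case where $x$ is, up to bounded corrections, a single cycle or fixes almost everything is handled separately and produces exactly the exceptional ramification types of Table~\ref{table:two-set-stabilizer}, which is why those types must be excluded at $t=2$). This is precisely the point where the hypotheses feeding into \emph{Castelnuovo's inequality} and the ``almost-Galois'' dichotomy from Sections~\ref{sec:castel}--\ref{sec:almost-Galois} are invoked: if $g_{Y_1}$ is large the linear-growth bound comes from the main-term estimates of Sections~\ref{sec:set-point}--\ref{sec:large-genus}, while if $g_{Y_1}$ is small, the ramification of $h$ is forced to be of almost-Galois type, and then one checks the finitely many almost-Galois ramification data against the requirement that $h$ be indecomposable (via the translation Lemma~\ref{lem:hurwitz1}), again landing in Table~\ref{table:two-set-stabilizer} or yielding the desired bound.

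After the per-branch-point analysis, I would sum over all branch points $P$. The Riemann--Hurwitz formula for $h=h_1$ bounds $\sum_P R_h(P)=2(g_{Y_1}+\ell-1-\ell g_Y)$, and the number of branch points is $O(g_{Y_1}+\ell)$-controlled; combining the local lower bounds with this global constraint, the terms that are merely $O(1)$ or $O(t^{c})$ per branch point aggregate to at most $d\,t^{15}\binom{\ell}{t}/\binom{\ell}{2}$ (the exponent $15$ being whatever the crude polynomial bookkeeping in the local estimate produces), while the terms growing like $\ell$ aggregate to at least $c\ell\binom{\ell}{t}/\binom{\ell}{2}$. The genus-$0$ and genus-$1$ cases for $Y$ need the usual separate attention to avoid negative Euler-characteristic contributions, but are covered by the same scheme. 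Assembling these gives the claimed inequality.

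\textbf{Main obstacle.} The hard part is the combinatorial lower bound on $\frac1{t!}R_{\pi_t}(P)$, uniformly in $t$ up to $\ell/2$ and in the cycle type of $x$: the expression for $R_{\pi_t}$ is, as the authors warn, ``an extremely complicated expression when described group theoretically,'' and extracting a clean $c\ell-O(t^{15})$ main term requires isolating the dominant contribution (coming from $t$-sets that meet several long cycles of $x$ ``transversally'') from a thicket of lower-order terms, while simultaneously identifying exactly the degenerate configurations of $x$ that must be carved out as the exceptional table at $t=2$. Getting the dependence on $t$ to be polynomial (rather than, say, exponential) in the error term, so that it is dominated by the $c\ell$ main term throughout the range $t\le c_1(\log\ell)^2/\log\log\ell$ used with Jordan's theorem, is the delicate quantitative heart of the argument.
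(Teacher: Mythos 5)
Your high-level skeleton matches the paper's: Riemann--Hurwitz bookkeeping on the correspondence $X_{t-1}\leftarrow Y_t\to X_t$, a dichotomy on $g_{Y_1}$, Castelnuovo's inequality to force a linear bound on $g_{Y_2}$ when the conclusion fails, the almost-Galois restriction on ramification, and the translation lemma to whittle the survivors down to Table~\ref{table:two-set-stabilizer} at $t=2$. But the quantitative engine you propose is pointed in the wrong direction. In the identity the paper actually uses (equation \eqref{equ:M-RH}), the term $R_{\pi_t}(f_t^{-1}(P))$ enters with a \emph{minus} sign: it measures the excess length of orbits on $t$-tuples over $t!$ times the length of the corresponding orbits on $t$-sets, it is a loss term, and the whole difficulty is to bound it from \emph{above} (Proposition~\ref{lem:pi-form}) and then cancel its main term against $\binom{t}{2}R_{h_1^{t-1}}(h^{-1}(P))$ (Proposition~\ref{lem:t-1-bound}). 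Your proposed ``key combinatorial input'' --- a lower bound showing that a generic $t$-set contributes on the order of $\ell$ to $R_{\pi_t}(P)$ --- is false: a $t$-set meeting $t$ distinct cycles of the branch cycle $x$ contributes exactly $0$ to $R_{\pi_t}$, since $x^{\abs{O_i}}$ then fixes the set pointwise; already for $t=2$ the total is just the number of even-length cycles of $x$, which is at most $\ell/2$ and often $O(1)$. So summing your per-branch-point lower bounds cannot produce the $c\ell\binom{\ell}{t}/\binom{\ell}{2}$ main term.

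The actual source of the main term is the Euler-characteristic piece $2(\ell-2t+1)(g_{Y_{t-1}}-1)$, which is pushed down to a quantity of size roughly $\frac{(\ell-2)!}{(\ell-t)!}(g_{Y_2}-1)$ by Riemann--Hurwitz for $Y_{t-1}\to Y_2$, and the needed lower bound $g_{Y_2}>2c'\ell-2d'-1$ is obtained by a bootstrap from the already-settled case $t=2$: either $g_{X_2}-g_{X_1}$ is large, or the ramification lies in the table, in which case $\sum_P R_{\pi_2}(f_2^{-1}(P))\geq(2\ell-5)/3$ still forces $g_{Y_2}$ to grow linearly, with the single type $[\ell],[a,\ell-a],[2,1^{\ell-2}]$ treated separately via $g_{Y_3}$. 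This bootstrap, the upper bound on $R_{\pi_t}$, and its cancellation against $R_{h_1^{t-1}}$ are the three steps your outline is missing; without them the argument as written does not close.
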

Note that the proof also gives $g_{Y_1}=0$ if the ramification of $h$ appears in Table \ref{table:two-set-stabilizer}. 
\begin{proof}[Proof of Theorem \ref{thm:Sn} assuming Theorem \ref{thm:main}]
Let $\log$ denote the natural logarithm. 
Let $c_1:=8\log 2$ be the constant given in \cite{BS}. Thus by setting $M_\ell:=\ceil{c_1(\log\ell)^2/\log\log\ell}$, every $M_\ell$-transitive subgroup of $S_\ell$  contains $A_\ell$. 
 
Let $c$ and $d$ be the constants from Theorem \ref{thm:main}. 
Put $a:=c/2$ and note that as $M_\ell$ is subpolynomial in $\ell$,  
there exists a constant $N_{c,d}$ such that $a\ell>dt^{15}$ 
for every $2\leq t\leq M_\ell$ and $\ell\geq N_{c,d}$. 
Put $N:=\max\{183,N_{c,d}\}$. 

 
Let $f:X\ra Y$ be an indecomposable covering with monodromy group $G=A_\ell$ or $S_\ell$, with $\ell\geq N$, and  $g_Y=0$. Let $H\leq G$ be the monodromy group of the natural projection $\tilde X\ra X$ from the Galois closure $\tilde X$ of $f$. 
Let $Y_1$ be the quotient of $\tilde X$ by the point stabilizer $H_1\leq G$ in the natural action on $\{1,\ldots,\ell\}$, 
and $h:Y_1\ra Y$ the natural projection which has monodromy group $G$, equipped with an action on the set $S = H_1\backslash G$ of cardinality $\ell$. As a subgroup of $S_\ell$, the group $H$ has a natural action on the set $\oline S^{(t)}$ of all subsets of $S$ of cardinality $t$, for $t=0,\ldots, \ell$. Put $\abs{\Orb_{\oline S^{(0)}}(H)}=1$. 

As $\ell\geq 5$,  \cite[Lemma 2.0.13]{GS} gives
\begin{equation}\label{equ:XkXk-1}
g_X \geq  \sum_{1\leq t \leq \lfloor \ell/2\rfloor} \Bigl(\abs{\Orb_{\oline S^{(t)}}(H)}-\abs{\Orb_{\oline S^{(t-1)}}(H)}\Bigr)\cdot\bigl(g_{X_t}-g_{X_{t-1}}\bigr).
\end{equation}
Moreover,  $\abs{\Orb_{\oline S^{(t)}}(H)}\geq \abs{\Orb_{\oline S^{(t-1)}}(H)}$ and $g_{X_{t}}\geq g_{X_{t-1}}$ for $1\leq t\leq \lfloor \ell/2\rfloor$, by the Livingstone--Wagner theorem \cite[Theorem 1]{LW}, and Guralnick--Shareshian \cite[Lemma~2.0.12]{GS}\footnote{The inequality $g_{X_k}\geq g_{X_{k-1}}$ holds more generally for every $k$-transitive subgroup $G\leq S_\ell$, as an immediate consequence of \cite[Theorem 4.34]{Mul2} and \cite[Lemma 4.15]{Mul3}.}, respectively, where the latter uses the assumption $g_Y=0$. 

Combining this with the assumptions  
$\abs{\Orb_{\oline S^{(0)}}(H)}=1$, $\ell\geq 5$, and that $H$ does not contain $A_\ell$, \eqref{equ:XkXk-1} gives
\begin{equation}\label{equ:k-difference}
g_X \geq  (\abs{\Orb_{\oline S^{(t)}}(H)}-\abs{\Orb_{\oline S^{(t-1)}}(H)})(g_{X_t}-g_{X_{t-1}})
\end{equation} for $1\leq t\leq \lfloor \ell/2 \rfloor$.
The proof splits into cases according to types of actions of $H$ on $S$.

\noindent {\bf Case 1}:  $H$ acts intransitively on $S$. Then $H$ must be the stabilizer of a $t$-set for some $2\leq t\leq \ell/2$: for, since $H$ permutes each of its orbits, $H$ is contained in the stabilizer (in $G$) of each orbit.  Since $H$ is intransitive,
there are at least two such orbits, so at least one of them has size at most $\ell/2$, and the
stabilizer of any orbit is a proper subgroup of $G$.  Since $f$ is indecomposable, $H$ is maximal, and hence equals
the stabilizer of each of its orbits, and in particular equals the stabilizer of an orbit which has size $t\leq \ell/2$.

If $t = 2,3,$ the theorem follows from Theorem \ref{thm:main} as then either 
\begin{equation}\label{equ:temp} 
g_{X_t}> (c\ell-dt^{15})>(c\ell-a\ell) = a\ell,
\end{equation} or $t=2$ and the ramification of $h$ appears in Table \ref{table:two-set-stabilizer}. 
If $4\leq t\leq \lfloor \ell/2\rfloor$, then as above $g_X=g_{X_t}\geq g_{X_3}$, and the claim follows from \eqref{equ:temp} with $t=3$. As noted in Section \ref{sec:ramification},  $g_{\tilde X/H}=0$ if $t=2$ and the ramification of $h$ appears in Table \ref{table:two-set-stabilizer}. 

\noindent{\bf Case 2}: $H$ acts transitively on $S$. 
By Theorem \ref{thm:main}, as $a\ell>dt^{15}$ we have
\begin{equation}\label{equ:from-main} 
 g_{X_t}-g_{X_{t-1}}\geq (c\ell-dt^{15})\frac{\binom{\ell}{t}}{\binom{\ell}{2}}> (c\ell-a\ell)\frac{\binom{\ell}{t}}{\binom{\ell}{2}}\geq a\ell,
\end{equation}
for $\ell\geq N_{c,d}$, and $2<t\leq M_\ell$, 
and also for $t=2$ if the ramification of $h$ does not appear in Table~\ref{table:two-set-stabilizer}.  
Thus the claim follows from \eqref{equ:k-difference} if $\abs{\Orb_{\oline S^{(t)}}(H)}>\abs{\Orb_{\oline S^{(t-1)}}(H)}$ for some $t\in\{2,3,\ldots,M_\ell\}$. Since in addition $\abs{\Orb_{\oline S^{(1)}}(H)}=1$ as $H$ is transitive, henceforth assume
\begin{equation}\label{equ:(4)}
\begin{array}{l}
\abs{\Orb_{\oline S^{(t)}}(H)}=\abs{\Orb_{\oline S^{(2)}}(H)}\text{ for all }2\leq t\leq M_\ell, 
\text{ and either } \\
\abs{\Orb_{\oline S^{(2)}}(H)}=1 
 \text{ or the ramification type of $h$ appears in Table \ref{table:two-set-stabilizer}}.
\end{array}
\end{equation}

\noindent {\bf Case 2a}: $H$ is imprimitive.
Then $H$ must be the stabilizer in $G$ of a partition into  $t$ parts of size $\ell/t$, where $1<t<\ell$:
for, $H$ preserves some such partition, and the stabilizer of such a partition is a proper subgroup
of $G$, so since $H$ is a maximal subgroup it must equal this stabilizer.

Note that $\abs{\Orb_{\oline S^{(2)}}(H)}=2$: for, $H$ acts doubly transitively on the set of parts of the partition, and also acts doubly
transitively on the elements in a given part, so the two orbits are [two points in the same part]
or [two points in different parts].

Also $\abs{\Orb_{\oline S^{(3)}}(H)}=3$ if $2<t<\ell/2$: 
for, $H$ acts as $S_t$ on the set of parts, and also acts as
$(S_{\ell/t})^{t-1}$ on any prescribed $t-1$ parts.  So 
the orbits are 
[three points in different parts], [two in one part and one in another], [three points in one part].

Similarly, if $t=2$ or $\ell/2$ 
we have $\abs{\Orb_{\oline S^{(4)}}(H)}=3$: for, if $t=2$ the orbits are [four points in one part], [three points in one part and one in the other], [two points in both parts], and if $t=\ell/2$ the orbits are [four points in different parts], [two points in one part, and two in other two parts], [two points in two parts].

We get that $\abs{\Orb_{\oline S^{(3)}}(H)}>\abs{\Orb_{\oline S^{(2)}}(H)}$ if $2<t<\ell/2$ and $\abs{\Orb_{\oline S^{(4)}}(H)}> \abs{\Orb_{\oline S^{(2)}}(H)}$ if $t=2$ or $\ell/2$, contradicting  \eqref{equ:(4)}.

\noindent {\bf Case 2b}: $H$ is primitive. In this case the condition $\abs{\Orb_{\oline S^{(3)}}(H)}=\abs{\Orb_{\oline  S^{(2)}}(H)}$ implies that $\abs{\Orb_{\oline  S^{(2)}}(H)}=1$,
by \cite[Proposition on p.165]{CNS}.
Hence $\abs{\Orb_{\oline S^{(t)}}(H)}=1$ by \eqref{equ:(4)} for all $1\leq t\leq M_\ell$. 
By \cite[Theorem 2]{LW}, this condition implies that $H$ is $M_\ell$-transitive. 
By  \cite{BS}, 
every $M_\ell$-transitive group 
$H$ is $A_\ell$ or $S_\ell$, contradicting the assumption $H\neq A_\ell,S_\ell$. 
\end{proof}

\begin{proof}[Proof of Theorem \ref{thm:general}] 
Let $G$ denote the monodromy group of $f$ and $n=\deg f$. Since $f$ is indecomposable, $G$ is primitive. 
The Aschbacher--O'Nan--Scott theorem \cite{GT} divides primitive groups $G$ according to the interaction of their $1$-point stabilizer $H\leq G$, and a minimal normal subgroup $Q$ of $G$. Let $L$ be a minimal normal subgroup of $Q$, and $L_1=L,L_2,...,L_t$ the conjugates of $L$ in $G$. Then one of the following holds:
\begin{enumerate}
\item[(A)]  $\abs{L}$ is prime; 
\item[(B)]  $G$ has more than one minimal normal subgroup; 
\item[(C)] $Q$ is the unique minimal normal subgroup of $G$, and $Q=L_1\times L_2\times\dots\times L_t$, where
$L$ is simple nonabelian and either
\item[(C1)] $H\cap Q=1$; or
\item[(C2)] $H\cap Q\ne 1$ but $H\cap L = 1$; or
\item[(C3)] $H\cap Q=H_1\times \cdots \times H_t$, where $H_i=H\cap L_i\ne 1$ for $1\leq i\leq t$.
\end{enumerate}

%
In case (A),  Neubauer's proof of \cite[Theorems 1.4-1.6]{Neu1}, which refines Guralnick--Thompson \cite[Theorem A]{GT}, 
shows that for sufficiently  large $n$, either $g_X>n/1024$ or $g_X=0$ and $G$ is of derived length at most $2$. In the latter cases,   \cite[Proposition 3.8]{GT} combined with the formula for the genus of a Galois closure (Remark \ref{rem:abh})  show that in these cases the genus $g_{\tilde Y_1}$ of the Galois closure $\tilde f:\tilde Y_1\ra Y_1$ is at most  $1$. 
Proposition \ref{lem:normal-closure} gives all possible ramification types for indecomposable $h$ and its corresponding monodromy groups $G$ in case $g_{\tilde Y_1}\leq 1$. 
In Case (B), Shih \cite{Shih} shows that $g_X>0$, 
and as shown in \cite{GMN} (and predicted in \cite[pg.\ 353]{Gur1}) his proof yields $g_X>cn-d$ for some $c,d>0$. 
In case  (C1), the proof of Guralnick--Thompson \cite[Theorem C1]{GT}   
 gives $g_X>n/2000$. 
In case (C2),  Aschbacher \cite{Asch} shows $g_X> n/336$ when $n$ is larger than a constant. 

Henceforth assume $G$ is as in case (C3).
For $t>8$, the theorem follows from Guralnick--Neubauer \cite[Corollary 8.7]{GN}, which shows that $g_X> (1/1250)n^{1-1/t}$. 
Consider the case $t=1$, so that $L\leq G\leq \Aut(L)$ for a finite simple group $L$, and first assume $L$ is nonalternating. By assuming $n$ is sufficiently large, we may assume $L$ is of Lie type. Let $q$ be the cardinality of the field over which $L$ is defined. 
Letting $\fpr(G)$ denote the maximal ratio $\abs{\{\text{fixed points of }x\}}/n$ over all $x\in G\setminus\{1\}$, Guralnick \cite[Theorem 1]{Gur1} shows that for any $0<\eps<1/85$, there exists a constant $c_\eps>0$ such that if $\fpr(G)<\eps$ then $g_X>c_\eps n$. 
Assuming $q\geq 23$, Liebeck--Saxl \cite[Theorem 1]{LSa} show that  $\fpr(G)\leq 4/(3q)$ if $L\neq \PSL_2(q)$, and that $\fpr(G)<(\sqrt{q}+1+(2/\sqrt{q}))/(q+1)$  if $L=\PSL_2(q)$. 
Pick $0<\eps<1/85$ such that $4/(3q)<\eps$ for $q>113$ and $(\sqrt{q}+1+(2/\sqrt{q}))/(q+1)<\eps$ for $q>(86)^2$, so that $\fpr(G)<\eps$ for such $q$. 
If the action of $G$ is not a subspace action, then Liebeck--Shalev \cite[Theorem~1.1]{LSh}, extending 
\cite{LWay}, show that $\fpr(G)<\eps$ for sufficiently large $n$.
If the action of $G$ is a subspace action, then Frohardt--Magaard \cite[Proposition 5.1]{FM} show that there exists a constant $N_q$, depending only on $q$,  such that  $g_X>n/2000$ for $n\geq N_q$. 
Thus, letting $\tilde c_\eps:=\min\{c_\eps,1/2000\}$, one has $g_X>\tilde c_\eps n$ for sufficiently large $n$, by the combination of \cite{Gur1} and \cite{LSa} if $q>113$ for $L\neq \PSL_2(q)$ and $q>(86)^2$ for $L=\PSL_2(q)$, and by the combination of \cite{LSh} and \cite{FM} otherwise.

Finally, the proof in case (C3) is completed for $t=1$ by Theorem \ref{thm:Sn} which covers the cases where $L$ is alternating, and for $t>1$ (and in particular for $2<t\leq 8$) by \cite[Theorem~1.2]{NZ2} which gives positive constants $c_t,d_t$, depending only on $t$, such that either $t=2$ and the ramification of $f$ appears in \cite[Table 3]{NZ2}, or $g_X>c_tn^{1-1/t}-d_t$. 
\end{proof}
The classification of finite simple groups is used in cases (B), (C1), (C2), and the case (C3) with $t=1$. It seems plausible that a classification free proof will be given in all cases but (C3) with $t=1$, that is, for all but almost simple monodromy groups. 


\section{The ramification types in Theorems \ref{thm:general}, \ref{thm:Sn}, and \ref{thm:main}}\label{sec:ramification}
In this section we complete the statements of Theorems \ref{thm:general}, \ref{thm:Sn}, and \ref{thm:main} by presenting the ramification types excluded in the
statement of Theorem \ref{thm:main}, as well as the ramification types from case (2) of Theorems \ref{thm:general} and \ref{thm:Sn}.
Specifically, in Table~\ref{table:two-set-stabilizer} we present the ramification types of some degree-$\ell$ coverings $h:\mP^1\ra \mP^1$ having monodromy group $A_\ell$
or $S_\ell$, and in Table~\ref{table:f} we present the ramification types of the degree-$\ell(\ell-1)/2$ coverings $f:X\ra \mP^1$ obtained from the coverings in
Table~\ref{table:two-set-stabilizer} by means of the action of $A_\ell$ or $S_\ell$ on the set of two-element subsets of $\{1,2,\dots,\ell\}$.  Crucially, the ramification
types in Table~\ref{table:two-set-stabilizer} have the property that the induced degree-$\ell(\ell-1)/2$ covering $f:X\ra \mP^1$ has $X$ being of genus zero.
We emphasize that each ramification type listed in Tables~\ref{table:two-set-stabilizer} and \ref{table:f} actually occurs for some indecomposable rational function with
monodromy group $A_\ell$ or $S_\ell$.  We will prove this in Proposition~\ref{prop:An-Sn-ram-types} for the types in Table~\ref{table:two-set-stabilizer}, and in this section
we will indicate how this fact implies the corresponding fact for each ramification type in Table~\ref{table:f}.

\begin{table}
\caption{Ramification types for some coverings $h:\mP^1\ra\mP^1$ of degree $\ell\geq 13$ and monodromy group $A_\ell$ or $S_\ell$.
Here $a\in\{1,\ldots, \ell-1\}$ is odd, $(a,\ell)=1$, and in each type $\ell$ satisfies the necessary congruence conditions to make all exponents integral. The third column refers to the corresponding ramification type in \cite{GS}. }
\begin{equation*}\label{table:two-set-stabilizer}
\begin{array}{| l | l | l |}
\hline
I1.1 & {[\ell], [a,\ell-a], \left[1^{\ell-2}, 2\right]}  & \\
\hline
I2.1 & {[\ell], [1^3,2^{(\ell-3)/2}], [1,2^{(\ell-1)/2}], \left[1^{\ell-2}, 2\right] } & \text{\cite[Proposition 3.0.24(e)]{GS}}\\
I2.2 & {[\ell], [1^2,2^{(\ell-2)/2}] \text{ twice}, \left[1^{\ell-2},2\right]} & \text{\cite[Proposition 3.0.24(c)]{GS}}\\
I2.3 & {[\ell], \left[1^3,2^{(\ell-3)/2}\right], [2^{(\ell-3)/2},3] } & \text{\cite[Proposition 3.0.25(b)]{GS}}\\
I2.4 & {[\ell], \left[1^2,2^{(\ell-2)/2}\right], [1,2^{(\ell-4)/2},3] } & \text{\cite[Proposition 3.0.25(d)]{GS}}\\
I2.5 & {[\ell], \left[1,2^{(\ell-1)/2}\right], [1^2,2^{(\ell-5)/2},3]} & \text{\cite[Proposition 3.0.25(f)]{GS}}\\
I2.6 & {[\ell], \left[1^3,2^{(\ell-3)/2}\right], [1,2^{(\ell-5)/2},4] } & \text{\cite[Proposition 3.0.25(a)]{GS}}\\
I2.7 & {[\ell], \left[1^2,2^{(\ell-2)/2}\right], [1^2,2^{(\ell-6)/2},4] } & \text{\cite[Proposition 3.0.25(c)]{GS}}\\
I2.8 & {[\ell], \left[1,2^{(\ell-1)/2}\right], [1^3,2^{(\ell-7)/2},4]} & \text{\cite[Proposition 3.0.25(e)]{GS}}\\
\hline
I2.9 & {[a,\ell-a], [1^2,2^{(\ell-2)/2}], [2^{\ell/2}], \left[1^{\ell-2},2\right] } & \text{\cite[Proposition 3.0.24(d)]{GS}}\\
I2.10 &  {[a,\ell-a], [1,2^{(\ell-1)/2}] \text{ twice}, \left[1^{\ell-2},2\right]} & \text{\cite[Proposition 3.0.24(f)]{GS}}\\
I2.11 & {[a,\ell-a],\left[2^{\ell/2}\right], [1^2,2^{(\ell-6)/2},4]  } & \text{\cite[Proposition 3.0.27(c)]{GS}}\\
I2.12 & {[a,\ell-a], \left[1,2^{(\ell-1)/2}\right], [1,2^{(\ell-5)/2},4]} & \text{\cite[Proposition 3.0.27(a)]{GS}}\\
I2.13 & {[a,\ell-a],\left[1^2,2^{(\ell-2)/2}\right], [2^{(\ell-4)/2},4] } & \text{\cite[Proposition 3.0.28(c)]{GS}}\\
I2.14 & {[a,\ell-a],\left[1,2^{(\ell-1)/2}\right], [2^{(\ell-3)/2},3] } & \text{\cite[Proposition 3.0.27(b)]{GS}}\\
I2.15 & {[a,\ell-a],\left[2^{\ell/2}\right], [1,2^{(\ell-4)/2},3]  } & \text{\cite[Proposition 3.0.27(d)]{GS}}\\
\hline
F1.1 & {\left[1^{\ell-2},2\right], [2^{\ell/2}], [1^2,2^{(\ell-2)/2}] \text{ thrice} } & \text{\cite[Proposition 3.0.24(a)]{GS}}\\
F1.2 & {\left[1^{\ell-2},2\right], [1^3,2^{(\ell-3)/2}], [1,2^{(\ell-1)/2}] \text{ thrice} } & \text{\cite[Proposition 3.0.24(b)]{GS}}\\
F1.3 & {\left[1^3,2^{(\ell-3)/2}\right], [2^{(\ell-3)/2},3], [1,2^{(\ell-1)/2}] \text{ twice} } & \text{\cite[Proposition 3.0.26(b)]{GS}}\\
F1.4 & {\left[2^{\ell/2}\right], [1,2^{(\ell-4)/2},3], [1^2,2^{(\ell-2)/2}] \text{ twice} } & \text{\cite[Proposition 3.0.26(d)]{GS}}\\
F1.5 & {\left[1^2,2^{(\ell-5)/2},3\right], [1,2^{(\ell-1)/2}] \text{ thrice} } & \text{\cite[Proposition 3.0.26(f)]{GS}}\\
F1.6 & {\left[1^3,2^{(\ell-3)/2}\right], [1,2^{(\ell-5)/2},4], [1,2^{(\ell-1)/2}] \text{ twice} } & \text{\cite[Proposition 3.0.26(a)]{GS}}\\
F1.7 & {\left[2^{\ell/2}\right], [1^2,2^{(\ell-6)/2},4], [1^2,2^{(\ell-2)/2}] \text{ twice} } & \text{\cite[Proposition 3.0.26(c)]{GS}}\\
F1.8 & {\left[1^3,2^{(\ell-7)/2},4\right], [1,2^{(\ell-1)/2}] \text{ thrice} } & \text{\cite[Proposition 3.0.26(e)]{GS}}\\
F1.9 & \left[2^{(\ell-4)/2},4\right], [1^2, 2^{(\ell-2)/2}]\text{ thrice}; & \\
\hline
F3.1 & {\left[1^2,2^{(\ell-2)/2}\right], [1,3,4^{(\ell-4)/4}], [4^{\ell/4}] } & \text{\cite[Conjecture 3.0.29(a)]{GS}}\\
F3.2 & {\left[1,2^{(\ell-1)/2}\right], [1,4^{(\ell-1)/4}], [2,3,4^{(\ell-5)/4}] } & \text{\cite[Conjecture 3.0.29(b)]{GS}}\\
F3.3 & {\left[1,2^{(\ell-1)/2}\right], [1,2,4^{(\ell-3)/4}], [3,4^{(\ell-3)/4}] } & \text{\cite[Conjecture 3.0.29(c)]{GS}}\\
\hline
F4.1 & {\left[1^2,2^{(\ell-2)/2}\right], [1,2,3^{(\ell-3)/3}], [6^{\ell/6}] } & \text{\cite[Conjecture 3.0.29(d)]{GS}}\\
F4.2 & {\left[1^2,2^{(\ell-2)/2}\right], [2,3^{(\ell-2)/3}], [2,6^{(\ell-2)/6}] } & \\
F4.3 & {\left[1,2^{(\ell-1)/2}\right], [1,3^{(\ell-1)/3}], [3,4,6^{(\ell-7)/6}] } & \text{\cite[Conjecture 3.0.29(e)]{GS}}\\
F4.4 & {\left[1,2^{(\ell-1)/2}\right], [1,2,3^{(\ell-3)/3}], [3,6^{(\ell-3)/6}] } & \text{\cite[Conjecture 3.0.29(h)]{GS}}\\
F4.5 & {\left[1^2,2^{(\ell-2)/2}\right], [1,3^{(\ell-1)/3}], [4,6^{(\ell-4)/6}] } & \text{\cite[Conjecture 3.0.29(i)]{GS}}\\
F4.6 & {\left[1,2^{(\ell-1)/2}\right], [2,3^{(\ell-2)/3}], [2,3,6^{(\ell-5)/6}] } & \text{\cite[Conjecture 3.0.29(j)]{GS}}\\
\hline
\end{array}
\end{equation*}\end{table}

\begin{table}
\caption{Ramification types for some coverings $f:\mP^1\ra\mP^1$ of degree $n=\ell(\ell-1)/2$, $\ell\geq 13$, and monodromy group $A_\ell$ or $S_\ell$. 
Here $a$ is an odd number in $\{1,\ldots ,\ell-1\}$, $(a,\ell)=1$, and $\ell$ satisfies the necessary congruence conditions to make exponents integral. The notation $k^*$ means that the rest of the entries in the multiset equal $k$.}
\begin{equation*}\label{table:f}
\begin{array}{| l | l |}
\hline
I1.1a & {[\ell^*], [a(\ell-a), a^{(a-1)/2}, \frac{\ell-a}{2}, (\ell-a)^{*}], \left[2^{\ell-2},1^{*}\right]}  \\
I1.1b & [\frac{\ell}{2},\ell^*], [a(\ell-a), a^{(a-1)/2}, (\ell-a)^{*}], \left[2^{\ell-2},1^{*}\right] \\
\hline
I2.1 & [\ell^*], [1^{(\ell+3)/2}, 2^*], [1^{(\ell-1)/2},2^*], [2^{\ell-2},1^*] \\
I2.2 & {[\frac{\ell}{2}, \ell^*], [1^{\ell/2},2^*]\text{ twice}, [2^{\ell-2},1^*]} \\
I2.3 & {[\ell^*], [1^{(\ell+3)/2}, 2^*], [3, 1^{(\ell-3)/2}, 6^{(\ell-3)/2}, 2^*]} \\
I2.4 & {[\frac{\ell}{2}, \ell^*], [1^{\ell/2},2^*], [3^2, 1^{(\ell-4)/2}, 6^{(\ell-4)/2}, 2^*]} \\
I2.5 & {[\ell^*], [1^{(\ell-1)/2}, 2^*], [3^3, 1^{(\ell-3)/2}, 6^{(\ell-5)/2}, 2^*]} \\
I2.6 & {[\ell^*], [1^{(\ell+3)/2}, 2^*], [1^{(\ell-5)/2}, 4^{\ell-3}, 2^*]} \\
I2.7 & {[\frac{\ell}{2}, \ell^*], [1^{\ell/2}, 2^*], [1^{(\ell-4)/2}, 4^{\ell-3}, 2^*]} \\
I2.8 & {[\ell^*], [1^{(\ell-1)/2}, 2^*], [1^{(\ell-1)/2}, 4^{\ell-3}, 2^*]} \\
\hline
I2.9 & {[a(\ell-a), a^{(a-1)/2},  (\ell-a)^*], [1^{\ell/2}, 2^*]\text{ twice}, [2^{\ell-2},1^*]} \\
I2.10 & {[a(\ell-a), a^{(a-1)/2}, \frac{\ell-a}{2}, (\ell-a)^*], [1^{(\ell-1)/2}, 2^*]\text{ twice}, [2^{\ell-2},1^*]} \\
I2.11 & {[a(\ell-a), a^{(a-1)/2}, (\ell-a)^*], [1^{\ell/2}, 2^*],  [1^{(\ell-4)/2}, 4^{\ell-3}, 2^*]} \\
I2.12 & {[a(\ell-a), a^{(a-1)/2}, \frac{\ell-a}{2}, (\ell-a)^*], [1^{(\ell-1)/2}, 2^*], [1^{(\ell-5)/2}, 4^{\ell-3},2^*]} \\
I2.14 & {[a(\ell-a), a^{(a-1)/2}, \frac{\ell-a}{2}, (\ell-a)^*], [1^{(\ell-1)/2}, 2^*], [3, 1^{(\ell-3)/2}, 6^{(\ell-3)/2}, 2^*]} \\
I2.15 & {[a(\ell-a), a^{(a-1)/2}, (\ell-a)^*], [1^{\ell/2}, 2^*], [3^2, 1^{(\ell-4)/2}, 6^{(\ell-4)/2}, 2^*]} \\
\hline
F1.1 & {[2^{\ell-2},1^*], [1^{\ell/2}, 2^*]\text{ four times}} \\
F1.2 & {[2^{\ell-2},1^*], [1^{(\ell+3)/2}, 2^*], [1^{(\ell-1)/2}, 2^*]\text{ thrice}} \\
F1.3 & {[1^{(\ell+3)/2}, 2^*], [3,1^{(\ell-3)/2}, 6^{(\ell-3)/2}, 2^*], [1^{(\ell-1)/2}, 2^*]\text{ twice}} \\
F1.4 & {[1^{\ell/2}, 2^*]\text{ thrice}, [3^2,1^{(\ell-4)/2}, 6^{(\ell-4)/2}, 2^*]} \\
F1.5 & {[3^3,1^{(\ell-3)/2}, 6^{(\ell-5)/2}, 2^*], [1^{(\ell-1)/2}, 2^*]\text{ thrice}} \\
F1.6 & {[1^{(\ell+3)/2}, 2^*], [1^{(\ell-5)/2}, 4^{\ell-3}, 2^*], [1^{(\ell-1)/2}, 2^*]\text{ twice}} \\
F1.7 & { [1^{(\ell-4)/2}, 4^{\ell-3}, 2^*], [1^{\ell/2}, 2^*]\text{ thrice}} \\
F1.8 & {[1^{(\ell-1)/2}, 4^{\ell-3}, 2^*], [1^{(\ell-1)/2}, 2^*]\text{ thrice}} \\
\hline
F3.1 & {[1^{\ell/2},2^*], [3^2, 2^{(\ell-4)/4}, 12^{(\ell-4)/4}, 4^*], [2^{\ell/4}, 4^*]} \\
F3.2 & {[1^{(\ell-1)/2},2^*], [ 2^{(\ell-1)/4}, 4^*], [1, 3, 6, 2^{(\ell-5)/4}, 12^{(\ell-5)/4}, 4^*]} \\
F3.3 & {[1^{(\ell-1)/2},2^*], [1,2^{(\ell+1)/4}, 4^*], [3,2^{(\ell-3)/4}, 12^{(\ell-3)/4}, 4^*]} \\
\hline
F4.1 & {[1^{\ell/2},2^*], [1,2,6^{(\ell-3)/3},3^*], [3^{\ell/6}, 6^*] } \\
F4.2 &  {[1^{\ell/2},2^*], [1, 6^{(\ell-2)/3}, 3^*],  [1, 3^{(\ell-2)/6}, 6^*] } \\
F4.3 & {[1^{(\ell-1)/2},2^*], [3^*],  [2, 4, 3^{(\ell-1)/6}, 12^{(\ell-4)/3},  6^*] } \\
F4.4 & {[1^{(\ell-1)/2},2^*], [1,2, 6^{(\ell-3)/3}, 3^*],  [3^{(\ell+3)/6}, 6^*] } \\
F4.5 & {[1^{\ell/2},2^*], [3^*],  [2, 4, 3^{(\ell-4)/6}, 12^{(\ell-4)/3},  6^*] } \\
F4.6 & {[1^{(\ell-1)/2},2^*], [1,6^{(\ell-2)/3}, 3^*],  [1, 3^{(\ell+1)/6}, 6^*] } \\
\hline
\end{array}
\end{equation*}
\end{table}

The ramification types for the coverings $f:X_2\ra \mP^1$ in Table \ref{table:f} are obtained from those of $h:Y_1\ra \mP^1$ in Table~\ref{table:two-set-stabilizer} using the following process. 
As in Setup \ref{sec:setup},  $h:Y_1\ra \mP^1$ is a degree $\ell$ covering with monodromy group $G\in\{A_\ell,S_\ell\}$ acting on an $\ell$-element set $S$, and with
Galois closure $\tilde X\to\mP^1$, and 
$f:X_2\ra \mP^1$ is the natural projection from the quotient $X_2:=\tilde X/\oline H_2$ of $\tilde X$ by the stabilizer $\oline H_2$ of a $2$-set in $S$, so that $\Mon(f)=G$ in its action on the set $\oline S^{(2)}$ of $2$-sets in $S$. 

Let $P$ be a point of $\mP^1$ and $x\in G$ a branch cycle  for $h$ over $P$. 
As in Section~\ref{sec:setup}, the ramification type $E_{h}(P)$ (resp., $E_f(P)$) equals the multiset   
of orbit cardinalities $\{\abs o \suchthat o\in \Orb_S(x)\}$ (resp., $\{\abs o \suchthat o\in \Orb_{\oline S^{(2)}}(x)\}$). 
The orbit cardinalities of $x$ on $\oline S^{(2)}$ are deduced from those on $S$ by means of the following lemma: 
\begin{lem}\label{lem:h-to-f}
Let $R_1, R_2\subseteq S$ be orbits of $x\in S_\ell$ having cardinalities $r_1,r_2$, respectively.
Let $T$ be the set of unordered pairs $\{a,b\}$ of distinct elements $a,b$ with $a\in R_1$ and $b\in R_2$.
Then the orbits of the action of $x$ on $T$ consist of
\begin{enumerate}
\item  $(r_1,r_2)$ orbits of cardinality $\lcm(r_1,r_2)$ if $R_1\neq R_2$;
\item $(r_1-1)/2$ orbits of cardinality $r_1$ if $R_1=R_2$ and $r_1$ is odd;
\item one orbit of cardinality $r_1/2$, and $r_1/2-1$ orbits of cardinality $r_1$ if $R_1=R_2$ and $r_1$ is even.
\end{enumerate}
\end{lem}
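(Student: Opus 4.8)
The plan is to analyze the permutation action of $x$ on the set $T$ of unordered pairs directly, treating the three cases separately according to whether $R_1=R_2$ and, in that case, according to the parity of $r_1$. Throughout, I would use the standard fact that on an orbit $R$ of size $r$, the element $x$ acts as an $r$-cycle, so that the stabilizer of a point of $R$ in the cyclic group $\langle x\rangle$ restricted to $R$ is trivial, and the orbit of any configuration under $\langle x\rangle$ has size $r/\gcd(\dots)$ by orbit–stabilizer.

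\textbf{Case (1): $R_1\neq R_2$.} Here $T$ is in bijection with the set of ordered pairs $R_1\times R_2$, since the two coordinates come from disjoint sets and the pair $\{a,b\}$ determines and is determined by $(a,b)\in R_1\times R_2$. The element $x$ acts on $R_1\times R_2$ as the direct product of an $r_1$-cycle and an $r_2$-cycle. The orbit of $(a,b)$ has size equal to the order of $x$ on this product, restricted to the coordinates, which is $\lcm(r_1,r_2)$; the number of orbits is then $r_1 r_2/\lcm(r_1,r_2)=\gcd(r_1,r_2)$, which is what the statement writes as $(r_1,r_2)$. So I would just invoke orbit–stabilizer together with the elementary identity $r_1 r_2 = \gcd(r_1,r_2)\lcm(r_1,r_2)$.

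\textbf{Cases (2) and (3): $R_1=R_2=R$ of size $r=r_1$.} Now $T$ is the set of $2$-subsets of $R$, of size $\binom{r}{2}$, and $x$ acts on $R$ as an $r$-cycle, which I identify with the cycle $(0,1,\dots,r-1)$ acting by $i\mapsto i+1$ on $\Z/r\Z$. A $2$-set $\{i,j\}$ is determined by the (unordered) difference $\pm(i-j)\in\Z/r\Z$, i.e.\ by an element $d$ of $\{1,2,\dots,\floor{r/2}\}$, which is invariant under $x$; so the orbits of $x$ on $T$ are indexed by such $d$, and the orbit corresponding to $d$ consists of all $2$-sets whose two elements differ by $\pm d$ mod $r$. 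For each fixed $d$ I would compute the size of that orbit: it is $r/\gcd(r,d)$ except when $r$ is even and $d=r/2$, in which case each element of $R$ pairs with a unique opposite element and the $r/2$ resulting pairs are permuted in a single cycle of length $r/2$. Counting: if $r$ is odd there are $(r-1)/2$ values of $d$, none equal to $r/2$, and for each the orbit size is $r$ (since $x$ acting on the pairs at distance $d$ is again a single $r$-cycle when $\gcd(r,d)$ divides consistently — one must check the orbit is not shorter; in fact for $r$ odd and $1\le d\le (r-1)/2$ the map $i\mapsto i+1$ on pairs at distance $d$ has order exactly $r$), giving conclusion (2). If $r$ is even there are $r/2$ values of $d$; the value $d=r/2$ gives one orbit of size $r/2$, and the remaining $r/2-1$ values each give an orbit of size $r$, giving conclusion (3).

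\textbf{Main obstacle.} The only genuinely non-formal point is verifying, in the $R_1=R_2$ case, that for every $d$ with $1\le d<r/2$ the orbit of $x$ on the $2$-sets at cyclic distance $d$ has size exactly $r$ rather than a proper divisor of $r$. Equivalently, one must rule out $\{i,i+d\}=\{i+m,i+m+d\}$ for $0<m<r$, i.e.\ rule out $m\equiv 0$ or $m\equiv -2d\pmod r$ with $0<m<r$; the second would force $2d\equiv 0\pmod r$, hence $d=r/2$, which is excluded. So the orbit has full length $r$, and the bookkeeping of orbit counts is then just $\binom{r}{2}=\tfrac{r-1}{2}\cdot r$ for $r$ odd and $\binom{r}{2}=\tfrac{r}{2}+(\tfrac{r}{2}-1)\cdot r$ for $r$ even. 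I would present the argument in this order — reduce to cyclic $r$-cycles, index orbits by cyclic difference, compute each orbit length, and tally — keeping the distance-$r/2$ degeneracy as the one case needing separate treatment.
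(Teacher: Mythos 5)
Your proof is correct and follows essentially the same route as the paper: identify $T$ with $R_1\times R_2$ and count orbits of length $\lcm(r_1,r_2)$ when $R_1\neq R_2$, and when $R_1=R_2$ observe that every orbit has full length $r_1$ except the single orbit of antipodal pairs when $r_1$ is even, then tally. Your parametrization by the cyclic difference $d$ just makes explicit the orbit-length claim that the paper states without elaboration, so there is nothing to correct.
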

\begin{proof}
Let $a\in R_1$ and $b\in R_2$.
The orbit of every element $\{c,d\}\in T$ under the action of $x$ is of cardinality $\lcm(r_1,r_2)$ unless $R_1=R_2$, $r_1$ is even, and $a$ is the image of $b$ under the action of $x^{r_1/2}$. 
Since there are $r_1r_2$ (resp., $r_1(r_1-1)/2$) elements in $T$ if $R_1\neq R_2$ 
(resp., $R_1=R_2$), there are $r_1r_2/\lcm(r_1,r_2) = (r_1,r_2)$ (resp., $(r_1-1)/2$) such orbits if $R_1 \neq R_2$ (resp., if $R_1=R_2$ and $r_1$ is odd), proving (1) and (2). 
In case (3), all pairs $\{a,a^{x^{r_1/2}}\}$ are in the same orbit of $x$ which has cardinality $r_1/2$. 
As there are $r_1(r_1-2)/2$ pairs in $T$ which are not of the form $\{a,a^{x^{r_1/2}}\}$, these comprise $r_1/2-1$ orbits, proving (3).  
\end{proof}
Applying this lemma for each of the ramification types of $h$ in Table \ref{table:two-set-stabilizer} over each branch point, 
we obtain the corresponding ramification types for $f$. As noted after Theorem \ref{thm:main}, $g_{Y_1}=0$ for each ramification type in Table \ref{table:two-set-stabilizer}. Hence the formula \eqref{equ:M-RH-t=2} gives $g_{X_2}=0$ for every $f:X_2\ra\mP^1$ whose ramification type appears in Table \ref{table:f}. 

The ramification types for $h$ are labeled according to the case in the proof of Proposition \ref{prop:two-set} from which they arise. The ramification types for $f$ are labeled 
as their corresponding ramification type of $h$
with additional subcases if a single ramification type for $h$ has several corresponding types for $f$, 
depending on whether $\ell$ is odd or even. 
\begin{rem}\label{rem:GS-correct}
Note that Item I1.1 in Table \ref{table:two-set-stabilizer} corresponds to the rational function $X^a(X-1)^{\ell-a}$; Item F1.9 does not appear in \cite{GS}; the slightly incorrect Item (g) of \cite[Conjecture 3.0.29]{GS} (which violates Riemann--Hurwitz) is replaced by Item F4.2; and Item (f) of \cite[Conjecture 3.0.29]{GS} does not correspond to a covering with primitive monodromy group, by the special case of
Lemma \ref{lem:non-existing} corresponding to case F1.N4 of Table \ref{table:non-existence}.
The ramification types F1.7 and F1.9 (resp., I2.11 and I2.13) in Table~\ref{table:two-set-stabilizer} correspond to the same ramification type in Table~\ref{table:f}, so we
do not include an entry for cases F1.9 or I2.13 in Table~\ref{table:f}.
\end{rem}

\section{Relating set and point stabilizers }\label{sec:set-point}

As in Setup \ref{sec:setup}, let $h:Y_1\ra Y$ be a covering of degree $\ell$ with monodromy group $G\in\{A_\ell,S_\ell\}$ acting on a set $S=\{1,\ldots,\ell\}$, let $X_t$ be the quotient by the stabilizer $\oline H_t$ of the set $\{1,\ldots,t\}$, and $Y_t$ the quotient by the pointwise stabilizer $H_t$ of $\{1,\ldots,t\}$.  

A key ingredient in proving Theorem \ref{thm:main} is bounding the contribution $R_{\pi_t}$ of the natural projection $\pi_t: Y_t\ra X_t$. The following proposition describes the main term of the Riemann--Hurwitz contribution from $\pi_t$. This contribution accounts for the extent to which the orbits of branch cycles on $t$-tuples are longer than their orbits on $t$-sets. For $t=2$, such orbits appear whenever the branch cycle has an even length orbit.

For $n\in\mathbb{N}\cup\{0\}$, let $v_p(n)$ be the largest integer such that $p^{v_p(n)}\divides n$.
\begin{prop}\label{lem:pi-form}
There exists a constant $E_0>0$ satisfying the following property. 
Let $t,\ell\in\mathbb{N}$ be integers such that $t\geq 2$ and $\ell>t^2$. 
Let $h:Y_1\ra Y$ be a covering of degree $\ell$ with monodromy group $G\in\{A_\ell,S_\ell\}$.
Let $P$ be a point of $Y$ and $x$ a branch cycle of $h$ over $P$. Then
\begin{enumerate}
\item $R_{\pi_2}(f_2^{-1}(P)) = \abs{\{\text{even }r\in E_{h}(P)\}}$;
\item 
\[
R_{\pi_t}(f_t^{-1}(P)) \leq \binom{t}{2}\sum_{\theta_1,\ldots,\theta_{t-1}} \frac{\hat r_1\cdots \hat r_{t-1}}{\lcm(r_1,\ldots,r_{t-1})} 
+  E_0 t^4\cdot  \frac{(\ell-2)!}{(\ell-t)!}
\]
where $r_i := \abs{\theta_i}$, and $\hat r_i := r_i - \abs{\{j\suchthat j<i, \theta_j = \theta_i\}}$ for $i=1,\ldots,t-1$, and $\theta_1,\ldots, \theta_{t-1}$ run through orbits of $x$ with $r_1$ even and $v_2(r_1)>v_2(r_j)$, for $j=2,\ldots,t-1$. 
\end{enumerate}
\end{prop}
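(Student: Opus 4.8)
The plan is to compute $R_{\pi_t}(f_t^{-1}(P))$ by analyzing, orbit-structure by orbit-structure, how the $t$-tuple orbits of the branch cycle $x$ sit over the $t$-set orbits of $x$. Fix a branch cycle $x$ over $P$, with cycle type recorded by its orbits on $S=\{1,\dots,\ell\}$. The map $\pi_t\colon Y_t\to X_t$ corresponds to the $t!$-to-$1$ covering of $G$-sets $S^{(t)}\to \oline S^{(t)}$, where $S^{(t)}$ is the set of ordered $t$-tuples of distinct elements and $\oline S^{(t)}$ the set of $t$-sets; the fiber of $\pi_t$ over a point of $X_t$ corresponding to a $t$-set $\{a_1,\dots,a_t\}$ consists of the $t!$ orderings, and $e_{\pi_t}$ at an ordered tuple equals the length of its $x$-orbit divided by the length of the $x$-orbit of the underlying $t$-set. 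Thus $R_{\pi_t}(f_t^{-1}(P)) = \abs{S^{(t)}} - \abs{\Orb_{S^{(t)}}(x)} - t!\bigl(\abs{\oline S^{(t)}} - \abs{\Orb_{\oline S^{(t)}}(x)}\bigr)$, i.e.\ $R_{\pi_t}(f_t^{-1}(P)) = t!\,\abs{\Orb_{\oline S^{(t)}}(x)} - \abs{\Orb_{S^{(t)}}(x)}$. So the whole computation reduces to counting orbits of $x$ on ordered $t$-tuples versus on $t$-sets.

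For a fixed $t$-set $\{a_1,\dots,a_t\}$ lying in $x$-orbits $\theta_1,\dots,\theta_s$ (with multiplicities: $m_j$ of the $a_i$ lie in $\theta_j$, $\sum m_j = t$), the length of the $x$-orbit of this set, and the number of orbits of the $t!$ orderings, are governed by an elementary lcm/gcd computation generalizing Lemma \ref{lem:h-to-f}: the orbit of an ordering has length $\lcm(r_{i_1},\dots,r_{i_t})$ (the lcm of the cycle lengths containing the coordinates) unless there is a ``collision,'' i.e.\ unless two of the coordinates lie in the same cycle $\theta_j$ of even length $r_j$ and are antipodal in it (related by $x^{r_j/2}$); such collisions are exactly what makes the set-orbit shorter than the tuple-orbits and hence contributes to $R_{\pi_t}$. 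I would first dispose of the case $t=2$ directly: a $2$-set in a single cycle of even length $r$ either is an antipodal pair (one set-orbit of length $r/2$, contributing $1$) or is not (set-orbit length $r$ = tuple-orbit length, contributing $0$); a $2$-set in two distinct cycles never collides. Summing, $R_{\pi_2}(f_2^{-1}(P))$ equals the number of even $r\in E_h(P)$, which is part (1). The exact formula here comes straight from Lemma \ref{lem:h-to-f}(3).

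For general $t$, the strategy is to isolate the dominant contribution, which comes from $t$-sets in which exactly two coordinates collide (i.e.\ lie antipodally in a common even cycle $\theta_1$) and the remaining $t-2$ coordinates, in cycles $\theta_2,\dots,\theta_{t-1}$ (possibly repeated, possibly equal to $\theta_1$), contribute no further collision — this is enforced by the condition $v_2(r_1)>v_2(r_j)$ for $j\ge 2$ in the statement, which guarantees that the antipodal pairing in $\theta_1$ is genuinely ``extra.'' Choosing the two colliding coordinates accounts for the $\binom{t}{2}$; choosing the remaining cycles $\theta_2,\dots,\theta_{t-1}$ and the positions within them, then dividing by the orbit length $\lcm(r_1,\dots,r_{t-1})$, yields $\binom{t}{2}\sum \hat r_1\cdots \hat r_{t-1}/\lcm(r_1,\dots,r_{t-1})$ as the main term, where the ``hat'' correction $\hat r_i = r_i - \abs{\{j<i:\theta_j=\theta_i\}}$ avoids overcounting when several chosen cycles coincide. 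Everything else — $t$-sets with three or more colliding coordinates, with two separate collisions, or with one collision but an additional coincidence among the non-colliding cycles forcing $v_2$-ties — is an error term. These configurations are pinned down by $O(t^4)$ choices of which coordinates collide and lie in which among at most $t-2$ distinguished cycles, each such configuration contributing at most $O(1)$ to $R_{\pi_t}$ per choice of the underlying tuple, and the number of relevant tuples is at most $(\ell-2)!/(\ell-t)!$ (two coordinates constrained by the collision, the rest free); multiplying gives the bound $E_0 t^4 (\ell-2)!/(\ell-t)!$ for a suitable absolute constant $E_0$. The hypothesis $\ell>t^2$ is used to ensure these crude bounds dominate lower-order terms and that all the binomial/factorial manipulations are valid.

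The main obstacle is the bookkeeping in the error-term estimate: one must argue that every ``non-main'' configuration of colliding/coinciding coordinates contributes at most a bounded amount to $R_{\pi_t}$ and is supported on at most $(\ell-2)!/(\ell-t)!$ tuples, uniformly, so that summing the finitely many (namely $O(t^4)$) configuration types produces the stated bound with an absolute constant $E_0$. Handling the interaction between collisions (which shorten orbits) and mere cycle-coincidences (which affect counting but not orbit length), and making sure the $v_2$-hypothesis correctly separates the main term from the case of competing $2$-adic valuations, is the delicate point; the rest is the orbit-counting identity $R_{\pi_t} = t!\,\abs{\Orb_{\oline S^{(t)}}(x)} - \abs{\Orb_{S^{(t)}}(x)}$ together with the lcm/antipodal analysis of Lemma \ref{lem:h-to-f} pushed to $t$ coordinates.
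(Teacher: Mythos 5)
Your overall strategy is the paper's: the identity $R_{\pi_t}(f_t^{-1}(P)) = t!\,\abs{\Orb_{\oline S^{(t)}}(x)} - \abs{\Orb_{S^{(t)}}(x)}$ (equivalently $\sum_{i,j}(\abs{O_i^j}/\abs{O_i}-1)$ summed over tuple-orbits $O_i^j$ lying over set-orbits $O_i$), the antipodal-pair analysis for $t=2$, and the identification of the main term with those $t$-sets on which $x^{\abs{O_i}}$ induces a single transposition, with the $v_2$-condition characterizing exactly when that happens. (One small slip in your setup: the orbit of an \emph{ordered} tuple always has length $\lcm(r_{i_1},\dots,r_{i_t})$; it is the \emph{set}-orbit that shortens when there is a coincidence. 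Your subsequent reasoning shows you know this.)

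The genuine gap is in the error estimate. The contribution of a tuple-orbit $O_i^j$ to $R_{\pi_t}$ is $e_i-1$, where $e_i=\abs{O_i^j}/\abs{O_i}$ is the order of the permutation induced by $x^{\abs{O_i}}$ on the underlying $t$-set; this is not $O(1)$ --- it can be as large as Landau's function of $t$ --- so the claim that each non-main configuration contributes at most $O(1)$ per tuple is false, and likewise the number of positional choices for a $q$-element coincidence is $t!/(t-q)!$, not $O(t^4)$. What rescues the bound is a trade-off that must be made explicit: for each prime power $q\ge 3$ dividing $e_i$, some $q$ elements of the $t$-set form a $q$-cycle under $x^{r_U/q}$, which pins down $q-1$ of the coordinates, so the number of supporting tuples is at most $\frac{t!}{(t-q)!}\,\ell\,\frac{(\ell-q)!}{(\ell-t)!}$; dividing by the orbit length (at least $q$) and multiplying by the crude bound $e_i-1\le q!-1$ (valid since $q$ is the largest prime power dividing $e_i$) gives a summand which, using $\ell>t^2$, is decreasing in $q$ and hence dominated by $t$ times its value at $q=3$, i.e.\ $O(t^4(\ell-2)!/(\ell-t)!)$ in total. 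The remaining order-$2$-but-not-a-transposition case (two disjoint antipodal pairs) contributes $1$ per orbit and is supported on at most $\frac{t!}{(t-4)!}\,\ell(\ell-2)\frac{(\ell-4)!}{(\ell-t)!}$ tuples, again $O(t^4(\ell-2)!/(\ell-t)!)$. Without this decomposition by prime powers and the monotonicity check, your ``$O(t^4)$ configurations times $O(1)$ each'' accounting does not yield the stated bound.
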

\begin{proof}
{\bf Step I:} {\it We first express $R_{\pi_t}(f_t^{-1}(P))$ using orbits of $x$ on $t$-sets and $t$-tuples. }
As in Section \ref{sec:setup}, let $h_t:Y_t\ra Y$ and $f_t:X_t\ra Y$ denote the natural projections, so that their monodromy act on the set $S^{(t)}$ of $t$-tuples of pairwise distinct elements of $S$ and on the set $\oline S^{(t)}$ of $t$-subsets of $S$, respectively. 

Let $O_1,\ldots, O_s$ be the orbits of $x$ on $\oline S^{(t)}$. 
For a $t$-tuple $U\in S^{(t)}$, denote by $\oline U\in \oline S^{(t)}$  its  underlying $t$-set, and by $r_U$ the length of its orbit.
For given $i\in\{1,\ldots,s\}$, we claim that the length $r_U$ is independent of the choice of a $t$-tuple $U\in S^{(t)}$ for which $\oline U\in O_i$. 
Given a $t$-set $\oline V\in O_i$, the element $x^{\abs{O_i}}$ defines a permutation on the elements of $\oline V$. Let $e_{i}$ be the order of this permutation.  
The orbit of every $t$-tuple $V\in S^{(t)}$ with underlying set $\oline V$ is then of length $r_V=e_i\cdot \abs{O_i}$. 
Since the orbit of every $t$-tuple $U\in S^{(t)}$ with $\oline U\in O_i$ contains such a tuple $V$, we have $r_U=r_V=e_i\cdot \abs{O_i}$, proving the claim. 

Since every $t$-subset $\oline U$ in $O_i$ has $t!$ tuples in $S^{(t)}$ with underlying set $\oline U$, the number of $t$-tuples in $S^{(t)}$ with underlying set in $O_i$ is $t!\cdot \abs{O_i}$, for $i=1,\ldots,s$. 
By the above claim, the action of $x$ on such tuples breaks into equal length orbits $O_i^j$, $j=1,\ldots,s_i$, where $s_i := t!\abs{O_i}/\abs{O_i^1}$, for  $1\leq i\leq s$.
Thus the Riemann--Hurwitz contributions are
\[
R_{f_t}(P) = \sum_{i=1}^s (\abs{O_i} -1) \quad\text{and} \quad
R_{h_t}(P) = \sum_{i=1}^s\sum_{j=1}^{s_i} (\abs{O_i^j}-1). 
\]
 Since $s_i = t!\abs{O_i}/\abs{O_i^1}$ and $\abs{O_i^j}=\abs{O_i^1}$, it follows that:
\[
R_{f_t}(P) = \sum_{i=1}^s (\abs{O_i} -1 ) = \sum_{i=1}^s\frac{1}{s_i}\sum_{j=1}^{s_i} (\abs{O_i}-1)  = 
\frac{1}{t!}\sum_{i=1}^s\sum_{j=1}^{s_i} \Bigl(\abs{O_i^j}-\frac{\abs{O_i^j}}{\abs{O_i}}\Bigr).
\]

Since $R_{h_t}(P) = t! R_{f_t}(P) + R_{\pi_t}(f_t^{-1}(P))$ by the chain rule \eqref{equ:chain}, we get
\begin{equation}\label{equ:pit-form}
R_{\pi_t}(f_t^{-1}(P)) = R_{h_t}(P) -t!R_{f_t}(P)  = \sum_{i=1}^s\sum_{j=1}^{s_i}\Bigl(\frac{\abs{O_i^j}}{\abs{O_i}}-1\Bigr).
\end{equation}

{\bf Step II:} {\it To bound $R_{\pi_t}(f_t^{-1}(P))$ using \eqref{equ:pit-form},
we bound the number of pairs $(i,j)$ for which $\abs{O_i^j}/\abs{O_i}$ is divisible by a given power $q$ of a prime $p$.}
 Let $U:=[u_1,\ldots, u_t]\in S^{(t)}$ be a tuple in $O_i^j$. 
If $q\divides (\abs{O_i^j}/\abs{O_i})$, then $\abs{O_i}$ divides $(r_U/q)$ and hence $x^{r_U/q}$ defines a permutation of order $q$ on $\oline U$. As $q$ is a prime power this implies that $q$ and $U$ satisfy the condition
\begin{enumerate}\item[$(\text{\emph{Cyc}})_{q,U}$] There exist $v_1,\ldots,v_q\in  \oline U$ that form a cycle under the action of $x^{r_U/q}$. 
\end{enumerate}

We next bound the total number of tuples $U\in S^{(t)}$ satisfying $(\text{\emph{Cyc}})_{q,U}$ for a given $q$. 
Note that if $v_1,\ldots,v_q$ is a cycle of $x^{r_U/q}$, then it is also a cycle of $x^{r_1/q}$ where $r_1$ is the length of the orbit of $v_1$. 
Thus, a choice of $v_1$ determines  $v_2,\ldots,v_q$ as the images of $v_1$ under $x^{r_1/q}$. 
Thus, there are at most $\ell$ choices for the first entry $v_1$ and these determine $v_2,\ldots,v_q$; there are $t!/(t-q)!$ choices for the positions of $v_1,\ldots,v_q$ in $U$; and $(\ell-q)!/(\ell-t)!$ choices for the rest of the entries in $U$. 
Hence in total the number of tuples $U\in S^{(t)}$ satisfying $(\text{\emph{Cyc}})_{q,U}$ is at most $ \frac{t!}{(t-q)!} \ell\frac{(\ell-q)!}{(\ell-t)!}$. 
Since the orbit of each such $U$ is of length at least $q$, we have
\begin{equation}\label{equ:largest-power} 
\abs{\{(i,j) \suchthat q\text{ divides $\abs{O_i^j}/\abs{O_i}$}\}} \leq  \frac{1}{q}\frac{t!}{(t-q)!} \ell\frac{(\ell-q)!}{(\ell-t)!}. 
\end{equation}

{\bf Step III:} {\it We now apply \eqref{equ:largest-power} to estimate \eqref{equ:pit-form}. }
Note that every integer $e>1$ is bounded by $e\leq q!$ 
where $q$ is the largest prime power dividing $e$, 
since at worst every integer $1<u<q$ is a prime power dividing $e$. 
Hence, each summand $\abs{O_i^j}/\abs{ O_i}-1$ in \eqref{equ:pit-form}  
is bounded by $q!-1$ where $q$ is the largest power dividing $\abs{O_i^j}/\abs{O_i}$. 
Combining this bound with \eqref{equ:largest-power} for each $q>2$,
we get the following estimate of \eqref{equ:pit-form}:
\begin{equation}\label{equ:first-q}
R_{\pi_t}(f_t^{-1}(P)) \leq \abs{\{(i,j)\suchthat \frac{\abs{O_i^j}}{\abs{O_i}}=2\}} + \sum_{q}\biggl( \frac{1}{q}\frac{t!}{(t-q)!} \ell\frac{(\ell-q)!}{(\ell-t)!}(q!-1)\biggr)
\end{equation}
where $q$ runs over all prime powers between $3$ and $t$. 
Since $3\le q\leq t$ and $\ell>t^2$, 
a routine calculation shows that the summands in \eqref{equ:first-q} are strictly decreasing as functions
of $q$.
%
%
Hence each of the summands corresponding to prime powers between $3$ and $t$ 
is bounded by the summand at $q=3$ which is at most $E_1t^3\frac{(\ell-2)!}{(\ell-t)!}$, for some constant $E_1>0$. 
Since there are less than $t$ prime powers between $3$ and $t$, \eqref{equ:first-q} gives
\begin{equation}\label{equ:q>2}
R_{\pi_t}(f_t^{-1}(P))  < \abs{\{(i,j) \suchthat \frac{\abs{O_i^j}}{\abs{O_i}}=2\}} + E_1 t^4\frac{(\ell-2)!}{(\ell-t)!}. 
\end{equation}

{\bf Step IV:} {\it We use a similar argument in order to bound the number of pairs $(i,j)$ with $\abs{O_i^j}/\abs{O_i}= 2$. }
Note that for a tuple $U=[u_1,\ldots,u_t]\in S^{(t)}$ in an orbit $O_i^j$ with $\abs{O_i^j}/\abs{O_i}=2$, 
the element $x^{r_U/2}$ is a permutation of order $2$ on  $\oline U$. 
We divide such tuples $U$ into two types according to whether  $x^{r_U/2}$ acts on $\oline U$ as a transposition. If $x^{r_U/q}$ does not act as a transposition then $U$ satisfies:
\begin{enumerate}\item[$(\oline{\text{\emph{Trans}}})_{2,U}$] 
There exist $u_1,u_2,v_1,v_2\in \oline U$, such that $u_1,u_2$ and $v_1,v_2$ are pairwise disjoint length $2$ cycles of $x^{r_U/2}$. 
\end{enumerate}


We next bound the number of tuples $U$ satisfying $(\oline{\text{\emph{Trans}}})_{2,U}$. 
As in Step II, if $u_1,u_2$ is a cycle of $x^{r_U/2}$, then it is also a cycle of $x^{r_1/2}$, where $r_1$ is the length of the orbit of $u_1$ under $x$. Hence, a choice of $u_1$ (resp., $v_1$)  determines $u_2$ (resp., $v_2$), as its image under $x^{r_1/2}$. It follows that there are $\ell$ choices for $u_1$, which then determine $u_2$; $\ell-2$ choices for $v_1$ which then determine $v_2$; $t!/(t-4)!$ choices for the positions for $u_1,u_2,v_1,v_2$ in $U$; and $\ell!/(\ell-4)!$ choice for the rest of the entries in $U$. 
Hence  in total the number of tuples $U\in S^{(t)}$ satisfying $(\oline{{\text{\emph{Trans}}}})_{2,U}$ is at most: 
\begin{equation}\label{equ:v2ab}
 \frac{t!}{(t-4)!}\ell(\ell-2)\frac{(\ell-4)!}{(\ell-t)!}  < E_2t^4 \frac{(\ell-2)!}{(\ell-t)!},\text{ for some constant $E_2>0$}.
\end{equation} 

We next count the number of tuples $U\in S^{(t)}$ satisfying: 
\begin{enumerate}\item[$({\text{\emph{Trans}}})_{2,U}$] 
$x^{r_U/2}$ acts on $\oline U$ as a transposition $(u_1,u_t)$, for some $u_1,u_t\in \oline U$. 
\end{enumerate}
As the number of tuples $U$ satisfying $({\text{\emph{Trans}}})_{2,U}$ with given positions of $u_1,u_t$ in $U$, is independent of the choice of these positions, it suffices to count the number of tuple $U=[u_1,\ldots,u_t]\in S^{(t)}$ such that 
$x^{r_U/2}$ acts on $\oline U$ as the transposition $(u_1,u_t)$. 

Letting $\theta_i$ be an orbit of $x$ and $r_i=\abs{\theta_i}$ for $i=1,\ldots,t$,  a tuple $U=[u_1,\ldots, u_t]\in S^{(t)}$ with $u_i\in \theta_i$, $i=1,\ldots,t$, has an orbit of length $r_U=\lcm (r_1,\ldots,r_t)$. For such a tuple $U$, the element  $x^{r_U/2}$ acts on $\oline U$ as the transposition $(u_1,u_t)$ if and only if 
 $\theta_1=\theta_t$, and $v_2(r_1) = v_2(r_U)>0$, and $v_2(r_i)<v_2(r_U)$ for all $1<i<t$.  

Fix orbits $\theta_1,\ldots, \theta_{t-1}$ satisfying the latter constraints, that is, $v_2(r_1)=v_2(r)>0$ and $v_2(r_i)<v_2(r_1)$, $i=2,\ldots,t-1$, where $r_i:=\abs{\theta_i}$ and $r:=\lcm(r_1,\ldots,r_{t-1})$. We count the number of tuples $U=[u_1,\ldots,u_t]\in S^{(t)}$ with $u_i\in \theta_i$, $i=1,\ldots,t-1$, and $u_t=u_1^{x^{r/2}}$. 
Since there are $\hat r_1:=r_1$ choices for $u_1\in \theta_1$,
and $\hat r_{i}:=r_i-\abs{\{j\suchthat j<i, \theta_{j}=\theta_i\}}$ choices for $u_i\in \theta_i$, for each $i=2,\ldots,t-1$, 
 the number of such tuples $U$ is $\hat r_1\hat r_2\cdots \hat r_{t-1}$. 

In total we get that the number of tuples $U\in S^{(t)}$  satisfying $({\text{\emph{Trans}}})_{2,U}$ is 
\begin{equation}\label{equ:transp}
\frac{t(t-1)}{2}\sum_{\theta_1,\ldots,\theta_{t-1}} \frac{\hat r_1\cdots \hat r_{t-1}}{\lcm(r_1,\ldots,r_{t-1})},
\end{equation}
where $r_i=\abs{\theta_i}$, and $\hat r_i=r_i-\abs{\{j\suchthat j<i, \theta_{j}=\theta_i\}}$, and $\theta_1,\ldots \theta_{t-1}$ run through orbits of $x$ with even $r_1$ and  $v_2(r_1)>v_2(r_i)$, for $i=2,\ldots,t-1$.
Plugging the bound \eqref{equ:v2ab} on tuples $U\in S^{(t)}$ satisfying $(\oline{{\text{\emph{Trans}}}})_{2,U}$ 
and the count  \eqref{equ:transp} of orbits of tuples $U\in S^{(t)}$ satisfying $({\text{\emph{Trans}}})_{2,U}$ into \eqref{equ:q>2} proves part (2) with $E_0:=E_1+E_2$. 

{\bf Step V:} {\it The case $t=2$}. For $t=2$, there are no $2$-tuples $U\in S^{(2)}$ satisfying $(\text{\emph{Cyc}})_{q,U}$ with $q>2$, nor tuples satisfying $(\oline{\text{\emph{Trans}}})_{2,U}$. Moreover, the number of orbits $O_i^j$ consisting of tuples $U\in S^{(2)}$ that satisfy $(\text{\emph{Trans}})_{2,U}$ equals the number of even length orbits of $x$ by 
\eqref{equ:transp}. 
In total, evaluating \eqref{equ:pit-form} when $t=2$, then gives
\begin{align*}
R_{\pi_t}(f_t^{-1}(P))  &=  \abs{\{(i,j)\suchthat \frac{\abs{O_i^j}}{\abs{O_i}}=2\}} = \abs{\{(i,j)\suchthat ({\text{\emph{Trans}}})_{2,U} \text{ holds for every }U\in O_i^j\}} \\
&=  \abs{\{\text{even length orbits of }x\}}. \qedhere
\end{align*}
\end{proof}
\begin{rem}\label{rem:Y2-table}
By Proposition \ref{lem:pi-form}, if the ramification type of $h$ appears in Table \ref{table:two-set-stabilizer} and is different from $[\ell], [a,\ell-a], [2,1^{\ell-2}]$, then the sum of the contributions $R_{\pi_2}(f_2^{-1}(P))$ over all points $P$ of $Y$ is at least $(2\ell-5)/3$, as this is the minimal number of even entries among the ramification types in Table \ref{table:two-set-stabilizer} with $\ell\geq 13$.
\end{rem}

The proof of Theorem \ref{thm:main} uses the following proposition to cancel out the main term of $R_{\pi_t}(f_t^{-1}(P))$ in Proposition \ref{lem:pi-form} with the Riemann--Hurwitz contribution of the natural projection $h_1^{t-1}:Y_{t-1}\ra Y_1$ for $t\geq 2$. 
\begin{prop}\label{lem:t-1-bound} Let $h:Y_1\ra Y$ be a degree $\ell$ covering 
with monodromy group ${G\in\{A_\ell,S_\ell\}}$ acting on $S$. 
Let $3\leq t\leq \ell/2$ be an integer,
 $P$ a point of $Y$, 
 and $x$ a branch cycle over $P$. Then
\[
\sum_{(\theta_1,\ldots, \theta_{t-1})\in O_h(P)}\frac{\hat r_1\cdots \hat r_{t-1}}{\lcm(r_1,\ldots, r_{t-1})} \leq R_{h_1^{t-1}}(h^{-1}(P)), 
\]
where  $r_i := \abs{\theta_i}$, and $\hat r_i := r_i - \abs{\{j\suchthat j<i, \theta_j =\theta_i\}}$, $i=1,\ldots,t-1$, and $O_h(P)$ consists of tuples $(\theta_1,\ldots, \theta_{t-1})\in \Orb_S(x)^{t-1}$ such that $v_2(r_1)>v_2(r_j)$ for $j=2,\ldots, t-1$.
\end{prop}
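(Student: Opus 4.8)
The plan is to interpret both sides of the inequality in terms of ramification of the natural projection $h_1^{t-1}\colon Y_{t-1}\to Y_1$, and then to exhibit an explicit injection from a set counted by the left-hand sum into the set of ramified points of $h_1^{t-1}$ lying over $h^{-1}(P)$, keeping track of multiplicities. Recall from Setup \ref{sec:setup} that $Y_{t-1}=\tilde Y_1/H_{t-1}$ with $\Mon(h_{t-1})=G$ acting on the set $S^{(t-1)}$ of ordered $(t-1)$-tuples of distinct elements of $S$, while $Y_1=\tilde Y_1/H_1$ with $G$ acting on $S$. The map $h_1^{t-1}\colon Y_{t-1}\to Y_1$ is the natural projection corresponding to the containment $H_{t-1}\le H_1$; concretely, its fiber data over a point $Q\in h^{-1}(P)$ is governed by the action of the branch cycle $x$ on the $(t-1)$-tuples whose first coordinate sits in the orbit $\theta_1\subseteq S$ corresponding to $Q$. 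By the orbit description preceding Lemma~\ref{lem:correspondence} (via Lemma~\ref{lem:correspondence} applied to $\tilde h_1 = h_1^{t-1}\circ h_0$), one has
\[
R_{h_1^{t-1}}(h^{-1}(P)) \;=\; \sum_{Q\in h^{-1}(P)} R_{h_1^{t-1}}(Q),
\]
and each $R_{h_1^{t-1}}(Q)$ counts $(\text{number of }(t-1)\text{-tuples with first coordinate in }\theta_1) - (\text{number of }x\text{-orbits on such tuples})$, where $\theta_1$ is the $x$-orbit on $S$ indexed by $Q$.

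**Key steps.** First I would fix $Q\in h^{-1}(P)$ with corresponding orbit $\theta_1$ of length $r_1$, and enumerate the $(t-1)$-tuples $U=[u_1,\dots,u_{t-1}]$ with $u_1\in\theta_1$ and the $u_i$ pairwise distinct. Grouping by the tuple of orbits $(\theta_1,\dots,\theta_{t-1})$ that the coordinates lie in, the number of such $U$ with $u_i\in\theta_i$ is exactly $\hat r_1\hat r_2\cdots\hat r_{t-1}$, with $\hat r_i = r_i - |\{j<i:\theta_j=\theta_i\}|$ accounting for coordinates forced into an already-used orbit, and the length of the $x$-orbit of any such $U$ is $\lcm(r_1,\dots,r_{t-1})$. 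Hence for the fixed first orbit $\theta_1$,
\[
R_{h_1^{t-1}}(Q) \;=\; \sum_{(\theta_2,\dots,\theta_{t-1})} \Bigl(\hat r_1\cdots\hat r_{t-1} \;-\; \frac{\hat r_1\cdots\hat r_{t-1}}{\lcm(r_1,\dots,r_{t-1})}\Bigr)
\;\ge\; \sum_{\substack{(\theta_2,\dots,\theta_{t-1})\\ v_2(r_1)>v_2(r_j)\,\forall j}} \Bigl(\hat r_1\cdots\hat r_{t-1} - \frac{\hat r_1\cdots\hat r_{t-1}}{\lcm(r_1,\dots,r_{t-1})}\Bigr),
\]
where I have simply restricted to the sub-collection of orbit-tuples appearing in $O_h(P)$ and discarded the (nonnegative) remaining terms. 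Summing over $Q\in h^{-1}(P)$ — equivalently over all choices of $\theta_1$ — turns the left side into $R_{h_1^{t-1}}(h^{-1}(P))$ and the right side into $\sum_{(\theta_1,\dots,\theta_{t-1})\in O_h(P)}\bigl(\hat r_1\cdots\hat r_{t-1} - \frac{\hat r_1\cdots\hat r_{t-1}}{\lcm(r_1,\dots,r_{t-1})}\bigr)$.

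**Completing the argument.** It now suffices to show that for each $(\theta_1,\dots,\theta_{t-1})\in O_h(P)$ we have
\[
\hat r_1\cdots\hat r_{t-1} - \frac{\hat r_1\cdots\hat r_{t-1}}{\lcm(r_1,\dots,r_{t-1})} \;\ge\; \frac{\hat r_1\cdots\hat r_{t-1}}{\lcm(r_1,\dots,r_{t-1})},
\]
i.e.\ $\lcm(r_1,\dots,r_{t-1})\ge 2$, which is immediate since the defining condition $v_2(r_1)>v_2(r_j)$ forces $r_1$ to be even, hence $\lcm(r_1,\dots,r_{t-1})\ge 2$. (Equivalently: the $x$-orbit of each such $U$ has length $\ge 2$, so at least half its elements are "lost" when passing to orbits.) Summing this term-by-term inequality over $O_h(P)$ and combining with the displayed bound yields the claim.

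**Main obstacle.** The only genuinely delicate point is the bookkeeping in the first step: making sure that the quantity $R_{h_1^{t-1}}(h^{-1}(P))$ is correctly identified with the sum over $Q$ of (tuples with $u_1$ in the orbit of $Q$) minus (orbits of such tuples), and in particular that the partition of the relevant $(t-1)$-tuples by their first coordinate exactly matches the partition of the fiber $(h_1^{t-1})^{-1}(h^{-1}(P))$ indexed by $h^{-1}(P)$. This follows from the compatibility of the bijections $\psi_{H_{t-1},\tilde Q}$ and $\psi_{H_1,\tilde Q}$ noted after Lemma~\ref{lem:correspondence}, together with the fact that the $x$-orbit length of a tuple equals $\lcm$ of its coordinate orbit lengths and that $\hat r_1\cdots\hat r_{t-1}$ correctly counts tuples with prescribed coordinate orbits even when some $\theta_i$ coincide. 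Once that identification is in place, the remaining inequality is the elementary observation above.
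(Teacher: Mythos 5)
There is a genuine gap, and it sits exactly at the point you flag as the ``only delicate point.'' Your identification of $R_{h_1^{t-1}}(Q)$ is wrong: the ramification index of a point $\hat Q$ of $Y_{t-1}$ over $Q\in h^{-1}(P)$ under $h_1^{t-1}$ is $\abs{\hat\theta}/r_1$ (the $x$-orbit length on tuples divided by the orbit length $r_1$ of the first coordinate), not $\abs{\hat\theta}$. Consequently
\[
R_{h_1^{t-1}}(Q)\;=\;\sum_{(\theta_2,\dots,\theta_{t-1})}\frac{\hat r_1\cdots\hat r_{t-1}}{\lcm(r_1,\dots,r_{t-1})}\Bigl(\frac{\lcm(r_1,\dots,r_{t-1})}{r_1}-1\Bigr),
\]
which is the content of Lemma~\ref{cor:RH-up-left}; the quantity you wrote, $\hat r_1\cdots\hat r_{t-1}-\hat r_1\cdots\hat r_{t-1}/\lcm(r_1,\dots,r_{t-1})$, is the contribution of these orbits to $R_{h_{t-1}}(P)$, which is larger by roughly a factor of $r_1$. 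With the corrected formula your term-by-term comparison requires $\lcm(r_1,\dots,r_{t-1})/r_1\ge 2$ rather than $\lcm(r_1,\dots,r_{t-1})\ge 2$, and this \emph{fails} for tuples in $O_h(P)$ in which every $r_j$ divides $r_1$ (e.g.\ $r_1$ even and all $\theta_j$, $j\ge 2$, of odd length dividing $r_1$): for such tuples the right-hand contribution is exactly $0$ while the left-hand term $\hat r_1\cdots\hat r_{t-1}/\lcm(r_1,\dots,r_{t-1})$ is positive. So no term-by-term argument over $O_h(P)$ alone can succeed; the inaccuracy in the first step is what makes the final elementary inequality look sufficient.

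The paper's proof repairs precisely this deficit. It sets $\oline O_h(P)\subseteq O_h(P)$ to be the tuples with $r_j\divides r_1$ for all $j$ (the ones contributing $0$ to $R_{h_1^{t-1}}$), and defines a map $\phi$ on $\oline O_h(P)$ swapping the first entry with a later entry of minimal length. Because of the hypothesis $v_2(r_1)>v_2(r_j)$, the image $\phi(\oline O_h(P))$ is disjoint from $O_h(P)$, and each swapped tuple has $\lcm/r_1'\ge 2$ for its new first entry $r_1'$; since $\phi$ preserves both $\hat r_1\cdots\hat r_{t-1}$ and the lcm, summing the (corrected) terms over $\phi(\oline O_h(P))\cup\bigl(O_h(P)\setminus\oline O_h(P)\bigr)$ recovers $\sum_{O_h(P)}\hat r_1\cdots\hat r_{t-1}/\lcm(r_1,\dots,r_{t-1})$ as a lower bound for $R_{h_1^{t-1}}(h^{-1}(P))$. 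If you fix your Step 1 to agree with Lemma~\ref{cor:RH-up-left}, you will need some such compensation mechanism for the vanishing terms; this is where $t\ge 3$ is used (so that a swap is available).
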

The proof relies on the following lemma.
\begin{lem}\label{cor:RH-up-left}
Let $h:Y_1\ra Y$ be a degree $\ell$ covering with monodromy group $G\in\{A_\ell,S_\ell\}$ acting on $S$. 
Let $t\geq 2$, and $P$ be a point of $Y$, and $x\in G$ a branch cycle over $P$. 
Then 
\[ R_{h_1^{t-1}}(h^{-1}(P)) =  \sum_{(\theta_1,\ldots,\theta_{t-1})\in \Orb_S(x)^{t-1}} \frac{\hat r_1\cdots \hat r_{t-1}}{\lcm(r_1,\ldots,r_{t-1})}\Bigl(\frac{\lcm(r_1,\ldots,r_{t-1})}{r_1}-1\Bigr),\]
where $r_i := \abs{\theta_i}$ and 
$\hat r_i := r_i - \abs{\{j\suchthat j<i, \theta_j = \theta_i\}}$ for $i=1,\ldots, t-1$. 
\end{lem}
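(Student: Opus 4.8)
The plan is to reduce the claimed identity to two elementary orbit-counts by way of the chain rule \eqref{equ:chain}. View the three coverings $h=h_1\colon Y_1\to Y$, $h_{t-1}\colon Y_{t-1}\to Y$ and $h_1^{t-1}\colon Y_{t-1}\to Y_1$ inside the Galois covering $\tilde h_1\colon\tilde Y_1\to Y$ with group $G$, and fix $\tilde Q\in\tilde h_1^{-1}(P)$ whose inertia group is $\langle x\rangle$. Since $\tilde h_1$ factors through each of $h_1$ and $h_{t-1}$ (as $Y_1=\tilde Y_1/H_1$ and $Y_{t-1}=\tilde Y_1/H_{t-1}$), Lemma~\ref{lem:correspondence} together with the discussion in Section~\ref{sec:prelim} shows that the ramification of $h_1$ (resp.\ of $h_{t-1}$) over $P$ is described by the orbits of $x$ on $S$ (resp.\ on $S^{(t-1)}$), so that $R_{h_1}(P)=\ell-\abs{\Orb_S(x)}$ and $R_{h_{t-1}}(P)=\abs{S^{(t-1)}}-\abs{\Orb_{S^{(t-1)}}(x)}$. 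Writing $m:=\deg(h_1^{t-1})=\abs{S^{(t-1)}}/\ell=(\ell-1)(\ell-2)\cdots(\ell-t+2)$ and applying \eqref{equ:chain} to the factorization $h_{t-1}=h_1\circ h_1^{t-1}$ gives, using $\abs{S^{(t-1)}}=m\ell$,
\[
R_{h_1^{t-1}}(h^{-1}(P))=R_{h_{t-1}}(P)-m\,R_{h_1}(P)=m\,\abs{\Orb_S(x)}-\abs{\Orb_{S^{(t-1)}}(x)}.
\]

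Next, expanding the right-hand side of the lemma and using $\hat r_1=r_1$ shows it equals $\sum_{(\theta_1,\ldots,\theta_{t-1})}\hat r_2\cdots\hat r_{t-1}\;-\;\sum_{(\theta_1,\ldots,\theta_{t-1})}\frac{\hat r_1\cdots\hat r_{t-1}}{\lcm(r_1,\ldots,r_{t-1})}$, both sums over $(\theta_1,\ldots,\theta_{t-1})\in\Orb_S(x)^{t-1}$. So it suffices to establish the two identities $\sum_{(\theta_1,\ldots,\theta_{t-1})}\frac{\hat r_1\cdots\hat r_{t-1}}{\lcm(r_1,\ldots,r_{t-1})}=\abs{\Orb_{S^{(t-1)}}(x)}$ and $\sum_{(\theta_1,\ldots,\theta_{t-1})}\hat r_2\cdots\hat r_{t-1}=m\,\abs{\Orb_S(x)}$. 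Both follow by sorting the tuples $U=[u_1,\ldots,u_{t-1}]\in S^{(t-1)}$ according to the ``orbit profile'' $(\theta_1,\ldots,\theta_{t-1})$ defined by $u_i\in\theta_i$. For the first identity: choosing $u_1,\ldots,u_{t-1}$ in order, at step $i$ we exclude from $\theta_i$ the previously chosen elements lying in $\theta_i$, of which there are $\abs{\{j<i:\theta_j=\theta_i\}}$, so the number of $U$ with a given profile is exactly $\hat r_1\cdots\hat r_{t-1}$; since $x$ permutes each $\theta_i$, the $x$-orbit of such a $U$ stays inside this set and has length exactly $\lcm(r_1,\ldots,r_{t-1})$ (as $x^k$ fixes $U$ iff $r_i\mid k$ for all $i$), so each profile contributes $\hat r_1\cdots\hat r_{t-1}/\lcm(r_1,\ldots,r_{t-1})$ orbits, and summing over profiles yields $\abs{\Orb_{S^{(t-1)}}(x)}$. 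For the second identity: by the same count, for a fixed orbit $\theta_1$ and fixed $u_1\in\theta_1$ the quantity $\hat r_2\cdots\hat r_{t-1}$ is the number of ordered tuples $(u_2,\ldots,u_{t-1})$ with $u_i\in\theta_i$ pairwise distinct and distinct from $u_1$; summing over $(\theta_2,\ldots,\theta_{t-1})$ removes the orbit constraints and leaves the number of ordered $(t-2)$-tuples of pairwise distinct elements of $S\setminus\{u_1\}$, namely $(\ell-1)(\ell-2)\cdots(\ell-t+2)=m$, independently of $\theta_1$; summing over $\theta_1\in\Orb_S(x)$ gives $m\,\abs{\Orb_S(x)}$. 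Substituting the two identities into the displayed formula for $R_{h_1^{t-1}}(h^{-1}(P))$ and factoring out $\hat r_1\cdots\hat r_{t-1}/\lcm(r_1,\ldots,r_{t-1})$ gives the stated equality; the degenerate case $t=2$ is included, where $m=1$, every term vanishes, and both sides equal $0$.

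The only nonroutine point is the combinatorial bookkeeping in the two orbit-counts — in particular, isolating the corrections $\hat r_i$ (rather than $r_i$) that account for coordinates forced to land in an orbit already used, and verifying that a tuple with a prescribed profile lies in an $x$-orbit of length exactly $\lcm(r_1,\ldots,r_{t-1})$. I do not expect a genuine obstacle: once the chain rule has reduced the statement to counting orbits of $x$ on $S$ and on $S^{(t-1)}$, everything is elementary permutation-group combinatorics.
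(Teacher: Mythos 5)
Your proposal is correct and follows essentially the same route as the paper: both reduce via the chain rule \eqref{equ:chain} to the identity $R_{h_1^{t-1}}(h^{-1}(P))=\frac{(\ell-1)!}{(\ell-t+1)!}\abs{\Orb_S(x)}-\abs{\Orb_{S^{(t-1)}}(x)}$ and then evaluate both orbit counts by sorting tuples in $S^{(t-1)}$ according to their orbit profile, with $\hat r_1\cdots\hat r_{t-1}$ tuples per profile in orbits of length $\lcm(r_1,\ldots,r_{t-1})$. The only difference is organizational — you expand the target sum into two pieces and match each to an orbit count, whereas the paper refines the partition of $\Orb_{S^{(t-1)}}(x)$ by the fibers of the first-coordinate projection — but the combinatorial content is identical.
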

\begin{proof}
As in Setup \ref{sec:setup},   the monodromy group of the natural projection $h_{t-1}:Y_{t-1}\ra Y$ is $G$ with its action on 
the set $S^{(t-1)}$ of $(t-1)$-tuples of distinct elements from $S$. 
Hence 
$h_{t-1}$ is of degree $\ell!/(\ell-(t-1))!$ and $h_1^{t-1}$ is of degree 
$(\ell-1)!/(\ell-(t-1))!$. 

By the chain rule \eqref{equ:chain}, one has $R_{h_1^{t-1}}(h^{-1}(P))  = R_{h_{t-1}}(P) - \frac{(\ell-1)!}{(\ell-(t-1))!}R_h(P)$. Since $R_{h_{t-1}}(P) = \frac{\ell!}{(\ell-(t-1))!} - \abs{\Orb_{S^{(t-1)}}(x)}$ and $R_{h}(P) = \ell - \abs{\Orb_S(x)}$, this gives
\begin{equation}\label{equ:only-orb}
R_{h_1^{t-1}}(h^{-1}(P))  = \frac{(\ell-1)!}{(\ell-(t-1))!}\abs{\Orb_S(x)} - \abs{\Orb_{S^{(t-1)}}(x)}. 
\end{equation}

Letting  $p:\Orb_{S^{(t-1)}}(x) \ra \Orb_S(x)$ denote the projection onto the first coordinate, we partition $\Orb_{S^{(t-1)}}(x)$ as the union of the fibers of $p$. 
Hence  \eqref{equ:only-orb} yields:  
\begin{equation}\label{equ:Rhk}
 R_{h_1^{t-1}}(h^{-1}(P)) = \sum_{\theta\in \Orb_S(x)} \Bigl( \frac{(\ell-1)!}{(\ell-(t-1))!} - \abs{p^{-1}(\theta)}\Bigr).
 \end{equation}
Given $\theta\in \Orb_S(x)$, we note that the total number of tuples $U=[u_1,\ldots,u_{t-1}]\in S^{(t-1)}$ contained in orbits in  $p^{-1}(\theta)$ is $\abs{\theta}\cdot \frac{(\ell-1)!}{(\ell-(t-1))!}$: for, there  are $\abs{\theta}$ choices for $u_1$, and $(\ell-1)!/(\ell-(t-1))!$ choices for the rest of the entries. 
It follows that $$\sum_{\hat\theta\in p^{-1}(\theta)}\abs{\hat\theta} = \abs{\theta}\cdot \frac{(\ell-1)!}{(\ell-(t-1))!}.$$ Hence \eqref{equ:Rhk} amounts to: 
\begin{equation}\label{equ:Rhk2}
 R_{h_1^{t-1}}(h^{-1}(P)) =  \sum_{\theta \in \Orb_S(x)}\sum_{\hat \theta\in p^{-1}(\theta)}\Bigl( \frac{\abs{\hat \theta}}{\abs{\theta}} - 1 \Bigr). 
\end{equation}
  
For orbits $\theta_1,\ldots,\theta_{t-1}$ denote $r_i:=\abs{\theta_i}$, and $\hat r_i := r_i - \abs{\{j\suchthat j<i, \theta_j = \theta_i\}}$ for $i=1,\ldots, t-1$. Let $U(\theta_1,\ldots,\theta_{t-1})$ be the set of orbits  of $x$ on $(\theta_1\times \cdots \times \theta_{t-1})\cap S^{(t-1)}$, so that  $p^{-1}(\theta)$ partitions into the sets $U(\theta_1,\ldots,\theta_{t-1})$ where $\theta_2,\ldots,\theta_{t-1}$ run through orbits of $x$, and $\theta_1 =\theta$. 
For fixed $\theta_1,\ldots, \theta_{t-1}$, the number of tuples in orbits in $U(\theta_1,\ldots,\theta_{t-1})$ is  $\hat r_1\cdots \hat r_{t-1}$, and the length of each orbit $\hat \theta\in U(\theta_1,\ldots,\theta_{t-1})$ is $\lcm(r_1,\ldots,r_{t-1})$, so that the number of orbits in $U(\theta_1,\ldots,\theta_{t-1})$ is $\hat r_1\cdots\hat r_{t-1}/\lcm(r_1,\ldots,r_{t-1})$. 
Hence \eqref{equ:Rhk2} yields: 
\begin{align*}
R_{h_1^{t-1}}(h^{-1}(P))  & = \sum_{\theta_1,\theta_2,\ldots,\theta_{t-1}\in \Orb_S(x)}\sum_{\hat \theta\in U(\theta_1,\ldots,\theta_{t-1})}\Bigl( \frac{\abs{\hat \theta}}{\abs{\theta_1}} -1 \Bigr) \\
&  = \sum_{\theta_1,\ldots,\theta_{t-1}\in \Orb_S(x)} \frac{\hat r_1\cdots \hat r_{t-1}}{\lcm(r_1,\ldots,r_{t-1})}\left(\frac{\lcm(r_1,\ldots,r_{t-1})}{r_1}-1 \right). \qedhere
\end{align*}
\end{proof}
\begin{proof}[Proof of Proposition \ref{lem:t-1-bound}]
For a tuple $(\theta_1,\ldots,\theta_{t-1})$, denote $r_i:=\abs{\theta_i}$ and let $\hat r_i$ denote $r_i-\abs{\{j\suchthat j<i, \theta_j=\theta_i\}}$, for $i=1,\ldots, t-1$. 
Let $\oline O_h(P)$ be the set of all $(\theta_1,\ldots,\theta_{t-1})\in O_h(P)$ such that 
$r_j\divides r_1$ for $j=2,\ldots,t-1$. 
As $t\geq 3$, we can define a map $\phi:\oline O_h(P)\ra \Orb_S(x)^{t-1}$ 
which swaps the first and $i$-th entry of  $(\theta_1,\ldots,\theta_{t-1})\in \oline O_h(P)$, 
where $2\leq i\leq t-1$ is the smallest index for which $r_i = \min_{2\leq j\leq t-1}(r_j)$. 
Note that since $v_2(r_1)>v_2(r_j)$ for all $j=2,\ldots,t-1$ and $(\theta_1,\ldots, \theta_{t-1})\in O_h(P)$, the image $\phi(\oline O_h(P))$ is disjoint from $O_h(P)$. 

Restricting the sum in Lemma \ref{cor:RH-up-left} to the set $ \phi(\oline O_h(P)) \cup O_h(P) \setminus \oline O_h(P)$ we get
\begin{equation}\label{equ:sum-restrict} R_{h_1^{t-1}}(h^{-1}(P)) \geq  \sum_{(\theta_1,\ldots,\theta_{t-1})\in\phi(\oline O_h(P)) \cup O_h(P) \setminus \oline O_h(P)}
\frac{\hat r_1\cdots\hat r_{t-1}}{\lcm(r_1,\ldots,r_t)}\left( \frac{\lcm(r_1,\ldots,r_{t-1})}{r_1} -1 \right).
\end{equation}
By the definitions of $\oline O_h(P)$ and $\phi$, we have $\lcm(r_1,\ldots,r_{t-1})/r_1\geq 2$ for every tuple $(\theta_1,\ldots,\theta_{t-1})$ in $\phi(\oline O_h(P))$ or in $O_h(P) \setminus \oline O_h(P)$. Hence \eqref{equ:sum-restrict} gives
\begin{equation}\label{equ:shorter-sum}
R_{h_1^{t-1}}(h^{-1}(P)) \geq  \sum_{(\theta_1,\ldots,\theta_{t-1})\in\phi(\oline O_h(P)) \cup O_h(P) \setminus \oline O_h(P)}
\frac{\hat r_1\cdots\hat r_{t-1}}{\lcm(r_1,\ldots,r_t)}. 
\end{equation}
Noting that the product $\hat r_1\cdots \hat r_{t-1}$ and $\lcm(r_1,\ldots,r_{t-1})$ are preserved by $\phi$, we deduce that the right hand side of \eqref{equ:shorter-sum} equals
$\sum_{(\theta_1,\ldots,\theta_{t-1})\in O_h(P)}\frac{\hat r_1\cdots\hat r_{t-1}}{\lcm(r_1,\ldots,r_{t-1})}$. 
\end{proof}

As a consequence of Lemma \ref{cor:RH-up-left}, the following example shows that when almost all entries are the same in the ramification types of a family of coverings $h:Y_1\ra \mP^1$ of genus $\leq 1$ and monodromy $A_\ell$ or $S_\ell$, then  $g_{Y_2}$ and hence  $g_{X_2}$ grow at most linearly with $\ell$: 
\begin{exam}\label{exam:almost-Gal}
Fix $\eps>0$, an integer $r\geq 3$, and consider  degree-$\ell$ coverings $h:Y_1\ra\mP^1$ of genus  $\leq 1$, monodromy group $A_\ell$ or $S_\ell$, and ramification type $[A_1,k_1^*], \ldots, [A_r,k_r^*]$,  
for lists $A_1,\ldots,A_r$ of positive integers whose sum is at most $\eps$, and positive integers $k_1, \ldots, k_r$. We claim that the genus of $Y_2$ and hence of $X_2$ is at most $r\eps \ell+r\eps^2/2$. Hence, when further fixing the multisets $A_i$ and integers $k_i$, $i=1,\ldots,r$, and letting the degree $\ell$ grow (by allowing the multiplicities of $k_i$  grow), the genus of coverings with such ramification type is bounded by a constant multiple of $\ell$. Many infinite families of such genus $1$ coverings with $\eps\leq 10$ are given by \cite[Theorem 1.1]{KLN}. 

To prove the claim, let $P_i$ denote the branch point of ramification $[A_i,k_i^*]$, $i=1,\ldots,r$. By Lemma \ref{cor:RH-up-left} for $t=2$, we have 
$$R_{h_1^2}(h^{-1}(P_i))=\sum_{r_1,r_2\in E_h(P_i)}(r_1,r_2)\bigl(\frac{r_2}{(r_1,r_2)}-1\bigr)=\sum_{r_1,r_2\in E_h(P_i)}\bigl(r_2-(r_1,r_2)\bigr),$$ 
for $i=1,\ldots,r$. Thus, this contribution is at most $$\sum_{r\in A_i}\Bigl((r-1)\bigl(\eps+\frac{\ell}{k_i}\bigr)+(k_i-1)\frac{\ell}{k_i}\Bigr)
< \eps^2+2\eps\ell$$ for all $i$. The total contribution is then smaller than $2r\eps\ell+r\eps^2$, and  hence $g_{Y_2}<r\eps\ell+r\eps^2/2$ by the Riemann--Hurwitz formula for $h_1^2$. 
\end{exam}


\section{Theorem \ref{thm:main} when $g_{Y_1}$ is bounded from below}\label{sec:large-genus}
The following lemma and proposition prove Theorem \ref{thm:main} for coverings $h:Y_1\ra Y$ for which $g_{Y_1}$ is large in comparison to $t$.  We use Setup \ref{sec:setup}, so that $X_t$ (resp., $Y_t$) is the quotient by the stabilizer of a $t$-set (resp., $t$-tuple of distinct elements). 
\begin{prop}\label{lem:t=3,4} 
There exist positive constants $c_2,d_2,\beta$ 
such that for every covering $h:Y_1\ra Y$ of degree $\ell$, with monodromy group $A_\ell$ or $S_\ell$, and genus $g_{Y_1}\geq \beta t^4$ (resp., $g_{Y_1}\geq 2$), 
one has
\[
g_{X_t}-g_{X_{t-1}} > (c_2\ell - d_2 t^8)\frac{\binom{\ell}{t}}{\binom{\ell}{2}}, \quad \text{for $t\geq 3$ (resp., $t=2$).}
\] 
Furthermore for $t\geq 3$, one can set $c_2=2$. 
\end{prop}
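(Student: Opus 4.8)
The plan is to combine the lower bound on $g_{X_t}-g_{X_{t-1}}$ coming from the correspondence $X_{t-1}\leftarrow Y_t\rightarrow X_t$ with the upper bound on $R_{\pi_t}$ from Proposition \ref{lem:pi-form}, using Proposition \ref{lem:t-1-bound} to cancel the main term. First I would write down, via Riemann--Hurwitz for the diagram in Setup \ref{sec:setup}, the genus of $Y_t$ in two ways: through $h_t:Y_t\to Y$ (degree $\ell!/(\ell-t)!$) and through $\pi_t:Y_t\to X_t$ (degree $t!$). Combining these with the chain rule \eqref{equ:chain} expresses $g_{Y_t}-t!\,g_{X_t}$ as (half of) $\sum_P R_{\pi_t}(f_t^{-1}(P))$, and similarly $g_{Y_{t-1}}-(t-1)!\,g_{X_{t-1}}$ in terms of $R_{\pi_{t-1}}$. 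The natural projection $h_1^{t-1}:Y_{t-1}\to Y_1$ then relates $g_{Y_{t-1}}$ to $g_{Y_1}$ via $2(g_{Y_{t-1}}-1)=\tfrac{(\ell-1)!}{(\ell-t+1)!}\,2(g_{Y_1}-1)+\sum_P R_{h_1^{t-1}}(h^{-1}(P))$. The idea is that the ``bad'' term $\binom{t}{2}\sum \hat r_1\cdots\hat r_{t-1}/\lcm(r_1,\dots,r_{t-1})$ appearing in the bound for $\sum_P R_{\pi_t}(f_t^{-1}(P))$ in Proposition \ref{lem:pi-form}(2) is, up to the factor $\binom{t}{2}$, bounded termwise by $\sum_P R_{h_1^{t-1}}(h^{-1}(P))$ by Proposition \ref{lem:t-1-bound}; and $\sum_P R_{h_1^{t-1}}(h^{-1}(P))$ is itself controlled by $g_{Y_{t-1}}$, which in turn grows linearly in $g_{Y_1}$ by the displayed identity. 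So under the hypothesis $g_{Y_1}\geq \beta t^4$ the linear-in-$\ell$ term (coming from the ramification of $h$, which always has $g_{Y_1}\geq 2$ forcing nontrivial total ramification, and more precisely $\sum_P R_h(P)=2(g_{Y_1}+\ell-1)\geq 2\ell+2$) dominates the error term $E_0 t^4 (\ell-2)!/(\ell-t)!$, and one reads off $c_2=2$ after normalizing by $\binom{\ell}{t}/\binom{\ell}{2}$.

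Concretely, the key steps in order would be: (i) express $2(g_{X_t}-g_{X_{t-1}})$ in the form $\tfrac{\binom{\ell}{t}}{\binom{\ell}{2}}\bigl(\text{linear term in }\ell\bigr)+\tfrac{1}{t!}\sum_P R_{\pi_t}(f_t^{-1}(P))-\tfrac{1}{(t-1)!}\sum_P R_{\pi_{t-1}}(\cdots)-(\text{correction from }h_1^{t-1})$, keeping careful track of the binomial normalizing factors; (ii) substitute the upper bound from Proposition \ref{lem:pi-form}(2) for $\sum_P R_{\pi_t}(f_t^{-1}(P))$; (iii) apply Proposition \ref{lem:t-1-bound} to bound the main term of that expression by $\binom{t}{2}\sum_P R_{h_1^{t-1}}(h^{-1}(P))$; (iv) use the Riemann--Hurwitz identity for $h_1^{t-1}$ together with $g_{Y_{t-1}}\geq g_{Y_1}$ (monotonicity, e.g. from \cite[Lemma 2.0.12]{GS} or the fact that $Y_{t-1}\to Y_1$ is a covering) to bound $\sum_P R_{h_1^{t-1}}(h^{-1}(P))$ from above by something like $C\,g_{Y_{t-1}}$, and re-expand $g_{Y_{t-1}}$ in terms of $g_{Y_1}$ and $\ell$; (v) collect terms: the positive linear-in-$\ell$ contribution $2\ell\cdot\binom{\ell}{t}/\binom{\ell}{2}$ (from $\sum_P R_h(P)$) survives, the $\pi_t$ main term is absorbed, and what remains to be dominated is $O(t^4)\cdot (\ell-2)!/(\ell-t)!+O(g_{Y_1})\cdot(\text{poly in }t)$ against $c_2\ell\cdot\binom{\ell}{t}/\binom{\ell}{2}$; choosing $\beta$ large forces the $g_{Y_1}$ term to be reabsorbed into the linear term it generated, and $d_2 t^8$ swallows the rest. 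The case $t=2$ is simpler: Proposition \ref{lem:pi-form}(1) gives $R_{\pi_2}(f_2^{-1}(P))=|\{\text{even }r\in E_h(P)\}|$ exactly, and one directly estimates $g_{X_2}$ using the identity \eqref{equ:M-RH-t=2} referenced later, so the only input needed is $g_{Y_1}\geq 2$.

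The main obstacle I anticipate is step (v): the bookkeeping of the several competing terms, each scaled by a different ratio of factorials/binomials, so that one genuinely verifies the linear-in-$\ell$ coefficient is at least $c_2=2$ rather than merely some positive constant. In particular one must check that the contribution of $R_{\pi_{t-1}}$ (subtracted with a $1/(t-1)!$ weight) does not eat into the $2\ell$ coefficient — this is where the precise matching between the $\pi_t$ main term and $R_{h_1^{t-1}}$ via Proposition \ref{lem:t-1-bound} is essential, and where the factor $\binom{t}{2}$ in Proposition \ref{lem:pi-form}(2) must be seen to be harmless after dividing by $t!$ and comparing with the $1/(t-1)!$-weighted lower-order pieces. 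A secondary technical point is justifying that, for the lower-degree error terms $E_0 t^4(\ell-2)!/(\ell-t)!$, the normalization $(\ell-2)!/(\ell-t)! = \binom{\ell}{t}/\binom{\ell}{2}\cdot t!/2 \cdot (1+o(1))$ makes them comparable to $d_2 t^{8}\binom{\ell}{t}/\binom{\ell}{2}$; this is routine but needs the hypothesis $\ell>t^2$ from Proposition \ref{lem:pi-form} to control the $o(1)$. Once these are in place, the stated bound with $c_2=2$ for $t\geq 3$ follows by assembling the inequalities.
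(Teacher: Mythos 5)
Your outline for $t\ge 3$ follows the paper's actual argument closely: the same starting inequality \eqref{equ:M-RH-inequ}, the same use of Riemann--Hurwitz for $h_1^{t-1}$, and the same cancellation of the main term of Proposition \ref{lem:pi-form}(2) against $(\ell-2t+1)R_{h_1^{t-1}}(h^{-1}(P))$ via Proposition \ref{lem:t-1-bound} (which is indeed harmless since $\ell-2t+1>\binom{t}{2}$ in the relevant range). But there is a genuine gap in your step (v): the error term in Proposition \ref{lem:pi-form}(2) is \emph{per branch point}, so after summing over $P$ you are left with $E_0\,B\,t^4\,(\ell-2)!/(\ell-t)!$, where $B$ is the number of branch points of $h$ — and $B$ is not $O(1)$; it can be as large as roughly $2g_{Y_1}+2\ell$. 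Writing the total error as $O(t^4)(\ell-2)!/(\ell-t)!$ drops this factor, and the bookkeeping as written does not close. The repair is a dichotomy on $B$: if $B\le 2\ell+C$ with $C$ of order $E_0t^4$, then $g_{Y_1}\ge\beta t^4$ with $\beta>2E_0$ makes the positive term $2(1-\eps)\frac{(\ell-1)!}{(\ell-t)!}(g_{Y_1}-1)$ dominate; if $B\ge 2\ell+C$, one instead feeds $2(g_{Y_1}-1)\ge -2\ell+B$ (Riemann--Hurwitz for $h$) back into that positive term, which then grows in $B$ with coefficient about $(1-\eps)(\ell-1)\gg E_0t^4$ and absorbs the error. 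Without separating these regimes you cannot extract $c_2=2$, or any positive constant.

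The second gap is the case $t=2$, which is not ``direct.'' In \eqref{equ:M-RH-t=2} the per-point quantity $-\abs{\{\text{even }r\in E_h(P)\}}+\sum_{r_1,r_2}(r_1-(r_1,r_2))$ can be as negative as $-\ell/s$, and one must prove (this is Lemma \ref{lem:2set-nonnegative}) that this happens only when $E_h(P)=[s^{\ell/s}]$ with $s$ even, the quantity being nonnegative otherwise. One then plays the number $B$ of such exceptional points against $g_{Y_1}$ via Riemann--Hurwitz; the cases $B\le 3$ and $B\ge 5$ fall out of the inequalities, but the borderline configuration $B=4$, $g_{Y_1}=2$ survives them and is eliminated only by listing the three ramification types consisting of $[2^{\ell/2}]$ four times plus one small extra fiber and checking \eqref{equ:M-RH-t=2} for each. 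Your proposal identifies neither the nonnegativity lemma nor this exceptional configuration, so ``$g_{Y_1}\ge 2$ is the only input needed'' understates what must actually be verified.
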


\begin{rem}\label{rem:main-RH} \label{lem:2set-genus-formula}
As in the setup of Proposition \ref{lem:t=3,4}, let $\pi_t$, $h_{t-1}$ and $h_{t-1}^t$ be the natural projections $Y_t\ra X_t$, $Y_{t-1}\ra Y$, and $Y_t\ra Y_{t-1}$, respectively. 
The proof relies on the following formula
\begin{equation}\label{equ:M-RH}
\begin{split}
2t!(g_{X_t}-g_{X_{t-1}}) & =  
 2(\ell-2t+1)(g_{Y_{t-1}}-1) 
  \\
 & \!\!\! + \sum_{P\in Y(\K)}\bigl( R_{h_{t-1}^t}({h_{t-1}}^{-1}(P)) + tR_{\pi_{t-1}}({f_{t-1}}^{-1}(P)) - R_{\pi_t}(f_t^{-1}(P)) \bigr) 
\end{split} 
\end{equation}
and its immediate consequence
\begin{equation}\label{equ:M-RH-inequ}
2t!(g_{X_t}-g_{X_{t-1}})  \geq 2(\ell-2t+1)(g_{Y_{t-1}}-1) - \sum_{P\in Y(\K)}R_{\pi_t}(f_t^{-1}(P)). 
\end{equation}
Indeed, \eqref{equ:M-RH} follows from the Riemann--Hurwitz 
formula for the natural projections $h_{t-1}^t$, $\pi_t$, and $\pi_{t-1}$:
\begin{align*}
2(g_{Y_t}-1) = & 2(\ell-t+1)(g_{Y_{t-1}}-1) + \sum_{P\in Y(\K)}R_{h_{t-1}^t}({h_{t-1}}^{-1}(P)) \\
2t(g_{Y_{t-1}}-1) = & 2t!(g_{X_{t-1}}-1) + t\sum_{P\in Y(\K)}R_{\pi_{t-1}}({f_{t-1}}^{-1}(P)),  \text{ and }\\
2(g_{Y_{t}}-1) = & 2t!(g_{X_{t}}-1) + \sum_{P\in Y(\K)}R_{\pi_{t}}(f_{t}^{-1}(P)),
\end{align*}
by subtracting the first and second equalities from the third. 

In particular for $t=2$ and $\ell\geq 5$, as $\pi_1$ is the identity map, $R_{\pi_2}(f_2^{-1}(P))$ is the number of even $r\in E_h(P)$ by Proposition \ref{lem:pi-form},  and as $R_{h_1^2}(h^{-1}(P)) = \sum_{r_1,r_2\in E_h(P)}\Bigl(r_1-(r_1,r_2)\Bigr)$ by Lemma \ref{cor:RH-up-left}, the equality \eqref{equ:M-RH} amounts to
\begin{equation}\label{equ:M-RH-t=2} 
4(g_{X_2}-g_{X_1}) = 2(\ell-3)(g_{Y_1}-1) + \sum_{P\in Y(\K)}\Bigl( -\abs{\{\text{even $r\in E_{h}(P)$}\}} + \sum_{r_1,r_2\in E_{h}(P)} \bigl(r_1 - (r_1,r_2)\bigr) \Bigr).
\end{equation}
\end{rem} 
The proof of Proposition \ref{lem:t=3,4} for $t=2$ relies on the following estimate of the latter sum: 
\begin{lem}\label{lem:2set-nonnegative}\label{lem:bound-rs}
Let $1\leq r_1\leq \ldots \leq r_u\leq \ell$ be integers whose sum is $\ell$. 
\begin{enumerate} 
\item If the quantity 
\[
\mu_r:=  -\abs{\{\text{even $r_i\suchthat 1\leq i\leq u$}\}} \,\,+ \!\!\!\sum_{1\leq i,j\leq u} \bigl(r_i - (r_i,r_j)\bigr)
\]
is negative then $r_1=\cdots = r_u=\ell/u$ and $\ell/u$ is even;
\item If $\sum_{i,j\leq u} \bigl(r_i-(r_i,r_j)\bigr)\leq \ell/2u$ then $r_1=\cdots = r_u=\ell/u$.
\end{enumerate} 
\end{lem}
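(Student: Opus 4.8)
The plan is to analyze the double sum $\sigma := \sum_{1\le i,j\le u}(r_i-(r_i,r_j))$ directly. Writing it as $\sigma = \sum_{i,j}(r_i - (r_i,r_j)) = u\ell - \sum_{i,j}(r_i,r_j)$ is one option, but more useful is to symmetrize: since $(r_i,r_j)=(r_j,r_i)$, one has $\sigma = \sum_{i<j}\bigl((r_i-(r_i,r_j)) + (r_j-(r_i,r_j))\bigr) + \sum_i (r_i - r_i) = \sum_{i<j}(r_i+r_j-2(r_i,r_j))$. Each term $r_i+r_j-2(r_i,r_j)$ is a nonnegative integer, and it is $0$ if and only if $r_i=r_j$; moreover when $r_i\neq r_j$ one has $r_i+r_j-2(r_i,r_j)\ge \max(r_i,r_j) - \min(r_i,r_j) \ge 1$, and in fact at least as large as $\max(r_i,r_j)/2$ when $r_i\ne r_j$ because the smaller of the two is at most half the larger whenever $(r_i,r_j)<\min(r_i,r_j)$, and equals $\min$ when it divides, so $r_i+r_j-2(r_i,r_j)\ge r_j - r_i \ge r_j/2$ if $r_i< r_j$ and $r_i \mid r_j$, while if $r_i\nmid r_j$ then $(r_i,r_j)\le r_i/2$ gives $r_i+r_j-2(r_i,r_j)\ge r_j \ge r_j/2$ as well. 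These elementary divisibility estimates are the technical core of the argument.

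For part (2): suppose the $r_i$ are not all equal. Then there is a largest index with $r_u > r_1$, and in particular $r_u > \ell/u$ (since the average is $\ell/u$ and not all equal), so $r_u > \ell/u \ge \ell/(2u)$... but we need a lower bound on $\sigma$, not on $r_u$. Instead, fix the largest value $r_u$ and note $r_1 < r_u$. Pairing index $u$ with the index $1$ (and more generally with every index $i$ for which $r_i<r_u$, of which there is at least one) gives, by the estimate above, $\sigma \ge r_u + r_1 - 2(r_u,r_1) \ge \lceil r_u/2\rceil \ge \lceil \ell/(2u)\rceil$ — here I use that $r_u\ge \lceil \ell/u\rceil$ strictly exceeds the average. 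Actually one must be slightly careful: I would argue that if $\sigma\le \ell/(2u)$ then taking any pair $i\ne j$ with $r_i\ne r_j$ forces $\max(r_i,r_j) \le \ell/u$, hence $r_j \le \ell/u$ for all $j$; combined with $\sum r_j = \ell$ and $r_j\le \ell/u$ this forces $r_j = \ell/u$ for all $j$, a contradiction. So part (2) reduces to: "$r_i+r_j-2(r_i,r_j) > \ell/(2u)$ whenever $\max(r_i,r_j) > \ell/u$," which follows from $r_i+r_j-2(r_i,r_j)\ge \max(r_i,r_j)/2$ proved above. For part (1): if $\mu_r<0$ then in particular $\sigma < \abs{\{\text{even }r_i\}} \le u$, so $\sigma \le u-1$; now if the $r_i$ are not all equal, the same pairing estimate gives $\sigma \ge \lceil\ell/(2u)\rceil$, and since $\ell \ge \sum r_i$ with $u$ summands not all equal forces (after discarding the degenerate small cases $u\le 2$, handled separately) $\ell/(2u) \ge u$, i.e. $\ell \ge 2u^2$, a contradiction — here I would simply observe that with $u$ distinct-valued summands present we may assume $\ell$ is large, and for the finitely many remaining small $(u,\ell)$ a direct check suffices, or better, sharpen the estimate: $\sigma \ge \sum_{i<j, r_i\ne r_j}\tfrac12\max(r_i,r_j)$, and bounding the number of "even $r_i$" by the number of *distinct* even values, one compares against a sum that dominates. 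So $\mu_r<0$ forces all $r_i$ equal to $\ell/u$; then $\mu_r = -\abs{\{\text{even }r_i\}}$, which is negative exactly when $\ell/u$ is even.

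The main obstacle I anticipate is pinning down the exact elementary inequality $r_i + r_j - 2(r_i,r_j) \ge (\text{something comparable to } \max(r_i,r_j))$ cleanly enough that it beats the "$\abs{\{\text{even }r_i\}}$" term in part (1), since that term can be as large as $u$ and $\sigma$ in the worst case (arithmetic-progression-like $r_i$) is only linear in $\max r_i$, not in $u^2$. The resolution is that the even $r_i$ also contribute to $\sigma$ through their mutual differences unless they too are all equal, so one should split: either all the even $r_i$ coincide (then $\abs{\{\text{even }r_i\}}$ equals their common multiplicity $m$, and those $m$ equal values combined with the constraint $\sum r_i=\ell$ force a contribution of size $\gtrsim \ell - m\cdot(\text{common value})$ from cross-terms with the odd ones unless everything is equal), or they don't all coincide and $\sigma$ already exceeds the number of distinct even values, which we then bootstrap. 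Getting this bookkeeping airtight — rather than the divisibility lemmas themselves, which are routine — is where the care is needed.
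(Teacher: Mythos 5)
Your part (2) is correct and is essentially the paper's own argument: when the $r_i$ are not all equal, the maximal entry satisfies $r_u>\ell/u$, and the single pair $(r_1,r_u)$ already contributes $r_u-(r_u,r_1)\ge r_u/2>\ell/(2u)$.

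Part (1), however, has a genuine gap. Your route to a contradiction needs $\lceil \ell/(2u)\rceil\ge u$, i.e.\ roughly $\ell\ge 2u^2$, and nothing forces this: for $r=[2^{u-1},4]$ one has $\ell=2u+2$, the number of even entries is $u$, and your single-pair estimate gives only $\sigma\ge 2$, far short of the $u$ you must beat (the true value is $\sigma=2(u-1)$, so the statement holds, but not by your bound). The fallback ``finitely many remaining small $(u,\ell)$'' is false, since $u$ is unbounded; and the proposed sharpening that replaces $\lvert\{\text{even }r_i\}\rvert$ by the number of \emph{distinct} even values is invalid --- multiplicity is exactly what matters. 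Indeed for $r=[1,2^{u-1}]$ there is one distinct even value but $u-1$ even entries, and $\sigma=u-1$ exactly, so the inequality is tight when counted with multiplicity and your bootstrap has no room. The paper's proof of (1) is correspondingly more delicate: it sets $s:=\gcd(r_1,\dots,r_u)$ and $S_i:=\sum_j\bigl(r_i-(r_i,r_j)\bigr)$; if no $r_i$ equals $s$ then every $S_i\ge 1$ and one is done; otherwise, letting $k$ be the multiplicity of the value $s$, it bounds $\sum_j(S_j-1)$ below by a concave quadratic in $k$ (using $S_j\ge k(r_j-s)$ for $r_j\ne s$ and $u-k\le(\ell-ks)/(2s)$), checks the endpoints $k=1$ and $k=\ell/s-2$, and finds the minimum value is $-1$, attained only when $s=1$ --- in which case some $r_i=1$ is odd, so the number of even entries is at most $u-1$ and $\mu_r\ge 0$ after all. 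That final parity observation is precisely what rescues the tight cases above, and your sketch contains no substitute for it.
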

\begin{proof} 
Let $S_i := \sum_{j=1}^u \bigl(r_i - (r_i,r_j)\bigr)$ for $i=1,\ldots,u$, and let $s$ be the greatest common divisor of  $r_1,\ldots , r_u$.
Since $r_u$ (resp.~$r_1$) is maximal (resp., minimal) among the $r_i$'s, if $r_u\neq s$, then $r_u>\ell/u>r_1$ and 
$S_u\geq r_u - (r_u,r_1)\geq r_u - r_u/2 = r_u/2 > \ell/2u,$  proving (2). 

If no $r_i$ equals $s$ then $S_i\geq 1$ for each $i$, so that $\mu_r$ is nonnegative.
 Now assume there is an $i$ for which $r_i$ divides all $r_j$'s (so that $r_i=s$).  If all the $r_i$'s
 equal one another then obviously $\mu_r$ is $0$ if $s$ is odd, and $-\ell/s$ if $s$ is even.
 If there are $k$ values $i$ for which $r_i = s$ (with $0<k<\ell/s$), then if $r_j\ne s$ we get
 $S_j \geq k(r_j-s)$,  so since   $\sum_{j: r_j\ne s} r_j = (\ell-ks)$  it follows that
 $\sum_j S_j \geq k(\ell-ks)-ks\cdot \abs{\{ j \suchthat r_j\ne s\}}$.
Since in addition $u-k=\abs{\{ j \suchthat r_j\ne s\}} \leq (\ell-ks)/2s$, we get 
\begin{align*}
\mu_r\geq \sum_j (S_j-1) &\geq k(\ell-ks) - ks(u-k) -\Bigl( k + (u-k)\Bigr)
\\
& \geq  k(\ell-ks) - k - (ks+1)\frac{\ell-ks}{2s}.
\end{align*}
 For fixed $s$ and $\ell$, this is a quadratic polynomial in $k$ with leading coefficient  $-s/2$,  so on
 any interval it is minimized only at an endpoint.  In particular, as  $1\leq k\leq\ell/s - 2$  the
 function is minimized only when $k$ is either $1$ or $\ell/s-2$, where its values are  $(\ell-s-1)/2 - \ell/(2s)$
 and  $\ell-2s-\ell/s+1$.  If we now fix $\ell$ and vary $s$, then these values are minimized only when $s$ is either as small or as big as possible; for $s=1$ the values are $-1$, for $s=\ell/3$ they are $\ell/3-2$; note that for $s=\ell/2$, we have $r_1=r_2=\ell/2$.
 Since the sum of the $S_j-1$'s is an integer, it follows that it is nonnegative unless $s=1$ and the
 sum equals $-1$, but in that case some $r_i$ is odd so the sum we want is again nonnegative.
\end{proof}
We separate the proof according to whether $t=2$ or $t>2$. For $t=2$ we use Lemma \ref{lem:2set-nonnegative} to bound $R_{\pi_2}$ while for $t>2$ we use the bound from Proposition \ref{lem:t-1-bound}.  In both cases the proof splits to cases according to the number of branch points. 
\begin{proof}[Proof of Proposition \ref{lem:t=3,4} for $t=2$]
For $t=2$ we assume $g_{X_1}\geq 2$. We will show $4(g_{X_2}-g_{X_1})\geq \ell/2-6$. Since the claim is trivial if $\ell/2<6$, we may assume $\ell\geq 12$.  By equality \eqref{equ:M-RH-t=2} of Remark \ref{lem:2set-genus-formula} and Lemma \ref{lem:2set-nonnegative}  we have
\begin{equation}\label{equ:gX-bound-br-pt} 4(g_{X_2}-g_{X_1}) \geq 2(\ell-3)(g_{Y_1}-1) - \sum_{B_s}\frac{\ell}{s_P} \geq 2(\ell-3)(g_{Y_1}-1) - \frac{B\ell}{2}
\end{equation}
where $B_s$ is the set of branch points $P$ such that $E_{h}(P)=[s_P^{n/s_P}]$ with $s_P$ even, and 
$B:=\abs{B_s}$.
Since Riemann--Hurwitz for  $h$ gives
\begin{equation}\label{equ:two-pt-lower}
2(g_{Y_1}-1) \geq 2\ell(g_Y-1) + \sum_{P\in B_s} \Bigl(\ell-\frac{\ell}{s_P}\Bigr) \geq -2\ell + \frac{B\ell}{2},
\end{equation}
we have
\begin{align*}
4(g_{X_2}-g_{X_1}) & \geq  2(\ell-3)(g_{Y_1}-1) - \frac{B\ell}{2} \geq  (\ell-3)\Bigl(-2\ell+\frac{B\ell}{2}\Bigr) - \frac{B\ell}{2} \\
& = \ell\Bigl( 6-2\ell+\frac{B(\ell-4)}{2}\Bigr). 
\end{align*}
If $B\geq 5$ then the right hand side is at least $\ell^2/2 -4\ell>\ell/4-6.$
If $B\leq 3$ then \eqref{equ:gX-bound-br-pt} yields
\[
4(g_{X_2}-g_{X_1})\geq 2(\ell-3)(g_{Y_1}-1) - 3\ell/2,
\]
and since $g_{Y_1}=g_{X_1}\geq 2$ we get  $4(g_{X_2}-g_{X_1}) \geq  \frac{\ell}{4}-6$.
The only remaining case is $B=4$.
In this case, 
the expression $2(\ell-3)(g_{Y_1}-1) - \sum_{P\in B_s}\ell/s_P$
in \eqref{equ:gX-bound-br-pt} is bigger than $\ell/4-6$ if $g_{Y_1}=g_{X_1}\geq 3$ or if $s_P\geq 4$ for some $P\in B_s$.
Thus, we may assume that $g_{Y_1}=g_{X_1}=2$ and there are four branch points of type $[2^{\ell/2}]$.
The only ramification types which satisfy these conditions and 
Riemann--Hurwitz for $h:X_1\ra Y$ are
\[ [2^{\ell/2}]^4, [1^{\ell-4},2^2]; \quad \text{or} \quad [2^{\ell/2}]^4, [1^{\ell-3},3]; \quad\text{or} \quad
[2^{\ell/2}]^4, [1^{\ell-2},2], [1^{\ell-2},2].\]
By 
equality \eqref{equ:M-RH-t=2},
in all three cases one has
$4(g_{X_2}-g_{X_1}) > \ell/4 - 6$ for $\ell\geq 12$.  
\end{proof}
\begin{proof}[Proof of Proposition \ref{lem:t=3,4} for $t\geq 3$]
Let $E_0$ be the constant from Proposition \ref{lem:pi-form}. 
Note that we will pick $c_2=2$ and let $d_2\geq \max\{4E_0,1\}$ be a constant which will be determined by the proof. Since it suffices to prove the proposition when $2\ell-d_2 t^8\geq 0$, we can assume $\ell\geq d_2t^8/2\geq \max\{t^8/2, 2E_0t^4+1\}$. 
By  \eqref{equ:M-RH-inequ}, we have
\begin{equation*}\label{equ:previous}
2t!(g_{X_t}-g_{X_{t-1}})  \geq 2(\ell-2t+1)(g_{Y_{t-1}}-1) - \sum_{P\in Y(\K)}R_{\pi_t}(f_t^{-1}(P)). 
\end{equation*}
 Combining this inequality with the Riemann--Hurwitz formula for the natural projection $h_1^{t-1}:Y_{t-1}\ra Y_1$ gives 
\begin{equation}\label{equ:XtXt-1Y1} 
\begin{split}
2t!(g_{X_t}-g_{X_{t-1}})  &\geq  2(\ell-2t+1)\frac{(\ell-1)!}{(\ell-t+1)!}(g_{Y_1}-1)  \\ &\quad + \sum_{P\in Y(\K)}\Bigl( (\ell-2t+1)R_{h_1^{t-1}}(h^{-1}(P)) - R_{\pi_t}(f_t^{-1}(P)) \Bigr).
\end{split}
\end{equation}
To show that the right hand side is positive and large we will cancel out the contribution $R_{\pi_t}(f_t^{-1}(P))$ using the former two terms. 
As $\ell\geq t^2$, we may apply 
 Proposition \ref{lem:pi-form}, and Proposition \ref{lem:t-1-bound}, to obtain $R_{\pi_t}(f_t^{-1}(P)) \leq \binom{t}{2}R_{h_1^{t-1}}(h^{-1}(P)) + E_0t^4\frac{(\ell-2)!}{(\ell-t)!}$, 
for every point $P$ of $Y$. 
Hence \eqref{equ:XtXt-1Y1} gives
\begin{equation}\label{equ:BE(t)}
\begin{split}
2t!(g_{X_t} - g_{X_{t-1}}) & \geq 2(1-\eps)\frac{(\ell-1)!}{(\ell-t)!}(g_{Y_1}-1) -  E_0 Bt^4\frac{(\ell-2)!}{(\ell-t)!} \\
& \quad+ \sum_{P\in Y(\K)}\left(\ell-2t+1-\binom{t}{2}\right)R_{h_1^{t-1}}(h^{-1}(P))
\end{split}
\end{equation}
where $B$ denotes the number of branch points of $h$ and $\eps:=t/(\ell-t+1)$. 
Since $\ell\geq t^8/2$, we have $\ell-2t+1>\binom{t}{2}$ and hence  \eqref{equ:BE(t)} gives 
\begin{equation}\label{equ:BE(t)2}
2t!(g_{X_t} - g_{X_{t-1}}) \geq 2(1-\eps)\frac{(\ell-1)!}{(\ell-t)!}(g_{Y_1}-1) - E_0 B t^4\frac{(\ell-2)!}{(\ell-t)!}.
\end{equation}
To cancel out the term $E_0 B t^4(\ell-2)!/(\ell-t)!$, we divide the argument into two cases according to how large $B$ is. 
First assume $B\leq 2\ell + C$, where $C:=2(E_0 t^4+1)/(1-\eps)$. 
Set $\beta := 2E_0+1$. 
Note that as $\ell>3t$, one has $\eps<1/2$ and hence $C/2<2E_0t^4+3<\beta t^4$.
Assuming $g_{Y_1}\geq \beta t^4$,  since $\beta t^4>C/2$, 
since $B\leq 2\ell + C$ and $C(1-\eps)=2(1+E_0t^4)$,  \eqref{equ:BE(t)2} gives
\begin{equation}\label{equ:gamma1}
\begin{split}
2t!(g_{X_t}-g_{X_{t-1}}) & \geq 2\frac{(\ell-1)!}{(\ell-t)!}(1-\eps)\frac{C}{2} - E_0 (2\ell+C) t^4\frac{(\ell-2)!}{(\ell-t)!} \\
& = 2\frac{(\ell-1)!}{(\ell-t)!}(E_0t^4+1) - E_0(2\ell+C)t^4\frac{(\ell-2)!}{(\ell-t)!} \\
& = \frac{(\ell-2)!}{(\ell-t)!}\Bigl(2\ell-2 - 2E_0 t^4 - E_0 C t^4\Bigr) > (2\ell-\gamma_1t^8)\frac{(\ell-2)!}{(\ell-t)!}
\end{split}
\end{equation}
for some absolute constant $\gamma_1>0$.

Henceforth assume  $B\geq 2\ell + C.$  
Since $2(g_{Y_1}-1) \geq -2\ell + B$ by the Riemann--Hurwitz formula for $h$, \eqref{equ:BE(t)2} gives
\begin{equation}\label{equ:temp2}
\begin{split}
2t!(g_{X_t}-g_{X_{t-1}})\geq & \frac{(\ell-2)!}{(\ell-t)!}\Bigl((1-\eps)(-2\ell+B)(\ell-1) - E_0 B t^4\Bigr) \\ 
 = & \frac{(\ell-2)!}{(\ell-t)!}\Bigl(B\bigl((1-\eps)(\ell-1) - E_0 t^4\bigr) -2\ell(1-\eps)(\ell-1)\Bigr). 
\end{split}
\end{equation}
Since $B\geq 2\ell + C$,  and $(1-\eps)(\ell-1) > (\ell-1)/2> E_0t^4$,  \eqref{equ:temp2} gives
\begin{align*}
2t!(g_{X_t}-g_{X_{t-1}})\geq & 
\frac{(\ell-2)!}{(\ell-t)!}\Bigl((2\ell+C)\bigl((1-\eps)(\ell-1) - E_0t^4\bigr) -2\ell(1-\eps)(\ell-1)\Bigr) \\
= & \frac{(\ell-2)!}{(\ell-t)!}\Bigl(-2E_0t^4\ell + C(1-\eps)(\ell-1)-CE_0 t^4 \Bigr) 
\end{align*}
Since  $C(1-\eps) = 2(1+E_0 t^4)$, we get 
\begin{equation}\label{equ:gamma2}
2t!(g_{X_t}-g_{X_{t-1}})\geq  \frac{(\ell-2)!}{(\ell-t)!}\Bigl(2\ell -2 -2E_0t^4-E_0Ct^4 \Bigr)>  (2\ell-\gamma_2t^8)\frac{(\ell-2)!}{(\ell-t)!}
\end{equation}
for some absolute constant $\gamma_2>0$. 
The result follows with $d_2:=\max\{\gamma_1,\gamma_2,1,4E_0\}$ from \eqref{equ:gamma1} and \eqref{equ:gamma2}. 
\end{proof}

\section{Almost Galois ramification}\label{sec:almost-Galois}
In this section we prove severe restrictions on the ramification of a degree-$\ell$
covering $h:Y_1\ra Y$ with monodromy group in $\{A_\ell,S_\ell\}$ for which the
induced curve $Y_2$ has genus at most a constant multiple of $\ell$.
The argument in this section is based on the method introduced in~\cite{DZ}.

We first show that the ramification indices over every point behave similarly to ramification in Galois extensions. More precisely, for every point $P$ of $Y$, either (1) all but a bounded number of bounded elements in $E_h(P)$ equal a common value, or (2) $E_h(P)$ contains only a small number of small values.
We use Setup \ref{sec:setup}.
\begin{prop} \label{lem:DZ}
There exists a constant $\lambda_{1}$ satisfying the following property. 
Let $\alpha>0$ be an integer,
and $h:Y_1\ra Y$ a covering of degree $\ell\geq \lambda_{1}\alpha^2$ with monodromy group $G\in\{A_\ell, S_\ell\}$, 
such that the 
quotient $Y_2$ of the Galois closure of $h$ by a two-point stabilizer of $G$ satisfies
$g_{Y_2}\le \alpha\ell$.
Then 
any point $P$ of $Y$ satisfies one of the following:
\begin{enumerate}
\item there is some $k$ with $1\le k\le 6$ such that the number of $h$-preimages of $P$ with ramification index $k$ is at least
$\ell/k - 2(\alpha+1)(k+1)$;
\item for each $k$ with $1\le k\le 6$, the number of $h$-preimages of $P$
with ramification index $k$ is at most $4(\alpha+1)$.
\end{enumerate}
\end{prop}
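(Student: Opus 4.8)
The plan is to convert the hypothesis $g_{Y_2}\le\alpha\ell$ into a uniform bound on the Riemann--Hurwitz contribution of the natural projection $h_1^2\colon Y_2\to Y_1$ over a single point $P$, and then extract the combinatorial dichotomy from that bound by an elementary analysis of multisets of ramification indices. Concretely, by Lemma~\ref{cor:RH-up-left} applied with $t=2$ one has, for each point $P$ of $Y$,
\[
R_{h_1^2}(h^{-1}(P)) = \sum_{r_1,r_2\in E_h(P)}\bigl(r_1-(r_1,r_2)\bigr),
\]
and summing over all branch points, the Riemann--Hurwitz formula for $h_1^2$ (which has degree $\ell-1$) gives $\sum_P R_{h_1^2}(h^{-1}(P)) = 2(g_{Y_2}-1) - 2(\ell-1)(g_{Y_1}-1) \le 2\alpha\ell + 2(\ell-1)$ since $g_{Y_1}\ge 0$; using also that $g_{Y_1}\le g_{Y_2}$ is not needed, just $g_{Y_1}\ge 0$ and a crude lower bound $g_{Y_1}-1\ge -1$. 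Hence there is a constant $C$ (one may take $C=2\alpha+3$, say) with $R_{h_1^2}(h^{-1}(P))\le C\ell$ for \emph{every} $P$. The first step, then, is to record this: for each fixed $P$, writing $E_h(P)=[r_1,\dots,r_u]$ with $\sum r_i=\ell$, we have $\sum_{i,j}(r_i-(r_i,r_j))\le C\ell$ with $C=O(\alpha)$.

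**The second step** is the purely arithmetic core: show that a partition $[r_1,\dots,r_u]$ of $\ell$ with $\sum_{i,j}(r_i-(r_i,r_j))$ small must either be dominated by copies of a single small value $k$ (case (1)) or contain only few small values (case (2)). The key observation is that if two indices $r_i, r_j$ satisfy $r_j\nmid r_i$ then $r_i-(r_i,r_j)\ge r_i/2$ — indeed $(r_i,r_j)\le r_i/2$ in that case — and more generally $r_i-(r_i,r_j)\ge r_i - \max\{d: d\mid r_i,\ d<r_i\}\ge r_i/2$ whenever $(r_i,r_j)<r_i$. So for any index $i$, the contribution $\sum_j (r_i-(r_i,r_j))$ is at least $(r_i/2)\cdot\#\{j: r_j\nmid r_i\}$. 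Now fix attention on the ``small'' part: let $m_k$ be the number of preimages with ramification index $k$ for $1\le k\le 6$, and suppose we are not in case (2), so some $m_k\ge 4(\alpha+1)$. One argues that there is essentially only one such ``heavy'' small value: if two distinct $k,k'\le 6$ both had $m_k,m_{k'}$ large, then (since neither divides the other unless one is $1$, $2$, $4$ or $1$, $2$, $3$, $6$ — but even then, $k$ with $k'\nmid k$ can be chosen, or one handles the divisor chains directly) the cross terms $\sum$ over pairs with index $k$ vs $k'$ force $\sum_{i,j}(r_i-(r_i,r_j))$ to exceed $C\ell$ once $\ell\ge\lambda_1\alpha^2$, by a counting estimate of the form (number of heavy pairs) $\times$ (at least a constant gap) versus $C\ell$. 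This pins down a unique $k\le 6$ with $m_k$ large; call it the exceptional value. Then, for every index $r$ in $E_h(P)$ with $r\ne k$, every pair $(r,k)$ with $k\nmid r$ contributes $\ge r/2$ if... — more carefully, one bounds $\sum_{r\ne k, k\nmid r} r \le 2C\ell/m_k + (\text{correction})$, which together with $m_k\ge 4(\alpha+1)$ and $C=O(\alpha)$ forces $\sum_{r\ne k}r$ to be $O(\alpha^2)$ — hence, since $\sum r = \ell$, we get $k\cdot m_k \ge \ell - O(\alpha^2)$, i.e. $m_k\ge \ell/k - O(\alpha^2/k)$; adjusting constants gives the stated bound $m_k\ge \ell/k - 2(\alpha+1)(k+1)$ after choosing $\lambda_1$ large enough. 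One must also separately argue that values $r>6$ but with $k\mid r$ cannot accumulate: if $k\mid r$ with $r>6\ge k$ then $r/k\ge 2$ so $r-(r,k)=r-k \ge r/2$, so such indices contribute $\ge r/2$ per pair with one of the $m_k$ copies of $k$, again forcing $\sum_{r>6,\ k\mid r} r = O(\alpha^2)$.

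**The main obstacle** I anticipate is the bookkeeping in the second step around the divisor chains $1\mid 2\mid 4$ and $1\mid 2, 1\mid 3, 2\mid 6, 3\mid 6$ within $\{1,\dots,6\}$: the crude inequality $r_i-(r_i,r_j)\ge r_i/2$ is only available when $r_j\nmid r_i$, and among small values there are genuine divisibilities, so one cannot simply say ``all cross pairs of distinct small heavy values are expensive.'' The right move is probably to order the heavy small values by size and, for the unique-exceptional-value argument, compare each candidate $k$ against a value it does \emph{not} divide (e.g. if both $m_2$ and $m_3$ were large, use that $3\nmid 2$, forcing the pairs $(2,3)$ — contribution $2-(2,3)=2-1=1$ each, times $m_2 m_3$ pairs — wait, that's only $1$ per pair, too weak; instead use $(3,2)$: $3-(3,2)=3-1=2$ per pair, times $m_2m_3\ge 16(\alpha+1)^2$, giving $\ge 32(\alpha+1)^2 > C\ell$? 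No — that's backwards since $\ell$ is large). Here one realizes the contribution from small-vs-small pairs is bounded (each term is $\le 6$), so those alone never exceed $C\ell$; the real leverage is small-vs-\emph{large}. So the argument must be: pick any heavy small value $k$ with $m_k\ge 4(\alpha+1)$; then $\sum_{r: k\nmid r}(r/2)\cdot m_k \le \sum_{i,j}(r_i-(r_i,r_j)) \le C\ell$, whence $\sum_{r: k\nmid r} r \le 2C\ell/m_k$. Choosing $\lambda_1$ so that $\ell\ge\lambda_1\alpha^2$ makes $2C\ell/m_k$ a suitable fraction... no, $2C\ell/m_k$ can be as big as $2C\ell/(4(\alpha+1)) = O(\ell)$, not small. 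The genuine resolution: we additionally need a lower bound on $m_k$ growing with $\ell$ before we conclude — which is circular. The escape, following \cite{DZ}, is that there can be \emph{at most one} value $v$ (small or large) appearing with multiplicity $\ge$ something like $\sqrt{C\ell}$, because two such values $v\ne w$ with $v\nmid w$ give $\gtrsim m_v m_w \cdot (\min(v,w)/2)$ cost; if both multiplicities are $\ge\sqrt{C\ell}$ this is $\ge C\ell/2$... and if $v\mid w$ one uses $w-(w,v)=w-v\ge w/2$ if $w\ge 2v$, paying $m_v m_w w/2$. So there is a unique ``dominant'' value $k_0$; since $\sum r=\ell$ and all non-dominant values together with all multiplicities below $\sqrt{C\ell}$ contribute $O(\alpha^2)$-ish bounded total (this is where $\ell\ge\lambda_1\alpha^2$ enters to absorb constants), $k_0$ must satisfy $k_0 m_{k_0}\ge \ell - O(\alpha^2)$, forcing $k_0$ itself to be small (indeed $\le 6$, since a dominant value $k_0\ge 7$ with $k_0 m_{k_0}\approx\ell$ still has $m_{k_0}\approx\ell/k_0$ large, which is \emph{allowed}... so actually one must rule out $k_0\ge 7$ differently — via the requirement that we only claim (1) for $k\le 6$, meaning: if the dominant $k_0\ge 7$ then we must land in case (2), i.e. all $m_k$ for $k\le 6$ are small, which holds because those small values are non-dominant hence contribute $O(\alpha^2)$ total, in particular each $m_k\le O(\alpha^2)\le 4(\alpha+1)$ after fixing $\lambda_1$). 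Getting all these constants to line up with the precise bounds $\ell/k-2(\alpha+1)(k+1)$ and $4(\alpha+1)$ is the delicate part, but it is routine once the structural claim — at most one dominant value — is in hand. I would present the proof in that order: (i) the RH bound $R_{h_1^2}(h^{-1}(P))\le C\ell$; (ii) the gap inequality $r-(r,r')\ge r/2$ when $r'\nmid r$ and $r-(r,r')\ge r/2$ when $r'\mid r$ and $r\ge 2r'$; (iii) at most one value has multiplicity exceeding $\sqrt{C\ell}$; (iv) the dominant value, if $\le 6$, gives case (1), else all small multiplicities are $O(\alpha^2)$ giving case (2); (v) fix $\lambda_1$ to make the arithmetic match the stated constants.
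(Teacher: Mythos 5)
Your opening steps coincide with the paper's: Lemma~\ref{cor:RH-up-left} together with Riemann--Hurwitz for $h_1^2$ gives $\sum_{r_1,r_2\in E_h(P)}\bigl(r_1-(r_1,r_2)\bigr)<2(\alpha+1)\ell$ for each point $P$, and the observation that $r-(r,k)\ge r/2$ for $r>k$ is the right engine. The gap is in the combinatorial core. The inequality you wrote down and then discarded as ``circular'' is in fact the whole proof: taking $k$ to be the \emph{smallest} value $\le 6$ with multiplicity $m>4(\alpha+1)$ (minimality bounds the mass $R$ of entries below $k$ by $2(\alpha+1)(k^2-k)=:d$), restricting the double sum to pairs $r_1>r_2=k$ yields $m(\ell-km-d)\le m(\ell-km-R)<4(\alpha+1)\ell$. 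Setting $s=(\ell-d)/k$ this reads $km(s-m)<4(\alpha+1)\ell$, a quadratic constraint on $m$ whose admissible set is two short intervals, one near $0$ and one near $s\approx\ell/k$; a two-step bootstrap (first $\min(m,s-m)<16(\alpha+1)$, then $<4\alpha+5$, using $\ell\ge 156(\alpha+1)(4\alpha+5)$) shows the lower interval is contained in $[0,4\alpha+5)$, so $m>4(\alpha+1)$ forces $m\ge s-4(\alpha+1)=\ell/k-2(\alpha+1)(k+1)$. Nothing growing with $\ell$ is needed as input; the dichotomy comes from the two roots of the quadratic.

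The substitute you propose --- a unique ``dominant'' value above the threshold $\sqrt{C\ell}$, with everything non-dominant contributing $O(\alpha^2)$ --- cannot deliver the stated conclusion. First, the claim that non-dominant values have bounded total mass is false: $E_h(P)$ may consist of many distinct large values each of multiplicity one (e.g.\ $[a,\ell-a]$), with total mass $\ell$ and double sum well below $C\ell$. Second, and fatally, $\sqrt{C\ell}\asymp\alpha^{1/2}\ell^{1/2}$ is far larger than $4(\alpha+1)$ when $\ell\ge\lambda_1\alpha^2$, so your argument leaves open the possibility that some $k\le 6$ has multiplicity in the window $\bigl(4(\alpha+1),\sqrt{C\ell}\,\bigr]$; such a $P$ would satisfy neither (1) nor (2), and nothing in your outline excludes it (your parenthetical ``each $m_k\le O(\alpha^2)\le 4(\alpha+1)$ after fixing $\lambda_1$'' cannot be right, since $\lambda_1$ relates $\ell$ to $\alpha$ and has no bearing on whether $O(\alpha^2)\le 4(\alpha+1)$). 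Only the two-root analysis closes that window, and it is exactly the step you abandoned.
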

\begin{proof} We pick $\lambda_1$ such that  $\lambda_1\alpha^2\geq 156(\alpha+1)(4\alpha+5)$. 
Suppose that (2) does not hold, so that there is an integer $k\le 6$ for which $E_h(P)$
contains more than $4(\alpha+1)$ copies of $k$.  Let $k$ be the smallest such integer.
We will show that $E_h(P)$ contains at least $\ell/k-2(\alpha+1)(k+1)$ copies of $k$, so that (1) holds.
Let $h_1^2:Y_2\ra Y_1$ be the natural projection. 
Combining Lemma~\ref{cor:RH-up-left} 
with
the Riemann--Hurwitz formula for $h_1^2$ yields
\begin{equation} \label{rh0}
\begin{split}
\sum_{r_1,r_2\in E_h(P)} &\bigl(r_1-(r_1,r_2)\bigr) = R_{h_1^2}(h^{-1}(P))
\le\sum_{Q\in Y(\mathbb{K})} R_{h_1^2}(h^{-1}(Q)) \\
&= 2\bigl(g_{Y_2}-1-(\ell-1)(g_{Y_1}-1)\bigr)
\le 2\bigl(\alpha\ell-1+(\ell-1)\bigr) < 2(\alpha+1)\ell,
\end{split}
\end{equation}
where the second inequality holds because $g_{Y_2}\le \alpha\ell$ and $g_{Y_1}\geq 0$.
Let $m$ be the number of copies of $k$ in $E_h(P)$, and let $R$ be the sum of the
elements in $E_h(P)$ which are less than $k$.  By minimality of $k$, for each $k_0<k$
there are at most $4(\alpha+1)$ copies of $k_0$ in $E_h(P)$, so that
\[
R \le \sum_{k_0=1}^{k-1} 4(\alpha+1)k_0 = 2(\alpha+1)(k^2-k).
\]
Restricting the left side of (\ref{rh0})
to pairs $(r_1,r_2)$ with $r_1>r_2=k$ yields
\[
m\!\!\!\sum_{\substack{r\in E_h(P)\\ r> k}}\!\!\! (r-(r,k)) < 2(\alpha+1)\ell.
\]
For any integer $r>k$, the value $(r,k)$ is a proper divisor of $r$ and hence is
at most $r/2$, so that $r-(r,k)\ge r/2$ and thus
\[
m\frac{\ell-km-R}{2} = m\!\!\!\sum_{\substack{r\in E_h(P) \\ r>k}} \frac{r}{2} \le
m\!\!\!\sum_{\substack{r\in E_h(P)\\ r> k}}\!\!\! \bigl(r-(r,k)\bigr) < 2(\alpha+1)\ell.
\]
Putting $d:=2(\alpha+1)(k^2-k)$ and $s:=(\ell-d)/k$, we deduce that
\begin{equation}\label{equ:rhs}
km(s-m)=m(\ell-km-d)\le m(\ell-km-R)<4(\alpha+1)\ell.
\end{equation}
Let $u$ be the unique element of $\{m, s-m\}$ such that $u\leq s/2$.
The hypothesis $\ell\ge \lambda_1\alpha^2\geq 156(\alpha+1)(4\alpha+5)$ implies that $s>\max(\ell/(2k),16(\alpha+1))$,
 so that $s-u\ge s/2>\ell/(4k)>0$ and thus by
\eqref{equ:rhs} we have
\[
u<\frac{4(\alpha+1)\ell}{k(s-u)} < 16(\alpha+1) < s,
\]
whence
\begin{equation} \label{zogby}
ku < \frac{4(\alpha+1)\ell}{s-u} < \frac{4(\alpha+1)\ell}{s-16(\alpha+1)} = \frac{4(\alpha+1)\ell k}{\ell-d-16(\alpha+1)k}. 
\end{equation}
The hypothesis $\ell\ge \lambda_1\alpha^2\geq 156(\alpha+1)(4\alpha+5)$ implies that the right side of \eqref{zogby} is
at most $(4\alpha+5)k$, so that $u<4\alpha+5$.
%
%
The definition of $k$ implies that $m>4(\alpha+1)$, so since $u\in\{m,s-m\}$ it follows that
$u=s-m$. As in addition $u\leq 4(\alpha+1)$, one has
\[
m\ge s-4(\alpha+1) = \frac{\ell-d}k-4(\alpha+1)=\frac{\ell-2(\alpha+1)(k^2-k)}k-4(\alpha+1)
=\frac{\ell}k -2(\alpha+1)(k+1).
\]
This shows that (1) holds, which completes the proof.
\end{proof}
\begin{defn}
Let $\alpha,\ell$ be positive integers such that $\ell\ge \lambda_{1}\alpha^2$, 
and let $P$ be a point of $Y$.
If there is any integer $k$ with $1\le k\le 6$ for which $E_h(P)$ contains at least
$\ell/k-2(\alpha+1)(k+1)$ copies of $k$ then we define $m_h(P)$ to be the least such integer
$k$.  If there is no such integer $k$, and case (2) of Proposition~\ref{lem:DZ} holds, then we define $m_h(P)$ to be $\infty$. 

We define the {\it error} $\eps_h(P)$ to be $2(\alpha+1)(m_h(P)+1)m_h(P)$ if $m_h(P)<\infty$
and $4(\alpha+1)\sum_{k=1}^6 k = 84(\alpha+1)$ if $m_h(P)=\infty$; thus if $m_h(P)<\infty$ then
$\eps_h(P)$ is an upper bound on the sum of the elements of $E_h(P)$ different from $m_h(P)$,
while if $m_h(P)=\infty$ then $\eps_h(P)$ is an upper bound on the sum of the elements of
$E_h(P)$ which are at most $6$.  
\end{defn}

\begin{cor} \label{cor:DZ}
There exists a constant $\lambda_{2}>0$ satisfying the following property. 
Let  $\alpha>0$ be an integer, and
$h:Y_1\ra Y$  a covering of  degree $\ell\geq \lambda_{2}\alpha^3$ and monodromy group $A_\ell$ or $S_\ell$, such that $g_{Y_2}\leq \alpha\ell$.  Let $M_h$ denote the multiset  of $m_h(P)$'s which are
bigger than $1$ (with $P$ varying over all points of $Y$). 
Then $g_Y\leq 1$; and $M_h$ is the empty set if $g_Y=1$, and $M_h$ is one of the following if $g_Y=0$:
\begin{itemize}
\item $\infty,\infty$
\item $\infty,2,2$
\item $3,3,3$
\item $2,3,6$
\item $2,4,4$
\item $2,2,2,2$;
\end{itemize}
$g_{Y_1}\leq \alpha+1$; there are at most $2(\alpha+1)$ points $P$ for which $m_h(P)=1$; and the sum of the quantities $R_{h}(P)$ over all such points is at most $8(\alpha+1)^2$.  
There exists also an absolute constant $\nu>0$ such that the number of preimages in $Y_1$ of each point $P$ with $m_h(P)=\infty$ is at most $\nu\alpha^2$. 
\end{cor}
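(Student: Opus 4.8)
The plan is to derive the whole statement from the Riemann--Hurwitz formula applied to $h$ and to the natural projection $h_1^2:Y_2\ra Y_1$, fed by Lemma~\ref{cor:RH-up-left} and Proposition~\ref{lem:DZ}; the constant $\lambda_2$ is taken large enough (in particular $\lambda_2\alpha^3\ge\lambda_1\alpha^2$) that all of the ``error term $<$ gap'' comparisons below hold. Every error term that arises is $O(\alpha^2)$ in absolute value, hence $O(\alpha^2/\ell)=O(1/(\lambda_2\alpha))$ after dividing by $\ell$.

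\emph{Step 1: the genus bounds and the count of ``$1$-points''.} Combining Lemma~\ref{cor:RH-up-left} (with $t=2$) and Riemann--Hurwitz for $h_1^2$ yields
\[
\sum_{Q\in Y(\K)}\ \sum_{r_1,r_2\in E_h(Q)}\bigl(r_1-(r_1,r_2)\bigr)\ =\ 2(g_{Y_2}-1)-2(\ell-1)(g_{Y_1}-1);
\]
since $g_{Y_2}\le\alpha\ell$ and $g_{Y_1}\ge 0$, the right-hand side is nonnegative and strictly less than $2(\alpha+1)\ell$. Nonnegativity forces $g_{Y_1}-1\le(\alpha\ell-1)/(\ell-1)<\alpha+1$, so $g_{Y_1}\le\alpha+1$; plugging this into $2g_{Y_1}-2=2\ell(g_Y-1)+\sum_Q R_h(Q)$ rules out $g_Y\ge 2$ (which would force $g_{Y_1}\ge\ell+1$), so $g_Y\le 1$. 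If $Q$ is a branch point with $m_h(Q)=1$, then $E_h(Q)$ has at least $\ell-\eps_h(Q)=\ell-4(\alpha+1)$ entries equal to $1$ and, being a branch point, at least one entry $r>1$; the pairs $(r,1)$ alone contribute at least $\ell-4(\alpha+1)$ to the displayed sum, so the bound $2(\alpha+1)\ell$ leaves room for at most $2(\alpha+1)$ such points once $\ell\ge\lambda_2\alpha^3$. For each of them $R_h(Q)\le\eps_h(Q)=4(\alpha+1)$, so their total is at most $8(\alpha+1)^2$.

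\emph{Step 2: determining $M_h$.} The next step is to read $R_h(Q)$ off from $E_h(Q)$. If $m_h(Q)=k\in\{2,\dots,6\}$, then $E_h(Q)$ consists of copies of $k$ together with entries of total size $\le\eps_h(Q)\le 84(\alpha+1)$, whence $\card{R_h(Q)-\ell(1-1/k)}\le 84(\alpha+1)$; if $m_h(Q)=\infty$, then at most $24(\alpha+1)$ entries of $E_h(Q)$ are $\le 6$ and the rest are $\ge 7$, whence $\tfrac67\ell-24(\alpha+1)\le R_h(Q)\le\ell-1$. In particular every point with $m_h(Q)\ge 2$ has $\ell/2-84(\alpha+1)\le R_h(Q)<\ell$. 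If $g_Y=1$ then $\sum_Q R_h(Q)=2g_{Y_1}-2\le 2\alpha$, too small to contain even one such point, so $M_h=\emptyset$. If $g_Y=0$ then, by Step 1,
\[
\sum_{m_h(Q)\ge 2} R_h(Q)\ =\ 2\ell+2g_{Y_1}-2-\!\!\sum_{m_h(Q)=1}\!\! R_h(Q)\ =\ 2\ell+O(\alpha^2),
\]
so the number $N$ of points with $m_h(Q)\ge 2$ satisfies $N\in\{2,3,4\}$ (it is at most $4$ because each summand is $\ge\ell/2-O(\alpha)$, and at least $2$ because a single summand is $<\ell$ while the total is $\approx 2\ell$). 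Dividing by $\ell$, one has $\sum R_h(Q)/\ell=2+O(1/\lambda_2)$, while each $R_h(Q)/\ell$ is within $O(1/\lambda_2)$ of an element of $\{1/2,2/3,3/4,4/5,5/6\}$ (finite type) or of $[6/7,1)$ ($\infty$-type). Since distinct sums of finitely many of the rationals $1/2,\dots,5/6$ differ by at least $1/60$, taking $\lambda_2$ large makes the finite-type contributions sum to an exact rational, and a short enumeration over $N\in\{2,3,4\}$---using that an $\infty$-type contributes at least $6/7$ but strictly less than $1$, so $N=2$ forces two $\infty$'s, $N=3$ forces either three finite types summing to $2$ (namely $3,3,3$ or $2,3,6$ or $2,4,4$) or one $\infty$ and two copies of $2$, and $N=4$ forces four copies of $2$---leaves exactly the six multisets in the statement.

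\emph{Step 3: size of the fibres over $\infty$-points.} An $\infty$-point occurs only for $M_h\in\{\{\infty,\infty\},\{2,2,\infty\}\}$. In either case the branch points other than a given $\infty$-point $P$ contribute at most $\ell/2+O(\alpha)$ each (type $2$), or, in $\{\infty,\infty\}$, the other $\infty$-point contributes less than $\ell$; together with $\sum_Q R_h(Q)=2\ell+O(\alpha^2)$ this forces $R_h(P)>\ell-O(\alpha^2)$, hence $\card{h^{-1}(P)}=\card{E_h(P)}=\ell-R_h(P)<\nu\alpha^2$ for an absolute constant $\nu$. I expect the main obstacle to be the enumeration in Step 2: an $\infty$-type point has no fixed ``effective ramification index'', only the constraint $1-1/k\in[6/7,1)$, so one must track the $O(\alpha^2/\ell)$ errors carefully against the gaps between admissible partial sums, and in particular handle the borderline family $\{2,2,\infty\}$, where the $\infty$-point's contribution is pushed arbitrarily close to $\ell$.
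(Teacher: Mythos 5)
Your proposal is correct and follows essentially the same route as the paper's proof: Riemann--Hurwitz for $h_1^2$ combined with Lemma~\ref{cor:RH-up-left} and Proposition~\ref{lem:DZ} gives the genus bounds and the count of points with $m_h(P)=1$, and Riemann--Hurwitz for $h$ then forces the contributions $R_h(P)/\ell$ to realize one of the six familiar decompositions of $2$, with the fibre bound over $\infty$-points falling out of the same identity. The only cosmetic differences are that you bound the number of branch points with $m_h(P)=1$ by counting the pairs $(r,1)$ directly rather than via the paper's estimate $\lvert R_{h_1^2}(h^{-1}(P))-\ell R_h(P)\rvert = O(\alpha^2)$, and that you carry out explicitly the enumeration of admissible multisets that the paper summarizes as ``the familiar ones.''
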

\begin{proof}
Let $h_1^2:Y_2\ra Y_1$ the natural projection. 
By the Riemann--Hurwitz formula for $h_1^2$ and Lemma \ref{cor:RH-up-left}, we have
\begin{equation}\label{equ:h12-bound}
2(\alpha\ell +\ell -2)  \ge 2(g_{Y_2}-\ell g_{Y_1}+\ell-4) = \sum_{P\in Y(\K)} R_{h_1^2}(h^{-1}(P)) = \sum_{P\in Y(\K)} \sum_{r_1,r_2\in E_h(P)} \bigl(r_1-(r_1,r_2)\bigr).
\end{equation}
For $\ell\geq \lambda_{1}\alpha^2$, Proposition~\ref{lem:DZ} implies that if $m_h(P)=1$ then
\begin{align*}
\vert R_{h_1^2}(h^{-1}(P)) - \ell R_h(P)\vert & = 
\vert \sum_{r_1,r_2\in E_h(P)}\bigl( r_1-(r_1,r_2)\bigr) - \ell\sum_{r\in E_h(P)}(r-1)\vert  \\
  & \leq \vert \sum_{r_1,r_2\in E_{h}(P)\suchthat r_1,r_2>1}(r_1-(r_1,r_2)) - (\eps_h(P)-1)\sum_{r\in E_h(P)}(r-1) \vert \\
  & \le \eps_h(P)(\eps_h(P)-1) < \delta\alpha^2
  \end{align*}
for some absolute constant $\delta >0$.
In particular, $R_{h_1^2}(h^{-1}(P)) \ge \ell-\delta\alpha^2$ if $P$ ramifies under $h$.
Hence for $\ell\geq (2\alpha+3)\delta\alpha^2$, it follows from \eqref{equ:h12-bound} that there are at most $2(\alpha+1)$ points $P$ of $Y$ which ramify under $h$ and have $m_h(P)=1$. 
By definition,  for every point $P$ with  $m_h(P)=1$, we have $R_{h}(P) \le 4(\alpha+1)$, so the sum of this quantity over all such points $P$ is at most $8(\alpha+1)^2$.

The Riemann--Hurwitz formula for $h_1^2$ implies that 
$$g_{Y_1}-1\leq (g_{Y_2}-1)/(\ell-1)<\alpha+1,$$ for $\ell>\alpha-2$.  
Thus, the Riemann--Hurwitz formula for $h$ gives 
$\ell(g_Y-1) < g_{Y_1}-1 < \alpha+1,$ forcing $g_Y\leq 1$ for $\ell\geq \alpha+1$. 
Hence in total, the Riemann--Hurwitz formula for $h$ gives
\begin{equation} \label{inf}
 \begin{split} 
 \Bigl\vert \sum_{P\suchthat  m_h(P)>1} \bigl(\ell - \abs{h^{-1}(P)}\bigr) - 2\ell(1-g_Y)\Bigr\vert & = \Bigl\vert 2(g_{Y_1}-1) - \sum_{P\suchthat m_h(P)=1}R_h(P)\Bigr\vert \\
 & < \max\{2(\alpha+1),8(\alpha+1)^2 +2\} = 8(\alpha+1)^2+2. 
 \end{split}
\end{equation}
If $m_h(P)<\infty$ then by definition $E_h(P)$ contains at least $\ell/m_h(P) - 2(\alpha+1)(m_h(P)+1)$ entries equal to $m_h(P)$, and hence $\abs{E_h(P)}$ is at  most $2(\alpha+1)(m_h(P)+1)(m_h(P)-1)$ and at least $\ell/m_h(P)-2(\alpha+1)(m_h(P)+1)+1$. Thus, it is $\ell/m_h(P)$ up to a bounded constant depending on $\alpha$:
$$\Bigl\vert \abs{E_h(P)}-\frac{\ell}{m_h(P)}\Bigr\vert < \max\bigl\{2(\alpha+1)(m_h(P)+1)+1,  2(\alpha+1)(m_h(P)+1)(m_h(P)-1)\bigr\}.$$ 
If $m_h(P)=\infty$, then this number of points is at least $\ell/7-\eps_h(P)$.
Thus if $g_Y=1$, there exists a constant $\lambda_{3}>0$ such that the condition \eqref{inf} for $\ell\geq \lambda_{3}\alpha^2$ implies that $m_h(P)=1$ for all points $P$ of $Y$, as desired. 
If $g_Y=0$, there exists a constant $\lambda_{4}$, such that the only ways to add up numbers of the form $\ell(1-1/m_h(P))$ (with $2\le m_h(P)\le 6$) and
numbers in the interval $[6\ell/7,\ell]$ for $\ell\geq \lambda_{4}\alpha^2$, and get a sum that differs from  $2\ell$ by a bounded constant depending only on $\alpha$,
are the familiar ones, namely $\ell+\ell$, $\ell+\ell/2+\ell/2$, $2\ell/3+2\ell/3+2\ell/3$, $\ell/2+2\ell/3+5\ell/6$,
$\ell/2+3\ell/4+3\ell/4$, and $\ell/2+\ell/2+\ell/2+\ell/2$. Thus by picking the constant $\lambda_2$ so that  $$\lambda_{2}\alpha^3\geq \max\{\lambda_1\alpha^2, (2\alpha+3)\delta\alpha^2, \alpha+1, \lambda_{3}\alpha^2, \lambda_{4}\alpha^2\},$$ we have shown that all but the last assertion of Corollary \ref{cor:DZ} hold for $\ell\geq \lambda_{2}\alpha^3$. 

Finally, suppose that the  values $m_h(P)$ bigger than $1$ are $\infty,2,2$ so that $g_Y=0$.  Then (\ref{inf}) implies that the
number of points of $Y_1$ lying over the point $P$ with $m_h(P)=\infty$ is at most $\nu_1\alpha^2$ for some absolute constant $\nu_1>0$. 
Likewise if the $m_h(P)$'s bigger than $1$ are $\infty,\infty$ then (\ref{inf}) implies that the number of preimages over both points $P$
with $m_h(P)>1$  is at most $\nu_2\alpha^2$ for some absolute constant $\nu_2>0$.
Thus, the last assertion of Corollary \ref{cor:DZ} holds with $\nu:=\max\{\nu_1,\nu_2\}$. \end{proof}
\begin{defn} We say that a degree $\ell$ covering $h:Y_1\ra Y$ with monodromy group $A_\ell$ or $S_\ell$ has  $2$-point genus bound $\alpha\ell$
if $\ell\geq \lambda_{2}\alpha^3$, 
and $g_{Y_2}\leq \alpha\ell$. 
Corollary \ref{cor:DZ} then gives the possibilities for the values $m_h(P)$ for points $P$ of $Y$. 
\end{defn}
The constants $\lambda_{2}$ and $\nu$ can be improved, at the cost of a lengthier proof and larger errors $\eps_h(P)$. 


%
%
\section{Castelnuovo's inequality}\label{sec:castel}
We shall use Castelnuovo's inequality to prove that a covering has a $2$-point genus bound, and hence ramification of almost Galois type.

Recall \cite[Theorem 3.11.3]{Stich} that given two coverings $q_i:Z_i\ra Z_0$, for $i=1,2$, Casetlnuovo's inequality bounds the genus of a {\it minimal covering $p:Z\ra Z_0$ which factors through $q_1$ and $q_2$}
\footnote{Note that the curve $Z$ can always be identified with an irreducible component of the (normalization) of the fiber product of $q_1$ and $q_2$.}, that is, 
a covering which factors as $p=p_i\circ q_i$ for coverings $p_i:Z\ra Z_i$, $i=1,2$,  such that $p_1$ and $p_2$ do not have a common factorization $p_i = p_i'\circ w$, $i=1,2,$ with $\deg w>1$.
\[
\xymatrix{
& Z \ar[dl]_{p_1} \ar[dr]^{p_2} \ar[dd]^p & \\
Z_1 \ar[dr]_{q_1}& & Z_2\ar[dl]^{q_2} \\
& Z_0
}
\]
Namely, it gives $g_{Z}\leq n_1g_{Z_1} + n_2 g_{Z_2} + (n_1-1)(n_2-1)$, where $n_i:=\deg p_i$, $i=1,2$. 

We use the notation of Setup \ref{sec:setup}, so that the quotient by a stabilizer of a $t$-set (resp., $t$-tuple of distinct elements) is denoted by $X_t$ (resp., $Y_t$). 
\begin{prop}\label{prop:Castel} Let  $h:Y_1\ra Y$ be a covering of degree $\ell$ with monodromy group $G=A_\ell$ or $S_\ell$. Let $2\leq t\leq \ell/2$ be an integer, and $\alpha>0$ a constant such that 
$g_{X_t}-g_{X_{t-1}}<\frac{\alpha}{\ell}\binom{\ell}{t}$. Then 
\[
g_{Y_2}< \begin{cases}
g_{Y_1}(\ell+1) + (\alpha+1)(\ell -1)&\text{ if $t=2$ } \\
 \frac{1}{1-\eps}\Bigl((t-1)g_{Y_1} + \binom{t}{2} + \alpha \Bigr)\ell & \text{ if $t>2$,}
\end{cases}
\]
where $\eps:=t/(\ell-t+1)$.
\end{prop}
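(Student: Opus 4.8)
The plan is to apply Castelnuovo's inequality to a well-chosen pair of coverings whose ``amalgam'' is $Y_2$ (or a curve closely related to it), and then feed in the hypothesis $g_{X_t}-g_{X_{t-1}}<\frac{\alpha}{\ell}\binom{\ell}{t}$ via the genus formula~\eqref{equ:M-RH} of Remark~\ref{rem:main-RH} to control $g_{Y_{t-1}}$ (or $g_{Y_1}$ directly when $t=2$). Concretely, for $t=2$ I would take $Z_0=X_2$, $Z_1=Y_1$ viewed over $X_2$ via the obvious degree-$(\ell-1)$ map (this is the composite $Y_1 \to \cdots$; recall $\pi_2:Y_2\to X_2$ and that $Y_1$ sits between $\tilde Y_1$ and $Y$, but more to the point the correspondence diagram in the introduction gives maps $X_1 \leftarrow Y_2 \to X_2$), and take $Z_2$ to be a second copy arising from the other projection, so that $Y_2$ is the minimal covering factoring through both. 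Then Castelnuovo gives $g_{Y_2}\le n_1 g_{Z_1}+n_2 g_{Z_2}+(n_1-1)(n_2-1)$ with $n_1,n_2$ the relevant degrees; since $Y_2\to X_2$ has degree $2$ and $Y_2\to X_1$ has degree $\binom{\ell}{2}/\ell\cdot\text{(something)}$, one of the two factors has small degree, which is what makes the bound linear in $\ell$. The precise bookkeeping of which curve plays the role of $Z_1,Z_2$ and what $n_1,n_2$ are is the first thing to nail down.

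Next, for $t=2$ I would use the hypothesis together with~\eqref{equ:M-RH-t=2}: from $4(g_{X_2}-g_{X_1})<4\alpha(\ell-1)/2 = 2\alpha(\ell-1)$ and the nonnegativity estimate of Lemma~\ref{lem:2set-nonnegative} for the ramification sum, I can extract an upper bound on $g_{Y_1}$, or rather directly combine~\eqref{equ:M-RH-t=2} with the Castelnuovo output. Actually the cleaner route: Castelnuovo applied to $Y_1/X_2$ and $Y_1/X_1$... — I would instead apply it to the two degree-$\ell$ (resp.\ degree $\binom{\ell-1}{1}$) maps $Y_2\to X_1$ and the structure of $Y_2$ as fiber product of $Y_1\to Y$ with itself restricted to the off-diagonal, giving $g_{Y_2}\le \ell g_{Y_1} + g_{Y_1} + (\ell-1)\cdot 1 + \text{lower order}$, i.e.\ exactly the shape $g_{Y_1}(\ell+1)+(\alpha+1)(\ell-1)$ once the $\alpha$-term is inserted to absorb any correction from the diagonal/fiber-product normalization. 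The $(\alpha+1)(\ell-1)$ rather than $(\ell-1)$ presumably comes from the fact that $Y_2$ may be a proper component of the fiber product and one pays $g$ of the discarded components, bounded using $g_{X_t}-g_{X_{t-1}}<\frac{\alpha}{\ell}\binom{\ell}{t}$.

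For $t>2$, the strategy is the same but now I would first bound $g_{Y_{t-1}}$ — and hence, via the chain $Y_{t-1}\to Y_1$ and Riemann--Hurwitz, $g_{Y_1}$ is not directly what appears; rather the formula~\eqref{equ:M-RH-inequ} gives $2t!(g_{X_t}-g_{X_{t-1}})\ge 2(\ell-2t+1)(g_{Y_{t-1}}-1)-\sum_P R_{\pi_t}(f_t^{-1}(P))$, and combining the hypothesis with the bound on $R_{\pi_t}$ from Propositions~\ref{lem:pi-form} and~\ref{lem:t-1-bound} (which bounds $\sum_P R_{\pi_t}$ by $\binom{t}{2}R_{h_1^{t-1}}(h^{-1}(P))$ plus a $t^4(\ell-2)!/(\ell-t)!$ error, and $R_{h_1^{t-1}}$ relates to $g_{Y_{t-1}}$ and $g_{Y_1}$ by Riemann--Hurwitz for $h_1^{t-1}$) yields $g_{Y_{t-1}}-1 \le \frac{1}{(\ell-2t+1)}\bigl(t! \cdot \frac{\alpha}{\ell}\binom{\ell}{t} + \tfrac12\sum_P R_{\pi_t}\bigr)$, which after substituting the Riemann--Hurwitz expression for $\sum_P R_{h_1^{t-1}}$ in terms of $g_{Y_{t-1}}$ and $g_{Y_1}$ and solving for $g_{Y_1}$ gives a bound of the form $g_{Y_1}\lesssim \frac{1}{1-\eps}(\cdots)$. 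Then Castelnuovo applied to $Y_2$ as the amalgam of $Y_1\to Y$ with itself (degrees $\ell-1$ and, for the second factor, bounded appropriately) converts a bound on $g_{Y_1}$ into the claimed bound on $g_{Y_2}$; the $(t-1)g_{Y_1}+\binom{t}{2}+\alpha$ inside and the $\frac{1}{1-\eps}$ out front are exactly the artifacts of this two-step substitution.

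The main obstacle I anticipate is \emph{identifying the correct pair $(q_1,q_2)$ of coverings to which Castelnuovo applies and verifying the no-common-factorization hypothesis}: $Y_2$ is the quotient of $\tilde Y_1$ by a two-point stabilizer, and one must exhibit it as a minimal covering factoring through two intermediate curves (morally $Y_1$ via ``forget the second point'' and $Y_1$ via ``forget the first point'', i.e.\ the two legs of the diagram $X_1 \leftarrow Y_2 \rightarrow X_1$, or perhaps $Y_1 \leftarrow Y_2 \rightarrow Y_1$), and then check these two degree-$(\ell-1)$ maps share no common subcovering of degree $>1$ — which is precisely the statement that the corresponding subgroups of $G=A_\ell$ or $S_\ell$ generate $H_1\cap H_1' $ appropriately, a transitivity/maximality fact about point stabilizers that needs a short group-theoretic argument. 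Once that is in place, the rest is combining Riemann--Hurwitz identities already assembled in Remark~\ref{rem:main-RH} with the hypothesis, which is routine arithmetic.
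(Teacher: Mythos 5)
Your $t=2$ argument is, at its core, the paper's: apply Castelnuovo to the pair $Y_1\ra Y$ and $X_2\ra Y$ with amalgam $Y_2$, so that the two degrees are $\ell-1$ and $2$ and the cross term $(n_1-1)(n_2-1)=\ell-2$ stays linear; then substitute $2g_{X_2}\le 2g_{X_1}+\alpha(\ell-1)=2g_{Y_1}+\alpha(\ell-1)$, which is where the $(\alpha+1)(\ell-1)$ actually comes from (not from ``discarded components of the fiber product,'' and not from any bound on $g_{Y_1}$ --- $g_{Y_1}$ sits on the right-hand side of the target and needs no bounding). Be careful, though: there is no map $Y_1\ra X_2$ (a point stabilizer is not contained in a $2$-set stabilizer), and the alternative picture you float --- $Y_2$ as a component of $Y_1\times_Y Y_1$ with \emph{both} projections of degree $\ell-1$ --- gives a Castelnuovo error of $(\ell-2)^2$, which is quadratic in $\ell$ and useless here. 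The minimality hypothesis you worry about is immediate from $H_2=H_{\{1\}}\cap\oline H_2$.

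The $t>2$ case is where the genuine gap lies. Your route --- bound $g_{Y_{t-1}}$ via \eqref{equ:M-RH-inequ} together with the $R_{\pi_t}$ estimates of Propositions~\ref{lem:pi-form} and~\ref{lem:t-1-bound}, then ``solve for $g_{Y_1}$,'' then apply Castelnuovo to $Y_1\times_Y Y_1$ --- cannot produce the stated inequality. First, it is pointed the wrong way: $g_{Y_1}$ is a datum appearing in the bound, whereas what must be bounded and then pushed down to $g_{Y_2}$ is $g_{Y_{t-1}}$. Second, the $R_{\pi_t}$ machinery injects an error $E_0t^4B\cdot(\ell-2)!/(\ell-t)!$ with $B$ the number of branch points (a priori of order $\ell$), which cannot be absorbed into the clean constant $\binom{t}{2}+\alpha$ of the statement; and the final Castelnuovo step you describe again has a quadratic cross term. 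The paper instead iterates Castelnuovo up the tower $X_t=X_t^{(0)},X_t^{(1)},\dots,X_t^{(t-1)}=Y_t$ of quotients by the groups $H_{i,t}$ (elements fixing $1,\dots,i$ and stabilizing $\{1,\dots,t\}$), pairing $X_t^{(i-1)}$ with a copy of $Y_1$ at each step; since $H_{i-1,t}\cap H_{\{i\}}=H_{i,t}$ and $[H_{i-1,t}:H_{i,t}]=t-i+1$ is small, each step costs only $(t-i+1)g_{X_t^{(i-1)}}$ plus a term linear in $(\ell-1)!/(\ell-t)!$, yielding $g_{Y_t}<\frac{(\ell-1)!}{(\ell-t)!}\bigl((t-1)g_{Y_1}+\binom{t}{2}\bigr)+t!\,g_{X_t}$. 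The hypothesis then replaces $t!\,g_{X_t}$ by $t!\,g_{X_{t-1}}+\alpha\frac{(\ell-1)!}{(\ell-t)!}$, and the crucial closing move --- entirely absent from your proposal --- is the Riemann--Hurwitz descent $(t-1)!(g_{X_{t-1}}-1)\le g_{Y_{t-1}}-1$, $(\ell-t+1)(g_{Y_{t-1}}-1)\le g_{Y_t}-1$, and $g_{Y_{t-1}}-1\ge\frac{(\ell-2)!}{(\ell-t+1)!}(g_{Y_2}-1)$, which converts the upper bound on $g_{Y_t}$ into the claimed bound on $g_{Y_2}$ with the factor $1/(1-\eps)$.
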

\begin{proof}
As in Setup \ref{sec:setup}, let $G$ act on  a set $S=\{1,\ldots,\ell\}$. 
Let $X_t^{(i)}$ denote the quotient of the Galois closure $\tilde Y_1$ by the subgroup $H_{i,t}\leq G$ of elements that fix $\{1,\ldots, i\}$ pointwise, and stabilize $\{1,\ldots,t\}$. In particular, 
$X_t^{(t-1)}$ is a $t$-point stabilizer and $X_t^{(0)}$ is a $t$-set stabilizer. 
Set  $Y_t:=X_t^{(t-1)}$ and $X_t := X_t^{(0)}$. 
Let $Y_{\{i\}}$ denote the quotient by the stabilizer $H_{\{i\}}$ of the element $i$, for $i\leq t$. 

Since $G$ is $t$-transitive, $[H_{i-1,t}:H_{i,t}]=t-i+1$ and $[H_{\{i\}}:H_{i,t}] = \frac{(\ell-1)!}{(\ell-t)!(t-i)!}$ for $i\leq t$. 
Since $H_{i-1,t}\cap H_{\{i\}} = H_{i,t}$ for $i=1,\ldots, t-1$, Castelnuovo's inequality gives
\begin{equation}\label{equ:castel} 
\begin{split} g_{X_t^{(i)}} & \leq \frac{(\ell-1)!}{(\ell-t)!(t-i)!} g_{Y_{\{i\}}} + (t-i+1)g_{X_t^{(i-1)}} + \Bigl(\frac{(\ell-1)!}{(\ell-t)!(t-i)!}-1\Bigr)(t-i) \\
& < \frac{(\ell-1)!}{(\ell-t)!(t-i)!} g_{Y_{\{i\}}} + (t-i+1)g_{X_t^{(i-1)}} + \frac{(\ell-1)!}{(\ell-t)!(t-i-1)!}.
\end{split} \end{equation}
For $t=2$ and $i=1$, as $X_1=Y_1$ and $g_{X_2}-g_{X_1}\leq \alpha(\ell-1)/2$, the first inequality in \eqref{equ:castel} gives
\begin{align*}
g_{Y_2} & \leq (\ell-1)g_{Y_1} + 2g_{X_2} + \ell-2 = (\ell+1)g_{Y_1} + 2(g_{X_2}-g_{X_1}) + \ell-2 \\
&  < (\ell+1)g_{Y_1} + (\alpha+1)(\ell-1) -1,
\end{align*}
as desired.

Henceforth we assume $t\geq 3$. Applying \eqref{equ:castel} iteratively for $i=t-1,\ldots , 1$, 
using the fact that $g_{Y_{\{i\}}}=g_{Y_1}$ since $G$ is transitive, we get
\begin{equation}\label{equ:sequence}
\begin{split} g_{Y_t}=g_{X_t^{(t-1)}} & < \frac{(\ell-1)!}{(\ell-t)!} g_{Y_1} + 2g_{X_t^{(t-2)}} + 
\frac{(\ell-1)!}{(\ell-t)!} \\
& < 2\frac{(\ell-1)!}{(\ell-t)!}g_{Y_1} + 3!g_{X_t^{(t-3)}} +(1+2)\frac{(\ell-1)!}{(\ell-t)!} \\
& \vdots \\
&  < (t-1)\frac{(\ell-1)!}{(\ell-t)!}g_{Y_1}  + t!g_{X_t^{(0)}} + (1+\cdots + t-1)\frac{(\ell-1)!}{(\ell-t)!}  \\
& = \frac{(\ell-1)!}{(\ell-t)!}\biggl((t-1)g_{Y_1} + \binom{t}{2}\biggr)  + t!g_{X_t}. 
\end{split}
\end{equation}
Since $t!(g_{X_t}-g_{X_{t-1}})<\alpha\frac{(\ell-1)!}{(\ell-t)!}$, the inequality \eqref{equ:sequence} yields
\begin{equation}\label{equ:iterate}
g_{Y_t} < \frac{(\ell-1)!}{(\ell-t)!}\biggl( (t-1)g_{Y_1} + \binom{t}{2} + \alpha \biggr)  + t!g_{X_{t-1}}.
\end{equation}  
By the Riemann--Hurwitz formula for the natural projections $\pi_{t-1}:Y_{t-1} \ra X_{t-1}$ and $Y_t\ra Y_{t-1}$,  one has $(t-1)!\cdot (g_{X_{t-1}}-1)\leq g_{Y_{t-1}}-1$ and  $ (\ell-t+1)(g_{Y_{t-1}}-1)\leq g_{Y_t}-1$.
Hence \eqref{equ:iterate} gives
\[ (\ell-t+1)(g_{Y_{t-1}}-1)\leq g_{Y_t}-1 < \frac{(\ell-1)!}{(\ell-t)!}\biggl( (t-1)g_{Y_1}+\binom{t}{2}+\alpha\biggr) + t(g_{Y_{t-1}}-1) + t!-1, \]
or equivalently
\begin{equation}\label{equ:almost-final}
(\ell-2t+1)(g_{Y_{t-1}}-1)< \frac{(\ell-1)!}{(\ell-t)!}\biggl( (t-1)g_{Y_1}+\binom{t}{2} +\alpha \biggr) +t!-1.
\end{equation}
Since $g_{Y_{t-1}}-1\geq \frac{(\ell-2)!}{(\ell-t+1)!}(g_{Y_2}-1)$ by the Riemann--Hurwitz formula for the natural projection $Y_{t-1}\ra Y_2$, \eqref{equ:almost-final} gives
\begin{equation}\label{equ:Castel3}
 g_{Y_2} \leq \frac{(\ell-t+1)!}{(\ell-2)!}(g_{Y_{t-1}}-1)+1< \frac{1}{1-\eps}\biggl( (t-1)g_{Y_1} + \binom{t}{2} + \alpha \biggr) (\ell-1)+ v,
\end{equation}
where $v := \frac{(t!-1)(\ell-t)!}{(1-\eps)(\ell-2)!}$. 
For $t\geq 3$ and $\ell\geq 2t$, a straight forward check shows that the right hand side of \eqref{equ:Castel3} is bounded by 
$\bigl( (t-1)g_{Y_1} + \binom{t}{2} + \alpha \bigr) \ell/(1-\eps),$ as desired. 
\end{proof}


\section{Ramification data of indecomposable coverings}
\label{sec:ram}
The final step of the proof of Theorem \ref{thm:main} is determining whether there exists a covering with a given ramification data, also known as the Hurwitz problem. 
In this section, we provide conditions under which a given ramification data does not correspond to an indecomposable covering. The following lemma is an explicit version of the ``translation" process in Guralnick--Shareshian \cite[Proposition 2.0.16 and Corollary 2.0.17]{GS}. 
\begin{lem}\label{lem:hurwitz1}
Let $p$ be a rational prime and $h:\mP^1\ra \mP^1$ a covering whose monodromy 
group is noncyclic, nondihedral, and is not $A_4$. Let $P_1, P_2, P_3$ be points of $\mP^1$. Then $h$ is decomposable if at least one of the following conditions holds:
\begin{enumerate}
\item All entries in $E_h(P_1)$ and $E_h(P_2)$ are divisible by $p$;
\item All entries in $E_h(P_1)$ are divisible by $p$, and there is a total of exactly two odd entries in $E_h(P_2)$ and $E_h(P_3)$;
\item All entries of $E_h(P_1)$ are even and there is a total of exactly two entries coprime to $3$ in $E_h(P_2)$ and $E_h(P_3)$. 
\end{enumerate}
\end{lem}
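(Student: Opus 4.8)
The plan is to use the Riemann--Hurwitz formula applied to a well-chosen subcover, obtained via the ``translation'' trick. Let $G=\Mon(h)$ acting on $S=H\backslash G$ with $|S|=\deg h$, and let $\tilde h:\tilde X\to\mP^1$ be the Galois closure. The point is that $h$ is decomposable if and only if $H$ is not a maximal subgroup of $G$, equivalently if there is an intermediate curve between $Y_1$ and $\mP^1$. To produce one, I would pass to a cover where a branch cycle has a \emph{large} power that acts trivially on many points, forcing the monodromy of that cover to be small (cyclic or dihedral), which then contradicts a genus/transitivity count unless $h$ already decomposes.

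\textbf{Key steps.} First, fix branch cycles $x_1,x_2,x_3$ over $P_1,P_2,P_3$ and a generating tuple $x_1,\dots,x_r$ of $G$ with $\prod x_i=1$. In case (1): every entry of $E_h(P_1)$ and $E_h(P_2)$ is divisible by $p$ means $x_1^{|S|/p\cdot(\text{something})}$ — more precisely, the permutations $x_1,x_2$ are $p$-th powers on each cycle, so $x_1,x_2$ lie in the kernel of the map $G\to G/N$ for a suitable construction; the translated cover $h'$ obtained by replacing $x_1\mapsto x_1^{1/p}$ (a $p$-power root on each cycle, which exists since all cycle lengths of $x_1$ are divisible by $p$) lowers the ramification contribution at $P_1$ and $P_2$ each by at least $|S|(1-1/p)\cdot(\text{fraction})$; the Riemann--Hurwitz count then shows the translated generating system has a product of smaller ``total ramification,'' and iterating/combining with the constraint that $\Mon(h')$ must still be transitive forces $\Mon(h')$ to be cyclic. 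But a cyclic transitive group generated by two elements with the specified cycle structures on $|S|$ points, together with the hypothesis that $G$ is noncyclic, is impossible unless the original $H$ was non-maximal. For cases (2) and (3) the same mechanism applies with $p=2$ (resp. $p=3$) but now only two odd (resp. coprime-to-$3$) entries are allowed outside $P_1$; these two exceptional entries are exactly what is needed so that after translation the product-one relation still closes up, and the resulting group is forced to be dihedral (case 2) — here the two odd entries become the two reflections — or, for $p=3$, a group which must be $A_4$ or dihedral or cyclic; since all of these are excluded by hypothesis, $h$ is decomposable. Throughout I would invoke \cite[Proposition 2.0.16 and Corollary 2.0.17]{GS} to legitimize the translation step and only need to track the explicit numerology to see that the hypotheses are exactly what makes the translated system valid.

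\textbf{Main obstacle.} The delicate point is verifying that the translation is \emph{well-defined}: replacing $x_1$ by a $p$-th root cycle-by-cycle produces a new tuple whose product is still the identity only if the ``leftover'' ramification at $P_2$ (and $P_3$) has exactly the parity/divisibility structure stated — this is why ``exactly two odd entries'' (not zero, not four) appears in (2) and (3). Pinning down that the translated monodromy group lands in precisely the list $\{$cyclic, dihedral, $A_4\}$, and not something larger, requires the classification-free input that a transitive group generated by two elements one of which is a single cycle (or has all cycles of a fixed length with two exceptions) on the translated point set is severely restricted; I expect this group-theoretic bookkeeping, rather than the Riemann--Hurwitz arithmetic, to be the crux. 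Once that is in hand, the contradiction with the hypothesis ``$\Mon(h)$ is noncyclic, nondihedral, not $A_4$'' is immediate, so $H$ cannot be maximal and $h$ is decomposable.
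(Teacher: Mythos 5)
Your proposal correctly identifies the relevant circle of ideas (the Guralnick--Shareshian ``translation'' process, which the paper itself cites as the ancestor of this lemma), but the mechanism you describe is not the one that works, and the step you flag as the crux is not where the actual difficulty lies. First, the operation ``replace $x_1$ by a $p$-th root cycle-by-cycle'' is impossible in exactly the situation of the lemma: if $p$ divides the length $m$ of a cycle, that $m$-cycle is \emph{not} the $p$-th power of any permutation of those $m$ points (the $p$-th power of an $m'$-cycle is a product of $\gcd(m',p)$ cycles of length $m'/\gcd(m',p)$, so a single $m$-cycle with $p\mid m$ never arises). The translation is instead realized by base change: one forms the fiber product of $h$ with an explicit auxiliary Galois cover $\phi$ of $\mP^1$ --- in the paper, $z^p$ with group $C_p$ for case (1), $z^p+z^{-p}$ with group $D_{2p}$ for case (2), and a degree-$12$ cover with group $A_4$ for case (3) --- and on the fiber product the long cycles \emph{split} rather than being rooted. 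Second, your proposed endgame (force the translated monodromy group into the list $\{$cyclic, dihedral, $A_4\}$ and contradict the hypothesis on $\Mon(h)$) is not how the contradiction arises and would not follow; there is no reason the monodromy of the translated cover is so restricted. In the paper the hypothesis that $\Mon(h)$ is noncyclic, nondihedral and not $A_4$ is used only once, to rule out $h$ and $\phi$ having a common nontrivial factor (which, by indecomposability of $h$, would make $\Mon(h)$ a quotient of $\Mon(\phi)$). Granting that, Lemma~\ref{lem:no-common} gives $\deg$(fiber product)$\,=\deg\phi\cdot\deg h$, Abhyankar's lemma (Lemma~\ref{lem:abh}) computes the ramification of the projection $\pi$ from the fiber product $Z$ to $Y_1$ explicitly in terms of how many entries of $E_h(P_i)$ are coprime to $p$ (resp.\ odd, resp.\ coprime to $3$), and the contradiction is purely numerical: $g_Z\ge 0$ forces $\sum_j R_\pi(h^{-1}(P_j))\ge 2\deg\phi-2$, which fails precisely under the divisibility hypotheses of (1)--(3). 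This Riemann--Hurwitz computation --- the place where ``exactly two odd entries'' enters, together with the parity observation that the number of odd entries in $E_h(P_2)\cup E_h(P_3)$ is even --- is the substance of the proof, and your proposal does not carry it out. So the proposal as written has a genuine gap: the well-definedness issue you worry about is a red herring, while the actual argument (explicit auxiliary covers, Abhyankar, and the genus inequality for the fiber product) is missing.
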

The proof uses the following version of Abhyankar's lemma. As in Section \ref{sec:castel}, we consider a minimal covering that factors through two given coverings.
\begin{lem}\label{lem:abh}
Let $q_i:Z_i\ra Z_0$, for $i=1,2$ be two coverings, and $p:Z\ra Z_0$ a minimal covering that factors through $q_1$ and $q_2$, say as $p=p_i\circ q_i$, for $i=1,2$. 
Let $Q_i$ be a point of $Z_i$, $i=1,2$ such that $q_1(Q_1)=q_2(Q_2)$. Then
\begin{enumerate} 
\item for every point $Q$ of $Z$ such that $p_1(Q) = Q_1$ and $p_2(Q)=Q_2$, one has
\[
e_p(Q) = \lcm(e_{q_1}(Q_1),e_{q_2}(Q_2))\text{ and }
e_{p_2}(Q)=
\frac{e_{q_1}(Q_1)}{\bigl(e_{q_1}(Q_1),e_{q_2}(Q_2)\bigr)}.
\]
\item If furthermore $\deg p = \deg q_1\cdot \deg q_2$ then the number of points $Q$ such that $p_1(Q)=Q_1$ and $p_2(Q)=Q_2$  is $\bigl(e_{q_1}(Q_1),e_{q_2}(Q_2)\bigr)$. 
\end{enumerate} 
\end{lem}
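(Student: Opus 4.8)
The plan is to reduce the whole statement to an explicit analysis of the completed local rings of $Z_0,Z_1,Z_2$ at $Q_0:=q_1(Q_1)=q_2(Q_2)$, $Q_1$, $Q_2$, exploiting that since $\charak\K=0$ and $\K$ is algebraically closed all the ramification in sight is tame and cyclic. Write $e_1:=e_{q_1}(Q_1)$, $e_2:=e_{q_2}(Q_2)$, $d:=(e_1,e_2)$, $m:=\lcm(e_1,e_2)$, and identify $\K(Z_0)\subseteq\K(Z_i)\subseteq\K(Z)$ via $q_i^{*}$ and $p_i^{*}$. First I would record the standard normalization: $\widehat{\mathcal O}_{Z_0,Q_0}\cong\K[[t]]$, and one may choose a uniformizer $t_i$ of $\widehat{\mathcal O}_{Z_i,Q_i}$ with $t=t_i^{e_i}$ (write $t=w_iu_i^{e_i}$ for an arbitrary uniformizer $u_i$ and a unit $w_i$, extract an $e_i$-th root of $w_i$ by Hensel's lemma since $e_i$ is invertible in $\K$, and absorb it into $u_i$); thus the completion of $\K(Z_i)$ at $Q_i$ is $\K((t_i))\cong\K((t))[y]/(y^{e_i}-t)$, a totally ramified extension of $\K((t))$ of degree $e_i$.

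The core computation is then: the $\K((t))$-algebra $A:=\K((t_1))\otimes_{\K((t))}\K((t_2))\cong\K((t_1))[y]/(y^{e_2}-t_1^{e_1})$ is a product of exactly $d$ fields, each a totally ramified extension of $\K((t))$ of degree $m$. To see this, write $e_i=de_i'$ with $(e_1',e_2')=1$, and use that $\K$ contains a primitive $d$-th root of unity $\zeta$ to factor
\[
y^{e_2}-t_1^{e_1}=\prod_{j=1}^{d}\bigl(y^{e_2'}-\zeta^{j}t_1^{e_1'}\bigr).
\]
Each factor is irreducible over $\K((t_1))$: adjoining a root $y$ forces $v(y)=(e_1'/e_2')\,v(t_1)$, which (normalizing $v(t_1)=1$) has denominator $e_2'$ because $(e_1',e_2')=1$, so the extension is at least — hence exactly — $e_2'$-ramified and of degree $e_2'$. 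Since $y^{e_2}-t_1^{e_1}$ is separable (its derivative $e_2y^{e_2-1}$ shares no root with it, as $\charak\K=0$), the $d$ factors are pairwise non-associate, so $A\cong\prod_{j=1}^{d}F_j$ with $F_j:=\K((t_1))[y]/(y^{e_2'}-\zeta^{j}t_1^{e_1'})$ totally ramified of degree $e_2'$ over $\K((t_1))$, hence of degree $e_2'e_1=m$ over $\K((t))$.

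It remains to connect $A$ with the points of $Z$ over $(Q_1,Q_2)$. Minimality of $Z$ means there is no proper subfield of $\K(Z)$ containing both $\K(Z_1)$ and $\K(Z_2)$, i.e.\ the multiplication map $\K(Z_1)\otimes_{\K(Z_0)}\K(Z_2)\to\K(Z)$ is surjective. Tensoring this surjection with the completion $\K((t))$ of $\K(Z_0)$ at $Q_0$ and extracting the component indexed by the pair $(Q_1,Q_2)$ yields a surjection of $\K((t))$-algebras $A\twoheadrightarrow\prod_{Q}\widehat{\mathcal O}_{Z,Q}$, where $Q$ runs over the points of $Z$ with $p_1(Q)=Q_1$ and $p_2(Q)=Q_2$ and $\widehat{\mathcal O}_{Z,Q}$ is written for its fraction field. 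A surjection out of a finite product of fields is a projection onto a sub-product, so each such $\widehat{\mathcal O}_{Z,Q}$ is one of the $F_j$; hence $e_p(Q)=[F_j:\K((t))]=m=\lcm(e_1,e_2)$, and $e_{p_2}(Q)=e_p(Q)/e_{q_2}(Q_2)=m/e_2=e_1/d$ by multiplicativity of ramification indices in $\K((t))\subseteq\K((t_2))\subseteq\widehat{\mathcal O}_{Z,Q}$, which proves (1). For (2), the hypothesis $\deg p=\deg q_1\cdot\deg q_2$ gives $[\K(Z):\K(Z_0)]=[\K(Z_1):\K(Z_0)]\cdot[\K(Z_2):\K(Z_0)]$, so by a dimension count the surjection $\K(Z_1)\otimes_{\K(Z_0)}\K(Z_2)\to\K(Z)$ is an isomorphism, whence $A\twoheadrightarrow\prod_{Q}\widehat{\mathcal O}_{Z,Q}$ is an isomorphism and the number of such $Q$ equals the number $d$ of field factors of $A$. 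I expect the delicate point to be the factorization step — confirming that $A$ splits into exactly $\gcd(e_1,e_2)$ fields, neither more nor fewer — since this is precisely where the arithmetic of $\gcd$ and $\lcm$ and the use of tameness enter; the passage from the global compositum identity to the completed-local statement is routine but must be bookkept carefully. Alternatively, part (1) can be obtained by quoting Abhyankar's lemma \cite[Theorem~3.9.1]{Stich} for $\K(Z)=\K(Z_1)\cdot\K(Z_2)$ together with the tower formula, reserving the local computation above for the counting in (2).
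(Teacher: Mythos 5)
Your proof is correct, but it takes a genuinely different route from the paper's. The paper passes to a Galois closure $\tilde p\colon\tilde Z\to Z_0$ with group $G$, identifies the points of $Z$, $Z_1$, $Z_2$ over $Q_0$ with orbits of a branch cycle $x$ on $(H_1\cap H_2)\backslash G$, $H_1\backslash G$, $H_2\backslash G$ respectively (via Lemma~\ref{lem:correspondence}), and then reads off both assertions from the injection $\iota\colon(H_1\cap H_2)\backslash G\hookrightarrow(H_1\backslash G)\times(H_2\backslash G)$: the $x$-orbit of a pair in $\theta_1\times\theta_2$ has length $\lcm(r_1,r_2)$, and when $\iota$ is surjective (which is exactly the hypothesis $\deg p=\deg q_1\cdot\deg q_2$, via Lemma~\ref{lem:no-common}-type considerations) the $r_1r_2$ pairs fall into $(r_1,r_2)$ orbits. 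Your argument instead works with completed local rings: you show $\K((t_1))\otimes_{\K((t))}\K((t_2))$ splits as a product of exactly $d=(e_1,e_2)$ fields each of degree $m=\lcm(e_1,e_2)$ via the Kummer factorization $y^{e_2}-t_1^{e_1}=\prod_j(y^{e_2'}-\zeta^jt_1^{e_1'})$, and then transport this to $Z$ through the surjection $\K(Z_1)\otimes_{\K(Z_0)}\K(Z_2)\twoheadrightarrow\K(Z)$ supplied by minimality (an isomorphism under the degree hypothesis of part (2)). All the steps check out, including the two points you flag as delicate: the irreducibility of each factor via the valuation $v(y)=e_1'/e_2'$ with coprime $e_1',e_2'$, and the component-by-component bookkeeping when tensoring the global surjection with $\K((t))$ (a homomorphism from a finite product of fields to a field factors through exactly one projection, namely the one indexed by $(p_1(Q),p_2(Q))$). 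What each approach buys: the paper's is uniform with the orbit-counting machinery used throughout (e.g.\ Lemmas~\ref{lem:h-to-f} and~\ref{cor:RH-up-left}) and needs no commutative algebra beyond Section~\ref{sec:prelim}; yours makes the role of tameness completely explicit, proves (1) and (2) simultaneously from a single structural statement about the local tensor product, and is essentially the classical proof of Abhyankar's lemma, so the fallback of citing \cite[Theorem~3.9.1]{Stich} for part (1) that you mention is consistent with your main line of argument.
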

\begin{proof}
Set $P:=q_1(Q_1)=q_2(Q_2)$. 
Let $\tilde p:\tilde Z\ra Z_0$ be a Galois closure of $p$ and $G$ its monodromy group.
Writing $\tilde p = q_1\circ u_1 = q_2\circ u_2 = p\circ v$, we let $H_i\leq G$ be the monodromy group of $u_i$, $i=1,2$. Since $p$ is minimal, the monodromy group of $v$ is $H_1\cap H_2$. 
Let $x\in G$ be a branch cycle of $\tilde p$ over $P$. 
By Section \ref{sec:prelim}, there is a one to one correspondence between points $Q$ of $Z$ and orbits $\theta$ in $\Orb_{(H_1\cap H_2)\backslash G}(x)$ such that $e_{p}(Q)=\abs{\theta}$. Moreover as in Section \ref{sec:prelim}, the image $p_i(Q)$  corresponds to the restriction of $\theta$ to an orbit in  $\Orb_{H_i\backslash G}(x)$ for $i=1,2$. 
Let $\theta_i\in \Orb_{H_i\backslash G}(x)$ be the orbit corresponding to $Q_i$ and $r_i:=\abs{\theta_i} = e_{q_i}(Q_i)$  for $i=1,2$. 

Note that the natural map $\iota:(H_1\cap H_2)\backslash G\ra (H_1\backslash G)\times (H_2\backslash G)$ of $G$-sets is an injection. Identifying $(H_1\cap H_2)\backslash G$ with its image, the orbit of any pair in $\theta_1\times \theta_2$ has length $\lcm(r_1,r_2)$. Thus by the above correspondence, $e_{p}(Q) = \abs{\theta} = \lcm(r_1,r_2)$ for every point $Q$ of $Z$ with $p_i(Q)=Q_i$, $i=1,2$, and hence 
\[
e_{p_2}(Q) = \frac{e_{p}(Q)}{e_{q_2}(Q_2)} = \frac{\lcm(r_1,r_2)}{r_2} = \frac{r_1}{(r_1,r_2)},
\]
giving (1). 
If $\deg p = \deg q_1\cdot \deg q_2$, then $\iota$ is surjective and hence the number of orbits is the number $r_1r_2$ of pairs in $\theta_1\times \theta_2$ divided by the length $\lcm(r_1,r_2)$ of each orbit, and hence is $(r_1,r_2)$, giving (2). 
\end{proof}
\begin{rem}\label{rem:abh} Let $h:Y_1\ra\mP^1$ be a covering with Galois closure $\tilde h:\tilde Y_1\ra\mP^1$. Then Lemma \ref{lem:abh}.(1) applied iteratively shows that $e_{\tilde h}(P)$ is the least common multiple of all entries in $E_h(P)$ for every point $P$ in $\mP^1$. Indeed, each such entry and hence their least common multiple divide $e_{\tilde h}(P)$. On the other hand, letting $G=\Mon(h)$ and $H_1$ a  point stabilizer,  $\tilde h$ is the minimal Galois covering which factors through the natural projections $h^{\sigma}:\tilde Y_1/H_1^\sigma \ra \mP^1$, $\sigma\in G$. Thus, Lemma \ref{lem:abh}.(1) implies that $e_{\tilde h}(P)$ is the least common multiple of  $e_{h^{\sigma}}(Q_\sigma)$, $\sigma\in G$ for some points $Q_\sigma\in \tilde Y_1/H_1^\sigma$.
As $h^{\sigma}$, $\sigma\in G$ are isomorphic coverings, $E_{h^{\sigma}}(P) = E_{h}(P)$ for $\sigma\in G$, and hence $e_{\tilde h}(P)$ is also the least common multiple of entries from $E_h(P)$. 
\end{rem}
To ensure the assumption of Lemma \ref{lem:abh}.(2) is satisfied, we shall use: 
\begin{lem}\label{lem:no-common}
Let $q_i: Z_i\ra Z_0$, $i=1,2$ be coverings with no common factorization $q_i = u\circ q_i'$, $i=1,2$ with $\deg u>1$. 
If $q_1$ is Galois then the degree of a minimal covering $p$ that factors through $q_1$ and $q_2$ is $\deg q_1\cdot \deg q_2$.
\end{lem}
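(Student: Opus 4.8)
The plan is to translate the statement into function fields using the correspondence between curves and function fields \cite[Section~7.1]{Fulton}, and then to reduce it to the classical translation theorem of Galois theory. Write $K_0:=\K(Z_0)$ and $K_i:=\K(Z_i)$ for $i=1,2$, so that $q_1,q_2$ correspond to the inclusions $K_0\subseteq K_1$ and $K_0\subseteq K_2$, and the assumption that $q_1$ is Galois means $K_1/K_0$ is Galois. A covering $p:Z\ra Z_0$ factoring through $q_1$ and $q_2$, say $p=p_i\circ q_i$, corresponds to a field $L:=\K(Z)$ together with $K_0$-embeddings $K_1\hookrightarrow L$ and $K_2\hookrightarrow L$; I identify $K_1$ and $K_2$ with their images. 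A common factorization $p_i=p_i'\circ w$ with $w:Z\ra W$ and $\deg w>1$ corresponds precisely to a field $N:=\K(W)$ with $K_1K_2\subseteq N\subsetneq L$, so the hypothesis that no such common factorization exists is exactly the statement $L=K_1K_2$. Hence $\deg p=[K_1K_2:K_0]$, and the goal becomes the identity $[K_1K_2:K_0]=[K_1:K_0]\cdot[K_2:K_0]$.

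Next I would use the Galois hypothesis on $K_1/K_0$: since $K_1$ is the splitting field over $K_0$ of a (separable) polynomial, $K_1K_2$ is the splitting field of the same polynomial over $K_2$, so $K_1K_2/K_2$ is Galois; moreover restriction to $K_1$ gives an injective homomorphism $\Gal(K_1K_2/K_2)\ra\Gal(K_1/K_0)$ whose image is $\Gal(K_1/(K_1\cap K_2))$. Therefore $[K_1K_2:K_2]=[K_1:K_1\cap K_2]$, and multiplying by $[K_2:K_0]$ gives
\[
[K_1K_2:K_0]=[K_1:K_1\cap K_2]\cdot[K_2:K_0].
\]
Thus the desired equality is equivalent to $K_1\cap K_2=K_0$.

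Finally I would establish $K_1\cap K_2=K_0$ by contradiction. If $M:=K_1\cap K_2$ strictly contained $K_0$, then $M$ would be the function field of a curve $Z_0'$, and the inclusions $K_0\subseteq M\subseteq K_1$ and $K_0\subseteq M\subseteq K_2$ would correspond to a covering $u:Z_0'\ra Z_0$ of degree $[M:K_0]>1$ together with factorizations $q_1=u\circ q_1'$ and $q_2=u\circ q_2'$ — exactly the kind of common factorization excluded in the statement. Hence $K_1\cap K_2=K_0$, giving $\deg p=\deg q_1\cdot\deg q_2$.

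I expect the only delicate point to be the bookkeeping in the first paragraph: one must check that the ``no common factorization'' condition genuinely forces $L$ to equal the compositum $K_1K_2$ rather than merely contain it, which is where the minimality of $p$ is being used. Everything else is a routine application of the Galois correspondence, and the Galois hypothesis on $q_1$ is essential precisely in the second paragraph, since without it the multiplicativity $[K_1K_2:K_0]=[K_1:K_0][K_2:K_0]$ can fail even when $K_1\cap K_2=K_0$.
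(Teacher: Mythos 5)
Your proof is correct and is essentially the paper's argument written in the dual, function-field language: the paper passes to the Galois closure $\tilde p$ with group $G$ and subgroups $H_1,H_2$, where your identification $L=K_1K_2$ is the statement that $\Mon(v)=H_1\cap H_2$, your $K_1\cap K_2=K_0$ is $G=\langle H_1,H_2\rangle$, and your use of the translation theorem (via $H_1\trianglelefteq G$, so $G=H_1H_2$) yields the same multiplicativity $[G:H_1\cap H_2]=[G:H_1]\cdot[G:H_2]$. The bookkeeping point you flag is handled correctly: minimality of $p$ is precisely the absence of a proper intermediate field between $K_1K_2$ and $L$, which forces $L=K_1K_2$ since the compositum is itself the function field of a curve.
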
 
\begin{proof}
Let $\tilde p:\tilde Z\ra Z_0$, $u_1,u_2,v, G, H_1,$ and $H_2$ be as in the proof of Lemma \ref{lem:abh}. 
 The minimality of $p$ shows that monodromy group of $v$ is $H_1\cap H_2$. Since $q_1$ and $q_2$ have no common factorization, we have $G=\langle H_1,H_2\rangle$. Since $q_1$ is Galois, $H_1$ is normal in $G$, and hence $G=H_1H_2$. It follows that the natural map $(H_1\cap H_2)\backslash G\ra (H_1\backslash G)\times (H_2\backslash G)$  is an isomorphism, and hence $\deg p = [G:H_1\cap H_2] = [G:H_1]\cdot [G:H_2] = \deg q_1\cdot \deg q_2$. 
\end{proof}
\begin{proof}[Proof of Lemma \ref{lem:hurwitz1}]
Consider a Galois covering $\phi:\mP^1\ra \mP^1$ such that in cases $(1), (2),$ and $(3)$, respectively, we have
\begin{enumerate}
\item $E_{\phi}(P_j)=[p]$ for $j=1,2$, and $\Mon(\phi)\cong C_p$; 
\item $E_{\phi}(P_1)=[p^2]$, $E_{\phi}(P_j)=[2^{p}]$ for $j=2,3$, and $\Mon(\phi)\cong D_{2p}$;
\item $E_{\phi}(P_1)=[2^6]$, $E_{\phi}(P_j)=[3^4]$ for $j=2,3$, and $\Mon(\phi)\cong A_4$.
\end{enumerate}
Note that in each of the cases (1), (2), and (3), there exists a covering $\phi'$ with the described ramification type and monodromy, namely, $X^p$,  $X^{p}+X^{-p}$, and $(X^4+8X)^3 / (X^3-1)^3$, respectively. The covering $\phi$ is then obtained by composing $\phi$ with a linear fractional which sends the branch points of $\phi'$ to $P_1,P_2,P_3$. 

Since $h$ is indecomposable, either  $\phi = h\circ \psi$ for some covering $\psi$, or $h$ and $\phi$ have no common factor $\phi = w\circ \phi'$, $h=w\circ h'$, with $\deg w>1$. 
The latter case contradicts the assumption that the monodromy group of $h$ is noncyclic, nondihedral, and is not $A_4$. 

Hencefore we assume $\phi$ and $h$ have no common factor $w$ as above. 
Let $p: Z\ra \mP^1$ be a minimal covering which factors through $\phi$ and $h$, 
say $p = \phi\circ \eta = h\circ \pi$. Then $\deg p = \deg \phi\cdot \deg h$ by Lemma \ref{lem:no-common}. 
Hence, we may apply Lemma \ref{lem:abh}.(2) to compute the ramification of $\pi$ in cases (1), (2), and (3), respectively, giving 
\begin{enumerate}
\item $R_{\pi}(h^{-1}(P_j)) = (p-1)\cdot \abs{\{r\in E_h(P_j): p\nmid r\}}$ \quad for $j=1,2$;
\item $R_{\pi}(h^{-1}(P_1)) = 2(p-1)\cdot \abs{\{r\in E_h(P_1): p\nmid r\}}$\quad and 
\[
R_{\pi}(h^{-1}(P_j)) = p \cdot\abs{\{r\in E_h(P_j): 2\nmid r\}}\quad \text{ for $j=2,3$;}
\]
\item $R_{\pi}(h^{-1}(P_1)) = 6 \cdot\abs{\{r\in E_h(P): 2\nmid r\}}$ \quad and 
\[
R_{\pi}(h^{-1}(P_j))=8\cdot \abs{\{r\in E_h(P_j): 3\nmid r\}} \quad \text{ for $j=2,3$}.
\]
\end{enumerate}
Since $g_{Z}\geq 0$, Riemann--Hurwitz for $\pi$ yields
\begin{equation}\label{equ:g123} \sum_{j=1}^3 R_{\pi}(h^{-1}(P_j))  = \sum_{P\in \mP^1(\K)}R_{\pi}(h^{-1}(P))  \geq 2\deg\pi -2 = 2\deg \phi -2.
\end{equation}
In case (1),  \eqref{equ:g123} shows that the total number of coprime to $p$ entries in $E_h(P_1)$ and $E_{h}(P_2)$ is at least two, contradicting (1).

In case (2), \eqref{equ:g123} gives
\begin{equation}\label{equ:cheb}
2(p-1) \cdot\abs{\{r\in E_h(P_1): p\nmid r\}}
+ p \cdot \abs{\{\text{odd }r\in E_h(P_j), j=2,3\}} 
\geq  4p-2. \end{equation}
By case (1), there exists at least one odd entry in $E_h(P_2)$ or in $E_h(P_3)$. Since $\sum_{r\in E_h(P_2)}r = \sum_{r\in E_h(P_3)}r$ mod $2$, there is an even number of such odd entries. Hence \eqref{equ:cheb} shows that either there is an entry in $E_h(P)$ that is coprime to $p$, or the total number of odd entries in $E_h(P_2)$ and $E_h(P_3)$ is at least four, contradicting  (2).

In case  (3), \eqref{equ:g123} gives
\begin{equation}\label{equ:A4}  6\cdot \abs{\{\text{odd }r\in E_h(P_1)\}} + 8\cdot \abs{\{r\in E_h(P_j)\suchthat j=2,3,\, r\text{ coprime to }3\}} \geq 22.
\end{equation}
By part (1) there exists at least one entry in $E_h(P_2)$ or $E_h(P_3)$ that is coprime to $3$. Since $\sum_{r\in E_h(P_2)}r \equiv \sum_{r\in E_h(P_3)}r \pmod 3$, the number of such coprime entries is at least $2$. Hence \eqref{equ:A4} shows either that there is an odd entry in $E_h(P_1)$, or the total number of coprime to $3$ entries in $E_h(P_2)$ and $E_h(P_3)$ is at least three, contradicting (3).
\end{proof}

The following proposition gives the list of ramification types in case (4) of Theorem \ref{thm:general}, and is then used to restrict ramification types of indecomposable polynomials. 
\begin{prop}\label{lem:normal-closure}\label{lem:isogeny-mon}
Let $h:Y_1\ra \mP^1$ be an indecomposable covering with Galois closure of genus $0$ or $1$.
Then the ramification type and monodromy group of $h$ appear in Table \ref{table:solvable}. Conversely, every  type in Table \ref{table:solvable} occurs as the ramification type of such covering.

\begin{table}
\caption{Ramification types of indecomposable coverings $h:Y_1\ra \mP^1$ of degree $\ell$ and Galois closure $\tilde h:\tilde Y_1\ra \mP^1$ of genus $g_{\tilde Y_1}\leq 1$. In each case, we assume $\ell$  satisfies the necessary congruence conditions to make all exponents integral. In all cases $p$ denotes a prime.}
\label{table:solvable}
\begin{equation*}
\begin{array}{| c | l | c | l |}
\hline
& \text{Ramification types for $h$ } & \text{$g_{\tilde Y_1}$} & \Mon(h) \\
\hline
(A1) & [\ell], [\ell] & 0 &  C_\ell, \text{$\ell$ prime}\\
(A2) & {[1,2^{(\ell-1)/2}]}, [1,2^{(\ell-1)/2}], [\ell] & 0 & D_{2\ell}, \text{$\ell$ prime} \\
(A3) & {[1^2,2^4]}, [1,3^3], [5^2] & 0 &  A_5 \\
(A4) & {[1^2,2^2]}, [3^2], [1,5] & 0 &  A_5 \\
(A5) & {[1,2^2]}, [1^2,3], [5]  & 0 & A_5 \\
(A6) & {[1^2,2]}, [1,3], [4]  & 0 & S_4 \\
(A7) & {[2^2]}, [1,3], [1,3]  & 0 & A_4 \\
\hline 
(E1) & [1,2^{(\ell-1)/2}] \text{ four times} 
& 1 & D_{2\ell}, \text{$\ell$ prime} \\
(E2) & [1,3^{(\ell-1)/3}], [1,3^{(\ell-1)/3}], [1,3^{(\ell-1)/3}] & 1 & C_\ell \rtimes C_3, \text{$\ell$ prime} \\
(E3) & [1,3^{(\ell-1)/3}], [1,3^{(\ell-1)/3}], [1,3^{(\ell-1)/3}] & 1 & (C_p)^2\rtimes C_3, \text{$\ell=p^2$, $p\equiv 2$ mod $3$} \\
(E4) & [1,2^{(\ell-1)/2}], [1,4^{(\ell-1)/4}], [1,4^{(\ell-1)/4}] & 1 & C_\ell\rtimes C_4, \text{$\ell$ prime} \\
(E5) & [1,2^{(\ell-1)/2}], [1,4^{(\ell-1)/4}], [1,4^{(\ell-1)/4}] & 1 & (C_p)^2\rtimes C_4, \text{$\ell=p^2$, $p\equiv 3$ mod $4$}  \\
(E6) & [1,2^{(\ell-1)/2}], [1,3^{(\ell-1)/3}], [1,6^{(\ell-1)/6}]  & 1 & C_\ell\rtimes C_6,  \text{$\ell$ prime} \\
(E7) & [1,2^{(\ell-1)/2}], [1,3^{(\ell-1)/3}], [1,6^{(\ell-1)/6}]  & 1 & (C_p)^2\rtimes C_6, \text{$\ell=p^2$, $p\equiv 5$ mod $6$}\\
\hline
(Q1) & [2], [2], [2], [2] & 1 & C_2 \\
(Q2) & [3], [3], [3] & 1 & C_3 \\
\hline
\end{array}
\end{equation*}\end{table}
\end{prop}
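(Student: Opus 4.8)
The plan is to reduce the classification to the classical theory of Galois covers of $\mP^1$ of genus $\le 1$, and then, for each such cover, enumerate the core-free maximal subgroups. Write $G:=\Mon(h)$; since $h$ is indecomposable, $h$ is equivalent to the natural projection $\tilde Y_1/H\to\mP^1$ for a core-free maximal subgroup $H<G$, where $\tilde h\colon\tilde Y_1\to\mP^1$ is the Galois closure of $h$. Fix branch points $P_1,\dots,P_r$ of $\tilde h$, branch cycles $x_i$ over $P_i$ of order $e_i$, with $\langle x_1,\dots,x_r\rangle=G$ and a product relation $x_1\cdots x_r=1$; by Lemma~\ref{lem:correspondence}, $E_h(P_i)$ is the multiset of cycle lengths of $x_i$ acting on $H\backslash G$.

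\textbf{Step 1 (Riemann--Hurwitz constraint).} Applying Riemann--Hurwitz to $\tilde h$ gives
\[
2g_{\tilde Y_1}-2=\abs{G}\Bigl(-2+\sum_{i=1}^r(1-1/e_i)\Bigr),
\]
so $g_{\tilde Y_1}\le 1$ forces $\sum_i(1-1/e_i)\le 2$. A short case analysis (with $r\ge2$ since $\deg h\ge2$) shows $\{e_1,\dots,e_r\}$ is one of $\{n,n\},\{2,2,n\},\{2,3,3\},\{2,3,4\},\{2,3,5\}$ (these force $g_{\tilde Y_1}=0$) or $\{3,3,3\},\{2,4,4\},\{2,3,6\},\{2,2,2,2\}$ (these force $g_{\tilde Y_1}=1$).

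\textbf{Step 2 (genus $0$).} Here $\tilde Y_1\cong\mP^1$ and $G$ is a finite subgroup of $\PGL_2(\K)=\Aut(\mP^1)$, hence classically cyclic $C_n$, dihedral $D_{2n}$, $A_4$, $S_4$ or $A_5$, realized with branching $(n,n),(2,2,n),(2,3,3),(2,3,4),(2,3,5)$ respectively. I would then list the core-free maximal subgroups $H$ of each such $G$ up to conjugacy: this forces $n$ prime in the cyclic and dihedral cases (with $H=1$, resp.\ $H=\langle\text{reflection}\rangle$), and leaves $H=C_3$ in $A_4$, $H=S_3$ in $S_4$, and $H\in\{A_4,D_{10},S_3\}$ in $A_5$. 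For each I compute the cycle type of every $x_i$ on $H\backslash G$ using the elementary fact that $g\in G$ fixes the coset $Hg$ iff $x_i\in g^{-1}Hg$; the only non-routine point is checking (by parity) that the order-$2$ branch cycle is a transposition in the $S_4$ case, and the analogous facts for $A_4$ and $A_5$. The outcome is exactly rows (A1)--(A7).

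\textbf{Step 3 (genus $1$).} Now $\tilde Y_1$ is an elliptic curve $E$ and $G=\Aut(E\to E/G)$ with $E/G=\mP^1$. The translation subgroup $T:=\ker\bigl(G\to\Aut(E,0)\bigr)$ is normal and is a finite subgroup of $E$, so $T\cong C_a\times C_b$ with $a\mid b$; moreover $G/T$ embeds in the cyclic group $\Aut(E,0)\in\{C_2,C_4,C_6\}$, is nontrivial (as $E/G$ has genus $0$), and the extension splits since $\Aut(E,0)$-elements lift to automorphisms of $E$. So $G=T\rtimes C$ with $C$ cyclic. As $T\triangleleft G$, a core-free maximal $H$ satisfies $HT=G$ and $H\cap T=1$, hence is a complement $\cong C$, and $H$ is maximal iff $T$ is an \emph{irreducible} $\Z[C]$-module on which $C$ acts faithfully. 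Running through $C\in\{C_2,C_3,C_4,C_6\}$: if $T=1$ then $H=1$ is maximal in $C$ only for $C=C_2,C_3$, giving (Q1),(Q2); if $T\neq1$, faithfulness plus irreducibility force either $T=C_\ell$ with $\ell$ prime and $2,3,4,6\mid\ell-1$ respectively, or — when the action factors through complex multiplication, i.e.\ for $C=C_3$ or $C_6$ ($j(E)=0$, $\End(E)=\Z[\zeta_3]$) and $C=C_4$ ($j(E)=1728$, $\End(E)=\Z[i]$) — $T=E[p]\cong(C_p)^2$ with $p$ inert in the relevant CM ring, i.e.\ $p\equiv2\pmod3$, $p\equiv3\pmod4$, or $p\equiv5\pmod6$; one also checks the ramified primes ($p=3$, resp.\ $p=2$) and the split (reducible) cases are excluded. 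These give rows (E1)--(E7). The branching is $(2,2,2,2),(3,3,3),(2,4,4),(2,3,6)$ according to $C=C_2,C_3,C_4,C_6$, and since each branch cycle lies in a conjugate of the complement $H=C$, $N_G(H)=H$, and $x_i$ is conjugate to exactly one element of $H$ of its order, $x_i$ has exactly one fixed point on $H\backslash G$; hence $E_h(P_i)=[1,e_i^{(\ell-1)/e_i}]$, matching (E1)--(E7).

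\textbf{Step 4 (converse) and main obstacle.} Conversely, every type occurs: for (A1)--(A7), $C_n,D_{2n},A_4,S_4,A_5$ act faithfully on $\mP^1$ over $\K$ and the quotient maps $\mP^1/H\to\mP^1/G$ are rational functions (e.g.\ $X^\ell$ for (A1)) with the stated monodromy and ramification; for (E1)--(E7),(Q1),(Q2) one takes an elliptic curve $E$ with the prescribed $j$-invariant and torsion subgroup $T$, forms $G=T\rtimes C\le\Aut(E)$, and uses $E\to E/G=\mP^1$ together with $E/H\to\mP^1$ — these are precisely the Latt\`es-type maps of the introduction, and their existence also follows from Riemann's existence theorem applied to the branch-cycle data of Step~3. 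I expect Step~3 to be the crux: one must pin down exactly which groups arise as $\Aut(E/\mP^1)$ with a core-free maximal subgroup, and the delicate part is matching the module-theoretic maximality criterion ($T$ irreducible over $\Z[C]$) with the arithmetic of complex multiplication, which is what produces the congruence conditions $p\equiv2\pmod3$, $p\equiv3\pmod4$, $p\equiv5\pmod6$ in rows (E3),(E5),(E7), and requires care in discarding the ramified and split cases. Steps~1 and~2 and the converse are bookkeeping on top of classical facts.
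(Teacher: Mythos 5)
Your proposal is correct and its skeleton matches the paper's: genus $0$ is handled via Klein's classification of finite subgroups of $\PGL_2(\K)$ together with an enumeration of core-free maximal subgroups, and genus $1$ via the decomposition $\Aut(\tilde Y_1)=T\rtimes A^\times$, the identification of a core-free maximal $H$ as a cyclic complement to $N=G\cap T$, and the analysis of $N$ as an irreducible faithful module over $\Z[\mu_v]$, with the inert/split/ramified trichotomy in the CM order producing exactly the congruence conditions in rows (E2)--(E7). Your case split ($N=1$ versus $N\neq1$) is equivalent to the paper's split by $g_{Y_1}\in\{1,0\}$. The one genuinely different ingredient is how you extract the ramification multisets: the paper computes $E_h(P)$ geometrically, in genus $0$ by combining Abhyankar's lemma (Remark~\ref{rem:abh}) with Riemann--Hurwitz, and in genus $1$ by tracking ramification through the isogeny $\eta:\tilde Y_1\to \tilde Y_1/N$ and comparing the two quotient maps $\rho_1,\rho_2$ by $\mu_v$; you instead count fixed points of branch cycles on $H\backslash G$ via the conjugacy formula $\abs{H\cap x^G}\cdot\abs{C_G(x)}/\abs{H}$, using that $1-x$ acts invertibly on $N$ to get $C_G(x)=H$ and a unique fixed point, hence $E_h(P_i)=[1,e_i^{(\ell-1)/e_i}]$. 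Your route is purely group-theoretic and arguably cleaner for the E-rows (it avoids the injectivity-on-torsion arguments), though you should note in writing it up that the same count applied to every power $x^d\neq 1$ is what rules out orbits of length strictly between $1$ and $e_i$, and that invertibility of $1-x^d$ on $N$ uses $p\nmid\operatorname{N}(1-\zeta)$, which is exactly where the hypotheses $p\nmid v$ enter; the paper's geometric computation buys the same facts from the statement that $\eta$ is injective on $\ker(1-z)$.
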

\begin{lem}\label{lem:elliptic-decomposition}
Let $Z$ be a curve of genus $1$ and let $O$ be a point of $Z$. Then the automorphism group $\Aut(Z)$ is the semidirect product $T\rtimes A^\times$, where $T$ is 
the group of translations by  points on the elliptic curve $(Z,O)$, and $A^\times$ is the group of automorphisms of $Z$ fixing $O$.
\end{lem}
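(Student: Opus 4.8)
The plan is to exploit the group law on the elliptic curve $(Z,O)$ together with the rigidity of morphisms fixing the origin. Write $\tau_Q\colon P\mapsto P+Q$ for translation by $Q$ with respect to the group law with identity $O$; each $\tau_Q$ is an automorphism of the curve $Z$ with inverse $\tau_{-Q}$, so $T:=\{\tau_Q : Q\in Z(\K)\}$ is a subgroup of $\Aut(Z)$, and it is abelian (indeed $\tau_Q\circ\tau_{Q'}=\tau_{Q+Q'}$). Also $A^\times=\{\phi\in\Aut(Z):\phi(O)=O\}$ is the stabilizer of $O$, hence a subgroup. I will show $\Aut(Z)=T\cdot A^\times$, $T\cap A^\times=\{1\}$, and $T\trianglelefteq\Aut(Z)$, which together give $\Aut(Z)=T\rtimes A^\times$.

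For the factorization, given $\phi\in\Aut(Z)$ put $P:=\phi(O)$; then $\tau_{-P}\circ\phi$ sends $O$ to $P-P=O$, so $\tau_{-P}\circ\phi\in A^\times$, and $\phi=\tau_P\circ(\tau_{-P}\circ\phi)\in T\cdot A^\times$. For the intersection, a translation $\tau_Q$ lies in $A^\times$ exactly when $\tau_Q(O)=Q=O$, i.e.\ $\tau_Q=\mathrm{id}$. For normality I will invoke rigidity: every $\alpha\in A^\times$ is a homomorphism for the group law, i.e.\ $\alpha(P+Q)=\alpha(P)+\alpha(Q)$ for all $P,Q$ (for instance, $\alpha$ induces an automorphism of $\operatorname{Pic}^0(Z)$ compatible, via $P\mapsto[P]-[O]$, with the action of $\alpha$ on points). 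Consequently $\alpha\circ\tau_Q\circ\alpha^{-1}=\tau_{\alpha(Q)}$. Writing an arbitrary $\phi\in\Aut(Z)$ as $\tau_P\circ\alpha$ with $\alpha\in A^\times$ and using that $T$ is abelian, we get $\phi\,\tau_Q\,\phi^{-1}=\tau_P\,\tau_{\alpha(Q)}\,\tau_{-P}=\tau_{\alpha(Q)}\in T$, so $T$ is normal.

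Combining the three facts yields $\Aut(Z)=T\rtimes A^\times$. The only nontrivial ingredient is the rigidity statement that an automorphism of $Z$ fixing $O$ respects the group law; once that is in hand the rest is routine group theory, so I expect the clean justification of rigidity to be the main (and essentially the only) obstacle.
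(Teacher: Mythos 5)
Your proposal is correct and follows essentially the same route as the paper: decompose an arbitrary automorphism $\phi$ as $\tau_{\phi(O)}\circ(\tau_{-\phi(O)}\circ\phi)$ with the second factor fixing $O$. The paper states only this factorization and leaves the triviality of $T\cap A^\times$ and the normality of $T$ (via the rigidity fact that an automorphism fixing $O$ is a group homomorphism) implicit, whereas you spell them out; the content is the same.
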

\begin{proof} Considering the group structure $(Z,+,O)$ on $Z$, the claim follows since every automorphism $f:Z\ra Z$ can be written as a composition $\sigma_{f(O)} \circ g$, where $\sigma_{f(O)}\in T$ is the translation map $P\mapsto P+f(O)$ and $g := \sigma_{f(O)}^{-1}\circ f$ is an automorphism fixing $O$. 
\end{proof}

\begin{rem}\label{rem:extra-facts}\label{rem:lattes}
(1) The normal subgroup $T$ is independent of the choice of base point $O$, namely it is the subgroup consisting of all fixed point free automorphisms of $Z$.

(2) The endomorphism ring $A:=\End(Z)$ is isomorphic either to $\mathbb Z$ or to an order in an imaginary quadratic field, so that the multiplicative group $A^\times$  is  the group  $\mu_v$ of $v$-th roots of unity for some $v\in\{2,4,6\}$, cf.~\cite[Section III.9 and III.10]{Sil} and \cite[Example III.4.4]{Sil}. 
Moreover, up to isomorphism there is a unique elliptic curve $Z$ for which $A^\times\cong \mu_v$, for $v= 4,6$. 
The cardinality of the kernel of an endomorphism $\eta\in A$ equals the norm of its corresponding element in $\mathbb Z$ or in the imaginary quadratic field.

(3) For a finite subgroup $H\leq \Aut(Z)$, the quotient $Z/H$ is of genus $1$ if and only if $H\leq T$,  and is of genus $0$ otherwise. 

(4) Every covering $h:Y_1\ra Y$ of genus $1$ curves is Galois with abelian monodromy group \cite[Theorem 4.10(c)]{Sil}.
\end{rem}
    
\begin{proof}[Proof of Proposition \ref{lem:normal-closure}] 
Let $G=\Mon(h)$ and $H_1\leq G$ a point stabilizer, so that $H_1$ is maximal in $G$. 
Let $\tilde h:\tilde Y_1\ra \mP^1$ be the Galois closure of $h$, and $n:=\deg\tilde h$. 
At first assume  $g_{\tilde Y_1}=0$.  The ramification and mondromy groups of genus $0$ Galois coverings $\tilde h:\tilde Y_1\ra \mP^1$ is well-known as a consequence of Klein's classification of finite subgroups of $\Aut(\mP^1)\cong \PGL_2(\K)$.  Namely, it is one of $[n],[n]$ with $G\cong C_n$; $[n/2,n/2],[2^{n/2}],[2^{n/2}]$ with $G\cong D_{2n}$; $[2^6], [3^4], [3^4]$ with $G\cong A_4$; $[2^{12}], [3^{8}], [4^6]$ with $G\cong S_4$; and $[2^{30}], [3^{20}], [5^{12}]$ with $G\cong A_5$. 
As $H_1$ is maximal and does not contain a normal subgroup of $G$, 
we have $n=\ell$ is prime and $H_1=1$ if $G\cong C_\ell$; and $n=2\ell$ with $\ell$ an odd prime, and $H_1$ is generated by a reflection if $G\cong D_{2\ell}$; and $H_1\cong A_3$ if $G\cong A_4$; and $H_1\cong S_3$ if $G\cong S_4$; and $H_1\cong A_4$, or $D_{10}$, or $S_3$ if $G\cong A_5$. By Abhyankar's lemma \ref{rem:abh},   $e_{\tilde h}(P)$ is the least common multiple of entries in $E_h(P)$, for every point $P$ of $\mP^1$. For each possibility of $G$ and $H_1$, the latter constraint and the Riemann--Hurwitz formula force the ramification of $h$ to be one of types A in Table \ref{table:solvable}. Conversely, all ramifications types A1-A7 of Table \ref{table:solvable} occur as the ramification of the quotient map $\tilde Y_1/H_1\ra\tilde Y_1/G$ induced from $\tilde h$ as above.

Henceforth assume $g_{\tilde Y_1}=1$. Let $T\lhd \Aut(\tilde Y_1)$ be the abelian normal subgroup from Lemma~\ref{lem:elliptic-decomposition}, and let $N:=G\cap T$, so that 
$N\lhd G$.
First consider the case where $g_{Y_1}=1$, in which case Remark \ref{rem:extra-facts}.(3) implies that $H_1\subseteq N$ and that $N$ is a proper subgroup of $G$.  As $H_1$ is maximal, we have $H_1=N$. 
Hence the natural projection  $\phi:\tilde Y_1/N\ra \mP^1$  factors through $h$. 
Since $\phi$ is Galois as $N\lhd G$, and $\tilde h$ is the Galois closure of $h$, it follows that $N=1$. It follows that the natural map $G\ra \Aut(\tilde Y_1)/T$ is injective, and hence that $G\cong \mu_v$ for $v\in\{2,3,4,6\}$ by Remark \ref{rem:extra-facts}.(2). In particular $h$ is Galois. As $h$ is indecomposable, we get $G\cong C_2$ or $C_3$. By the Riemann--Hurwitz formula for $h$, its ramification is  (Q1) or (Q2) in Table \ref{table:solvable}. Conversely, (Q1),(Q2) are the ramification types of the projection $Y\ra\mP^1$ from an elliptic curve $Y$ to its $x$ and $y$ coordinates, resp.

Henceforth assume $g_{Y_1}=0$.  
Remark \ref{rem:extra-facts} then implies that $H_1$ is not contained in $N$. As $H_1$ is maximal, this gives $G=N\cdot H_1$. Let $\psi:\tilde Y_1/(H_1\cap N) \ra \mP^1$ be the natural projection. Since $H_1\cap N\lhd H_1$ as $N\lhd G$, and since $H_1\cap N\lhd N$ as $N$ is abelian, we have $H_1\cap N \lhd  N\cdot H_1=G$, so that $\psi$ is Galois. Since $\psi$ factors through $h$, and $\tilde h$ is the Galois closure of $h$, it follows that $H_1\cap N=1$, and hence $G=  N\cdot H_1 =  N\rtimes H_1$.

We next describe $H_1$ and $N$ using the decomposition in Lemma \ref{lem:elliptic-decomposition}. As $N$ consists of the fixed point free automorphisms in $G$ by Remark \ref{rem:extra-facts}.(1), it follows that a generator of $H_1$ and hence $H_1$ has a fixed point $O$ in $\tilde Y_1$, so that $(\tilde Y_1,O)$ is an elliptic curve, and $H_1$ is a subgroup of the group $A^\times$ of automorphisms fixing $O$. 
Let $Z_1:=\tilde Y_1/N$, let $\eta:\tilde Y_1\ra Z_1$ be the natural projection, and set $O'=\eta(O)$. Then  $(Z_1,O')$ is an elliptic curve by Remark~\ref{rem:extra-facts}.(3), and $\eta$ is an isogeny. 
\begin{equation}\label{equ:EC}
\xymatrix{
\tilde Y_1 \ar[r]^{\eta} \ar[d]_{\rho_1} & Z_1\ar[d]^{\rho_2} \\
Y_1 \ar[r]_h & \mP^1 
}
\end{equation}
By Remark \ref{rem:extra-facts}.(2), we can identify the endomorphism ring $A:=\End(\tilde Y_1)$  either with $\mathbb Z$ or  with an  order  in an imaginary quadratic field, so that the subgroup $H_1$ of  $A^\times$   identifies with a multiplicative subgroup of $v$-th roots of unity $ \mu_v$ for $v\in\{2,3,4,6\}$. Let $B$ be the subring $\mathbb Z[\mu_v]$ of $A$. 
Since $H_1$ is maximal, there are no nontrivial intermediate subgroups between $H_1$ and $ N\rtimes H_1$, and hence  no nontrivial proper subgroups of $N$ that are invariant under conjugation by $H_1$. In particular, $N$ is  $p$-torsion  for some prime $p$. Thus, the action of $H_1$ gives $N$ the structure of an irreducible $p$-torsion $B$-module. 

First consider the case where $p$ is coprime to $v=\abs{H_1}$.  
Let $[-1]\in A^\times$ be the map $P^{[-1]}=-P$, and $\sigma_Q\in T$ the translation  $P^\sigma_Q = P+Q$ for $P,Q\in \tilde Y_1$. 
If $v=2$, then $H_1= \mu_2=\langle [-1]\rangle$, and $\sigma_P^{[-1]} = [-1]\circ \sigma_P\circ [-1] = \sigma_{-P}$ for $\sigma_P\in N$. As $N$ is an irreducible $\mathbb Z$-module, in this case $\abs N=p$, so that $G = N\rtimes H_1\cong D_{2p}$. 

For $v=3$, since $N$ is an irreducible $p$-torsion $B$-module, we have
\begin{itemize} 
%
\item  $N\cong B/pB$ as a $B$-module if  $p\equiv 2$ mod $3$, so that $N\cong C_p^2$ as an abelian group;
\item  $N\cong B/\pi B$ as a $B$-module if  $p\equiv 1$ mod $3$, where $\pi$  is an irreducible element in $B$ of norm  $p$, so that $N\cong C_p$ as an abelian group. 
\end{itemize}
Similarly, one obtains a description of the $B$-module $N$ for  $v=6$ and $4$, according to whether the ideal $(p)$ is prime or not in $B$, giving the groups $G$ in cases E of Table \ref{table:solvable}. 

Next assume $p$ divides $v=\abs{H_1}$, so that $p=2$ or $3$.
If $p=2$ and $v$ is even, then $[-1]\in H_1$ as it fixes $O$, and  $\sigma_P^{[-1]}=\sigma_{-P}=\sigma_P$ for every point $P$ in $N$.
Hence $[-1]$ is in the center of $G$, so that the natural projection $\tilde Y_1/[-1]\ra \mP^1$ is Galois, 
and factors through $h$ as $[-1]\in H_1$, contradicting the choice of $\tilde h$ as the Galois closure of $h$.
Assume $p=3$ and $3\divides v$.  Since an element of order $3$ fixes a nontrivial subgroup of $T[3]$ (in any action), and every subgroup of $T[3]$ is invariant under conjugation by $[-1]$,  the $B$-module $T[3]$ is reducible.  Since $N\le T[3]$ is irreducible, $N\cong C_3$ as an abelian group.
As an element of order $3$ acts trivially on $C_3$, we get that $\mu_3\leq H_1$ is in the center of $G$.
It follows that the natural projection $\tilde Y_1/\mu_3\ra \mP^1$ is Galois, and factors through $h$, contradicting the choice of $\tilde h$ as
the Galois closure of $h$. 

Henceforth we may assume $p$ is coprime to $v$.
We next determine the ramification types in each possibility for $H_1$. If $v=2$, then the ramification points of the natural projection $\rho_1:\tilde Y_1\ra Y_1$ (resp., $\rho_2:Z_1\ra\mP^1$) are the fixed points of the involution $[-1]\in H_1$ (resp., $[-1]\in \Aut(Z_1)$), that is,
the four $2$-torsion points of $\tilde Y_1$ (resp., $Z_1$).
Since $\eta$ is of odd degree it is injective on $2$-torsion points and hence maps the ramification points of $\rho_1$ to the ramification points of $\rho_2$.
Since $\eta$ is unramified, 
 the multiplicativity of ramification indices in \eqref{equ:EC} implies that $e_h(\rho_1(Q)) = e_{\rho_2}(\eta(Q))/e_{\rho_1}(Q)$ for every point $Q$ of $\tilde Y_1$. As $\rho_1$ and $\rho_2$ both have four ramification points, and $\deg h$ is odd as $p$ is odd, this forces the ramification of $h$ to be type (E1). Conversely, (E1) occurs as the ramification type of the map $\tilde Y_1/\langle[-1]\rangle\ra Z_1/\langle[-1]\rangle$ induced from a degree-$\ell$ isogeny $\tilde Y_1\ra Z_1$ by taking the quotients by the order-two automorphism $[-1]$.

If $v>2$, we identify  $\tilde Y_1$ with the unique elliptic curve $E$ whose automorphism group contains $\mu_v$, cf.~Remark \ref{rem:extra-facts}.(2). 
Note that as $H_1$ preserves $\ker\eta$, it also acts on $Z_1$ by $ \eta(P)^\sigma:=\eta(P^\sigma)$ for $\sigma\in H_1$, $P\in \tilde Y_1$. 
Since this action is faithful, and preserves $O'$, the automorphism group of the elliptic curve $Z_1$  also contains $\mu_v$, and hence we also identify $Z_1$ with $E$. 
As $H_1$ preserves the fibers of $h\circ \rho_1$, it also preserves the fibers of $\rho_2$. As in addition $\deg\rho_1=\deg\rho_2$, the map  $\rho_2$ is also the quotient map by the same subgroup $\mu_v$ of automorphisms of $E$. 

A point $Q$ of $E$ is then a ramification point of order $e$ under $\rho_1$ if and only if $e$ is the maximal integer for which $Q$ is fixed by an $e$-th root of unity $z$ in $A$. 
Computing $\abs{\ker(1-z)}$ for $z\in \mu_v$ via Remark \ref{rem:extra-facts}.(2),
we get that the ramification of $\rho_i$ is  $[3],[3],[3]$ if $v=3$, and $[2^2],[4],[4]$ if $v=4$, and $[2^3],[3^2],[6]$ if $v=6$, for $i=1,2$.
In particular, $\abs{\ker(1-z)}$ is  of order prime to $\ell$, and hence the map $\eta$ maps this kernel injectively to itself. 
In particular, if $Q$ is a ramification point of $\rho_1$, then $\eta(Q)$ is a ramification point of $\rho_2$ with $e_{\rho_1}(Q) = e_{\rho_2}(\eta(Q))$. 
Since in addition $e_{h}(\rho_1(Q))=e_{\rho_2}(\eta(Q))/e_{\rho_1}(Q)$ as $\eta$ is unramified for every point $Q$ of $\tilde Y_1$, the ramification type of $h$ is forced to be one of the types (E2)-(E7) in Table \ref{table:solvable}.
Conversely, (E2)-(E3) (resp.\ (E4)-(E5) and (E6)-(E7)) occur as the ramification type of the map $E/\mu_v\ra E/\mu_v$ induced from a degree-$\ell$ isogeny $E\ra E$ from the Elliptic curve $E$ as above upon taking the  quotient by its automorphisms $\mu_v$ of order dividing $v=3$ (resp.\ $v=4$ and $v=6$).
\end{proof}
\begin{cor}\label{cor:normal} Let $h:Y_1\ra\mP^1$ be an indecomposable covering with branch points $P_1,\ldots, P_r$. If the multiset $\{\lcm(E_{h}(P_i))\suchthat i=1,\ldots,r\}$ is one of  $\{2,2,2,2\}$, $\{3,3,3\}$, $\{2,4,4\}$, $\{2,3,6\}$, then the monodromy group of $h$ is solvable. 
\end{cor}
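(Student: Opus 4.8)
The plan is to reduce everything to the structure of $\Aut(\tilde Y_1)$, where $\tilde h\colon\tilde Y_1\to\mP^1$ is the Galois closure of $h$. First I would recall from Remark~\ref{rem:abh} that $e_{\tilde h}(P)=\lcm(E_h(P))$ for every point $P$ of $\mP^1$; in particular the branch points of $\tilde h$ are exactly $P_1,\dots,P_r$ (each $P_i$ is a branch point of $h$, so $E_h(P_i)\neq[1^\ell]$ and thus $\lcm(E_h(P_i))>1$), and the inertia group over $P_i$ has order $e_i:=\lcm(E_h(P_i))$.

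Next I would apply the Riemann--Hurwitz formula to the Galois covering $\tilde h$. Writing $G=\Mon(h)$ and $n=\abs G=\deg\tilde h$, each of the $n/e_i$ points in $\tilde h^{-1}(P_i)$ has ramification index $e_i$, so
\[
2g_{\tilde Y_1}-2 \;=\; -2n+\sum_{i=1}^r\frac{n}{e_i}(e_i-1)\;=\;n\Bigl(-2+\sum_{i=1}^r\bigl(1-\tfrac1{e_i}\bigr)\Bigr).
\]
The four multisets $\{2,2,2,2\}$, $\{3,3,3\}$, $\{2,4,4\}$, $\{2,3,6\}$ are precisely the ``Euclidean'' signatures: in each case $\sum_i(1-1/e_i)=2$, so the right-hand side vanishes and $g_{\tilde Y_1}=1$.

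Finally, knowing $\tilde Y_1$ has genus $1$, I would invoke Lemma~\ref{lem:elliptic-decomposition} to write $\Aut(\tilde Y_1)=T\rtimes A^\times$ with $T$ the abelian group of translations and, by Remark~\ref{rem:extra-facts}.(2), $A^\times\cong\mu_v$ cyclic for some $v\in\{2,4,6\}$; hence $\Aut(\tilde Y_1)$ is metabelian, in particular solvable. Since $G=\Mon(h)=\Aut(\tilde h)$ is a subgroup of $\Aut(\tilde Y_1)$, it is solvable, which is the claim. (Alternatively, one could quote Proposition~\ref{lem:normal-closure} directly: $h$ is indecomposable with Galois closure of genus~$1$, so $\Mon(h)$ occurs among the rows (E1)--(E7), (Q1), (Q2) of Table~\ref{table:solvable}, all of which are solvable.)

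There is no serious obstacle here; the one place to be careful is the very first step, namely correctly transferring the ramification data of $h$ to the inertia orders of the Galois closure via Abhyankar's lemma (Remark~\ref{rem:abh}) — once that is in hand, the rest is a one-line Riemann--Hurwitz computation plus the classical description of automorphisms of a genus-$1$ curve.
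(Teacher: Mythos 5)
Your proposal is correct and follows essentially the same route as the paper: Remark~\ref{rem:abh} identifies $e_{\tilde h}(P_i)=\lcm(E_h(P_i))$, Riemann--Hurwitz for the Galois covering $\tilde h$ then forces $g_{\tilde Y_1}=1$ since each of the four signatures satisfies $\sum_i(1-1/e_i)=2$, and solvability follows. The paper concludes by citing Proposition~\ref{lem:normal-closure}, whereas your primary ending argues directly that $\Mon(h)\le\Aut(\tilde Y_1)=T\rtimes A^\times$ is metabelian via Lemma~\ref{lem:elliptic-decomposition} and Remark~\ref{rem:extra-facts}.(2); this is a valid and slightly more self-contained finish, and you also note the paper's citation as an alternative.
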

\begin{proof}
Let $\tilde h:\tilde Y_1\ra \mP^1$ be the Galois closure of $h$. 
By  Abhyankar's lemma~\ref{rem:abh}, $e_{\tilde h}(P)$ is the least common multiple  of the entries in $E_{h}(P)$ for every point $P$. Thus, the Riemann--Hurwitz formula for $\tilde h$ gives  $g_{\tilde Y_1} = 1$. Thus, $\Mon(h)$ is solvable by Proposition \ref{lem:normal-closure}. 
\end{proof}

Lemmas \ref{lem:hurwitz1} and \ref{lem:isogeny-mon} suffice to rule out all ramification data we shall encounter with the exception of the ones appearing in the following lemma. 
\begin{lem}\label{lem:non-existing} There is no degree $\ell$ covering with monodromy group $A_\ell$ or $S_\ell$ whose ramification type appears in Table \ref{table:non-existence}. 

\begin{table}
\caption{Ramification data that does not correspond to a covering with monodromy group $A_\ell$ or $S_\ell$. }
\begin{equation*}\label{table:non-existence}
\begin{array}{| l | l |}
\hline
F1.N1 & [1^2,2^{(\ell-2)/2}], [2^{\ell/2}]\text{ thrice}, [2,1^{\ell-2}]; \\
F1.N2 & [1,3,2^{(\ell-4)/2}], [2^{\ell/2}]\text{ thrice}; \\
F1.N3 & [1^2,4,2^{(\ell-6)/2}], [2^{\ell/2}]\text{ thrice};\\
F1.N4 & [4,2^{(\ell-4)/2}], [1^2,2^{(\ell-2)/2}], [2^{\ell/2}]\text{ twice}; \\
\hline
F4.N1 & [2^{\ell/2}], [2, 3^{(\ell-2)/3}], [1^2, 6^{(\ell-2)/6}]; \\
\hline
I2.N1 & [\ell],  [4,2^{(\ell-4)/2}],  [2^{\ell/2}];  \\
I2.N2 & [\ell],  [2^{\ell/2}],  [2^{\ell/2}],  [2,1^{\ell-2}]. \\
\hline
\end{array}
\end{equation*}
\end{table}
\end{lem}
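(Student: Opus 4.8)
The plan is to rule out each of the seven ramification data in Table~\ref{table:non-existence} by applying the translation criterion of Lemma~\ref{lem:hurwitz1}, after first disposing of the possibility that the monodromy group is too small for that lemma to apply. Since in each case the monodromy group is assumed to be $A_\ell$ or $S_\ell$ with $\ell\geq 13$, it is automatically noncyclic, nondihedral, and not $A_4$, so the hypotheses of Lemma~\ref{lem:hurwitz1} are met, and any covering $h$ realizing one of these types would necessarily be \emph{indecomposable} (otherwise its monodromy group would be imprimitive, contradicting that $A_\ell$ and $S_\ell$ act primitively on $\{1,\dots,\ell\}$). Thus it suffices, for each row, to exhibit one of the three divisibility configurations in Lemma~\ref{lem:hurwitz1}, which forces $h$ to be decomposable — a contradiction.

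Concretely, I would go row by row. For F1.N1, the type is $[1^2,2^{(\ell-2)/2}], [2^{\ell/2}]$ thrice, $[2,1^{\ell-2}]$: three of the branch points have all entries divisible by $p=2$, so applying Lemma~\ref{lem:hurwitz1}(1) to any two of them (all entries even) gives decomposability. For F1.N2 ($[1,3,2^{(\ell-4)/2}], [2^{\ell/2}]$ thrice) and F1.N3 ($[1^2,4,2^{(\ell-6)/2}], [2^{\ell/2}]$ thrice), the three points of type $[2^{\ell/2}]$ again have all entries even, so Lemma~\ref{lem:hurwitz1}(1) applies to any two of them. For F1.N4 ($[4,2^{(\ell-4)/2}], [1^2,2^{(\ell-2)/2}], [2^{\ell/2}]$ twice): here the two $[2^{\ell/2}]$ points and the point $[4,2^{(\ell-4)/2}]$ all have every entry even, so pick two of these three and apply part~(1). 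For F4.N1 ($[2^{\ell/2}], [2,3^{(\ell-2)/3}], [1^2,6^{(\ell-2)/6}]$): the point $P_1$ of type $[2^{\ell/2}]$ has all entries even; the point $P_2$ of type $[2,3^{(\ell-2)/3}]$ has exactly one entry coprime to $3$ (the single $2$), and the point $P_3$ of type $[1^2,6^{(\ell-2)/6}]$ has exactly one entry coprime to $3$ (well, two $1$'s) — I need to recount: $[1^2,6^{(\ell-2)/6}]$ has two entries equal to $1$, both coprime to $3$, so the total number of entries coprime to $3$ in $E_h(P_2)$ and $E_h(P_3)$ is $1+2=3$, not $2$. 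Let me instead use $p=2$: $E_h(P_2)$ has one odd entry ($3^{(\ell-2)/3}$ contributes $(\ell-2)/3$ odd entries, plus the $2$), so that does not work directly either. The cleaner route for F4.N1 is part~(3): $E_h(P_1)=[2^{\ell/2}]$ has all entries even; then $E_h(P_2)=[2,3^{(\ell-2)/3}]$ has exactly one entry coprime to $3$ (the $2$), and $E_h(P_3)=[1^2,6^{(\ell-2)/6}]$ — its entries coprime to $3$ are the two $1$'s, giving three total; so actually one should check whether the intended reading of the table is $[1,6^{(\ell-1)/6}]$ with a single $1$. I would reconcile this with the corrections noted in Remark~\ref{rem:GS-correct} (this is exactly item (f) of \cite[Conjecture 3.0.29]{GS}), confirm the correct multiset, and then apply part~(3) with the point of all-even entries as $P_1$ and the two remaining points contributing exactly two entries coprime to $3$ in total. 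For I2.N1 ($[\ell], [4,2^{(\ell-4)/2}], [2^{\ell/2}]$): apply part~(1) with $p=2$ to the points $[4,2^{(\ell-4)/2}]$ and $[2^{\ell/2}]$, both of which have all entries even (note $\ell$ is odd here since there is an $[\ell]$ branch point with $\ell$ prime in the relevant family — but we only need evenness of the entries in those two other fibers, which holds regardless). For I2.N2 ($[\ell], [2^{\ell/2}], [2^{\ell/2}], [2,1^{\ell-2}]$): apply part~(1) with $p=2$ to the two $[2^{\ell/2}]$ points.

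The main obstacle I anticipate is purely bookkeeping: ensuring in each row that the claimed divisibility pattern genuinely holds for the multiset as written, and in particular pinning down the exact multisets in the borderline cases F4.N1 and those touched by Remark~\ref{rem:GS-correct}, where the Guralnick--Shareshian list contained a Riemann--Hurwitz-violating or otherwise incorrect entry. Once the multisets are fixed, each case reduces to a one-line verification that two fibers consist entirely of entries divisible by a prime $p$ (applying Lemma~\ref{lem:hurwitz1}(1)), or that one fiber is all-even while two others contribute exactly two entries coprime to $3$ (applying part~(3)). In every case the conclusion ``$h$ is decomposable'' contradicts the primitivity of $A_\ell$ and $S_\ell$, completing the proof. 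A secondary point worth a sentence is to record explicitly that the congruence conditions on $\ell$ making all exponents integral are compatible with the parity and divisibility claims used (e.g. $\ell$ even where a $[2^{\ell/2}]$ appears), so that no vacuous case slips through.
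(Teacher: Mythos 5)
Your plan founders on a hypothesis of Lemma~\ref{lem:hurwitz1} that you never check: that lemma is stated for coverings $h\colon\mP^1\to\mP^1$, i.e.\ for a \emph{genus-zero source curve}, and its proof genuinely needs this. The contradiction there comes from Riemann--Hurwitz applied to $\pi\colon Z\to Y_1$, which yields $\sum_j R_\pi(h^{-1}(P_j))=2g_Z-2-\deg\phi\,(2g_{Y_1}-2)\ge 2\deg\phi-2$ only when $g_{Y_1}=0$; if $g_{Y_1}=1$ the same identity gives merely $2g_Z-2\ge -2$, and the vanishing of all the $R_\pi$ terms produces no contradiction. Now compute genera: for each of F1.N1--F1.N4, I2.N1 and I2.N2 the total Riemann--Hurwitz contribution of the listed data is exactly $2\ell$, so the source curve has genus $1$ and Lemma~\ref{lem:hurwitz1} simply does not apply. (This is precisely why the paper, which elsewhere freely uses Lemma~\ref{lem:hurwitz1}(1) to forbid two all-even fibers when $g_{Y_1}=0$, does not dispose of these six cases that way, even though each has two or three branch points with all indices even.) The remaining case F4.N1 does have a genus-zero source, but, as you half-noticed and then left unresolved, none of conditions (1)--(3) holds for it: the entries coprime to $3$ over the last two points number $1+2=3$, not $2$, and the entry $[1^2,6^{(\ell-2)/6}]$ is correct as printed (it is forced by Riemann--Hurwitz together with $\ell\equiv 2\pmod 6$; your proposed $[1,6^{(\ell-1)/6}]$ would require $\ell$ odd, contradicting $[2^{\ell/2}]$). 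So the proposal closes none of the seven cases.

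The paper's proof uses different tools throughout. For F1.N1--F1.N4 one checks that $g_{X_1}=g_{Y_1}=1$ while the two-set genus formula \eqref{equ:M-RH-t=2} gives $g_{X_2}=0$, contradicting the monotonicity $g_{X_1}\le g_{X_2}$ of Lemma~\ref{lem:gXk} (applicable since the group contains $A_\ell$ by primitivity and Remark~\ref{rem:jordan}). For F4.N1 one first composes with a degree-$3$ cyclic cover branched at the last two points; Abhyankar's lemma converts the data into the genus-one type $[2^{\ell/2}]^3,[1^2,2^{\ell/2-1}],[2,1^{\ell-2}]$ with monodromy $S_\ell$, and the same genus-monotonicity contradiction applies. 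For I2.N1 and I2.N2 the argument is combinatorial: labelling the points along the $\ell$-cycle $bc$ (resp.\ $dcb$) shows that the generators preserve the partition into pairs $\{i,\ell/2+i\}$, so the generated group is imprimitive and cannot be $A_\ell$ or $S_\ell$. To rescue a translation-style argument you would need a version of Lemma~\ref{lem:hurwitz1} valid over a genus-one source, and the Riemann--Hurwitz bookkeeping above shows no such version can hold in the strength you require.
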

The proof of Lemma \ref{lem:non-existing} is given in Section \ref{sec:non-existence}.


\section{Proof of Theorem \ref{thm:main}}\label{sec:proof}

We start by deriving Theorem \ref{thm:main} from the following propositions which deal with the case of ramification of almost Galois type. The propositions are separated according to the cases $t=2$ and $t\geq 3$. We use the notation of Setup \ref{sec:setup}, so that for a covering $h:Y_1\ra Y$, the quotient by the  stabilizer of a $t$-set (resp., $t$-tuple of distinct elements) is denoted by $X_t$ (resp., $Y_t$).
\begin{prop}\label{prop:two-set}
For every integer $\alpha>0$, there exist constants $c_{2,\alpha},d_{2,\alpha}>0$ satisfying the following property. 
For every covering $h: Y_1\ra Y$ of degree $\ell$ with monodromy group $A_\ell$ or $S_\ell$, and $2$-point genus bound $\alpha\ell$,
either
$
g_{X_2} - g_{X_{1}} > c_{2,\alpha}\ell - d_{2,\alpha}$
or $g_{Y_1}=0$ and the ramification type of $h$ appears in Table~\ref{table:two-set-stabilizer}. 
\end{prop}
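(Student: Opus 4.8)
The plan is to use the almost-Galois structure established in Section~\ref{sec:almost-Galois} together with the explicit genus formula \eqref{equ:M-RH-t=2} and then to weed out the non-indecomposable ramification data using the translation lemmas of Section~\ref{sec:ram}. Since $h$ has $2$-point genus bound $\alpha\ell$, Corollary~\ref{cor:DZ} applies: we have $g_Y\le 1$, the multiset $M_h$ of values $m_h(P)>1$ is one of the seven possibilities listed there (empty if $g_Y=1$; one of $\infty\infty$, $\infty 22$, $333$, $236$, $244$, $2222$ if $g_Y=0$), and there is only a bounded (in $\alpha$) amount of ``extra'' ramification: at most $2(\alpha+1)$ points with $m_h(P)=1$, total $R_h$-contribution over these at most $8(\alpha+1)^2$, and $g_{Y_1}\le \alpha+1$. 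The first step is to plug all this into the exact identity \eqref{equ:M-RH-t=2}
\[
4(g_{X_2}-g_{X_1}) = 2(\ell-3)(g_{Y_1}-1) + \sum_{P\in Y(\K)}\Bigl( -\abs{\{\text{even }r\in E_{h}(P)\}} + \sum_{r_1,r_2\in E_{h}(P)}\bigl(r_1-(r_1,r_2)\bigr)\Bigr),
\]
and estimate the right-hand side. For a point $P$ with $m_h(P)=k<\infty$, Lemma~\ref{lem:2set-nonnegative}(1) shows the per-point summand is nonnegative unless $E_h(P)=[k^{\ell/k}]$ with $k$ even, in which case it equals $-\ell/k$. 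For a point with $m_h(P)=\infty$, the error $\eps_h(P)$ is bounded by $84(\alpha+1)$ and one checks directly that the summand is at least $-O_\alpha(1)$, in fact the double sum alone is large (of order $\ell$) because most entries of $E_h(P)$ exceed $6$. Hence the only way $g_{X_2}-g_{X_1}$ can fail to grow linearly is when $g_{Y_1}$ is bounded and the ``Galois model'' of $h$ is a genuine genus-zero quotient whose branch data has lcm-type in $\{2222,333,244,236\}$ with the relevant branch points actually carrying pure power ramification $[k^{\ell/k}]$ — precisely the configurations where the main $2(\ell-3)(g_{Y_1}-1)$ term can be nonpositive and the negative contributions $-\ell/k$ can cancel the positive ones.

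The second step is a finite case analysis over these four ``elliptic'' shapes of $M_h$ (plus the $\infty$-cases, which I expect to be eliminated quickly since they force an order-$\ell$ positive contribution, hence $g_{X_2}-g_{X_1}>c\ell-d$ outright, and similarly the case $g_Y=1$, $M_h=\emptyset$). In each of the four remaining shapes, write $E_h(P_i)=[A_i,k_i^{*}]$ where $k_i\in\{2,3,4,6\}$ is essentially $m_h(P_i)$ and $A_i$ is a bounded multiset of ``exceptional'' ramification indices; run \eqref{equ:M-RH-t=2} once more keeping track of the $A_i$'s explicitly. Requiring $g_{X_2}-g_{X_1}\not> c_{2,\alpha}\ell - d_{2,\alpha}$ forces $g_{Y_1}\le 1$ and bounds $\sum_i\sum_{a\in A_i}a$ by an absolute constant; combined with Riemann--Hurwitz for $h$ and the constraint that $\lcm(E_h(P_i))$ be fixed, this cuts the $A_i$'s down to a finite list of candidate ramification data of degree $\ell$. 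For each candidate I would first use Corollary~\ref{cor:normal} to note the Galois closure has genus $1$ is not the obstruction here (the monodromy is supposed to be $A_\ell$ or $S_\ell$), and then apply the translation criteria: Lemma~\ref{lem:hurwitz1}(1)–(3) eliminates every candidate in which two branch points have all entries divisible by a common prime, or one has all entries divisible by $p$ and another pair has exactly two odd (resp.\ two coprime-to-$3$) entries; Lemma~\ref{lem:non-existing} (Table~\ref{table:non-existence}) kills the handful of residual data not covered by Lemma~\ref{lem:hurwitz1}. What survives is exactly the list in Table~\ref{table:two-set-stabilizer}, and tracing the cases back shows $g_{Y_1}=0$ in each, giving the dichotomy in the statement; the labels I1.*, I2.*, F1.*, F3.*, F4.* are attached according to which shape and sub-case each datum arose from.

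The main obstacle will be organizing the case analysis in step two so that it is both exhaustive and manageable: the four elliptic shapes together with the choices of which branch point carries the ``non-pure'' ramification, the parity/congruence conditions on $\ell$ that make exponents integral, and the bookkeeping of the exceptional multisets $A_i$ produce a large but finite tree of possibilities, and one must check that \emph{every} leaf is either eliminated by Lemma~\ref{lem:hurwitz1} or Lemma~\ref{lem:non-existing}, or appears in Table~\ref{table:two-set-stabilizer}. Getting the constants right is comparatively routine — $c_{2,\alpha}$ and $d_{2,\alpha}$ only need to be chosen after the finite list of ``bad'' shapes is fixed, absorbing the $O(\alpha^2)$ errors from Corollary~\ref{cor:DZ} and the bound $g_{Y_1}\le\alpha+1$ into $d_{2,\alpha}$ — but verifying that the translation lemmas genuinely apply to each surviving datum (in particular checking the monodromy hypothesis ``noncyclic, nondihedral, not $A_4$'' is satisfied, which is immediate since $\Mon(h)=A_\ell$ or $S_\ell$ with $\ell$ large) and that no legitimate indecomposable covering is accidentally discarded is where the real work lies. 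I would handle the $t\ge 3$ analogue separately (it is stated as a companion proposition and reduces, via Proposition~\ref{lem:t-1-bound} and Proposition~\ref{prop:Castel}, to the already-resolved $t=2$ picture rather than requiring a fresh case analysis).
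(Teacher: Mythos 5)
Your overall architecture matches the paper's: Corollary~\ref{cor:DZ} gives the seven possible shapes of $M_h$, the identity \eqref{equ:M-RH-t=2} combined with per-point estimates of $R_{h_1^2}(h^{-1}(P))$ reduces everything to a finite list of candidate ramification data, and Lemma~\ref{lem:hurwitz1} together with Lemma~\ref{lem:non-existing} weeds out the data that cannot belong to an indecomposable covering with monodromy $A_\ell$ or $S_\ell$. The treatment of the four ``elliptic'' shapes $\{2,2,2,2\}$, $\{3,3,3\}$, $\{2,4,4\}$, $\{2,3,6\}$ and of the case $g_Y=1$, $M_h=\emptyset$ is essentially the paper's argument.

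However, there is a genuine error in your handling of the cases $M_h=\{\infty,\infty\}$ and $M_h=\{\infty,2,2\}$. You claim these ``are eliminated quickly since they force an order-$\ell$ positive contribution, hence $g_{X_2}-g_{X_1}>c\ell-d$ outright,'' on the grounds that ``the double sum alone is large (of order $\ell$) because most entries of $E_h(P)$ exceed $6$.'' This is false: for a point with $m_h(P)=\infty$ and $E_h(P)=[\ell/u,\dots,\ell/u]$ (e.g.\ $E_h(P)=[\ell]$ or $[a,\ell-a]$), the sum $\sum_{r_1,r_2\in E_h(P)}\bigl(r_1-(r_1,r_2)\bigr)$ is small or even zero --- this is exactly the dichotomy in Lemma~\ref{lem:bound-rs}(2) and Lemma~\ref{lem:RS-estimates} --- and even when it is of order $\ell$ it must be weighed against the negative term $2(\ell-3)(g_{Y_1}-1)\approx-2\ell$ arising when $g_{Y_1}=0$. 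The balance does \emph{not} always come out positive: the $\infty$-cases are precisely where the types I1.1 and I2.1--I2.15 of Table~\ref{table:two-set-stabilizer} arise (for instance $[\ell],[a,\ell-a],[2,1^{\ell-2}]$ has $M_h=\{\infty,\infty\}$, $g_{Y_1}=g_{X_2}=0$, so $g_{X_2}-g_{X_1}=0$). Dismissing these cases would therefore yield a ``proof'' of a false statement, omitting roughly half the entries of the table, including the family corresponding to $X^a(X-1)^{\ell-a}$. The paper's proof devotes its two longest cases (I1 and I2) to exactly this analysis: one derives $4g_{X_2}\geq 4\ell(g_{Y_1}-1)+(u+v+\epsilon_h)\ell-O_\alpha(1)$ with $u,v$ the numbers of preimages over the $\infty$-points, and the failure of linear growth forces $g_{Y_1}=0$ and $u+v\le 3$ (resp.\ the analogous constraint in case I2), after which Lemma~\ref{lem:hurwitz1} and Lemma~\ref{lem:non-existing} pin down the surviving types. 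Your proposal needs this additional analysis; the rest of it is sound.
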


\begin{prop}\label{prop:t-set}
There exists an absolute constant $c_{4}>0$, and for every integer $\beta_1>0$ a constant $d_{4,\beta_1}>0$ satisfying the following property. Let $k\geq 2$ and $t\geq 3$ be integers. 
If  $h: Y_1\ra Y$ is a covering of degree $\ell\geq 2t$ with monodromy group $A_\ell$ or $S_\ell$, and $2$-point genus bound $\alpha\ell$ for $\alpha=\beta_1 t^k$, then
\[
g_{X_t} - g_{X_{t-1}} > (c_{4}\ell - d_{4,\beta_1}t^{2(k+1)})\frac{\binom{\ell}{t}}{\binom{\ell}{2}}.
\]
\end{prop}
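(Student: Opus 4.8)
The plan is to start from the key identity \eqref{equ:M-RH-inequ} from Remark \ref{rem:main-RH}, which gives
\[
2t!(g_{X_t}-g_{X_{t-1}}) \geq 2(\ell-2t+1)(g_{Y_{t-1}}-1) - \sum_{P\in Y(\K)} R_{\pi_t}(f_t^{-1}(P)),
\]
and then bound the right-hand side from below. The first move is to apply Proposition \ref{lem:pi-form}(2) together with Proposition \ref{lem:t-1-bound} (both available since $\ell\geq 2t$ forces $\ell>t^2$ once $t$ is large, and the small-$t$ cases can be absorbed into the constant) to obtain, for every branch point $P$,
\[
R_{\pi_t}(f_t^{-1}(P)) \leq \binom{t}{2} R_{h_1^{t-1}}(h^{-1}(P)) + E_0 t^4 \frac{(\ell-2)!}{(\ell-t)!}.
\]
Combining this with the Riemann--Hurwitz formula for $h_1^{t-1}:Y_{t-1}\ra Y_1$ exactly as in the proof of Proposition \ref{lem:t=3,4} for $t\geq 3$ (see \eqref{equ:XtXt-1Y1}--\eqref{equ:BE(t)2}), and using that $\ell-2t+1>\binom{t}{2}$ for $\ell$ large, we reach
\[
2t!(g_{X_t}-g_{X_{t-1}}) \geq 2(1-\eps)\frac{(\ell-1)!}{(\ell-t)!}(g_{Y_1}-1) - E_0 B t^4 \frac{(\ell-2)!}{(\ell-t)!},
\]
where $B$ is the number of branch points of $h$ and $\eps=t/(\ell-t+1)$. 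So far this is identical to the earlier argument; the new content is that here we are allowed to assume $h$ has $2$-point genus bound $\alpha\ell$ with $\alpha=\beta_1 t^k$, and we must exploit that hypothesis rather than the crude lower bound $g_{Y_1}\geq \beta t^4$ used in Proposition \ref{lem:t=3,4}.

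The heart of the proof is therefore to control $B$ and $g_{Y_1}$ using Corollary \ref{cor:DZ}, which applies since $\ell\geq \lambda_2\alpha^3 = \lambda_2\beta_1^3 t^{3k}$ (again, a lower bound on $\ell$ that can be folded into $d_{4,\beta_1}t^{2(k+1)}$ whenever $3k\le 2(k+1)$, i.e.\ $k\le 2$; for $k\ge 3$ the hypothesis $\ell\ge 2t$ alone is not enough, but in that regime we simply take $d_{4,\beta_1}$ large enough that the asserted bound is vacuous unless $\ell$ is already $\geq \lambda_2\beta_1^3 t^{3k}$). Corollary \ref{cor:DZ} tells us that $g_Y\le 1$, that the multiset $M_h$ of values $m_h(P)>1$ is one of the finitely many familiar platonic lists, that $g_{Y_1}\le\alpha+1$, that there are at most $2(\alpha+1)$ points with $m_h(P)=1$, and crucially that $\sum_{m_h(P)=1} R_h(P)\le 8(\alpha+1)^2$. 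Feeding the Riemann--Hurwitz formula for $h$ — namely $2(g_{Y_1}-1)=2\ell(g_Y-1)+\sum_P R_h(P)$ — through this, and noting that each point with $m_h(P)>1$ contributes $R_h(P)=\ell-|h^{-1}(P)|$ which is at most $\ell$, we get $B\le |M_h| + 2(\alpha+1) \le 6 + 2(\alpha+1)$, a quantity bounded by $O(\alpha)=O(\beta_1 t^k)$. Plugging $B = O(\beta_1 t^k)$ and $0\le g_{Y_1}-1$ (so the first term is $\ge -2(1-\eps)(\ell-1)!/(\ell-t)!$) into the displayed inequality yields
\[
2t!(g_{X_t}-g_{X_{t-1}}) \geq -2(1-\eps)\frac{(\ell-1)!}{(\ell-t)!} - O(\beta_1 t^k)\cdot E_0 t^4 \frac{(\ell-2)!}{(\ell-t)!}.
\]
This is the wrong sign, so the argument as stated so far is insufficient; the fix is to go back and keep the positive term. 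Since we are trying to prove a lower bound of size $c_4 \ell \binom{\ell}{t}/\binom{\ell}{2} \asymp c_4 (\ell-1)!/(\ell-t)!$, and the error from $B$ is $O(\beta_1 t^k)\cdot t^4 (\ell-2)!/(\ell-t)! = O(\beta_1 t^{k+4}/\ell)\cdot (\ell-1)!/(\ell-t)!$, we need the main positive term $2(1-\eps)(g_{Y_1}-1)(\ell-1)!/(\ell-t)!$ to beat both. The main obstacle, and the reason a genuinely new input is needed, is that $g_{Y_1}$ could be as small as $0$; when $g_{Y_1}\leq 1$ the positive term vanishes or is negative, and one cannot conclude. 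This is precisely the case excluded in Proposition \ref{lem:t=3,4} (which assumed $g_{Y_1}\geq \beta t^4$), and it is handled here by a separate argument.

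For that remaining case $g_{Y_1}\le 1$, one uses the finiteness of the list $M_h$ from Corollary \ref{cor:DZ} to split into the six platonic possibilities and argue directly that $R_{h_1^{t-1}}(h^{-1}(P))$ is large enough to make $R_{h_{t-1}}(P)$ dominate. Concretely, when $m_h(P)=m<\infty$ for one of the ramified points, $E_h(P)$ consists of $\ell/m + O(\alpha)$ copies of $m$ together with $O(\alpha^2)$ leftover mass; Lemma \ref{cor:RH-up-left} then gives $R_{h_1^{t-1}}(h^{-1}(P)) = (\ell/m + O(\alpha))\cdot(\text{something}) \asymp (\ell-1)!/(\ell-t)!\cdot(1/m - 1/\lcm\cdots)$ summed appropriately, and crucially the main terms across the branch points combine (via the Euler-characteristic identity underlying \eqref{equ:M-RH}) to give a \emph{positive} multiple of $(\ell-1)!/(\ell-t)!$ of size $\asymp c_4\ell\binom{\ell}{t}/\binom{\ell}{2}$, because a Galois cover of genus $g_{Y_1}\le 1$ with these ramification types has $\tilde Y_1$ of genus $\le 1$, which forces $Y_t$ (equivalently $X_t$) to still grow — this is exactly the phenomenon quantified by Castelnuovo in Proposition \ref{prop:Castel}, or alternatively a direct count. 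The points with $m_h(P)=\infty$ are controlled by the last assertion of Corollary \ref{cor:DZ} ($|h^{-1}(P)|\le \nu\alpha^2$, so $R_h(P)\ge \ell - \nu\alpha^2$), and since $\alpha^2 = \beta_1^2 t^{2k}$ their contribution to the positive side is $\ell - O(\beta_1^2 t^{2k})$, again dominant. Tracking all constants, the surviving main term is $\ge c_4\ell\binom{\ell}{t}/\binom{\ell}{2}$ and the aggregate error is $\le d_{4,\beta_1}t^{2(k+1)}\binom{\ell}{t}/\binom{\ell}{2}$, where the exponent $2(k+1)$ comes from multiplying the $O(\alpha)=O(\beta_1 t^k)$ bound on the number of exceptional points by the $O(t^4)$ (or $O(t^2)$ from $\binom{t}{2}$, whichever dominates after the careful bookkeeping) per-point error and the $O(\alpha)$ leftover mass per point, i.e.\ roughly $\alpha\cdot\alpha\cdot t^{O(1)} = \beta_1^2 t^{2k+O(1)}$; absorbing the $t^{O(1)}$ and a factor $\beta_1$ into the constant $d_{4,\beta_1}$ gives the stated $d_{4,\beta_1}t^{2(k+1)}$. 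The main difficulty throughout is the $g_{Y_1}\le 1$ regime: there one must genuinely use that the ramification is of almost-Galois type to guarantee that $X_t$ keeps growing with $t$, and the cleanest route is to combine the finite classification in Corollary \ref{cor:DZ} with the Castelnuovo bound of Proposition \ref{prop:Castel} run in reverse (a lower bound on $g_{X_t}-g_{X_{t-1}}$ forces constraints, and conversely the platonic ramification data are exactly those where the bound is tight up to $O(\alpha)$, but never negative for $t\ge 3$ because then $\binom{t}{2}$-fold branching always strictly increases genus).
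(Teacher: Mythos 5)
Your proposal correctly sets up the starting inequality \eqref{equ:M-RH-inequ} and correctly diagnoses the obstruction: after descending all the way to $Y_1$, the main term $2(1-\eps)\frac{(\ell-1)!}{(\ell-t)!}(g_{Y_1}-1)$ is non-positive whenever $g_{Y_1}\le 1$, which is exactly the regime left open by Proposition \ref{lem:t=3,4}. But the fix you offer for that regime is where the gap lies. The paper's mechanism is to descend only as far as $Y_2$ (via Riemann--Hurwitz for $Y_{t-1}\ra Y_2$), obtaining $2t!(g_{X_t}-g_{X_{t-1}})\ge 2(1-\eps)\frac{(\ell-2)!}{(\ell-t)!}(g_{Y_2}-1)-R_{\pi_t}$, and then to feed in a \emph{lower} bound $g_{Y_2}\gtrsim c'\ell$ which comes from the already-established $t=2$ case: Proposition \ref{prop:two-set} gives $g_{X_2}\ge c_{2,3}\ell-d_{2,3}$ unless the ramification of $h$ is in Table \ref{table:two-set-stabilizer}, and for types in that table other than $[\ell],[a,\ell-a],[2,1^{\ell-2}]$ one instead gets $R_{\pi_2}\ge(2\ell-5)/3$ (Remark \ref{rem:Y2-table}), so $g_{Y_2}$ is still linear in $\ell$. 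The single remaining type $[\ell],[a,\ell-a],[2,1^{\ell-2}]$ has $g_{Y_2}=0$ and genuinely requires a third step: one computes $g_{Y_3}\asymp\ell^2$ directly and descends from $Y_{t-1}$ to $Y_3$ instead. Your proposal contains none of this: it never identifies $Y_2$ (or $Y_3$) as the right intermediate curve, never invokes the $t=2$ proposition, and does not isolate the exceptional ramification type. Your closing claim that ``$\binom{t}{2}$-fold branching always strictly increases genus\dots never negative for $t\ge3$'' is precisely what fails for $[\ell],[a,\ell-a],[2,1^{\ell-2}]$ at the level of $Y_2$, and your assertion that a cover with $g_{Y_1}\le1$ and almost-Galois ramification ``has $\tilde Y_1$ of genus $\le1$'' is false for $A_\ell$/$S_\ell$ monodromy (the Galois closure has huge genus); Proposition \ref{prop:Castel} also only gives an upper bound on $g_{Y_2}$ and cannot be ``run in reverse'' in the way you suggest.

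A secondary discrepancy: for the error term the paper does not use the bound $R_{\pi_t}(f_t^{-1}(P))\le\binom{t}{2}R_{h_1^{t-1}}(h^{-1}(P))+E_0t^4\frac{(\ell-2)!}{(\ell-t)!}$ from Proposition \ref{lem:t-1-bound} (that route ties the error to $R_{h_1^{t-1}}$, which can be of order $\ell\cdot\frac{(\ell-2)!}{(\ell-t)!}$ and is only harmless when cancelled against the positive $(\ell-2t+1)R_{h_1^{t-1}}$ term of the $Y_1$-descent you are abandoning). Instead it proves the separate Lemma \ref{cor:bounding-mP}, which exploits the almost-Galois structure ($\eps_h(P)=O(\alpha)$, at most $\nu\alpha^2$ preimages when $m_h(P)=\infty$) to bound each $R_{\pi_t}(f_t^{-1}(P))$ by $O_{\beta_1}(t^{2k+2})\frac{(\ell-2)!}{(\ell-t)!}$ absolutely, so that the total error is compatible with the $Y_2$-descent. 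Your accounting of the exponent $2(k+1)$ is heuristic but lands in the right place; the missing content is the entire lower-bound side of the inequality.
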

\begin{proof}[Proof of Theorem \ref{thm:main}]
Let $\beta,c_2,d_2$ be the constants from Proposition \ref{lem:t=3,4}, let $c_3,d_3$ be the constants from Proposition \ref{prop:Castel}, and let $\lambda_{2}$ be the constant from Corollary \ref{cor:DZ}.  Set $\beta_1:=\lceil  \beta/(1-10^{-6})\rceil +1$. 

We shall define constants $c,d>0$ such that for every degree $\ell$ covering $h:Y_1\ra Y$ with monodromy group $A_\ell$ or $S_\ell$ and integer $2\leq t\leq \ell/2$ satisfying \begin{equation}\label{equ:thm3.1-claim}
g_{X_t}-g_{X_{t-1}}\leq (c\ell-dt^{15})\frac{\binom{\ell}{t}}{\binom{\ell}{2}},
\end{equation} one has $t=2$ and the ramification of $h$ is in Table \ref{table:two-set-stabilizer}. 

We pick $0<c\leq \min\{1/2, c_2, c_3, c_{2,3}, c_{2,\beta_1}, c_{4}\}$, and let $d\geq \max\{1,d_2,d_3,d_{2,3},d_{4,\beta_1}\}$ be sufficiently large so that  $dt^{15}/c\geq  \max\{3^3\lambda_2,\lambda_2 (\beta_1t^{5})^3\}$ 
for every $2\leq t\leq \ell/2$. 
Since $c\leq c_2$ and $d\geq d_2$, the theorem follows from Proposition~\ref{lem:t=3,4} if $g_{Y_1}\geq 2$ and $t=2$, or if $g_{Y_1}\geq \beta t^4$ and $t>2$. 
Hencefore assume that $g_{Y_1}\leq \beta t^4$ if $t>2$, and $g_{Y_1}\leq 1$ if $t=2$.
We may also assume that $c\ell-dt^{15}\geq 0$ and hence that $\ell\geq dt^{15}/c\geq \max\{2t^{15},3^3\lambda_2,\lambda_2(\beta_1t^{5})^3\}$. 
As $c\leq 1/2$ and $d\geq 1$ we have: 
$$g_{X_t} - g_{X_{t-1}}\leq (c\ell-dt^{15})\frac{\binom{\ell}{t}}{\binom{\ell}{2}}<\frac{1}{\ell}\binom{\ell}{t}. $$
Thus, 
Proposition~\ref{prop:Castel} with $\alpha=1$ gives
\begin{equation}\label{equ:from-cast}
\begin{split}
g_{Y_2} & < g_{Y_1}(\ell+1) + 2(\ell-1) 
\quad\text{ for $t=2$, and} \\
g_{Y_2} & < \frac{1}{1-\eps}\Bigl((t-1)g_{Y_1} + \binom{t}{2} + 1\Bigr)\ell  \quad\text{ for $t\geq 3$,}
\end{split}
\end{equation}
where $\eps:=t/(\ell-t+1)$. 
Since $\ell\geq 2t^{15}$, we have $\eps<10^{-6}$ for $t\geq 3$. Thus, the bound $g_{Y_1}\leq \beta t^4$ and  \eqref{equ:from-cast} give  $g_{Y_2}<\beta_1 t^5\ell$ for $t\geq 3$. 
Since $g_{Y_1}\leq 1$ for $t=2$, in this case \eqref{equ:from-cast} gives $g_{Y_2}<3\ell$.
Since in addition $\ell\geq \lambda_2\alpha^3$ for $\alpha=3$ if $t=2$, and for $\alpha=\beta_1t^5$ if $t\geq 3$, 
we deduce that $h$ has $2$-point genus bound $\alpha\ell$.
Since $c\leq \min\{c_{2,3},c_{4,\beta_1}\}$ and $d\geq \max\{d_{2,3},d_{4,\beta_1}\}$, 
the theorem follows from Proposition \ref{prop:two-set} with $\alpha=3$ and Proposition \ref{prop:t-set} with $\alpha=\beta t^k$ and $k=5$. 
%
%
\end{proof}

The proof of Proposition \ref{prop:two-set}  relies on the following estimates of the contributions 
$R_{h_1^2}(P)$, $P\in Y(\K)$, for the natural projection $h_1^2:Y_2\ra Y_1$.
For fixed $\alpha>0$, we write $A=B + O_\alpha(1)$ for expressions $A,B$ to denote that there exists a constant $c_\alpha$ depending only on $\alpha$ such that $\abs{A-B}\leq c_\alpha$ for all values of $A,B$. Let $\nu$ be the constant from Corollary \ref{cor:DZ}. 
\begin{lem}\label{lem:RS-estimates} 
Fix $\alpha>0$.
Let $h:Y_1\ra Y$ be a degree $\ell$ cover with monodromy group $A_\ell$ or $S_\ell$ and $2$-point genus bound $\alpha\ell$.
Let $P$ be a point of $Y$, and  $m:=m_h(P)$. 
If $m<\infty$, then $R_{h_1^2}(P) = S_{h}(P)+O_\alpha(1)$, 
where $S_h(P)$ is 
\begin{align*}
& \ell R_{h}(P) \quad\text{ if }m=1; \\
&\ell\Bigl(\frac{\ell}{2} - \abs{E_{h}(P)} + \abs{\{r\in E_{h}(P)\suchthat 2\nmid r\}}\Bigr)  \quad\text{ if }m=2; \\
 &\ell\Bigl(\frac{\ell}{3} - \abs{E_{h}(P)} + \frac{4}{3}\abs{\{r\in E_{h}(P) \suchthat 3\nmid r\}}\Bigr)  \quad\text{ if }m=3; \\
  &\ell\Bigl(\frac{\ell}{4} - \abs{E_{h}(P)} + \abs{\{r\in E_{h}(P) \suchthat r\equiv 2\!\!\!\!\pmod 4\}}
   + \frac{3}{2}\abs{\{r\in E_{h}(P) \suchthat 2\nmid r\}}\Bigr)  \quad\text{ if }m=4; \\
&\ell\Bigl(\frac{\ell}{6} - \abs{E_{h}(P)} + \abs{\{r\in E_{h}(P) \suchthat r\equiv 3\!\!\!\!\pmod 6\}} + \frac{4}{3}\abs{\{r\in E_{h}(P) \suchthat r\equiv \pm 2\!\!\!\!\pmod 6\}} \\
& \quad + \frac{5}{3}\abs{\{r\in E_{h}(P) \suchthat r\equiv \pm 1\!\!\!\!\pmod 6\}}\Bigr)  \quad\text{ if }m=6.
\end{align*}
If  $m=\infty$,  then either $E_{h}(P) =  [\ell/u,\ldots, \ell/u]$ (in which case $R_{h_1^2}(P) = 0$) or $R_{h_1^2}(P)$ is at least $\ell/(2\nu\alpha^2)$. 
%
\end{lem}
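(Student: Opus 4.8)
The plan is to compute $R_{h_1^2}(P)$ directly from Lemma~\ref{cor:RH-up-left} with $t=2$, which gives
\[
R_{h_1^2}(h^{-1}(P)) = \sum_{r_1,r_2\in E_h(P)}\bigl(r_1-(r_1,r_2)\bigr),
\]
and then to estimate the right-hand side using the almost-Galois description provided by Corollary~\ref{cor:DZ}. The key point is that when $m:=m_h(P)<\infty$, all but boundedly many (in $\alpha$) entries of $E_h(P)$ equal $m$, and the total sum of the exceptional entries is bounded by $\eps_h(P)=O_\alpha(1)$. So I would split the double sum into three parts: (i) pairs where both $r_1,r_2=m$, (ii) pairs where exactly one is exceptional, (iii) pairs where both are exceptional. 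Part (iii) contributes $O_\alpha(1)$ outright since there are $O_\alpha(1)$ exceptional entries each of size $O_\alpha(1)$. For part (i), each such pair contributes $m-(m,m)=0$, so the main bulk vanishes; this is exactly why $R_{h_1^2}(P)$ ends up with the ``$\ell/m$'' normalization rather than something of size $\ell^2$.

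The heart of the computation is part (ii). Writing $N_m$ for the number of entries equal to $m$ (so $N_m = \ell/m + O_\alpha(1)$ by the definition of $m_h(P)$), and letting $a$ range over the exceptional entries, a pair $(m,a)$ contributes $m-(m,a)$ while a pair $(a,m)$ contributes $a-(a,m)$. Summing over the $N_m$ copies of $m$ and the finitely many $a$'s, one gets
\[
R_{h_1^2}(P) = N_m\sum_{a}\bigl(m-(m,a)\bigr) + O_\alpha(1) = \frac{\ell}{m}\sum_{a}\bigl(m-(m,a)\bigr) + O_\alpha(1),
\]
using $\sum_a\bigl(a-(a,m)\bigr)=O_\alpha(1)$ and $N_m = \ell/m+O_\alpha(1)$ together with $\sum_a\bigl(m-(m,a)\bigr)=O_\alpha(1)$. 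Thus it remains to identify $\sum_a\bigl(m-(m,a)\bigr)$ with the stated expressions. For each fixed small $m\in\{1,2,3,4,6\}$ this is a finite case check on the residue of $a$: for $m=2$, $m-(m,a)$ is $1$ if $a$ is odd and $0$ if $a$ is even, giving the count of odd entries; for $m=3$ it is $2$ if $3\nmid a$ and $0$ otherwise, and since $2=\tfrac{4}{3}\cdot\tfrac{3}{2}$ one rewrites $2\abs{\{3\nmid r\}} = \tfrac{4}{3}(\ell/3-\abs{E_h(P)}\cdot\text{something})$ — more precisely one uses $\abs{E_h(P)} = \ell/m + O_\alpha(1) + (\text{sum over exceptional }a\text{ of }1)$ and the identity $\sum_a\bigl(m-(m,a)\bigr) = m\cdot(\text{number of }a\text{ with }m\nmid a) - (\text{correction})$; carefully bookkeeping $\abs{E_h(P)}$, $\abs{\{r:3\nmid r\}}$ etc., and absorbing $\ell\cdot O_\alpha(1)$ terms appropriately, reproduces each displayed formula. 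The cases $m=4$ (residues $0,2,1,3$ giving $(m,a)=4,2,1,1$, i.e. $m-(m,a)=0,2,3,3$) and $m=6$ (residues mod $6$ giving $m-(m,a)=0,4,5,\ldots$ according to $(6,a)\in\{6,3,2,1\}$) are analogous; the case $m=1$ is immediate since $1-(1,a)=0$ always, and the sum over pairs collapses to $\ell$ times $\sum_{r\in E_h(P)}(r-1)=R_h(P)$ up to the negligible exceptional pairs (here ``exceptional'' means $>1$, and $\eps_h(P)=O_\alpha(1)$ bounds their total).

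For the case $m=\infty$, Proposition~\ref{lem:DZ}(2) tells us every small value occurs at most $4(\alpha+1)$ times, so Corollary~\ref{cor:DZ} bounds the number of preimages of $P$ by $\nu\alpha^2$; hence $\abs{E_h(P)}\le\nu\alpha^2$ and every entry is $\ge \ell/(\nu\alpha^2)$. If all entries are equal, say to $\ell/u$, then every $(r_1,r_2)$ pair contributes $0$ and $R_{h_1^2}(P)=0$. Otherwise there exist $r_1\neq r_2$ in $E_h(P)$, and since $(r_1,r_2)$ is a proper divisor of the larger one, say $r_1>(r_1,r_2)$, we get $r_1-(r_1,r_2)\ge r_1/2\ge \ell/(2\nu\alpha^2)$, so the single pair $(r_1,r_2)$ already forces $R_{h_1^2}(P)\ge \ell/(2\nu\alpha^2)$. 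I expect the main obstacle to be purely the bookkeeping in part (ii): one must track which $O_\alpha(1)$ error terms get multiplied by $\ell$ (those coming from $N_m = \ell/m + O_\alpha(1)$ hitting a genuinely nonzero $\sum_a(m-(m,a))$, versus those that stay bounded) and verify that, after expressing everything in terms of $\abs{E_h(P)}$ and residue counts of entries, the algebra collapses exactly to the five stated formulas — there is no conceptual difficulty, but it is easy to misplace a factor.
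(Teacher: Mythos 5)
Your overall strategy --- computing $R_{h_1^2}(h^{-1}(P))$ from Lemma~\ref{cor:RH-up-left} with $t=2$ and splitting the double sum according to whether each entry equals $m$ --- is exactly the paper's. The $m=\infty$ case is essentially right, modulo a small slip: it is not true that every entry of $E_h(P)$ is at least $\ell/(\nu\alpha^2)$; you should take $r_1$ to be the \emph{maximal} entry, which is at least $\ell/u\geq \ell/(\nu\alpha^2)$, and pair it with any entry of a different value.

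The $m<\infty$ case, however, contains a genuine error. In your displayed reduction you discard the contribution of the pairs $(a,m)$ with $a$ exceptional, on the grounds that $\sum_a\bigl(a-(a,m)\bigr)=O_\alpha(1)$. But that sum is multiplied by $N_m\approx\ell/m$, so the discarded quantity is of order $\ell$, not $O_\alpha(1)$ --- and the lemma asserts an equality up to $O_\alpha(1)$, so order-$\ell$ terms cannot be dropped. The correct main term, as in the paper, is
\[
\frac{\ell}{m}\sum_{r\in E_h(P)}\bigl(r+m-2(r,m)\bigr)
=\frac{\ell}{m}\sum_{a}\bigl(a-(a,m)\bigr)+\frac{\ell}{m}\sum_{a}\bigl(m-(m,a)\bigr),
\]
and both halves are needed. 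Concretely, for $m=2$ the term $\ell\bigl(\frac{\ell}{2}-\abs{E_h(P)}\bigr)$ in the stated $S_h(P)$ equals $\frac{\ell}{2}\sum_a(a-2)$ up to $O_\alpha(1)$, which depends on the actual values of the exceptional entries and not only on their residues mod $m$; your reduced expression $\frac{\ell}{m}\sum_a\bigl(m-(m,a)\bigr)$ depends only on residues, so no amount of later bookkeeping can recover the stated formulas from it (a single exceptional entry $a=3$ with $m=2$ already produces a discrepancy of exactly $\ell$). Your treatment of $m=1$ in fact silently uses the very term you dropped --- there $\sum_a\bigl(m-(m,a)\bigr)=0$ and the whole answer $\ell R_h(P)$ comes from $N_1\sum_a\bigl(a-(a,1)\bigr)$ --- which is a good internal signal that the general reduction is inconsistent.
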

\begin{proof}
First consider the case $m<\infty$. 
Since $\Mon(h)=A_\ell$ or $S_\ell$, and since the sum of the entries of $h$ different from $m$ is at most $\eps_h(P)=O_\alpha(1)$, Lemma \ref{cor:RH-up-left} gives
\begin{align*}
R_{h_1^2}(P) &=   \sum_{r_1,r_2\in E_{h}(P),r_1\neq m} \bigl(r_1-(r_1,r_2)\bigr) +   \sum_{r_1,r_2\in E_{h}(P),r_1= m} \bigl(m-(m,r_2)\bigr)  \\
 &=  \frac{\ell}{m}\Bigl( \sum_{r_1\in E_{h}(P)} 
 \bigl(r_1-(r_1,m)\bigr) +   \sum_{r_2\in E_{h_1}(P)} \bigl(m-(m,r_2)\bigr)\Bigr) +O_\alpha(1)  \\
 &=  \frac{\ell}{m}\sum_{r\in E_{h}(P)}\bigl( r + m - 2(r,m)\bigr) + O_\alpha(1).
 \end{align*}
Evaluating the last sum for each of the possibilities for  finite $m$ gives the desired estimates for $R_{h_1^2}(P)$. For example, for $m=2$ this sum $S_h(P):=\ell/m\sum_{r\in E_h(P)}(r+m-2(r,m))$  is: 
\begin{align*}
     S_h(P) & = \frac{\ell}{2}\Bigl(\ell+2\abs{E_h(P)}-2\abs{\{r\in E_h(P)\suchthat 2 \nmid r\}}-4\abs{\{r\in E_h(P)\suchthat 2 \mid r\}}\Bigr) \\
    & = \ell\Bigl(\frac{\ell}{2}+\abs{E_h(P)}-\abs{\{r\in E_h(P)\suchthat 2 \nmid r\}}-2\abs{\{r\in E_h(P)\suchthat 2 \mid r\}}\Bigr) \\
    & = \ell\Bigl(\frac{\ell}{2} - \abs{E_{h}(P)} + \abs{\{r\in E_{h}(P)\suchthat 2\nmid r\}}\Bigr).
\end{align*}

Let $u:=\abs{E_h(P)}$. If $m=\infty$ and $E_h(P)\neq [\ell/u,\ldots,\ell/u]$, 
then Lemma \ref{lem:bound-rs}.(2) implies that $R_{h_1^2}(P)\geq \ell/(2u)$. The claim follows in this case since $u\leq \nu\alpha^2$ by Corollary \ref{cor:DZ}. 
\end{proof}

\begin{proof}[Proof of Proposition \ref{prop:two-set}]
As in Lemma \ref{lem:RS-estimates}, write $A=B+O_\alpha(1)$ (resp., $A\geq B+O_\alpha(1)$) to denote that $\abs{A-B}$ is bounded (resp. $A-B$ is bounded from below) by a constant depending only on $\alpha$. 

We divide the proof into cases according to the possible multisets 
\[
M_h:=\{m_h(P)>1 \suchthat P\in Y(\K)\}.
\]
By Corollary \ref{cor:DZ}, the set $M_h$ is one of the following: 
(I1) $\{\infty,\infty\}$; 
(I2) $\{\infty,2,2\}$; 
(F1) $\{2,2,2,2\}$; 
(F2) $\{3,3,3\}$; 
(F3) $\{2,4,4\}$; 
(F4) $\{2,3,6\}$;
 or (F5) $\emptyset$. Moreover, the corollary implies $g_{Y_1}\leq \alpha+1$, and also that $g_{Y} = 0$ in all cases but (F5) where $g_Y=1$. 

It suffices to prove that for some constants $c_{2,\alpha},d_{2,\alpha}>0$, either $g_{X_2}>c_{2,\alpha}\ell-d_{2,\alpha}$ or the ramification of $h$ appears in Table \ref{table:two-set-stabilizer}. 
In each case we add a constraint on $c_{2,\alpha}$ and $d_{2,\alpha}$, and determine the ramification types of $h$ for which $g_X\leq c_{2,\alpha}\ell-d_{2,\alpha}$.  
The proposition then follows by taking $c_{2,\alpha}$ and $d_{2,\alpha}$ which satisfy the constraints in each case.  Since it suffices to prove the proposition when $c_{2,\alpha}\ell-d_{2,\alpha}\geq 0$, by requiring $d_{2,\alpha}/c_\alpha>\nu^2\alpha^4$ we may assume $\ell>\nu^2\alpha^4$. 

{\bf Case I1:}
Assume $m_{h}(P_1)=m_{h}(P_2)=\infty$ for two points $P_1,P_2$ of $Y$, 
and $m_h(P)=1$ for any other point $P$ of $Y$.

Let $u=u_{P_1}:=\abs{E_{h}(P_1)}$ and $v=v_{P_2}:=\abs{E_{h}(P_2)}$. Note that $u$ and $v$ are both less than the bound $\nu\alpha^2$ 
from Corollary \ref{cor:DZ}. 
By \eqref{equ:M-RH-t=2} with $g_Y=0$ and $g_{Y_1}=O_\alpha(1)$, and by  Lemma~\ref{lem:RS-estimates}, one has
\begin{equation}\label{equ:Sn-I1-gXY1}
\begin{split}
4(g_{X_2}-1)  &=  \displaystyle 2\ell(g_{Y_1}-1) + \ell\sum_{P\neq P_1,P_2}R_{h}(P) + \sum_{j=1}^2\sum_{r_1,r_2\in E_{h}(P_j)}\bigl(r_1-(r_1,r_2)\bigr) +  O_\alpha(1) \\
&\geq   \ell\Bigl( 2(g_{Y_1}-1) + \sum_{P\neq P_1,P_2}R_{h}(P) + \epsilon_h \Bigr) + O_\alpha(1),
\end{split}
\end{equation}
for  $\epsilon_h = 1/(2\nu\alpha^2)$ whenever $E_{h}(P_1)\neq [\ell/u,\ldots, \ell/u]$ or $E_{h}(P_2)\neq [\ell/v,\ldots,\ell/v]$, and for $\epsilon_h =0$ otherwise.
On the other hand, the Riemann--Hurwitz formula for $h$ gives
\begin{equation}\label{equ:Sn-I1-Y1Y}
\sum_{P\neq P_1,P_2}R_{h}(P) = 2(g_{Y_1}-1) + u + v.
\end{equation}
Substituting the latter into \eqref{equ:Sn-I1-gXY1} gives
\begin{equation}\label{equ:gX2} 
4g_{X_2} \geq 4\ell(g_{Y_1}-1) + (u + v + \epsilon_h)\ell -B_\alpha,
\end{equation} for some constant $B_\alpha$ depending only on $\alpha$. 
Multiplying the latter inequality by $2\nu\alpha^2$, we get that all summands are integers, except perhaps  $2\nu\alpha^2 B_\alpha$. Thus choosing $d_{2,\alpha}$ to be larger than  $2\nu\alpha^2 B_\alpha$, and $c_{2,\alpha}<1/(8\nu\alpha^2)$,
the condition $g_{X_2}\leq c_{2,\alpha}\ell-d_{2,\alpha}$ and \eqref{equ:gX2} force $0\geq 4(g_{Y_1}-1) + u + v + \epsilon_h$,
or equivalently  $u+v\leq 4(1-g_{Y_1})-\epsilon_h$.

It follows that $g_{Y_1}=0$. Moreover, if $u+v=4$ then $\eps_h=0$ and hence $E_{h}(P_1)=[\ell/u,\ldots, \ell/u]$, and $E_{h}(P_2)=[\ell/v,\ldots, \ell/v]$. In this case, since $\ell>\nu^2\alpha^4\geq uv$ the number $\ell/(uv)$ is greater than $1$ and divides the greatest commond divisor of all entries of $E_{h}(P_1)$ and $E_{h}(P_2)$, contradicting the indecomposability of $h$ by Lemma~\ref{lem:hurwitz1}.(1).
Hence  $u+v\leq 3$.
If $u=v=1$ then $E_{h}(P_1)=E_{h}(P_2)=[\ell]$, contradicting the indecomposability of $h$ by Lemma~\ref{lem:hurwitz1}.(1).
We may therefore assume without loss of generality that  $u=1$ and $v=2$.
Then \eqref{equ:Sn-I1-Y1Y} gives $\sum_{P\neq P_1,P_2} R_{h}(P) = 1$,
showing that $h$ has a single branch point $Q$ different from $P_1,P_2$, and moreover  $E_{h}(Q)=[2,1,\dots, 1]$.
Put $E_{h}(P_2)=[a,\ell-a]$. Since $h$ is indecomposable, Lemma \ref{lem:hurwitz1}.(1) shows that 
$(a,\ell)=1$.  Thus, the ramification type of $h$ corresponding to $P_1,P_2,Q$ is
$[\ell], [a,\ell-a], [2,1,\dots ,1]$, with $(a,\ell)=1$, corresponding to type I1.1 in Table \ref{table:two-set-stabilizer}.

{\bf Case I2:} Assume that $m_{h}(P_0)=\infty, m_{h}(P_1)=m_{h}(P_2)=2$ for points $P_0,P_1,P_2$ of $Y$, and $m_h(P)=1$ for all other points $P$ of $Y$.
Denote $u=u_{P_0}:=\abs{E_{h}(P_0)}$ and $O:= \abs{\{r\in E_{h}(P_i) \suchthat \text{$r$ is odd},i=1,2\}}$.
Note that $O$ is even.
By \eqref{equ:M-RH-t=2} and Lemma~\ref{lem:RS-estimates} one has
\begin{equation}\label{equ:Sn-inf-2-2-gXY1}
\begin{split}
 4g_{X_2}  &=  2\ell(g_{Y_1}-1) + \sum_{r_1,r_2\in E_{h}(P_0)}\bigl(r_1-(r_1,r_2)\bigr) + \ell\sum_{P\neq P_1,P_2}R_{h}(P) \\
&  \quad + \ell\sum_{i=1}^2\Bigl(-\frac{1}{2} + \frac{\ell}{2} -\abs{E_{h}(P_i)} +  \abs{\{\text{odd }r\in E_{h}(P_i)\}}\Bigr) +O_\alpha(1).
\end{split}
\end{equation}
On the other hand the Riemann--Hurwitz formula for $h$ gives
\begin{equation}\label{equ:Sn-inf-2-2-gY1Y}
2(g_{Y_1}-1) + u =   \sum_{P\neq P_1,P_2}R_{h}(P) + \sum_{i=1}^2\Bigl(\frac{\ell}{2} - \abs{E_{h}(P_i)}\Bigr).
\end{equation}

Substituting \eqref{equ:Sn-inf-2-2-gY1Y} into  \eqref{equ:Sn-inf-2-2-gXY1} and applying Lemma \ref{lem:RS-estimates} we get
\begin{equation}\label{equ:Sn-inf-2-2-gXY1-refined}
\begin{split}
4g_{X_2} & = 4\ell(g_{Y_1}-1) + \sum_{r_1,r_2\in E_{h}(P_0)}\bigl(r_1-(r_1,r_2)\bigr) + (u-1)\ell + O\ell + O_\alpha(1) \\
& \geq  4\ell(g_{Y_1}-1)  + \epsilon_h \ell + (u-1)\ell + O\ell + O_\alpha(1),
\end{split}
\end{equation}
for $\epsilon_h  = 1/(2\nu\alpha^2)$ whenever $E_{h}(P_0)\neq [\ell/u,\ldots,\ell/u]$ and for $\epsilon_h =0$ otherwise.
Thus, by taking $d_{2,\alpha}$ to be sufficiently large and $0<c_{2,\alpha}<1/(8\nu\alpha^2)$, the assumption $g_{X_2}\leq c_{2,\alpha}\ell-d_{2,\alpha}$ and \eqref{equ:Sn-inf-2-2-gXY1-refined} force
\begin{equation}\label{equ:Sn-gY1Y-conclusion} 
4(1-g_{Y_1}) + 1  -\epsilon_h \geq u + O. 
\end{equation}

Hence $g_{Y_1}\leq 1$. First consider the case $g_{Y_1}=1$ in which  \eqref{equ:Sn-gY1Y-conclusion} gives  $u =1$, $\epsilon_h=0$, and  $O=0$.
The only ramification type of $h$ satisfying the latter constraints and \eqref{equ:Sn-inf-2-2-gY1Y} are types I2.N1, I2.N2 in Table~\ref{table:non-existence}.
These do not correspond to any covering by Lemma \ref{lem:non-existing}.

Now assume $g_{Y_1}=0$. In this case \eqref{equ:Sn-gY1Y-conclusion} gives $u + O \leq 5-\epsilon_h$.
Since $h$ is indecomposable, Lemma \ref{lem:hurwitz1}.(1) shows that at least one of $E_{h}(P_i)$, $i=1,2$, contains an odd number. Since the sum of entries of $E_h(P_1)$ has the same parity as that of $E_h(P_2)$, the number $O$ is even, and hence $O\geq 2$. It follows from \eqref{equ:Sn-gY1Y-conclusion} that $u\leq 3$.

If $u=1$, then $E_{h}(P_0)=[\ell]$, and hence $\eps_h=0$, and \eqref{equ:Sn-inf-2-2-gXY1-refined} and  \eqref{equ:Sn-gY1Y-conclusion} become equalities.
It follows that $O=4$. Moreover, if $\ell$ is even $E_h(P_i),i=1,2$ do not consist of even entries, by Lemma \ref{lem:hurwitz1}. 
The ramification types of $h$ satisfying these constraints and  \eqref{equ:Sn-inf-2-2-gY1Y} correspond, over $P_0,P_1,P_2$ and possibly another branch point $Q$,
to types I2.1-I2.8 in Table~\ref{table:two-set-stabilizer}.

If $u=2$, since $O$ is even and nontrivial by Lemma \ref{lem:hurwitz1}.(1), by \eqref{equ:Sn-gY1Y-conclusion} we have $O=2$. Write $E_{h}(P_0)=[a,\ell-a]$ and let $d:=(a,\ell)$. If $d>1$, Lemma \ref{lem:hurwitz1}.(2) applied with a prime $p$ dividing $d$ contradicts the indecomposability of $h$. Thus $d=(a,\ell)=1$.
The only ramification types of $h$ which satisfy these constraints and \eqref{equ:Sn-inf-2-2-gY1Y}
are types I2.9-I2.15 in Table \ref{table:two-set-stabilizer} corresponding to the points $P_0,P_1,P_2$ and possibly another point~$Q$.

It remains to treat the case $u=3$. In this case \eqref{equ:Sn-gY1Y-conclusion} is necessarily an equality with $O=2$ and $\epsilon_h =0$, forcing $E_{h}(P_0)=[\ell/3,\ell/3,\ell/3]$.
Since $\ell>3$, Lemma \ref{lem:hurwitz1}.(2) applied with a prime dividing $\ell/3$ contradicts the indecomposability of $h$.

{\bf Case F1:} Assume $m_h(P_i)=2$  for four points $P_i,i=1,2,3,4$ of $Y$, and $m_h(P)=1$ for all other points of $Y$.
Then 
\eqref{equ:M-RH-t=2} and Lemma \ref{lem:RS-estimates} give
\begin{equation}\label{equ:F1-main}
\begin{split}
4(g_{X_2}-1)  &=  2\ell(g_{Y_1}-1) +  \ell\biggl( -2 + \sum_{j=1}^4\Bigl(\frac{\ell}{2}-\abs{E_{h}(P_j)}+\abs{\{\text{odd }r\in E_{h}(P_j)\}}\Bigr)\biggr) \\ &\quad + \ell\sum_{P\neq P_1,\ldots, P_4}R_{h}(P) + O_\alpha(1).
\end{split}
\end{equation}
On the other hand the Riemann--Hurwitz formula for $h$ gives 
\[
2(g_{Y_1}-1) = \sum_{j=1}^2\Bigl(\frac{\ell}{2}-\abs{E_{h}(P_j)}\Bigr) + \sum_{P\neq P_1,\ldots,P_4}R_{h}(P).
\]
Substituting the latter equality into the former gives 
\begin{equation*}
4(g_{X_2}-1) = 4\ell(g_{Y_1}-1) + \ell\Bigl(-2+\sum_{j=1}^4\abs{\{\text{odd }r\in E_{h}(P_j)\}}\Bigr) + O_\alpha(1).
\end{equation*}
Hence for sufficiently large $d_{2,\alpha}$ and $c_{2,\alpha}<1/4$, the latter equality and the assumption $g_{X_2}\leq c_{2,\alpha}\ell-d_{2,\alpha}$ force
\begin{equation}\label{equ:Sn-2222-gY1Y}
 \abs{\{\text{odd }r\in E_{h}(P_j),j=1,\ldots, 4\}} =4(1-g_{Y_1}) + 2.
\end{equation}
Hence $g_{Y_1}\leq 1$. If $g_{Y_1}=1$, there are exactly two odd entries among $E_{h}(P_j)$, for $j=1,\ldots, 4$. The only possible ramification types of $h$ which satisfy these constraints and the Riemann--Hurwitz formula for $h$ are types F1.N1-F1.N4 in Table \ref{table:non-existence}.
These ramification types do not correspond to any covering by Lemma \ref{lem:non-existing}.

Assume $g_{Y_1}=0$, in which case there is a total of $6$ odd entries in $E_{h}(P_j)$,  for $j=1,\ldots, 4$, by \eqref{equ:Sn-2222-gY1Y}.
Since $h$ is indecomposable, $E_{h}(P_j)$ can have no odd entries for at most one point among $P_1,\ldots, P_4$, by Lemma \ref{lem:hurwitz1}.(1).
The ramification types of $h$ satisfying these constraints and the Riemann--Hurwitz formula for $h$ are types F1.1-F1.9 in Table~\ref{table:two-set-stabilizer}.

{\bf Case F2:}  Assume $m_h(P_j)=3,j=1,2,3$ for three points $P_1,P_2,P_3$ of $Y$, and $m_h(P)=1$ for all other points $P$ of $Y$.
By \eqref{equ:M-RH-t=2} and Lemma \ref{lem:RS-estimates} one has
\begin{equation}\label{equ:Sn-333}
\begin{split}
4(g_{X_2}-1) = &  2\ell(g_{Y_1}-1) + \ell\sum_{j=1}^3\Bigl(\frac{\ell}{3} -\abs{E_{h}(P_j)}+\frac{4}{3}\abs{\{r\in E_{h}(P_j) \suchthat 3\nmid r\}}\Bigr) \\
&  + \ell\sum_{P\neq P_1,P_2,P_3}R_{h}(P) + O_\alpha(1).
\end{split}
\end{equation}
The Riemann--Hurwitz formula for $h$ gives 
\[
-2 = \sum_{j=1}^3 \Bigl(\frac\ell3-\abs{E_{h}(P_j)}\Bigr) + \sum_{P\neq P_1,P_2,P_3}R_{h}(P).
\]
Substituting the latter equality into \eqref{equ:Sn-333} gives:
\begin{equation*}
4(g_{X_2}-1) = 4\ell(g_{Y_1}-1) + \frac{4\ell}{3}\sum_{j=1}^3\abs{\{r\in E_{h}(P_j)\suchthat 3\nmid r\}} + O_\alpha(1).
\end{equation*}
When $d_{2,\alpha}$ is sufficiently large and $c_{2,\alpha}<1/4$, this equality and the assumption  $g_{X_2}\leq c_{2,\alpha}\ell-d_{2,\alpha}$ force
\begin{equation}\label{equ:Sn-333-gXY1}
4(1-g_{Y_1}) = \frac{4}{3}\sum_{j=1}^3\abs{\{r\in E_{h}(P_j)\suchthat 3\nmid r\}}. 
\end{equation}
 In particular $g_{Y_1}\leq 1$, and if $g_{Y_1}=1$ then the ramification type of $h$ is $[3^{\ell/3}]$ thrice, contradicting  $\Mon(h)=A_\ell$ or $S_\ell$ by Corollary \ref{cor:normal}.
Assume $g_{Y_1}=0$. Then \eqref{equ:Sn-333-gXY1} 
 shows that there are a  total of three coprime to $3$ entries in $E_{h}(P_j)$, for $j=1,2,3$.
Since $h$ is indecomposable, at most one point has all indices divisible by $3$ by Lemma \ref{lem:hurwitz1}.(1).
The only ramification types satisfying these constraint and the Riemann--Hurwitz formula for $h$ is $[1,3^{(n-1)/3}]$ thrice,  contradicting 
$\Mon(h)=A_\ell$ or $S_\ell$, by Lemma \ref{cor:normal}.

{\bf Case F3:}
Assume  $m_h(P_0)=2$, $m_h(P_1)=m_h(P_2)=4$ for points $P_0, P_1, P_2$ of $Y$, and $m_h(P)=1$ for all other points $P$ of $Y$.
By \eqref{equ:M-RH-t=2} and Lemma \ref{lem:RS-estimates} one has
\begin{equation}\label{equ:Sn-F3}
\begin{split}
 4g_{X_2}  & = \displaystyle 2\ell(g_{Y_1}-1) + \ell\bigl(\frac{\ell}{2}-\abs{E_{h}(P_0)} + \abs{\{\text{odd }r\in E_{h}(P_0)\}} - \frac{1}{2}\bigr)  + \ell \sum_{P\neq P_0,P_1,P_2}R_{h}(P)  \\
   & \quad  +  \ell\sum_{i=1}^2\Bigl(-\frac{1}{2} + \frac{\ell}{4} - \abs{E_{h}(P_i)} + \abs{\{r\in E_{h}(P_i) \suchthat r\equiv 2\!\!\!\!\pmod 4 \}} \\
 & \qquad\quad\qquad +\frac{3}{2}\abs{\{\text{odd }r\in E_{h}(P_i)\}}\Bigr) +  O_\alpha(1).
 \end{split}
\end{equation}
The Riemann--Hurwitz formula for $h$ gives
\[
 2(g_{Y_1}-1) = \Bigl(\frac{\ell}{2} -\abs{E_{h}(P_0)}\Bigr) + \sum_{P\neq P_0,P_1,P_2}R_{h}(P)  +  \sum_{i=1}^2\Bigl(\frac{\ell}{4} - \abs{E_{h}(P_i)}\Bigr).
\]
Substituting the latter into \eqref{equ:Sn-F3} gives
\begin{equation}\label{equ:Sn-F3-gXY1}
\begin{split}
4g_{X_2}  = & 4\ell(g_{Y_1}-1)  + \ell\abs{\{\text{odd }r\in E_{h}(P_0)\}} + \ell\sum_{i=1}^2\Bigl(\abs{\{r\in E_{h}(P_i) \suchthat r\equiv 2\!\!\!\!\pmod 4\}} \\
& +\frac{3}{2}\abs{\{\text{odd }r\in E_{h}(P_i),i=1,2\}}\Bigr) - \ell + O_\alpha(1).
\end{split}
\end{equation}
Note that since the total number of odd entries in $E_h(P_i),i=1,2$ is even, the sum on the right hand side of the above equality is an integer. 
Hence for sufficiently large $d_{2,\alpha}$ and for $c_{2,\alpha}<1/4$ we get

\begin{equation}\label{equ:Sn-244-conclusion}
\begin{split}
5-4g_{Y_1}  & =\abs{\{\text{odd }r\in E_{h}(P_0)\}} + \abs{\{r\in E_{h}(P_i) \suchthat r\equiv 2\!\!\!\!\pmod{4}, i=1,2\}} \\
 & \quad + \frac{3}{2}\abs{\{\text{odd }r\in E_{h}(P_i),i=1,2\}}.
 \end{split}
\end{equation}
Hence $g_{Y_1}\leq 1$. If $g_{Y_1}=1$, there is no ramification type of $h$ which satisfies \eqref{equ:Sn-244-conclusion} and the Riemann--Hurwitz formula for $h$. Assume $g_{Y_1}=0$.
Note that since $h$ is indecomposable, Lemma \ref{lem:hurwitz1}.(1) implies that the total number of odds in $E_{h}(P_i),i=1,2,$ is at least two (recall it is even). Moreover, if $\ell$ is even then there are at least four odd numbers in $E_{h}(P_0),E_{h}(P_1), E_{h}(P_2)$ (since at least two of these contain at least two odd numbers).  The only ramification types of $h$ which satisfy these constraints  
and the Riemann--Hurwitz formula for $h$ are types F3.1-F3.3 over the points $P_0,P_1,P_2$. 

{\bf Case F4:}
Assume  $m_h(P_1)=2, m_h(P_2)=3,m_h(P_3)=6$ for points $P_1,P_2,P_3$ of $Y$ and that $m_h(P)=1$ for all other points of $Y$.
By \eqref{equ:M-RH-t=2} and Lemma \ref{lem:RS-estimates}, one has
\begin{equation}\label{equ:F4-main}
\begin{split}
 4g_{X_2}  & = 2\ell(g_{Y_1}-1) + \ell\Bigl(- \frac{1}{2} + \frac{\ell}{2}-\abs{E_{h}(P_1)} + \abs{\{\text{odd }r\in E_{h}(P_1)\}}\Bigr)   \\
  &  \quad + \ell \!\!\!\sum_{P\neq P_1,P_2,P_3}\!\!\!R_{h}(P) 
   +  \ell\Bigl(\frac{\ell}{3} - \abs{E_{h}(P_2)} + \frac{4}{3}\abs{\{r\in E_{h}(P_2) \suchthat 3\nmid r\}}\Bigr) \\
   & \quad + \ell\Bigl( -\frac{1}{6} + \frac{\ell}{6}-\abs{E_{h}(P_3)}
    + \abs{\{r\in E_{h}(P_3) \suchthat r\equiv 3\!\!\!\!\pmod 6 \}}  \\
    &\qquad\quad +  \frac{4}{3}\abs{\{r\in E_{h}(P_3) \suchthat r\equiv \pm 2\!\!\!\!\pmod 6 \}} \\    
    & \qquad\quad  + \frac{5}{3}\abs{\{r\in E_{h}(P_3) \suchthat r\equiv \pm 1\!\!\!\!\pmod 6 \}}\Bigr) +  O_\alpha(1).
\end{split}
\end{equation}
The Riemann--Hurwitz formula for $h$ gives
\[ 2(g_{Y_1}-1) = \Bigl(\frac{\ell}{2} -\abs{E_{h}(P_1)}\Bigr) + \!\!\!\sum_{P\neq P_1,P_2,P_3}\!\!\! R_{h}(P)  +  \Bigl(\frac{\ell}{3} - \abs{E_{h}(P_2)}\Bigr) + \Bigl(\frac{\ell}{6} - \abs{E_{h}(P_3)}\Bigr).
\]
Substituting the latter into \eqref{equ:F4-main} we get
\begin{equation}\label{equ:Sn-F4-gXY1}
\begin{split}
4g_{X_2} = & 4\ell(g_{Y_1}-1) -2\ell/3  + \ell\Bigl(\abs{\{\text{odd }r\in E_{h}(P_1)\}} + 
\frac{4}{3}\abs{\{r\in E_{h}(P_2) \suchthat (r,3)=1\}}  \\
   &  + \abs{\{r\in E_{h}(P_3) \suchthat r\equiv 3\!\!\!\! \pmod{6}\}}   + \frac{4}{3}\abs{\{r\in E_{h}(P_3) \suchthat r\equiv \pm 2\!\!\!\! \pmod{6}\}}   \\
   &  + \frac{5}{3}\abs{\{r\in E_{h}(P_3)\suchthat r\equiv \pm 1\!\!\!\!\pmod{6}\}}\Bigr) + O_\alpha(1).
\end{split}
\end{equation}
Note that the coefficient of $\ell$ in the right hand side is a third of an integer. 
Hence for sufficiently large $d_{2,\alpha}$ and for $c_{2,\alpha}<1/12$, \eqref{equ:Sn-F4-gXY1} and the assumption $g_{X_2}\leq c_{2,\alpha}\ell-d_{2,\alpha}$ force
\begin{equation}\label{equ:Sn-236-conclusion}
\begin{split}
\frac{14}{3}-4g_{Y_1} &=  \abs{\{\text{odd }r\in E_{h}(P_1)\}} + \frac{4}{3}\abs{\{r\in E_{h}(P_i) \suchthat (r,3)=1\}} \\
   & \quad + \abs{\{r\in E_{h}(P_3) \suchthat r\equiv 3 \!\!\!\!\pmod{6}\}}  \\
   & \quad+  \frac{4}{3}\abs{\{r\in E_{h}(P_3) \suchthat r\equiv \pm 2\!\!\!\!\pmod{6}\}}    \\
   &  \quad + \frac{5}{3}\abs{\{r\in E_{h}(P_3) \suchthat r\equiv \pm 1\!\!\!\! \pmod{6}\}}.
\end{split}
\end{equation}
If $g_{Y_1}=1$, there is no ramification type of $h$ which satisfies \eqref{equ:Sn-236-conclusion} and the Riemann--Hurwitz formula for $h$.  Assume $g_{Y_1}=0$.
Note that since $h$ is indecomposable, Lemma \ref{lem:hurwitz1}.(1) implies that there are odd entries in at least one of  $E_{h}(P_1), E_{h}(P_3)$ and there are coprime to $3$ entries  in at least one of $E_{h}(P_2),E_{h}(P_3)$.
The only ramification types of $h$ over $P_1,P_2,P_3$ which satisfy these restrictions and the Riemann--Hurwitz formula for $h$ are types F4.1--F4.6 and the following:
\begin{equation}\label{table:non-236}
\begin{array}{| l |}
\hline
\left[2^{n/2}], [3^{n/3}\right], [1^2,4,6^{(n-6)/6}]  \\
\left[1^2,2^{(n-2)/2}\right], [3^{n/3}], [2,4,6^{(n-6)/6}] \\
\left[1,2^{(n-1)/2}\right], [3^{n/3}], [2,3,4,6^{(n-9)/6}]  \\
\hline
\left[2^{n/2}\right], [1,2,3^{(n-3)/3}], [3^2,6^{(n-6)/6}]  \\ 
\left[2^{n/2}\right], [1,3^{(n-1)/3}], [3^2,4,6^{(n-10)/6}]  \\ 
\left[2^{n/2}\right], [2,3^{(n-2)/3}], [2,3^2,6^{(n-8)/6}]   \\ 
\left[2^{n/2}\right], [3^{n/3}], [2,3^2,4,6^{(n-12)/6}]  \\ 
\hline
\left[2^{n/2}\right], [2,3^{(n-2)/3}], [1^2,6^{(n-2)/6}]  \\ 
\hline
\end{array}
\end{equation}
The first three types in \eqref{table:non-236} do not occur as a ramification type of a covering by Lemma \ref{lem:hurwitz1}.(2), the next four do not occur by Lemma \ref{lem:hurwitz1}.(3), and the last is type F4.N1 which does not appear by  Lemma \ref{lem:non-existing}. 

{\bf Case F5:} Assume $m_h(P)=1$  for all points $P$ of $Y$, and that $g_Y=1$. 
Then 
\eqref{equ:M-RH-t=2} and Lemma \ref{lem:RS-estimates} give
\begin{equation}\label{equ:F5-main}
4(g_{X_2}-1) =  2\ell(g_{Y_1}-1) +  \ell\sum_{P\neq P_1,\ldots, P_4}R_{h}(P) + O_\alpha(1).
\end{equation}
Hence for sufficiently large $d_{2,\alpha}$ and $c_{2,\alpha}<1/4$, \eqref{equ:F5-main} and the assumption $g_{X_2}\leq c_{2,\alpha}\ell-d_{2,\alpha}$ force
$2(1-g_{Y_1})  =  \sum_{P\in Y(\K)}R_{h}(P)$.
On the other hand the Riemann--Hurwitz formula for $h$ gives 
$2(g_{Y_1}-1) =  \sum_{P\in Y(\K)}R_{h}(P).$
These two equalities force $g_{Y_1}=1$ and $h$ is unramified, 
contradicting the assumption that $\Mon(h)=A_\ell$ or $S_\ell$, by Remark \ref{rem:extra-facts}.(4). 
\end{proof}

The proof for $t\geq 3$ relies on the following bound on the Riemann--Hurwitz contribution $R_{\pi_t}(f_t^{-1}(P))$, where  
$\pi_t:Y_t\ra X_t$ and $f_t:X_t\ra Y$ are the natural projections. 
Let $E_0$ (resp.~$\nu$) be the constant from Proposition \ref{lem:pi-form} (resp.~Corollary \ref{cor:DZ}). 
\begin{lem}\label{cor:bounding-mP} 
Let $h:Y_1\ra Y$ be a degree $\ell$ covering with monodromy group $A_\ell$ or $S_\ell$ and $2$-point genus bound $\alpha\ell$.
Let $3\leq t\leq \ell/2$ an integer.
Assume $P$ is a point of $Y$ with error $\eps_h(P) <\ell/2-1$.  
Then 
\[
R_{\pi_t}(f_t^{-1}(P)) < 
\begin{cases}\left((\eps_h(P)+\frac{1}{2})\binom{t}{2} + E_0t^4\right)\frac{(\ell-2)!}{(\ell-t)!} & \text{ if }m_h(P)<\infty \\
\left(\nu\alpha^2\binom{t}{2} + E_0t^4\right)\frac{(\ell-2)!}{(\ell-t)!} & \text{ if }m_h(P)=\infty.
 \end{cases}
 \]
\end{lem}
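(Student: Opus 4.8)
The plan is to apply Proposition~\ref{lem:pi-form}(2), which bounds $R_{\pi_t}(f_t^{-1}(P))$ by $\binom{t}{2}\sum_{\theta_1,\ldots,\theta_{t-1}}\frac{\hat r_1\cdots\hat r_{t-1}}{\lcm(r_1,\ldots,r_{t-1})} + E_0t^4\frac{(\ell-2)!}{(\ell-t)!}$, and then estimate the main sum $\Sigma := \sum_{\theta_1,\ldots,\theta_{t-1}}\frac{\hat r_1\cdots\hat r_{t-1}}{\lcm(r_1,\ldots,r_{t-1})}$ over tuples of orbits of a branch cycle $x$ with $r_1$ even and $v_2(r_1)>v_2(r_j)$ for $j=2,\ldots,t-1$. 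Since $\hat r_i\le r_i$ and $\lcm(r_1,\ldots,r_{t-1})\ge r_1\cdots$ is overkill — instead I use $\frac{\hat r_1\cdots\hat r_{t-1}}{\lcm(r_1,\ldots,r_{t-1})}\le \frac{\hat r_1\cdots\hat r_{t-1}}{r_1} = \hat r_2\cdots\hat r_{t-1}\cdot\frac{\hat r_1}{r_1}\le \hat r_2\cdots \hat r_{t-1}$. Summing over all choices of $\theta_2,\ldots,\theta_{t-1}$ among all orbits of $x$ (dropping the $v_2$-constraints, which only shrinks the index set) gives $\Sigma \le \sum_{\theta_1: r_1 \text{ even}} \sum_{\theta_2,\ldots,\theta_{t-1}} \hat r_2\cdots\hat r_{t-1}$, and the inner sum counts (ordered, distinct-entry) $(t-2)$-tuples of elements of $S$, hence is at most $\frac{(\ell-1)!}{(\ell-t+1)!}\le \frac{(\ell-1)!}{(\ell-t+1)!}$. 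This crude bound gives something like $\Sigma \le (\#\text{even orbits})\cdot\frac{(\ell-1)!}{(\ell-t+1)!}$, which is too weak; the point of the almost-Galois structure is that $\#\text{even orbits}$ should contribute like $\ell/2$ only when $m_h(P)=2$ and otherwise is tiny, so I need a sharper accounting.

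The sharper approach is to split according to whether $m_h(P)<\infty$ or $m_h(P)=\infty$ and use the error bound. Fix $\theta_1$ with $r_1$ even; then $\frac{\hat r_1\cdots\hat r_{t-1}}{\lcm(r_1,\ldots,r_{t-1})}\le \frac{\hat r_1}{r_1}\cdot\prod_{i\ge 2}\hat r_i \le \prod_{i\ge 2}\hat r_i$, and summing $\prod_{i\ge2}\hat r_i$ over all $(\theta_2,\ldots,\theta_{t-1})$ (allowing repeats with the $\hat r$ correction) equals the number of ordered $(t-2)$-tuples of elements of $S$ whose first coordinate can be anything — that is, $(\ell-1)(\ell-2)\cdots(\ell-t+2)$ at most, which is $\frac{(\ell-1)!}{(\ell-t+1)!}$; more carefully, $\sum_{\theta_2,\ldots,\theta_{t-1}}\prod_{i=2}^{t-1}\hat r_i$ counts ordered $(t-2)$-tuples $(u_2,\ldots,u_{t-1})$ of \emph{pairwise distinct} elements of $S\setminus\{u_1\}$ for a fixed representative $u_1$, i.e.\ it is $\frac{(\ell-1)!}{(\ell-t+1)!}$. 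Now the number of orbits $\theta_1$ of $x$ with $r_1$ even: if $m_h(P)<\infty$, all but at most $\eps_h(P)$ of the elements of $S$ lie in orbits of length exactly $m_h(P)$, so the number of even-length orbits is at most $\frac{\eps_h(P)}{2} + [\,2\mid m_h(P)\,]\cdot\frac{\ell}{m_h(P)}$ — but I must be careful, because if $m_h(P)$ is itself even, there are about $\ell/m_h(P)$ such orbits. Here is where the refinement enters: when $m_h(P)$ is even, say $m_h(P)\in\{2,4,6\}$, the constraint $v_2(r_1)>v_2(r_j)$ for all $j\ge2$ forces $\theta_j$ (for $j\ge2$) to have $2$-adic valuation strictly less than $v_2(m_h(P))$, but almost every orbit of $x$ has length $m_h(P)$, so there are only $O(\eps_h(P))$ valid choices for each $\theta_j$, $j\ge 2$. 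That shrinks the inner sum from $\frac{(\ell-1)!}{(\ell-t+1)!}$ to at most $\eps_h(P)^{t-2}$ roughly — wait, that overcorrects; the cleanest is: if $m_h(P)$ is even then $\theta_1$ must have $v_2(r_1)>v_2(m_h(P))$, so $\theta_1$ itself is one of the $\le\eps_h(P)$ exceptional orbits, giving at most $\frac{\eps_h(P)}{2}$ choices for $\theta_1$; combined with the $\frac{(\ell-1)!}{(\ell-t+1)!}$ bound on the rest, $\Sigma \le \frac{\eps_h(P)}{2}\cdot\frac{(\ell-1)!}{(\ell-t+1)!}$. If $m_h(P)$ is odd, then $\theta_1$ is again exceptional (odd $m_h(P)$ orbits don't have even length), same bound. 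So in all finite cases $\Sigma\le(\tfrac{\eps_h(P)}{2}+\tfrac12)\frac{(\ell-1)!}{(\ell-t+1)!}$ — the extra $\tfrac12$ absorbing boundary/rounding issues — and since $\frac{(\ell-1)!}{(\ell-t+1)!}\le\frac{(\ell-2)!}{(\ell-t)!}$ for $t\ge3$ up to a harmless factor (actually $\frac{(\ell-1)!}{(\ell-t+1)!}=\frac{(\ell-1)(\ell-t+2)\cdots}{}$; one compares this with $\frac{(\ell-2)!}{(\ell-t)!}$ directly and finds the former is smaller for $t\ge 3,\ \ell\ge 2t$), multiplying by $\binom{t}{2}$ and adding $E_0t^4\frac{(\ell-2)!}{(\ell-t)!}$ yields the claimed bound $\left((\eps_h(P)+\tfrac12)\binom{t}{2}+E_0t^4\right)\frac{(\ell-2)!}{(\ell-t)!}$.

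For the case $m_h(P)=\infty$: by Corollary~\ref{cor:DZ}, the number of $h$-preimages of $P$ — equivalently $\abs{E_h(P)}=\abs{\Orb_S(x)}$ — is at most $\nu\alpha^2$, hence the number of even-length orbits $\theta_1$ is at most $\nu\alpha^2$, and the same argument (choose $\theta_1$ even in $\le\nu\alpha^2$ ways, then $\le\frac{(\ell-1)!}{(\ell-t+1)!}$ ways for $\theta_2,\ldots,\theta_{t-1}$) gives $\Sigma\le\nu\alpha^2\cdot\frac{(\ell-2)!}{(\ell-t)!}$, and after multiplying by $\binom{t}{2}$ and adding the $E_0t^4$ term we get $\left(\nu\alpha^2\binom{t}{2}+E_0t^4\right)\frac{(\ell-2)!}{(\ell-t)!}$, as claimed. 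The hypothesis $\eps_h(P)<\ell/2-1$ is only needed to ensure that in the finite case the $\le\eps_h(P)$ exceptional elements genuinely do not exhaust $S$, so that "almost every orbit has length $m_h(P)$" is meaningful (otherwise the partition of $\Orb_S(x)$ into "the $\ell/m_h(P)-O(\eps)$ orbits of length $m_h(P)$" plus "$O(\eps)$ other orbits" could fail). \textbf{The main obstacle} I anticipate is bookkeeping the even-length-orbit count correctly when $m_h(P)$ itself is even: one must verify that the constraint $v_2(r_1)>v_2(r_j)$ genuinely forces $\theta_1$ to be one of the (few) non-$m_h(P)$-length orbits rather than one of the $\Theta(\ell)$ length-$m_h(P)$ orbits — this is true because all length-$m_h(P)$ orbits share the \emph{same} $v_2$, so no two of them can satisfy the strict inequality against each other, but phrasing it cleanly without an index clash with Step II/IV of Proposition~\ref{lem:pi-form} will require care; and comparing $\frac{(\ell-1)!}{(\ell-t+1)!}$ against $\frac{(\ell-2)!}{(\ell-t)!}$ needs the hypothesis $\ell\ge 2t$ (true since $t\le\ell/2$) to be invoked explicitly.
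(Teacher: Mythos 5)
There is a genuine gap at the heart of your finite-$m_h(P)$ case. You assert that when $m:=m_h(P)$ is even, ``$\theta_1$ must have $v_2(r_1)>v_2(m)$, so $\theta_1$ itself is one of the $\le\eps_h(P)$ exceptional orbits.'' This is false: the summation condition in Proposition~\ref{lem:pi-form}(2) only requires $v_2(r_1)>v_2(r_j)$ for the \emph{other orbits $\theta_j$ appearing in the tuple}, not relative to $m$. A tuple with $\theta_1$ a length-$m$ orbit and every $\theta_j$ ($j\ge 2$) an exceptional orbit of smaller $2$-adic valuation is perfectly admissible, and there are roughly $\ell/m$ choices of such $\theta_1$ --- a $\Theta(\ell)$ family that your count omits entirely. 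Ironically, the idea you floated and then discarded as ``overcorrecting'' is the correct one and is what the paper does: when $r_1=m$, all length-$m$ orbits share the same $2$-adic valuation, so each $\theta_j$ with $j\ge2$ must lie among the exceptional orbits, whose elements total at most $\eps_h(P)$; hence the inner sum is at most $\eps_h(P)(\eps_h(P)-1)\cdots(\eps_h(P)-t+3)=\eps_h(P)!/(\eps_h(P)-t+2)!$, and the contribution of this case is at most $\tfrac{\ell}{2}\cdot\eps_h(P)!/(\eps_h(P)-t+2)!$. The hypothesis $\eps_h(P)<\ell/2-1$ is then used precisely to show $\ell\cdot\eps_h(P)!/(\eps_h(P)-t+2)!<(\eps_h(P)+1)\tfrac{(\ell-2)!}{(\ell-t)!}$ (for $t=3$ this is exactly $\ell\eps<(\eps+1)(\ell-2)$, i.e.\ $\eps<\ell/2-1$), not for the vague ``exceptional elements do not exhaust $S$'' reason you give. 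Adding this to the $r_1\ne m$ contribution, where the number of admissible $\theta_1$ is the number of \emph{even orbits of length $\ne m$}, hence at most $\eps_h(P)/2$, yields the stated $(\eps_h(P)+\tfrac12)$ coefficient.

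A secondary but real error: your inner-sum bound $\tfrac{(\ell-1)!}{(\ell-t+1)!}$ is not $\le\tfrac{(\ell-2)!}{(\ell-t)!}$; it exceeds it by the factor $\tfrac{\ell-1}{\ell-t+1}>1$ (both are products of $t-2$ terms, and each factor of the former is larger). The correct inner bound is $\tfrac{(\ell-2)!}{(\ell-t)!}$, obtained by noting that the orbits $\theta_k$ ($k\ge2$) are restricted to $U_{\theta_1}=\{\theta: v_2(|\theta|)<v_2(r_1)\}$, which excludes all of $\theta_1$, so $\sum_{\theta_k\in U_{\theta_1}}\hat r_k\le \ell-r_1-(k-2)\le\ell-k$. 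With your looser bound the final constants in both the finite and infinite cases come out worse than the lemma claims.
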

\begin{proof}
Let  $m:=m_h(P)$. 
We estimate the main term of Proposition \ref{lem:pi-form}.(2). 
Let $\Mon(h)$ act on the set $S$. 
Let $x\in \Mon(h)$ be a branch cycle over $P$, 
and let 
\[
M_h:=\sum_{(\theta_1,\ldots,\theta_{t-1})\in O_{t-1}}\frac{\hat r_1\cdots \hat r_{t-1}}{\lcm(r_1,\ldots,r_{t-1})},
\]
where $r_i := \abs{\theta_i}$, and $\hat r_i$ is $r_i$ minus the number of $j<i$ with $\theta_j = \theta_i$ for $i=1,\ldots, t-1$, and $O_{t-1}$ is the set of tuples $(\theta_1,\ldots,\theta_{t-1})$ of orbits of $x$ such that $r_1$ is even and $v_2(r_1)>v_2(r_k)$ for $k>1$.
Note that since $\lcm(r_1,\ldots,r_{t-1})\geq r_1 = \hat r_1$, we have 
\begin{equation}\label{equ:sum-split}
\begin{split}
M_h&\leq \sum_{(\theta_1,\theta_2,\ldots,\theta_{t-1})\in O_{t-1}}\hat r_2\cdots \hat r_{t-1} \\
&=  \sum_{\substack{(\theta_1,\ldots,\theta_{t-1})\in O_{t-1} \\ r_1\neq m}} \hat r_2\cdots \hat r_{t-1} +  \sum_{\substack{(\theta_1,\ldots,\theta_{t-1})\in O_{t-1} \\ r_1 = m}} \hat r_2\cdots \hat r_{t-1}.
\end{split}
\end{equation}
We bound each of the summands on the right hand side, noting that the last sum is zero if $m=\infty$. 

Fix $2\leq k\leq t-1$ and orbits $\theta_1,\ldots,\theta_{k-1}$, and let $U_{\theta_1}$ be the set of orbits $\theta$ of $x$ such that  
$v_2(\abs\theta)<v_2(r_1)$. 
We claim that the sum $\sum_{\theta_{k}\in U_{\theta_1}}\hat r_{k}$
is bounded by $\ell-k$ if $r_1\neq m$, and by $\eps_h(P)-(k-2)$ if $r_1=m$. 
Indeed, note that 
the sum 
$\sum_{\theta_{k}\in U_{k}}r_{k}$ is at most $\ell-r_1$,
and  at most $\eps_h(P)$ if further $r_1=m$. 
Since in addition $r_1\geq 2$, and 
$\sum_{\theta_{k}\in U_{\theta_1}}\abs{\{1<j<k \suchthat \theta_j = \theta_k\}} =  k-2$, 
we get
\begin{align*}
 \sum_{\theta_{k}\in U_{\theta_1}}\hat r_{k} & = \sum_{\theta_{k}\in U_{\theta_1}}r_{k} -  \sum_{\theta_{k}\in U_{\theta_1}}\abs{\{1<j<k \suchthat \theta_j = \theta_k\}} \\
 & \leq 
\begin{cases} \eps_h(P)-(k-2) & \text{if }r_1=m \\
\ell-2-(k-2) & \text{otherwise,}
\end{cases}
\end{align*}
proving the claim. 

Applying the claim for $k=t-1,t-2,\ldots, 2$, we get
\begin{align*}
 \sum_{(\theta_1,\ldots,\theta_{t-1})\in O_{t-1}, r_1\neq m} \hat r_2\cdots \hat r_{t-1} &  \leq  
 (\ell-t+1)\sum_{(\theta_1,\ldots,\theta_{t-2})\in O_{t-2}, r_1\neq m} \hat r_2\cdots \hat r_{t-2}   \\
 & \leq \cdots \leq  \sum_{\theta_1\in O_1,r_1\neq m}\frac{(\ell-2)!}{(\ell-t)!}  = \tilde N_h\frac{(\ell-2)!}{(\ell-t)!},
 \end{align*}
where $\tilde N_h$ is the number of even entries in $E_h(P)$ that are different from $m$. 
Similarly if $r_1=m<\infty$, applying the claim for $k=t-1,\ldots,2$ gives
\[ \sum_{(\theta_1,\ldots,\theta_{t-1})\in O_{t-1}, r_1 = m} \hat r_2\cdots \hat r_{t-1}  \leq 
 \displaystyle \sum_{\theta_1\in O_1, r_1=m} \frac{\eps_h(P)!}{(\eps_h(P)-(t-2))!} \leq \frac{\ell}{m}\frac{\eps_h(P)!}{(\eps_h(P)-(t-2))!},\]
 where $(\eps_h(P)-t+2)!$ is defined to be $1$ if $\eps_h(P)\leq t-2$. 
 Note that in case $r_1=m$, we have $m\geq 2$ since $r_1$ is even. 
In total,  estimating each of summands in \eqref{equ:sum-split},
we get
\[
M_h < \begin{cases}
\tilde N_h\frac{(\ell-2)!}{(\ell-t)!} + \frac{\ell}{2}\frac{\eps_h(P)!}{(\eps_h(P)-t+2)!} & \text{ if }m<\infty \\ 
\tilde N_h\frac{(\ell-2)!}{(\ell-t)!} & \text{ if }m=\infty. 
\end{cases}
\]
Since $t\geq 3$, and $\eps_h(P)+1<\ell/2$, a straightforward check shows that  $\ell \frac{\eps_h(P)!}{(\eps_h(P)-t+2)!} < (\eps_h(P)+1)\frac{(\ell-2)!}{(\ell-t)!}$. 
Note that  $\tilde N_h\leq \eps_h(P)/2$ if $m$ is finite, and $\tilde N_h\leq \nu\alpha^2$ if $m=\infty$, by Corollary \ref{cor:DZ}. 
In total we get $M_h< (\eps_h(P)+1/2)\frac{(\ell-2)!}{(\ell-t)!}$ if $m$ is finite, and $M_h<\nu\alpha^2\frac{(\ell-2)!}{(\ell-t)!}$ if $m=\infty$. 
Thus Proposition \ref{lem:pi-form}  gives
\[ R_{\pi_t}(f_t^{-1}(P))  < \binom{t}{2}M_h + E_0t^4\frac{(\ell-2)!}{(\ell-t)!} <
\begin{cases} ((\eps_h(P)+\frac{1}{2})\binom{t}{2} + E_0t^4)\frac{(\ell-2)!}{(\ell-t)!} & \text{ if }m<\infty \\
(\nu\alpha^2\binom{t}{2} + E_0t^4)\frac{(\ell-2)!}{(\ell-t)!} & \text{ if }m=\infty. \qedhere
\end{cases} 
\]
\end{proof}
\begin{proof}[Proof of Proposition \ref{prop:t-set}]
Denote $R_{\pi_t}:=\sum_{P\in Y(\K)}R_{\pi_t}(f_t^{-1}(P))$. 
By inequality \eqref{equ:M-RH-inequ} of Remark \ref{rem:main-RH}, and the Riemann--Hurwitz formula the natural projection $Y_{t-1}\ra Y_2$, we get: 
\begin{equation}\label{equ:main-RH2}
\begin{split}
2t!(g_{X_t}-g_{X_{t-1}}) & \geq
2(\ell-2t+1)(g_{Y_{t-1}}-1)  - R_{\pi_t} \\ 
 & \geq 2(1-\eps)\frac{(\ell-2)!}{(\ell-t)!}(g_{Y_2}-1)  - R_{\pi_t}, 
\end{split}
\end{equation}
where $\eps := t/(\ell-t+1)$. We show that $R_{\pi_t}$ is bounded by a constant times $t^{2k+2}\frac{(\ell-2)!}{(\ell-t)!}$ while the first term on the right hand side is at least a linear factor in $\ell$ times $\frac{(\ell-2)!}{(\ell-t)!}$. 

{\bf Step I:} {\it Bounding $R_{\pi_t}$ using Lemma \ref{cor:bounding-mP}. }
Let $\eps_h(P)$ denote the error over a point $P$ of $Y$, and let $\nu\alpha^2$ denote the bound on $\abs{h^{-1}(P)}$ from Corollary \ref{cor:DZ} in case $m_h(P)=\infty$.
By Lemma \ref{cor:bounding-mP},  
 $R_{\pi_t}(f_t^{-1}(P))$ is at most 
$((\eps_h(P)+1/2)\binom{t}{2} + E_0t^4))\frac{(\ell-2)!}{(\ell-t)!}$ if $m_h(P)<\infty$ and at most 
$(\nu\alpha^2\binom{t}{2} + E_0t^4)\frac{(\ell-2)!}{(\ell-t)!}$ if $m_h(P)=\infty$.  
By definition, 
$\eps_h(P)+1/2< 84(\alpha+3/2)$ for every point $P$ of $Y$. 
Moreover by Corollary \ref{cor:DZ}, there are at most four branch points $P$ with $m_h(P)>1$, at most $2(\alpha+1)$ with $m_h(P)=1$, 
and at most two with $m_h(P)=\infty$.
In total we have the following bound on $R_{\pi_t}$
\begin{equation*}
\begin{split}
 R_{\pi_t} &   =  \sum_{P\in Y(\K)\suchthat m_h(P)<\infty}R_{\pi_t}(f_t^{-1}(P)) + \sum_{P\in Y(\K),m_h(P)=\infty}R_{\pi_t}(f_t^{-1}(P)) \\
& < \Biggl(\sum_{P\in Y(\K)\suchthat m_h(P)<\infty}\Bigl(\binom{t}{2}(\eps_h(P)+1/2) + E_0t^4 \Bigr) + 2\Bigl(\nu\alpha^2\binom{t}{2}  + E_0t^4\Bigr))\Biggr)\frac{(\ell-2)!}{(\ell-t)!}
\\
&  <  \Biggl( (2\alpha+6)\Bigl(84(\alpha+3/2)\binom{t}{2} + E_0t^4\Bigr) 
+ 2\nu\alpha^2\binom{t}{2}  + 2E_0t^4 \Biggr)\frac{(\ell-2)!}{(\ell-t)!}. 
\end{split}
\end{equation*}
As $\alpha=\beta_1t^k$ for $k\geq 2$, this gives
$R_{\pi_t}\leq d_{5,\beta_1}t^{2k+2}(\ell-2)!/(\ell-t)!$ for some constant $d_{5,\beta_1}>0$ depending only on $\beta_1$. 


{\bf Step II:} {\it Bounding $g_{Y_2}$ from below and estimating \eqref{equ:main-RH2}. }
We next claim similarly to the proof of Theorem \ref{thm:main} that if the ramification type of $h$ is not $[\ell],[a,\ell-a],[2,1^{\ell-2}]$ then $g_{Y_2}>2c'\ell-2d'-1$, for constants $c'\leq \min\{1/3,c_2,c_{2,3}\}$ and $d'\geq\max\{2,d,2^8d_2,d_{2,3}\}$ such that $d'/c'\geq 3^3\lambda_2$.

First consider coverings $h$ whose ramification type does not appear in Table \ref{table:two-set-stabilizer}.  
 It suffices to prove the claim when $c'\ell-d'\geq 0$, and hence when $\ell\geq d'/c'\geq \max\{6,3^3\lambda_2\}.$
If $g_{Y_1}\leq 1$ then  $g_{Y_2}<3\ell$ by Proposition \ref{prop:Castel}. 
Since in addition $\ell\geq 3^3\lambda_2$ and the ramification type of $h$ is not in Table \ref{table:two-set-stabilizer}, Proposition \ref{prop:two-set} with $\alpha=3$ implies that
$$g_{X_2}\geq g_{X_2}-g_{X_1}>c_{2,3}\ell -d_{2,3}\geq c'\ell-d'.$$  
If $g_{Y_1}>1$, then 
$g_{X_2}\geq g_{X_2}-g_{X_1}>c_2\ell-2^8d_2\geq c'\ell-d'$ by Proposition \ref{lem:t=3,4}. 
Thus, in combination with the Riemann--Hurwitz formula for $\pi_2$, we have 
$$g_{Y_2}-1 \geq 2(g_{X_2}-1)> 2(c'\ell-d'-1),$$ proving the claim when the ramification of $h$ is not in Table \ref{table:two-set-stabilizer}.

If the ramification type of $h$ does appear in Table \ref{table:two-set-stabilizer} but is not $[\ell],[a,\ell-a],[2,1^{\ell-2}]$, then $R_{\pi_2}=\sum_{P\in Y(\K)}R_{\pi_2}(f_2^{-1}(P))$ is at least $(2\ell-5)/3$ by Remark \ref{rem:Y2-table}. Since in addition $c'\leq 1/3$ and $d'\geq 2$, the Riemann--Hurwitz formula for $\pi_2$ gives
\[
g_{Y_2}-1\geq 2(g_{X_2}-1) + R_{\pi_2} 
\geq -2 +  \frac{2\ell-5}{3}>2(c'\ell-d'-1), 
\]
completing the proof of the claim. 

Set $c_4:=\min\{(99/25)c',1\}$ and let $d_{4,\beta_1}\geq 1$ 
be sufficiently large so that 
\begin{equation}\label{equ:d4}
d_{4,\beta_1}t^{2k+2}\geq (99/25)(d'+1)+d_{5,\beta_1}t^{2k+2}
\end{equation}
 and $d_{4,\beta_1}t^{6}\geq (3E_0+1)t^4+8$ for all $t\geq 3$.  
Since the claim is trivial when $c_4\ell-d_{4,\beta_1}t^{2k+2}<0$, we may assume $\ell\geq d_{4,\beta_1}t^{2k+2}/c_4\geq t^{2k+2}$. 

Since $g_{Y_2}-1>2(c'\ell-d'-1)$ and $R_{\pi_t}\leq d_{5,\beta_1}t^{2k+2}(\ell-2)!/(\ell-t)!$ by Step~II, \eqref{equ:main-RH2}  gives
\begin{equation}\label{equ:final}
2t!(g_{X_t}-g_{X_{t-1}})  > \frac{(\ell-2)!}{(\ell-t)!}\Bigl( 4(1-\eps)(c'\ell - d'-1) -d_{5,\beta_1}t^{2k+2}\Bigr)
\end{equation}
As $t\geq 3$ and $k\geq 2$, we have $\ell\geq t^6$, so that $\eps<1/100$ and $4(1-\eps)c'>25/99 c'>c_4$. Thus \eqref{equ:final} gives 
$$ 2t!(g_{X_t}-g_{X_{t-1}}) > \frac{(\ell-2)!}{(\ell-t)!}\Bigl(c_4\ell - 4(1-\eps)(d'+1) 
-d_{5,\beta_1}t^{2k+2}\Bigr). $$
The right hand side is at least $(c_4\ell-d_{4,\beta_1}t^{2k+2})\frac{(\ell-2)!}{(\ell-t)!}$ by \eqref{equ:d4}.
This completes the proof when the ramification type of $h$ is not $[\ell], [a,\ell-a], [2, 1^{\ell-2}]$. 

{\bf Step III:} {\it The case where $h$ has ramification type $[\ell], [a,\ell-a], [2,1^{\ell-2}]$}. Let $P_1,P_2,P_3$ be the branch points with ramification $[\ell], [a,\ell-a], [2,1^{\ell-2}]$, respectively. 
In this case, applying the Riemann--Hurwitz formula to the natural projections $Y_2\ra Y_1$ and $Y_3\ra Y_2$ gives $g_{Y_2}=0$, and 
\begin{equation}\label{equ:gY3} 
2(g_{Y_3}-1) = (\ell-5)(\ell-2) + 2(a-1)(\ell-a-1)\geq (\ell-5)(\ell-2). 
\end{equation}
On the other hand, computing the bounds on $R_{\pi_t}$ via Proposition \ref{lem:pi-form} give
\begin{equation*}\label{equ:special}
\begin{split}
R_{\pi_t}(f_t^{-1}(P_1)) & < \biggl(\binom{t}{2}\delta_\ell+E_0 t^4\biggr)\frac{(\ell-2)!}{(\ell-t)!} \\
 R_{\pi_t}(f_t^{-1}(P_2)) & < \biggl(\binom{t}{2}(1-\delta_\ell)+E_0 t^4\biggr)\frac{(\ell-2)!}{(\ell-t)!} \\ 
R_{\pi_t}(f_t^{-1}(P_3)) & < \biggl(E_0t^4+\binom{t}{2}\cdot\frac{1}{\ell-2}\biggr)\frac{(\ell-2)!}{(\ell-t)!},
\end{split}
\end{equation*}
where $\delta_\ell=1$ if $\ell$ is even and $0$ otherwise. 
Note that as $\ell\geq t^6\geq 3^6$, in total we have $R_{\pi_t}<(3E_0+1)t^4(\ell-2)!/(\ell - t)!$. Thus for $t=3$, in combination with \eqref{equ:gY3}, the Riemann--Hurwitz formula for $\pi_3$ gives 
\begin{equation*}
    \begin{split}
    12(g_{X_3}-1) & =  12(g_{Y_3}-1) - \sum_{P\in Y(\K)}R_{\pi_3}(f_3^{-1}(P)) \\
    & > (\ell-2)(\ell-5) - (3E_0+1)t^4(\ell-2) > (c_4\ell-d_{4,\beta_1}t^{2k+2})(\ell-2). 
    \end{split}
\end{equation*}
As $g_{X_2}=0$, this gives $12(g_{X_3}-g_{X_2})>(c_4\ell-d_{4,\beta_1}t^{2k+2})(\ell-2)$ as desired. 
Henceforth assume $t\geq 4$. By 
\eqref{equ:M-RH-inequ} and Riemann--Hurwitz for the natural projection $Y_{t-1}\ra Y_3$, we have 
\begin{equation}\label{equ:done!!!}
2t!(g_{X_t}-g_{X_{t-1}})\geq 2\frac{\ell-5}{\ell-2}(g_{Y_{t-1}}-1)-R_{\pi_t} \geq 2\frac{\ell-5}{\ell-2}\cdot \frac{(\ell-3)!}{(\ell-t)!}(g_{Y_3}-1)  - R_{\pi_t}. 
\end{equation}
Since  $R_{\pi_t}<(3E_0+1)t^4(\ell-2)!/(\ell - t)!$ and $k\geq 2$, \eqref{equ:gY3} and \eqref{equ:done!!!} give
\begin{align*}\label{equ:main-RH4}
2t!(g_{X_t}-g_{X_{t-1}})&\geq  2\frac{\ell-5}{\ell-2}\cdot \frac{(\ell-3)!}{(\ell-t)!}(g_{Y_3}-1)  - R_{\pi_t}  \\
& >  \frac{(\ell-2)!}{(\ell-t)!}\biggl(\frac{(\ell-5)^2}{\ell-2}-(3E_0+1)t^4 \biggr) \\
& > (\ell-(3E_0+1)t^4-8)\frac{(\ell-2)!}{(\ell-t)!}> (c_4\ell-d_{4,\beta_1}t^{2k+2})\frac{(\ell-2)!}{(\ell-t)!}. \qedhere
\end{align*}
\end{proof}
\section{Nonoccurring ramification data} \label{sec:non-existence}
It remains to prove that the ramification data 
in Table \ref{table:non-existence} do not correspond to an indecomposable covering (Lemma \ref{lem:non-existing}). 
We shall use the following lemma of Guralnick--Shareshian \cite[Lemma 2.0.12]{GS}. We follow the notation of Setup \ref{sec:setup}. Permutation multiplication is left to right, that is, $(1,2)(1,3)=(1,2,3)$. 
\begin{lem}\label{lem:gXk}
Let $h:Y_1\ra Y$ be a degree $\ell$ covering with monodromy group $G\in \{A_\ell,S_\ell\}$, 
and $X_k$ the quotient by a $k$-set stabilizer. Then $g_{X_k}\leq g_{X_{k+1}}$ for all $1\leq k< \ell/2$. 
\end{lem}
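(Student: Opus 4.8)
The statement to prove is Lemma~\ref{lem:gXk}: for a degree-$\ell$ covering $h:Y_1\to Y$ with monodromy group $G\in\{A_\ell,S_\ell\}$, one has $g_{X_k}\leq g_{X_{k+1}}$ for all $1\leq k<\ell/2$, where $X_k=\tilde Y_1/\oline H_k$ is the quotient by the stabilizer of a $k$-set.

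The plan is to compare the two curves via the fiber product / correspondence already set up in Section~\ref{sec:setup}. The natural map to exploit is the one sending a $(k+1)$-tuple to its first $k$ coordinates, which passes to a correspondence between $Y_k$ (the $k$-tuple quotient) and $Y_{k+1}$; dividing by the $S_k$-action recovers a relation between $X_k$ and $X_{k+1}$. More concretely, I would use the diagram of natural projections from Setup~\ref{sec:setup} together with the Riemann--Hurwitz / chain-rule formulae recorded in \eqref{equ:chain}, exactly in the style of the computation in Remark~\ref{rem:main-RH}. The key inputs are: (i) $g_{Y_k}\le g_{Y_{k+1}}$ — this follows immediately from the Riemann--Hurwitz formula for the natural projection $Y_{k+1}\to Y_k$ (a nonconstant morphism of curves never decreases genus), so the analogous inequality for the tuple-quotients is free; (ii) a comparison between the Riemann--Hurwitz contribution $R_{\pi_k}(f_k^{-1}(P))$ of $\pi_k:Y_k\to X_k$ and that of $\pi_{k+1}:Y_{k+1}\to X_{k+1}$, branch point by branch point. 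All of these contributions are expressible purely in terms of the cycle structure of a branch cycle $x$ of $h$ over $P$ acting on $k$-sets versus $(k+1)$-sets, via Lemma~\ref{lem:correspondence} and the formula $R=\ell-|\Orb_S(x)|$ from Section~\ref{sec:prelim}.

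The concrete combinatorial step is therefore: for a fixed permutation $x\in S_\ell$ and each $1\le k<\ell/2$, show that the genus defect of the $(k+1)$-set quotient, computed from $|\Orb_{\oline S^{(k+1)}}(x)|$ and $g_Y$, dominates the corresponding quantity for the $k$-set quotient. Writing out the Riemann--Hurwitz formula for $f_k:X_k\to Y$ and $f_{k+1}:X_{k+1}\to Y$, the inequality $g_{X_{k+1}}\ge g_{X_k}$ reduces (after clearing the binomial factors $\binom{\ell}{k},\binom{\ell}{k+1}$) to an inequality of the shape
\[
\frac{1}{\binom{\ell}{k+1}}\sum_{P}\bigl(\tbinom{\ell}{k+1}-|\Orb_{\oline S^{(k+1)}}(x_P)|\bigr)\;\ge\;\frac{1}{\binom{\ell}{k}}\sum_{P}\bigl(\tbinom{\ell}{k}-|\Orb_{\oline S^{(k)}}(x_P)|\bigr)
\]
together with the genus-$g_Y$ bookkeeping term, and this in turn follows from a local (per-$P$, per-$x$) estimate comparing the normalized number of orbits of a single permutation on $(k+1)$-sets to that on $k$-sets. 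The cleanest way to get the local estimate is the averaging/fibering argument already used in Lemma~\ref{cor:RH-up-left} and the proof of Proposition~\ref{lem:pi-form}: partition $(k+1)$-sets by their "first" $k$ elements (or rather use the orbit-counting identity relating $\Orb_{S^{(k+1)}}$ and $\Orb_{S^{(k)}}$), which shows $|\Orb_{\oline S^{(k+1)}}(x)|/\binom{\ell}{k+1}$ is controlled by $|\Orb_{\oline S^{(k)}}(x)|/\binom{\ell}{k}$ precisely because each $k$-orbit spawns at least a proportional number of $(k+1)$-orbits when $k<\ell/2$; the Livingstone--Wagner-type counting underlying \cite[Theorem 1]{LW} is exactly this phenomenon.

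The main obstacle I expect is handling the genus of the base $Y$: when $g_Y>0$ the Riemann--Hurwitz contributions pick up the extra term $2\binom{\ell}{k}(g_Y-1)$, and one must verify that the monotonicity survives this correction — which it does because $\binom{\ell}{k}\le\binom{\ell}{k+1}$ for $k<\ell/2$, so a larger base-genus penalty is attached to the larger-degree cover, going in the favorable direction; but the bookkeeping must be done carefully so that the normalization is consistent. A secondary subtlety is that the lemma is stated for $G\in\{A_\ell,S_\ell\}$, which guarantees $(k+1)$-transitivity and hence that $\oline S^{(k)},\oline S^{(k+1)}$ really are single $G$-orbits (so $X_k,X_{k+1}$ are connected curves and the formulae apply); one should record this. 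Since this is precisely \cite[Lemma 2.0.12]{GS}, the most economical route is to cite that proof and merely indicate the translation of notation (Setup~\ref{sec:setup}), noting the footnote already in the excerpt that the inequality in fact holds for any $(k+1)$-transitive subgroup as a consequence of \cite[Theorem 4.34]{Mul2} and \cite[Lemma 4.15]{Mul3}; I would present the argument along the lines above if a self-contained proof is wanted, and otherwise simply appeal to \cite{GS,Mul2,Mul3}.
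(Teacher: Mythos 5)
The paper offers no proof of this lemma at all: it is quoted verbatim as Guralnick--Shareshian \cite[Lemma 2.0.12]{GS}, whose actual proof is character-theoretic. One writes $2(g_{\tilde Y_1/\oline H_{k+1}}-g_{\tilde Y_1/\oline H_k})$ via Riemann--Hurwitz as an affine-linear functional of the permutation character $1_{\oline H_{k}}^{G}$, observes that $1_{\oline H_{k+1}}^{G}-1_{\oline H_{k}}^{G}$ is a genuine character with no trivial constituent for $k+1\le\ell/2$ (this is Livingstone--Wagner at the level of characters), and then uses that each nontrivial irreducible contributes a nonnegative amount to the genus (Chevalley--Weil: it is twice a multiplicity in the space of holomorphic differentials of $\tilde Y_1$). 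So your ``economical route'' of citing \cite{GS} coincides with what the paper does, and is fine.

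Your self-contained sketch, however, has a genuine gap at its central step. The local estimate you propose --- that for a single permutation $x$ the normalized orbit count satisfies $\abs{\Orb_{\oline S^{(k+1)}}(x)}/\binom{\ell}{k+1}\le\abs{\Orb_{\oline S^{(k)}}(x)}/\binom{\ell}{k}$ --- is false. Take $x$ an $\ell$-cycle with $\ell=6$ and $k=1$: there is $1$ orbit on points (density $1/6$), while by Lemma \ref{lem:h-to-f} there are $3$ orbits on $2$-sets out of $\binom{6}{2}=15$ (density $1/5>1/6$). Livingstone--Wagner gives only that the \emph{raw} orbit count increases, $\abs{\Orb_{\oline S^{(k+1)}}(x)}\ge\abs{\Orb_{\oline S^{(k)}}(x)}$, which together with the growth of $\binom{\ell}{k}$ leaves the density free to move either way --- and indeed the raw inequality pushes \emph{against} the genus inequality you want, since it makes $R_{f_{k+1}}(P)-R_{f_k}(P)$ smaller than $\binom{\ell}{k+1}-\binom{\ell}{k}$. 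Because $\oline H_k$ and $\oline H_{k+1}$ are not nested, there is no morphism between $X_k$ and $X_{k+1}$ in either direction, and no branch-point-by-branch-point Riemann--Hurwitz comparison can close the argument: the inequality is irreducibly global, exploiting the product-one relation and high transitivity through the nonnegativity of character multiplicities in $H^0(\tilde Y_1,\Omega)$. (A smaller point: the base-genus bookkeeping you describe as ``favorable'' is favorable only for $g_Y\ge 1$; in the relevant case $g_Y=0$ the term $2\binom{\ell}{k+1}(g_Y-1)$ is \emph{more} negative for the larger cover, which is exactly why \cite[Lemma 2.0.12]{GS} needs the global argument and why the paper's footnote flags the hypothesis $g_Y=0$.)
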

\begin{rem}\label{rem:jordan} 
Let $G$ be a primitive subgroup of $S_\ell$ which does not contain $A_\ell$.
Classical results of Jordan \cite[Theorem 3.3.E and Example 3.3.1]{DM} imply that
$G$ does not contain a $p$-cycle for any prime $p<\ell-2$, and also that if $\ell\ge 9$ then
$G$ does not contain a product of two disjoint $2$-cycles.
\end{rem}
\begin{proof}[Proof of Lemma \ref{lem:non-existing}]
Assume on the contrary that there exists an indecomposable degree $\ell$ covering $h:Y_1\ra \mP^1$ whose ramification type is in Table~\ref{table:non-existence}, and let $G\leq S_\ell$ be its (primitive) monodromy group. 
By Riemann's existence theorem 
there exists a product $1$ tuple $a,b,c,d\in G$  that generates $G$, and whose cycle structures correspond to the given ramification data. We divide the argument into cases according to the types in Table~\ref{table:non-existence}.

{\bf\noindent Cases F1.N1-F1.N4}:
Since $G$ is primitive (as $h$ is indecomposable) and  
 contains a $2$-cycle in case F1.N1, 
 a $3$-cycle in case F1.N2,  
and an element of cycle structure $[2^2,1^{\ell-4}]$ in cases F1.N3, F1.N4,  $G$ contains $A_\ell$ if $\ell\geq 9$ by Remark \ref{rem:jordan}.
Riemann--Hurwitz for $h$ implies that $g_{X_1}=g_{Y_1}=1$ in all cases F1.N1-F1.N4. 
However, in each case we get $g_{X_2}=0$ by Remark \ref{lem:2set-genus-formula}, equality \eqref{equ:M-RH-t=2}. 
As $G\supseteq A_\ell$,  this contradicts $g_{X_2}\geq  g_{X_1}$ by Lemma \ref{lem:gXk}. 

{\bf \noindent Case F4.N1}: 
Let $P_1,P_2,P_3$ be the branch points of $h$ corresponding to 
$[2^{\ell/2}]$, $[2,3^{(\ell-2)/3}]$, $[1^2,6^{(\ell-2)/6}]$, respectively. 
Let $\alpha:\mP^1\ra \mP^1$ be a Galois degree $3$ covering totally ramified over $P_2$ and $P_3$, and unramified anywhere else. 
Consider a minimal covering $p:Z\ra \mP^1$ which factors through $h$ and $\alpha$, say as $p=h\circ  h'=\alpha\circ\alpha'$. 

We first claim that $\alpha'$ is a degree $\ell$ covering with monodromy group $S_\ell$ and ramification $[2^{\ell/2}]^3,  [1^2,2^{\ell/2-1}], [2,1^{\ell-2}]$. 
Let $p \circ \tilde p:\tilde Z\ra \mP^1$ be the Galois closure of $p$. 
Since $G$ is a primitive subgroup of $S_\ell$ containing the $2$-cycle $y^3$,
we have $G=S_\ell$ by Remark \ref{rem:jordan}. 
Since $G$ and $\Mon(\alpha)=C_3$ have no common nontrivial quotient, we have $\Mon(p\circ \tilde p)\cong S_\ell\times C_3$, $\Mon(\alpha'\circ \tilde p)=S_\ell$,
and $h$ and $\alpha$ have no common factorization $\alpha = w\circ \tilde\alpha$, $h=w\circ \tilde h$, with $\deg w>1$.
In particular, $\deg p = \deg \alpha\cdot \deg h$ by Lemma~\ref{lem:no-common}
and the ramification type of $\alpha'$ is $[2^{\ell/2}]^3,  [1^2,2^{\ell/2-1}], [2,1^{\ell-2}]$ by Lemma~\ref{lem:abh}.(2). 
Since $\Mon(\alpha'\circ\tilde p) = S_\ell$, and $\deg \alpha' =\ell$, 
$ \alpha'\circ\tilde p$ is the Galois closure of $\alpha'$, completing the proof of the claim. 

 
Letting $Z_2$ be the quotient  of $\tilde Z$ by a stabilizer of a $2$-set in $\Mon(\alpha')=S_\ell$, we get that $g_Z=1$ and $g_{Z_2}=0$ by Remark \ref{lem:2set-genus-formula}, equality \eqref{equ:M-RH-t=2}, contradicting  $g_{Z_2}\geq g_{Z}$ by Lemma~\ref{lem:gXk}. 

{\bf\noindent Case I2.N1} 
Without loss of generality assume $c$ has cycle structure $[2^{\ell/2}]$,  and $b$ has structure $[4,2^{\ell/2-2}]$,
so that $bc$ is an $\ell$-cycle.
View the letters $1,2,\ldots,\ell$ as vertices in a directed bicolored graph,
with  red edges representing $b$ and blue edges representing $c$. Since edges corresponding to a transposition go both way we denote them by an undirected edge. 

Each of the four vertices in the $4$-cycle $X$ of $b$ meets an edge of $c$,
and if the other vertex of that edge is not in $X$ then it meets an edge
of $b$ whose other vertex is not in $X$, and so on, where eventually we
have an edge of $c$ whose other vertex is in $X$.  
So from each vertex in $X$
we get a path of edges colored $cbcbcb\ldots bc$ in which the first and last
vertices are in $X$ (and are distinct) but no intermediate vertex is in $X$.

First suppose that one such path connects two adejacent vertices $u,v \in X$
where $u^b=v$; then $bc$ maps $u$ to [all vertices in the $uv$ path occurring after $c$'s] to $u$,
so $bc$ is not an $\ell$-cycle.
Thus the two paths connect the two pairs of non-adjacent vertices in $X$,
say $u,v$ and $w,x$, where the path connecting $u$ and $v$ contains $m$ edges from $b$
and $m+1$ edges from $c$ (for some $m\geq 0$), and likewise the path connecting $w$ and $x$
contains $n$ edges from $b$ and $n+1$ edges from $c$ (for some $n\geq 0$), and where $X$ is
$(u,w,v,x)$.  Here $\ell=2m+2n+4$.

\[
\xymatrix{ 
 & \labelstyle \ast=m+n+3 \ar@/_1.0pc/@{-}@[blue][dl]\ar@{-}@[red][r]  & \labelstyle m \ar@/^1.0pc/@{..}[rd]   & \\
\labelstyle u=m+1 \ar@[red][r]  &  \labelstyle w=2m+2n+4 \ar@[red][d] \ar@{-}@[blue][r] & \labelstyle m+2 \ar@/^0.5pc/@{-}@[red][rd] & \labelstyle \ast = 2m+n+2\\
\labelstyle x= m+n+2 \ar@[red][u]   & \labelstyle v=2m+n+3 \ar@[red][l]_{\textstyle X} \ar@{-}@[blue][r] & \labelstyle 1 \ar@/_0.5pc/@{-}@[red][ur] & \labelstyle \ast=2m+2n+3 \\
& \labelstyle 2m+n+4 \ar@/^1.0pc/@{-}@[blue][ul] \ar@{-}@[red][r] & \labelstyle m+n+1 \ar@/_1.0pc/@{..}[ru]  & 
}
\]       

Now label the vertices so that $bc$ is $(1,2,\ldots ,\ell)$, starting by putting $w^{bc}=1$.
Then the path from $v$ to $u$ is
$v,1,\ast,2,\ast,3,\ast,\ldots ,m,\ast,u$  where the $\ast$ represent as-yet-unlabelled vertices.
Thus $u$ gets label $m+1$, and the path from $w$ to $x$ is
$$w,m+2,\ast,m+3,\ast,\ldots ,m+n+1,\ast,x$$
so that $x$ gets label $m+n+2$, and the path from $u$ to $v$ is
$$m+1,m+n+3,m,m+n+4,m-1,m+n+5,\ldots ,2,2m+n+2,1,v$$
so that $v$ gets label $2m+n+3$, and the path from $x$ to $w$ is
$$m+n+2,2m+n+4, m+n+1,2m+n+5,\ldots ,m+3,2m+2n+3,m+2,w$$
so that $w$ gets label $2m+2n+4$.

Thus $c$ is the product of transpositions $(i,3m+2n+6-i)$, for $m+2\leq i\leq m+n+2$,  and
 $(i,2m+n+4-i)$ for $1\leq i \leq m+1$,
and $b$ is the product of the transpositions $(i,3m+2n+5-i)$ for $m+2\leq i\leq m+n+1$,  and $(i,2m+n+3-i)$ for $1\leq i\leq m$,  and the $4$-cycle  $(m+1,2m+2n+4,2m+n+3,m+n+2)$.

The $(\ell/2)$-th power of the $\ell$-cycle $bc$ is the product of $(i,\ell/2+i)$, for $1\leq i \leq \ell/2$.
For $1\leq i \leq m+1$, we have $2m+n+4-1\geq m+n+2+i\geq 2m+n+4-(m+1)$
so that $c$ maps $\{i,\ell/2+i\}$ to $\{2m+n+4-i,2m+n+4-\ell/2-i\}$.
For $m+2\leq i \leq m+n+2$, we have 
$$3m+2n+6-(m+2)\geq m+n+2+i\geq 3m+2n+6-(m+n+2)$$ so that
$c$ maps $\{i,\ell/2+i\}$ to $\{3m+2n+6-i,3m+2n+6-\ell/2-i\}$.
Therefore both $bc$ and $c$ preserve the partition $\{i,\ell/2+i\}$, $1\leq i\leq \ell/2$,
so also $\langle b,c\rangle$ preserves this partition, whence $\langle b,c\rangle$ is not primitive.

{\bf\noindent Case I2.N2} 
Without loss of generality assume $b$ and $c$ have cycle structure $[2^{\ell/2}]$, and $d$ is a $2$-cycle $(u,v)$
such that $dcb$ is an $\ell$-cycle.  Then each vertex of $d$ is an endpoint of
a unique $b$-edge and a unique $c$-edge, and each $b$-or-$c$-edge emanating from
a vertex of $d$ gives rise to a path of alternating $b$-and-$c$-edges which
ends in one of the vertices of $d$.  If each such path ends at the same
vertex of $d$ at which it starts, then $dcb$ maps $u\mapsto v^{cb} \mapsto v^{cbcb} \mapsto \ldots
 \mapsto v \mapsto u^{cb} \mapsto u^{cbcb} \mapsto \ldots  \mapsto u$  so that $dcb$ is not an $\ell$-cycle, as it does not cover all vertices in the alternating $b$-and-$c$  paths from $u$ to $v$ and  from $v$ to $u$.
Hence there are two paths from $u$ to $v$ consisting of $b$-and-$c$-edges, one
starting with $b$ and one starting with $c$.
If the path starting with $c$ ends in $b$ then $dcb$ maps $v\ra u^{cb}\ra u^{cbcb}\ra \ldots\ra v$,
so $dcb$ is not an $\ell$-cycle.
Thus the path starting with $c$ ends in $c$; say it consists of $m$ $b$-edges and
$m+1$ $c$-edges.  Then the path starting with $b$ ends in $b$; say it consists of $n+1$ $b$-edges and $n$ $c$-edges.  Here $\ell=2m+2n+2$.

Now label the vertices so that $dcb$ is $(1,2,\ldots,\ell)$, starting by putting $v^{cb}=1$.
Then the path from $v$ to $u$ which starts with $c$ is
$v,\ast,1,\ast,2,\ast,3,\ast,\ldots ,\ast,m,u$  where the $\ast$ represent as-yet-unlabelled vertices.
Thus the path from $u$ to $v$ which starts with $b$ is
$u,m+1,\ast,m+2,\ast,\ldots ,m+n,\ast,v$  where $v$ gets label $m+n+1$.
Then the path from $u$ to $v$ which starts with $c$ is
$$u,m,m+n+2,m-1,m+n+3,\ldots ,1,2m+n+1, v=m+n+1$$
so the path from $v$ to $u$ which starts with $b$ is
$$m+n+1,2m+n+2, m+n,2m+n+3,\ldots ,m+1,u=2m+2n+2=\ell.$$
Thus $c$ is the product of $(m+n+1,2m+n+1)$, $(m,2m+2n+2)$, $(i,2m+n+1-i)$ for $1\leq i\leq m-1$, and $(i,3m+2n+2-i)$ for $m+1\leq i \leq m+n$; 
$b$ is the product of $(i,2m+n+2-i)$ for $1\leq i\leq m$, and $(i,3m+2n+3-i)$ for $m+1\leq i\leq m+n+1$;
 and $d$ is $(\ell/2,\ell)$.

\[
\xymatrix{ 
& \labelstyle m+1 \ar@{-}@[blue][r] & \labelstyle \ast = 2m+n+3 \ar@{..}[r] & \labelstyle m+n \ar@{-}@[blue][r] &\labelstyle  \ast = 2m + n + 1 \ar@/^0.5pc/@{-}@[red][dr] & \\
\labelstyle u=2m+2n+2 \ar@{-}@[green][rrrrr] \ar@/^0.5pc/@{-}@[red][ur] \ar@/_0.5pc/@{-}@[blue][dr] &   &  & & & \labelstyle v = m+n + 1 \\
& \labelstyle m \ar@{-}@[red][r] & \labelstyle \ast = m+n+2  \ar@{..}[r] & \labelstyle 1 \ar@{-}@[red][r] &\labelstyle  \ast = 2m + n + 1 \ar@/_0.5pc/@{-}@[blue][ur] & \\
}
\]

The $\ell/2$-th power of the $\ell$-cycle $dcb$ is the product of $(i,\ell/2+i)$ over all $1\leq i \leq \ell/2$.
Plainly $d$ preserves the partition $\{i,\ell/2+i\}$, $1\leq i\leq \ell/2$.
For every $1\leq i\leq m$ we have $2m+n+2-1\geq \ell/2+i\geq 2m+n+2-m$, so that $b$ maps
$\{i,n/2+i\}$ to $\{2m+n+2-i,2m+n+2-\ell/2-i\}$.
For $m+1\leq i\leq m+n+1$ we have $3m+2n+3-(m+1)\geq \ell/2+i\geq 3m+2n+3-(m+n+1)$, so that $b$ maps
$\{i,\ell/2+i\}$ to $\{3m+2n+3-i,3m+2n+3-\ell/2-i\}$.
Therefore the partition $\{i,\ell/2+i\}$, $1\leq i\leq \ell/2$ is preserved by $b,d$ and $dcb$,
so also by $\langle b,c,d\rangle$, whence $\langle b,c,d\rangle$ is not primitive.
\end{proof}

\appendix 

\section{Existence of coverings with ramification as in Table \ref{table:two-set-stabilizer}}\label{sec:app}
We use Riemann's existence theorem to show: 
\begin{prop}\label{prop:An-Sn-ram-types}
Each ramification data in Table \ref{table:two-set-stabilizer} is the ramification type of a degree $\ell$ covering $h:\mP^1\ra\mP^1$ with $\Mon(h)\cong A_\ell$ or $S_\ell$.
\end{prop}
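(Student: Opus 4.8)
The plan is to verify, for each entry in Table~\ref{table:two-set-stabilizer}, that (i) the listed multiset of partitions of $\ell$ is a genus-zero ramification data, i.e.\ it satisfies the Riemann--Hurwitz equality $\sum_{P}(\ell-|E_h(P)|)=2\ell-2$, so that by Riemann's existence theorem there is a covering $h\colon\mP^1\to\mP^1$ with exactly this ramification whenever we can realize the cycle types by a product-one generating tuple of the symmetric group; and (ii) the group generated by any such tuple is primitive and in fact contains $A_\ell$, hence equals $A_\ell$ or $S_\ell$. Step (i) is a routine finite check (the authors have even included a Magma verification in the commented-out block), so the substance is step (ii): producing, for each type, an explicit product-one tuple in $S_\ell$ with the prescribed cycle structures whose generated group is $2$-transitive (or at least primitive) and contains an element forcing it to contain $A_\ell$.

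\textbf{Key steps.} First I would dispose of the easy structural input: every type in Table~\ref{table:two-set-stabilizer} contains either a single $\ell$-cycle, or a permutation of cycle type $[a,\ell-a]$ with $(a,\ell)=1$, together with a transposition or a short product of small cycles; and every type contains, over at least one branch point, an element whose cycle structure makes it a transposition $[2,1^{\ell-2}]$ or a $3$-cycle $[3,2^{(\ell-3)/2}]$ raised to a power $\ldots$ more to the point, most types contain a genuine transposition $[2,1^{\ell-2}]$, and those that do not (the I2.11--I2.15 and F-types built from $[a,\ell-a]$, $[2^{\ell/2}]$, etc.) still contain an element with a single fixed point and all other cycles of length $2$ or $4$. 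So the strategy for (ii) splits: if the group is primitive and contains a transposition, it is $S_\ell$ by the classical theorem of Jordan (cited as Remark~\ref{rem:jordan}); if it is primitive and contains a $3$-cycle it contains $A_\ell$; and if it is primitive and contains an element acting as a single $p$-cycle on a $p$-element block for suitable $p$, again Jordan applies. Primitivity itself follows once we know the group is transitive and, say, $2$-transitive, which for these tuples can be read off from the presence of an $\ell$-cycle together with a second element moving the right letters; alternatively, primitivity is forced because a degree-$\ell$ covering is indecomposable iff the monodromy is primitive, and we will have constructed $h$ to be indecomposable by choosing the tuple to generate a transitive group with no invariant block — which in turn we check using the $\ell$-cycle (a transitive group containing an $\ell$-cycle with $\ell$ not a prime power can still be imprimitive, so here one must be slightly careful and instead invoke the presence of a transposition or $3$-cycle directly, since a transitive group containing a transposition is automatically primitive once it is also generated with an $\ell$-cycle that is not compatible with any block system — or simply: a transitive group generated by an $\ell$-cycle and a transposition is $S_\ell$ by the standard connectivity argument on the transposition graph).

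\textbf{Construction of the tuples.} The concrete work is to write down, for each of the roughly two dozen parametrized families, an explicit choice of permutations $a,b,c$ (or $a,b,c,d$) in $S_\ell$ with $abc=1$ (resp.\ $abcd=1$) realizing the prescribed cycle types. I expect to do this uniformly in $\ell$: place the $\ell$-cycle (or $[a,\ell-a]$ element) as $a=(1,2,\dots,\ell)$, choose $c$ to be the designated transposition or short-support element, and let $b=(ac)^{-1}$; then one checks $b$ has the cycle type required by the table (this is where the congruence conditions on $\ell$ enter), and that $\langle a,b,c\rangle$ is transitive (immediate, $a$ is an $\ell$-cycle) and contains the transposition/$3$-cycle, hence is $A_\ell$ or $S_\ell$ by Jordan after noting primitivity. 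For the types with no $\ell$-cycle (those beginning with $[a,\ell-a]$ or with $[2^{\ell/2}]$), transitivity of the generated group must be checked separately from the combinatorics of how the supports of the generators overlap, and primitivity will again be handed to us by the Jordan-type element once transitivity is established; here a convenient trick is to realize these as the ``translates'' of types that do contain an $\ell$-cycle via the explicit translation method already set up in Lemma~\ref{lem:hurwitz1}, which transfers a known covering of one type to a covering of the neighbouring type in the same proof-of-Proposition~\ref{prop:two-set} cluster.

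\textbf{Main obstacle.} The genuine difficulty is bookkeeping, not conceptual: one must produce explicit braid/Nielsen-type words realizing each cycle structure and then verify the cycle type of the remaining factor $b$, which involves the parity constraints and the divisibility conditions $(a,\ell)=1$, $\ell\equiv\cdot\pmod{\cdot}$ that appear in the table captions — and one must make sure the generated group is primitive rather than merely transitive in the handful of $[2^{\ell/2}]$-heavy types where no $\ell$-cycle is present. I expect the cleanest route for those is to lean on the translation method (Lemma~\ref{lem:hurwitz1} and the proof of Proposition~\ref{prop:two-set}), reducing each such type to a type already handled, rather than constructing its tuple from scratch; verifying that the translation is applicable (i.e.\ that the auxiliary Galois covering $\phi$ has the right ramification and that $h$ and $\phi$ share no common factor) is then the one place real care is needed.
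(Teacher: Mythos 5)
Your outline matches the paper's strategy in broad strokes: write down an explicit product-one tuple in $S_\ell$ realizing each cycle-type multiset, invoke Riemann's existence theorem, and use Jordan's criterion (Remark~\ref{rem:jordan}) to force the generated group to contain $A_\ell$ once primitivity is known. (The paper economizes by citing \cite{GS} for types I1, I2 and F1.1--F1.8, so only F1.9, F3.1--F3.3 and F4.1--F4.6 need new tuples; you propose to redo everything, which is fine in principle.) But there are two genuine gaps at exactly the points where the work is hard. First, your fallback claim that ``a transitive group generated by an $\ell$-cycle and a transposition is $S_\ell$ by the standard connectivity argument'' is false as stated: $\langle(1,2,\dots,\ell),(i,j)\rangle=S_\ell$ if and only if $\gcd(j-i,\ell)=1$, so even in the $\ell$-cycle cases you must choose and verify the tuple, not just ``read off'' $2$-transitivity. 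Second, and more seriously, the types you flag as problematic (the $[2^{\ell/2}]$-heavy F3 and F4 families, which contain no $\ell$-cycle and no $(\ell-1)$-cycle) are precisely where primitivity is the crux, and your proposed escape route --- ``lean on the translation method (Lemma~\ref{lem:hurwitz1})'' --- misreads that lemma. Lemma~\ref{lem:hurwitz1} is a \emph{non-existence} criterion: it derives a contradiction from Riemann--Hurwitz applied to the fiber-product covering $\pi$, and it gives you no way to manufacture a covering of one type from a covering of a ``neighbouring'' type. A constructive use of translation (as in the paper's treatment of case F4.N1 inside Lemma~\ref{lem:non-existing}) requires building the minimal covering through $h$ and an auxiliary Galois $\phi$, computing its ramification via Lemma~\ref{lem:abh}, and separately verifying its monodromy; nothing in your proposal sets this up, nor is it clear which ``already handled'' type would translate to, say, F3.1 or F4.3.

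What the paper actually does for these cases is write explicit tuples and then run a direct block argument: e.g.\ for F3.1 one observes $x_2^4=(2,3,4)$ and shows any nontrivial block containing $2$ must be fixed by $(2,3,4)$, hence contains $3$ and $4$, hence is fixed by $x_2$ and $x_1^2$, hence is everything by transitivity of $\langle x_2,x_1^2\rangle$; primitivity plus a $3$-cycle (or a $[2^2,1^{\ell-4}]$-element, for $\ell\ge 9$) then gives $A_\ell\le G$ by Jordan. This block analysis, tailored to each family and to the specific short cycles available ($x_1^3$, $x_2^2$, $x_2^6$, etc.), is the content of the proof; without it, or without a correctly formulated constructive translation replacing it, your argument does not close.
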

\begin{proof}
The branch cycles in types I1, I2, and types F1.1-F1.8 were proven to occur in Propositions 3.0.24--3.0.28 of \cite{GS}. It remains to treat the cases F1.9 and the cases in F3 and F4. 
For each of these ramification data, we write a product $1$ tuple $x_1,\ldots,x_r\in S_\ell$ of elements whose cycle structures correspond to the ramification type, and show that that the group $G:=\langle x_1,\ldots,x_r\rangle$ contains $A_\ell$. 
This prove the proposition by Riemann's existence theorem.  
Each element in a product $1$ relation is written within parenthesis. 

{\bf \noindent Case F1.9}: Write the product $1$ relation as $x_1x_2 = x_4x_3$ where $x_3^2=x_4^2=1$: 
\begin{align*}
& \biggl((1,4)(3,5)\prod_{i=3}^{\ell/2-1}(2i,2i+1)\biggr) \cdot \biggl((1,2,3,4)\prod_{i=2}^{\ell/2-1}(2i+1,2i+2)\biggr) \\
& = (4,2,3, 6,8,10,\ldots,\ell-2,\ell, \ell-1,\ell-3,\ell-5,\ldots ,7,5) \\
& =  \biggl(\prod_{i=2}^{\ell/2-1}(2i+1,2i+2)\biggr) \cdot \biggl((2,3)(4,6)\prod_{i=1}^{\ell/2-2}(2i+1,2i+4)\biggr).
 \end{align*}
The subgroup $G=\langle x_1,x_2,x_3,x_4\rangle\leq S_\ell$ is transitive and contains an $(\ell-1)$-cycle, hence doubly transitive and in particular primitive.  But  $x_2^2\in G$  has cycle structure $[2^2,1^{\ell-4}]$, so $G$
 contains $A_\ell$ if $\ell>8$ by Remark \ref{rem:jordan}. 

{\bf \noindent Case F3.1}: 
For $\ell=4k$, $k\geq 2$, write the relation as $x_1x_2=x_3$ where $x_3^2=1$:
\begin{align*}
& \biggl((2,1,4,5)(4k-2,4k-5,4k,4k-1) \prod_{i=1}^{k-2} (4i+2,4i-1,4i+4,4i+5)\biggr) \\
&\cdot \biggl((1)(2,3,4) \prod_{i=1}^{k-1} (4i+1,4i+2,4i+3,4i+4)\biggr)
 \\
& =\biggl((2,1)(4k-1)(4k) \prod_{i=1}^{k-1} (4i,4i+2)(4i+1,4i-1)\biggr).
\end{align*}
Consider a block which contains $2$ and has size at least $2$.
Since $x_2^4=(2,3,4)$, if this block contains a point outside $\{2,3,4\}$ then that point (and hence the whole block) is
fixed by $(2,3,4)$; and if the block contains either $3$ or $4$ then the block must be fixed by $(2,3,4)$.  In any case,
the block is fixed by $(2,3,4)$, so it contains $3$ and $4$.  Thus the block is fixed by $x_2$ (and not just by
$x_1^4$), and by $x_1^2$. Since $\langle x_2,x_1^2\rangle\leq S_\ell$ is transitive the block is $\{1,2,\ldots ,\ell\}$. 
So $G$ is primitive and contains a $3$-cycle, so it contains $A_\ell$ by Remark \ref{rem:jordan}. Note that $G=S_\ell$, as $x_3$ has cycle structure $[1^2,2^{2k-1}]$.
A similar argument applies in {\bf cases F3.2 and F3.3}. In case F3.2 with $\ell=4k+5$, $k\geq 1$, we write the relation $x_1x_2=x_3$ as
\begin{align*}
&\biggl((1)(4,5,4k+5,4k+4)(4k-2,4k-1,4k+2,4k+3)   \prod_{i=1}^{k-1} (4i-2,4i-1,4i+4,4i+5)\biggr) \\
&\cdot  \biggl( (1,2,3)(4k+4,4k+5) \prod_{i=1}^k (4i,4i+1,4i+2,4i+3)\biggr)  
 \\
& = \biggl( (1,2)(5,4k+4)(4k-1,4k+3)(4k,4k+2)(4k+5) \prod_{i=1}^{k-1} (4i-1,4i+5)(4i,4i+2)\biggr).
\end{align*}
 and in case F3.3 with $\ell=4k+3$, $k\geq 1$, we write it as\footnote{Note that when applying the argument from case F3.1 to case F3.3, one uses the $3$-cycle $x_1^4$ to replace the role of the $3$-cycle $x_2^4$ in case F3.1. 
}
\begin{align*}
&\biggl( (1,4,5)(4k-2,4k-1,4k+2,4k+3) \prod_{i=1}^{k-1} (4i-2,4i-1,4i+4,4i+5)\biggr)  \\
&\cdot \biggl( (1)(2,3) \prod_{i=1}^k (4i,4i+1,4i+2,4i+3)\biggr) 
 \\
&=\biggl((1,5)(2)(4k-1,4k+3)(4k,4k+2) \prod_{i=1}^{k-1} (4i-1,4i+5)(4i,4i+2)\biggr).
\end{align*}

\noindent{\bf Case F4.3}:  For $\ell=6k+7$, $k\geq1$, write $x_1x_2=x_3$, where $x_3^2=1$: 
%
%
\begin{align*}
& \biggl( (1,6k+7,6k+5)(2,6k+4,6)(6k-3,6k-1,6k+1)(6k-2,6k+3,6k+2)\\
 &\qquad (6k+6)\prod_{i=1}^{k-1} (6i-3,6i-1,6i+1)(6i+2,6i-2,6i+6)\biggr)
 \\
 &\cdot \biggl( (6k+1,6k+2,6k+3)(6k+4,6k+5,6k+6,6k+7) \\
& \qquad\prod_{i=0}^{k-1} (6i+1,6i+2,6i+3,6i+4,6i+5,6i+6) \biggr) \\
&=\biggl((1,6k+4)(2,6k+5)(6k+3)(6k+6,6k+7) \\
&\qquad \prod_{i=1}^k (6i-3,6i)(6i-2,6i+1)(6i-1,6i+2)\biggr).
\end{align*}
Consider a block which contains $6k+4$ and has size at least $2$.  Since $x_2^6$ equals  $(6k+4,6k+6)(6k+5,6k+7)$,  if the block does not contain $6k+6$ then the block is not fixed by $x_2^6$ and hence
does not contain any elements fixed by $x_2^6$, so the block is contained in $\{6k+4,6k+5,6k+7\}$ (and must have
size $2$ since it is not fixed by $x_2^6$).  
But then the block is fixed by $x_2$, so it contains
$6k+6$, contradiction.  Thus the block contains $6k+6$, so it is fixed by $x_1$ and $x_2^2$. Since $\langle x_2^2,x_1\rangle\leq S_\ell$ is transitive the block  is $\{1,2,\ldots ,\ell\}$.
So $G$ is primitive and contains an element of cycle structure $[2^2,1^{\ell-4}]$, hence $G$ contains $A_\ell$ for $\ell\geq 9$ by Remark \ref{rem:jordan}. 
A similar argument applies in
 {\bf case F4.5}\footnote{When applying the argument from case F4.3 to case F4.5, one uses  $x_2^2$ to replace the role of the element $x_2^6$ in case F4.3}. Here $\ell=6k+4$, $k\geq 1$, and  the relation  $x_1x_2=x_3$ is written as
\begin{align*}
& \biggl( (1,6,2)(3,5,7)(6k-2,6k+4,6k+2)(6k+3)  \\
& \qquad \prod_{i=2}^k (6i-4,6i-8,6i)(6i-3,6i-1,6i+1)\biggr)
 \\
 &\cdot \biggl( (6k+1,6k+2,6k+3,6k+4) \prod_{i=0}^{k-1} (6i+1,6i+2,6i+3,6i+4,6i+5,6i+6)  \biggr)    \\
& = \biggl((1)(2)(6k+3,6k+4) \prod_{i=1}^k (6i-3,6i)(6i-2,6i+1)(6i-1,6i+2)\biggr).
\end{align*}

\noindent {\bf Case F4.6}: 
%
%
 For $\ell=6k+5$, $k\geq 1$, write the relation as $x_1x_2=x_3$, where $x_3^2=1$: 
\begin{align*}
& \biggl( (1,6k+5)(2,6k+4,6)(6k-3,6k-1,6k+1)(6k+2,6k-2,6k+3) \\
&\qquad \prod_{i=1}^{k-1} (6i-3,6i-1,6i+1)(6i+2,6i-2,6i+6)\biggr)
 \\
&\cdot \biggl( (6k+1,6k+2,6k+3)(6k+4,6k+5) \prod_{i=0}^{k-1} (6i+1,6i+2,6i+3,6i+4,6i+5,6i+6)\biggr) \\
&=\biggl((1,6k+4)(2,6k+5)(6k+3) \prod_{i=1}^k (6i-3,6i)(6i-2,6i+1)(6i-1,6i+2)\biggr).
\end{align*}
Consider a block which contains $1$ and has size at least $2$.  Since $x_1^3=(1,6k+5)$, it
must fix the block so the block contains $6k+5$ and hence is fixed by $x_2$.  Also $x_2^2$ fixes $6k+5$ and hence fixes the block. Since $\langle x_2^2,x_1\rangle\leq S_\ell$ is transitive, the block is $\{1,\ldots,\ell\}$. 
So $G$ is primitive and contains a $2$-cycle,
hence  $G=S_\ell$ by Remark \ref{rem:jordan}. 
A similar argument applies in {\bf cases F4.1, F4.2, and F4.4}.
In case F4.1 with $\ell=6k$, $k\geq 1$, write the relation  $x_1x_2=x_3$ as
\begin{align*}
& \biggl( (1)(2,6)(6k-1,6k-2,6k-3) \prod_{i=1}^{k-1} (6i-3,6i-1,6i+1)(6i+2,6i-2,6i+6)\biggr)
 \\
 &\cdot \biggl( \prod_{i=0}^{k-1} (6i+1,6i+2,6i+3,6i+4,6i+5,6i+6)  \biggr)  \\
& = \biggl( (6k-2)(6k-1)(1,2)(6k-3,6k) \prod_{i=1}^{k-1} (6i-3,6i)(6i-2,6i+1)(6i-1,6i+2)\biggr).
\end{align*}
In case F4.2 with
 $\ell=6k+2$, $k\geq 1$, we write this relation  as
\begin{align*}
&  \biggl( (1,6k+1)(2,6k+2,6)(6k-1,6k-2,6k-3)\\
&\qquad \cdot \prod_{i=1}^{k-1} (6i-3,6i-1,6i+1)(6i+2,6i-2,6i+6) \biggr)
 \\
 &\biggl( (6k+1,6k+2) \prod_{i=0}^{k-1} (6i+1,6i+2,6i+3,6i+4,6i+5,6i+6) \biggr) \\
& = \biggl((1,6k+2)(2,6k+1)(6k,6k-3)(6k-1)(6k-2) \\
&\qquad \prod_{i=1}^{k-1} (6i-3,6i)(6i-2,6i+1)(6i-1,6i+2)\biggr).
\end{align*}
In case F4.4 with $\ell=6k+3$, $k\geq 1$, we write this relation as 
\begin{align*}
& \biggl( (1)(2,6)(6k-2,6k+3,6k+2)(6k-3,6k-1,6k+1) \\
&\qquad  \prod_{i=1}^{k-1} (6i-3,6i-1,6i+1)(6i+2,6i-2,6i+6) \biggr)
 \\
 & \cdot \biggl( (6k+1,6k+2,6k+3) \prod_{i=0}^{k-1} (6i+1,6i+2,6i+3,6i+4,6i+5,6i+6)  \biggr)  \\
& =  \biggl((1,2)(6k+3) \prod_{i=1}^k (6i-3,6i)(6i-2,6i+1)(6i-1,6i+2)\biggr).
\end{align*}
\end{proof}

\bibliographystyle{plain}

\end{document}